\documentclass[a4paper]{amsart}

\usepackage{amssymb,latexsym}
\usepackage[utf8x]{inputenc}
\usepackage{amsmath,amsthm}
\usepackage{amsfonts,mathrsfs}
\usepackage{mathtools}

\usepackage{todonotes}

\usepackage{enumerate,units}
\usepackage[all]{xy}
\usepackage{graphicx}

\usepackage{xcolor,url}
\usepackage{hyperref}
\hypersetup{
    colorlinks,
    citecolor=blue,
    filecolor=blue,
    linkcolor=black,
    urlcolor=blue
}


\newcommand\Q{\mathbb{Q}}
\newcommand\N{\mathbb{N}}
\newcommand\Z{\mathbb{Z}}
\newcommand\U{\mathcal{U}}
\newcommand\F{\mathbb{F}}

\newcommand\M{M}

\newcommand{\set}[1]{\left\{ {#1} \right\}}
\newcommand{\vect}[1]{\langle {#1} \rangle}
\newcommand{\abs}[1]{\lvert {#1} \rvert}

\newcommand{\NSOP}[1]{\mathrm{NSOP}_{#1}}  
\newcommand{\NTP}[1]{\mathrm{NTP}_{#1}} 
  
\newcommand{\NIP}{\mathrm{NIP}}
\newcommand{\ol}[1]{\overline{#1}}  

\newcommand{\ACF}{\mathrm{ACF}}

\newcommand{\acl}{\mathrm{acl}}
\newcommand{\dcl}{\mathrm{dcl}}

\newcommand{\eq}{\mathrm{eq}}

\newcommand{\tp}{\mathrm{tp}}


\def\indi#1{\mathop{\ \ \hbox to 0ex{\hss$\vert^{\hbox to 0ex{$\scriptstyle#1$\hss}}$\hss}
\lower1ex\hbox to 0ex{\hss$\smile$\hss}\ \ }}

\def\nindi#1{\mathop{\ \ \hbox to 0ex{\hss$\!\not{\vert}^{\hbox to 0ex{$\scriptstyle\,#1$\hss}}$\hss}
\lower1ex\hbox to 0ex{\hss$\smile$\hss}\ \ }}


\definecolor{airforceblue}{rgb}{0.36, 0.54, 0.66}


\theoremstyle{plain}
\newtheorem{theorem}{Theorem}

\newtheorem{corollary}[theorem]{Corollary}
\newtheorem{lemma}[theorem]{Lemma}
\newtheorem{proposition}[theorem]{Proposition}

\newtheorem{fact}[theorem]{Fact}
\newtheorem{alphatheorem}{Theorem}

\newtheorem*{theorem*}{Theorem}

\theoremstyle{definition}
\newtheorem{definition}[theorem]{Definition}
\newtheorem{example}[theorem]{Example}
\newtheorem{remark}[theorem]{Remark}
\newtheorem{question}[theorem]{Question}
\newtheorem*{question*}{Question}

\theoremstyle{remark}


\makeatletter
\newcommand{\setword}[2]{%
  \phantomsection
  #1\def\@currentlabel{\unexpanded{#1}}\label{#2}%
}
\makeatother

\renewcommand{\phi}{\varphi}

\def\seq{\subseteq}
\def\cA{\mathcal{A}}
\def\cD{\mathcal{D}}
\def\cL{\mathcal{L}}

\def\cQ{\mathcal{Q}}
\def\cH{\mathcal{H}}
\def\cM{\mathcal{M}}
\def\cN{\mathcal{N}}

\def\cU{\mathcal{U}}
\def\cZ{\mathcal{Z}}

\def\indd{\textnormal{ind}}

\def\bbar{\bar{b}}

\def\dbar{\bar{d}}
\def\xbar{\bar{x}}
\def\ybar{\bar{y}}

\def\Th{\operatorname{Th}}
\def\lcm{\operatorname{lcm}}
\def\miff{\makebox[.5in]{$\Leftrightarrow$}}
\def\Lbig{\cL[\cQ]}
\def\Tbig{T[\cQ]}
\def\Tcb{T[\cQ]_{\mathrm{cb}}}
\def\Lcb{\cL[\cQ]_{\mathrm{cb}}}
\def\Thx{\Th(\cQ)}
\def\fpr{Q\textnormal{Pr}}
\newcommand{\clqed}{\hfill$\dashv_{\text{\scriptsize{claim}}}$}
\newcommand{\inv}{^{\text{-}1}}

\newcommand{\mand}{\makebox[.4in]{and}}

\def\Ind{\setbox0=\hbox{$x$}\kern\wd0\hbox to 0pt{\hss$\mid$\hss}
\lower.9\ht0\hbox to 0pt{\hss$\smile$\hss}\kern\wd0}
\def\Notind{\setbox0=\hbox{$x$}\kern\wd0\hbox to 0pt{\mathchardef
\nn=12854\hss$\nn$\kern1.4\wd0\hss}\hbox to
0pt{\hss$\mid$\hss}\lower.9\ht0 \hbox to 0pt{\hss$\smile$\hss}\kern\wd0}
\def\ind{\mathop{\mathpalette\Ind{}}}

\def\ma{\textnormal{ma}}


\begin{document}
\title[Enriching a predicate]{Enriching a predicate and tame expansions of the integers}

\author[G. Conant]{Gabriel Conant}
\address{Department of Mathematics\\
The Ohio State University\\
Columbus, OH, USA}
\email{conant.38@osu.edu}

\author[C. d'Elb\'{e}e]{Christian d\textquoteright Elb\'ee}
\address{Fields Institute for Research in Mathematical Sciences, Office 416\\
222 College Street\\ Toronto, Ontario\\ Canada}
\urladdr{\href{http://choum.net/\textasciitilde chris/page\textunderscore perso/}{http://choum.net/\textasciitilde chris/page\textunderscore perso/}}

\author[Y. Halevi]{Yatir Halevi}
\address{Department of Mathematics\\ University of Haifa\\ 199 Abba Khoushy Avenue \\ Haifa \\Israel}
\email{ybenarih@campus.haifa.ac.il}

\author[L. Jimenez]{L\'{e}o Jimenez}
\address{The University of Waterloo, Office 6457 \\ 200 University Avenue West \\
Waterloo, ON, Canada  N2L 3G1}
\email{ljimenez@uwaterloo.ca}

\author[S. Rideau-Kikuchi]{Silvain Rideau-Kikuchi}
\address{CNRS, DMA, École Normale Supérieure, 45 rue d’Ulm, 75005
Paris, France}
\email{silvain.rideau-kikuchi@ens.fr}

\date{\today}

\begin{abstract}
    Given a structure $\cM$ and a stably embedded $\emptyset$-definable set $Q$, we prove tameness preservation results when enriching the induced structure on $Q$ by some further structure $\cQ$. In particular, we show that if $T=\textnormal{Th}(\mathcal{M})$ and $\textnormal{Th}(\mathcal{Q})$ are stable (resp., superstable, $\omega$-stable), then so is the theory $T[\mathcal{Q}]$ of the enrichment of $\mathcal{M}$ by $\mathcal{Q}$. Assuming simplicity of $T$, elimination of hyperimaginaries and a further condition on $Q$ related to the behavior of algebraic closure, we also show that simplicity and NSOP$_1$ pass from $\textnormal{Th}(\mathcal{Q})$ to $T[\mathcal{Q}]$. We then prove several applications for tame expansions of weakly minimal structures and, in particular, the group of integers. For example, we construct the first known examples of strictly stable expansions of $(\mathbb{Z},+)$. More generally, we show that any stable (resp., superstable, simple, NIP, NTP$_2$, NSOP$_1$) countable graph can be defined in a stable (resp., superstable, simple, NIP, NTP$_2$, NSOP$_1$) expansion of $(\mathbb{Z},+)$ by some unary predicate $A\subseteq\mathbb{N}$. 
\end{abstract}

\maketitle

\section{Introduction}

The work presented in this article began when the authors met to discuss two  open questions from the literature on stable expansions of the group of integers. Recall that  $(\Z,+)$ is a  canonical example of a group whose theory is superstable, but not $\omega$-stable. Over the last several years, there has been an extensive amount of work on  classifying (or at least cataloguing) stable expansions of $(\Z,+)$. See \cite{CoSS,CoMS,CoLa,HaZ,LaPo,PaSk}. A compelling phenomenon is that every proper stable expansion of $(\Z,+)$ identified in  these previous sources is superstable of $U$-rank $\omega$ (recall that $(\Z,+)$ itself has $U$-rank $1$).\footnote{By the $U$-rank of a structure, we mean the $U$-rank of its complete theory.} This raises the following questions, which are asked  implicitly in \cite[Question 7.10]{CoSS}.

\begin{question}\label{ques:intro}$~$
\begin{enumerate}[$(a)$]
\item Is there a strictly stable expansion of $(\Z,+)$?
\item Is there a superstable expansion of $(\Z,+)$ with $U$-rank not equal to $\omega$?
\end{enumerate}
\end{question}

The strategy undertaken to answer this question (which we will do) led us to a much broader line of research and  more general theorems. In particular, our results not only add a significant new dimension to the understanding of stable expansions of $(\Z,+)$, but also to other forms of tameness in expansions of arbitrary weakly minimal structures, as well as the fully general question of preserving dividing lines when enriching the induced structure on a stably embedded definable set. Thus, before discussing the answer to the above question, we will first introduce what we mean by ``enriching structure", and describe our general results on preserving model-theoretic tameness in this context.

An appropriate starting point is the general framework of ``model theory over a predicate". As described very nicely by Baldwin and Benedikt in the introduction of \cite{BB00}, this research area can be divided to at least two main subtopics. The first deals with the extent to which one can learn about a structure $(\cM,P)$ from the induced structure on $P$ alone. See \cite{BB00} for some specific references for this line of research. The second subtopic deals with a kind of converse question: To what extent do model-theoretic properties of a structure  survive after naming a new unary set? Usually one aims to preserve dividing lines such as stability, NIP, simplicity, etc., and there is a significant amount of existing literature along these lines. For example \cite{BB00} and \cite{CaZi} focus on preserving stability when naming a new predicate, while \cite{CS13} extends this to NIP. Those sources also draw from earlier work of Poizat \cite{Po83} on \emph{belle paires}, which has led to similar  such notions as lovely pairs \cite{BPV03}, $H$-structures \cite{BV16}, and more. 

Our work connects to the second subtopic above, and can be seen as a \emph{general} continuation of this line of thought. Let $\cM$ be a structure, and suppose one is given an $\emptyset$-definable set $Q$ (e.g., a unary predicate). Then most reasonable forms of model-theoretic tameness (e.g., stability), will be inherited by the structure on $Q$ induced from  $\cM$. Suppose now that we enrich the $\cM$-induced structure on $Q$ to some expanded structure $\cQ$, which still maintains some level of tameness. Is the corresponding enrichment of $\cM$ still tame? As we will see below, provided one further assumes that $Q$ is stably embedded in $\Th(\cM)$,\footnote{Stable embeddedness is necessary; see Example \ref{E: Stably Embed is necessary}.} then many dividing lines yield a positive answer (under additional assumptions in some cases).

As far as we are aware, the only result of this nature in the literature is the preservation of NIP, which appears in \cite[Proposition 2.5]{JaSi} and \cite[Lemma 48]{ChSi} (see Fact \ref{fact:JS} below). In \cite{JaSi}, this result is used to prove NIP for various valued fields (in the spirit of Ax-Kochen-Ershov)  by enriching the residue field in a way that preserves NIP. The first main goal of this article is to prove analogous preservation results for other model-theoretic dividing lines. With an eye toward Question \ref{ques:intro}, we will start with various forms of stability. 

\begin{alphatheorem}\label{athm:stable-pres}
Let $T$ be a complete theory and let $Q$ be an $\emptyset$-definable stably embedded set in $T$. Fix an arbitrary structure $\cQ$ expanding $Q_{\indd}$,  and let $\Tbig$ be the corresponding enrichment of $T$. Then $\Tbig$ is stable (resp., superstable, $\omega$-stable) if and only if $T$ and $\Thx$ are stable (resp., superstable, $\omega$-stable). 
\end{alphatheorem}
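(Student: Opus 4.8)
The plan is to analyze types in $\Tbig$ by decomposing them into their "$Q$-part" and the rest. Since $Q$ is stably embedded in $T$, for any model $\cM \models T$ and any tuple $a$, the type $\tp_T(a/M)$ is determined by $\tp_T(a/\acl^{\eq}_M(Q))$ together with the $Q$-internal data, or more precisely, there should be a "small" parameter set $d_a \seq Q^{\eq}(\cM)$ (a code for the $Q$-definable set of realizations of the relevant part of the type, using stable embeddedness) so that $\tp_T(a/M)$ is implied by $\tp_T(a/M_0 d_a)$ for a small submodel $M_0$. The key structural input is therefore a description, already implicit in the theory of stably embedded sets, of the relationship between $\tp_{\Tbig}(a/M)$, $\tp_T(a/M)$, and $\tp_{\cQ}$ of the relevant $Q$-tuples. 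I would want to first isolate, as a lemma (perhaps this is proven earlier in the paper, in which case I cite it), the statement that a type over a model $M \models \Tbig$ is determined by: (i) its restriction to the $\cL$-reduct, i.e.\ $\tp_T(a/M)$, and (ii) the $\cQ$-type over $\cQ(M)$ of the $Q$-coordinates appearing in the "canonical parameters" $d_a$ of the $T$-definable sets involved.

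Given such a decomposition, the strategy is a counting-of-types argument for each of the three properties. For \emph{stability}: fix an infinite cardinal $\lambda$ with $\lambda^{|T|} = \lambda$ (and $\lambda^{\aleph_0}=\lambda$, $\lambda \geq |{\cL[\cQ]}|$ as needed); I want to show $|S^{\Tbig}_1(M)| \leq \lambda$ whenever $|M| \leq \lambda$. By the decomposition, a $1$-type over $M$ in $\Tbig$ is determined by a $1$-type over $M$ in $T$ (there are $\leq \lambda$ of these by stability of $T$, for suitable $\lambda$) together with a $\cQ$-type over a set of size $\leq \lambda$ inside $\cQ(M)^{\eq}$ — and since $\Thx$ is stable, there are $\leq \lambda$ of those as well for the appropriate $\lambda$. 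Hence $|S^{\Tbig}_1(M)| \leq \lambda \cdot \lambda = \lambda$, and $\Tbig$ is stable. The converse directions are immediate: $T$ is a reduct of $\Tbig$, and $\Thx$ is interpretable in $\Tbig$ (it is the structure induced on $Q$), so instability, the order property witnessed in either, lifts to $\Tbig$. For \emph{superstability} one runs the same argument but now tracking forking: I would show that a non-forking extension in $\Tbig$ corresponds to a pair of non-forking extensions (one in $T$, one in $\cQ$), so that a forking chain of length $\omega_1$ in $\Tbig$ would project to such a chain in one of the two reducts; equivalently, count types over sets of size $\leq 2^{\aleph_0 + |T|}$-ish and use the Lascar-style characterization. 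For \emph{$\omega$-stability}, take $M$ countable: then $\cQ(M)$ is countable, so there are countably many $\cQ$-types over it ($\Thx$ is $\omega$-stable) and countably many $T$-types over $M$ ($T$ is $\omega$-stable), giving countably many $\Tbig$-types over $M$.

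The main obstacle, and the step I would spend the most care on, is making precise and correct the claim that $\tp_{\Tbig}(a/M)$ is controlled by $\tp_T(a/M)$ plus a \emph{small} amount of $\cQ$-data over $\cQ(M)$ — i.e., that the enrichment $\Tbig$ really does not create genuinely new types beyond "old $T$-type $\times$ enriched $\cQ$-type." This requires a quantifier-elimination-type analysis of $\Tbig$: one needs to know how formulas of $\cL[\cQ]$ decompose, presumably via a result (likely established earlier in the paper, given the notation $\Tbig$, $\Lbig$) saying every $\cL[\cQ]$-formula is equivalent to a Boolean combination of $\cL$-formulas and formulas of the form $R(t_1(\bar{x}),\dots,t_k(\bar{x}))$ where $R$ is a new $\cQ$-relation and the $t_i$ are $\cL$-definable functions into $Q^{\eq}$. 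Stable embeddedness is exactly what guarantees such functions are available to "pull out" the $Q$-coordinates relevant to a given formula. Once that decomposition is in hand, the type-counting is routine; without it, the whole scheme is in danger of undercounting. A secondary subtlety is bookkeeping with $\eq$ and imaginaries — the "$Q$-part" lives in $Q^{\eq}$, and one must make sure the induced structure $Q_{\indd}$ and its enrichment $\cQ$ interact correctly with $\acl^{\eq}$ so that the cardinalities add up as claimed; this is where I would be most careful to avoid an off-by-a-cardinal error in the superstable case in particular.
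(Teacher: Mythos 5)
Your outline for stability and $\omega$-stability matches the paper's strategy closely: the paper first proves (Lemma~\ref{lem:finding c-stable} together with Proposition~\ref{prop:char types D}) that every type in $\Tbig$ over a model $M$ is determined by a $T$-type over $M$ of a tuple $ac$ where $c\in Q(M)$ has length $\le |T|$ (finite when $T$ is countable $\omega$-stable), together with the $\cQ$-type of $c$ over $Q(M)$; from this the bounds $|S^{\Tbig}(M)|\leq |S^T(M)|^{|T|}\cdot |S^{\cQ}(Q(M))|^{|T|}$ and (in the $\omega$-stable countable case) $|S^{\Tbig}(M)|\leq |M|\cdot|S^{\cQ}(Q(M))|$ follow (Lemma~\ref{L: lemma for pres of stab}), and stability and $\omega$-stability drop out by counting. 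Your worry about bookkeeping with imaginaries is well taken but handled cleanly because $T$ is stable, so one can work with plain $Q$-tuples via Lemma~\ref{lem:finding c-stable} and avoid $Q^{\eq}$ entirely in this section.

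However, there is a genuine gap in your treatment of superstability, and it is exactly the spot you flagged as delicate. The counting bound above carries exponents of $|T|$, so it does not directly give $\lambda$-stability for \emph{all} $\lambda\geq 2^{|\Tbig|}$; you cannot conclude superstability by counting types over arbitrary models. Your proposed alternative --- "track forking" and show that a non-forking extension in $\Tbig$ corresponds to a pair of non-forking extensions in $T$ and $\cQ$, so a long forking chain in $\Tbig$ projects to one in a reduct --- is not established anywhere and is far from obvious; in fact, the paper develops precisely such a forking calculus for $\Tbig$ only in the later simplicity/NSOP$_1$ sections and only under an additional hypothesis (algebraic embeddedness of $Q$), which is not assumed here. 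The paper sidesteps this entirely: it argues by contradiction, supposing $\Tbig$ is strictly stable, picks $\lambda = \beth_{\aleph_0}(|\Tbig|)$ (so $\lambda < \lambda^{\aleph_0}$), invokes Shelah's saturation spectrum theorem \cite[Theorem VIII.4.7]{classification} to conclude $\Tbig$ has no saturated model of size $\lambda$, then uses a saturation-transfer lemma (Lemma~\ref{L: lemma for pres of stab}$(c)$: if the $\cL$-reduct and $\cQ(M)$ are $\lambda$-saturated, so is $M$) to force one of $T$ or $\Thx$ to lack a saturated model of size $\lambda$ --- contradicting their superstability and $\lambda\geq 2^{|T|}$. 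This is a genuinely different, and more indirect, argument than anything in your sketch, and it is the essential new idea you are missing.
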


This result is proved in Section \ref{sec:pres stable} (see Theorem \ref{T:pres. of stab}). The proof stems from the fact that, since $Q$ is stably embedded, we can characterize types in $\Tbig$ by means of certain types in $T$ and $\Thx$ (see Proposition \ref{prop:char types D}). The preservation of stability can also be proved in a more combinatorial fashion along the lines of the proof for NIP in \cite{JaSi} (see Remark \ref{rem:stabJS}). For superstability, the proof is more intricate and relies on Shelah's saturation spectrum \cite[Theorem VIII.4.7]{classification}. As a result we obtain in Corollary \ref{cor:Hils} a negative answer to the question of existence of a strongly dependent field which eliminates $\exists^\infty$, as opposed to fields of finite dp-rank \cite[Lemma 2.2]{DoGoStrong}. The idea behind this example is due to Martin Hils. 

In Section \ref{sec:NIPNTP2}, we first recount the analogue of Theorem \ref{athm:stable-pres} for NIP (due to \cite{ChSi,JaSi}), and then we prove the analogue for NTP$_2$  using various key results of Chernikov and Hils \cite{CheHil} (see Propostion \ref{prop:CH}). 

We then turn to the dividing lines of simplicity and NSOP$_1$. For these notions of tameness, we will prove  conditional preservation results which require some further assumptions. For one, we need to assume the starting theory $T$ is simple with elimination of hyperimaginaries. Moreover, we assume that the definable set $Q$ satisfies an additional coherence property for algebraic closure, which we refer to as ``algebraic embeddedness" (see Definition \ref{D:H}). Although somewhat technical, this condition appears to be rather natural, and holds for several examples from the literature (see Sections \ref{sec:H} and \ref{sec:vapQ}). In any case, with these extra assumptions we prove the following preservation result for simplicity and NSOP$_1$. 

\begin{alphatheorem}\label{athm:NSOP1-pres}
Let $T$ and $\cQ$ be as in Theorem \ref{athm:stable-pres}. Assume further that $T$ is simple with elimination of hyperimaginaries, and that $Q$ is algebraically embedded in $T$. Then $\Tbig$ is simple (resp., $\NSOP 1$) if and only if $\Thx$ is simple (resp., $\NSOP 1$).
\end{alphatheorem}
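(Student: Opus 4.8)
The strategy is to mimic the structure of the proof of Theorem~\ref{athm:stable-pres}, replacing the type-counting arguments for stability by a local/Kim-independence analysis adapted to simplicity and $\NSOP 1$. The key conceptual input is Proposition~\ref{prop:char types D}: because $Q$ is stably embedded, a type in $\Tbig$ over a model is controlled by (i) its restriction to the $\cL$-language (a type in $T$), and (ii) the $\cQ$-induced type of its ``$Q$-part'' over the $Q$-part of the base (a type in $\Thx$). The extra hypothesis that $T$ is \emph{stable} is what makes the $\cL$-component harmless: over any set $B$, forking in $T$ is symmetric, transitive and satisfies the independence theorem, and algebraic embeddedness ensures that $\acl$ in $\Tbig$ does not create new interactions between the $\cL$-part and the $Q$-part beyond what is visible in $\acl_{\Thx}$. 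So the heuristic is: an independence relation for $\Tbig$ should be ``$\ind^{\cL}$ on the $\cL$-reducts \emph{and} $\ind^{\cQ}$-Kim-independence on the $Q$-parts''. One then recognizes this composite relation as Kim-independence (resp.\ forking-independence) in $\Tbig$ and invokes the Kim--Pillay-style criteria.

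Concretely, here is the order I would carry things out. First, fix a monster model of $\Tbig$ and, for the simplicity case, define $A \ind_{C}^{*} B$ to mean $A \ind_{C}^{\cL} B$ in the sense of $T$ \emph{and} $\acl(AC)\cap Q$ is $\ind^{\cQ}$-independent from $\acl(BC)\cap Q$ over $\acl(C)\cap Q$ in the sense of $\Thx$ (using that simplicity of $\Thx$ gives a well-behaved forking-independence on $Q$, and using stable embeddedness to know that the $Q$-part of $\acl$ computed in $\Tbig$ agrees with $\acl_{\Thx}$, which is exactly where algebraic embeddedness is needed). Second, verify the axioms of the Kim--Pillay theorem for $\ind^{*}$: invariance, monotonicity, base monotonicity, transitivity, normality, existence and extension are inherited componentwise from $T$ and $\Thx$; finite character and local character likewise; symmetry follows since each component is symmetric (stability gives it for $\ind^{\cL}$, simplicity for $\ind^{\cQ}$). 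The only genuinely nontrivial axiom is the \emph{independence theorem} (amalgamation of independent types): given three pieces pairwise $\ind^{*}$-independent over a model $M$ with two of them having the same type over $M$, one must produce a common realization. Here one splits the amalgamation into its $\cL$-part and its $\cQ$-part: amalgamate the $\cL$-types using the independence theorem in the stable theory $T$, amalgamate the induced $\cQ$-types using the independence theorem in the simple theory $\Thx$, and then invoke Proposition~\ref{prop:char types D} to glue these into a genuine type of $\Tbig$ that is consistent and $\ind^{*}$-independent. Third, conclude by Kim--Pillay that $\Tbig$ is simple and $\ind^{*} = \ind^{f}$; the converse (simplicity of $\Tbig$ implies simplicity of $\Thx$) is immediate since $\Thx$ is interpretable in $\Tbig$ and simplicity passes to interpretable reducts.

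For $\NSOP 1$ the argument is parallel but one works over models only and uses Kim-forking: define $A \ind_{M}^{K} B$ to mean $A \ind_{M}^{\cL} B$ (which, since $T$ is stable, is just nonforking in $T$) \emph{and} the $Q$-parts are $\ind^{K}$-Kim-independent over $M\cap Q$ in $\Thx$. One then checks the hypotheses of the Kim--Pillay-style criterion for $\NSOP 1$ (Chernikov--Ramsey): symmetry, finite character, the strong independence theorem over models, and ``witnessing'' via Morley sequences. Stability of $T$ is used to guarantee that the $\cL$-component behaves like nonforking even after naming the new structure (no new $\cL$-dividing is introduced), and algebraic embeddedness again guarantees the $Q$-part of $\acl$ is computed correctly, so that a Morley sequence in $\Tbig$ projects to a Morley sequence in $\Thx$ on the $Q$-side and to an $\cL$-Morley sequence on the other side.

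The main obstacle I expect is the \emph{independence theorem / strong amalgamation} step, and specifically the gluing via Proposition~\ref{prop:char types D}: one must check that the amalgamated $\cL$-type and the amalgamated $\cQ$-type are \emph{compatible}, i.e.\ that the $\cQ$-type one builds on the abstract $Q$-parts really does extend to the $Q$-part of the $\cL$-configuration produced, and that no hidden algebraic coincidence forced by $\acl$ in $\Tbig$ obstructs this. This is precisely the point where both hypotheses — stability of $T$ (to control $\acl$ and forking on the $\cL$-side, and to run the stable independence theorem) and algebraic embeddedness of $Q$ (to ensure $\acl^{\Tbig}(\cdot)\cap Q = \acl^{\Thx}(\cdot)$, so the two amalgamations see the same algebraic closure on $Q$) — are indispensable, and verifying the compatibility carefully is the technical heart of the proof.
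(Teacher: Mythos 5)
Your plan is, at the level of architecture, the same as the paper's: define a composite relation (forking in $T$ on the full tuples, plus Kim/forking independence in $\Thx$ on the $\acl$-closures intersected with $Q$), verify a Kim--Pillay-style axiomatization, treat the independence theorem as the hard step, use stationarity in the stable theory $T$ for the $\cL$-side amalgamation, and use algebraic embeddedness to control how $\acl$ interacts with $Q$. That is exactly what the paper does (Theorem~\ref{thm:KPnsop1} and Corollary~\ref{cor:KPsimple}, with the relation $A\ind_M B \Leftrightarrow A\indi T_M B$ and $\ol{AM}\indi{\cQ}_{\ol{M}}\ol{BM}$, where $\ol{X}=\acl^{\eq}(X)\cap Q^{\eq}$). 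So the identification of the right relation and the right criterion is correct.

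Two substantive things differ, and the first is a genuine snag in your plan as stated. For simplicity you propose to run Kim--Pillay directly over arbitrary bases, declaring that local character is inherited ``componentwise.'' This does not go through as smoothly as you suggest: to get $A\ind^*_C B$ with $C\subseteq B$ small, you must find a single small $C$ that simultaneously works for $\indi T$ and makes $\ol{AC}\indi{\cQ}_{\ol{C}}\ol{BC}$, but the $Q$-part $\ol{AC}$ is a moving target in $C$, and the small base produced by local character in $\Thx$ lives in $Q^{\eq}$ rather than being of the form $\ol{C}$ for some $C\subseteq B$. The paper avoids this by proving $\NSOP 1$ first using the Dobrowolski--Kamsma axiomatization (Fact~\ref{fact:KPnsop1}), whose ``chain local character'' \ref{LOCS} is verified by an intersection-of-clubs argument that handles the moving target, and then upgrades to simplicity by checking only base monotonicity for $\ind$, citing \cite[Propositions 8.4 and 8.8]{KR20}. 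This is worth knowing: the choice of axiomatization is doing real work here, and the Chernikov--Ramsey ``witnessing'' clause you cite would be comparatively awkward to verify for this composite relation. Second, your paraphrase of algebraic embeddedness ($\acl^{\Tbig}(\cdot)\cap Q = \acl^{\Thx}(\cdot)$) is not the actual definition: it is a condition on $T$, saying that if $A\indi T_M B$ then $\ol{ABM}\subseteq\acl^{\eq}(\ol{AM},\ol{BM})$; its role is to make $\ol{(\cdot)}$ behave additively under $T$-independence, which you need (together with the transfer Lemma~\ref{lem:forking-in-X}, which you do not mention, that $\ol{AM}\indi{\cQ}_{\ol{M}}\ol{BM}$ implies $\ol{AM}\indi T_M B$) in the proofs of extension and the independence theorem. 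Neither of these is a fatal flaw in the overall idea, but both would bite if you tried to write the proof as planned.
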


This theorem is proved in Section \ref{sec:presNSOP1}, and combines Theorem \ref{thm:KPnsop1} and Corollary \ref{cor:KPsimple}. The proof proceeds by characterizing (Kim-)forking independence over models in $\Tbig$ in terms of forking independence in $T$ and (Kim-)forking independence in $\Thx$. 
Simplicity of $T$ and algebraic embeddedness of $Q$ are used in several steps of the forking calculus. A crucial step  is proving the independence theorem for the relevant independence relation in $\Tbig$. This is done by first amalgamating types in $\cQ$ and then amalgamating (strong) types over $\cQ$. To carry out this latter amalgamation, we introduce in Section \ref{ss: strong types over Q and H} a structure $\cH$ consisting of all definable finite covers of $\cQ$. We then show that if $\cQ$ is NSOP$_1$, then so is $\cH$, and we then proceed with amalgamating types in $\cH$. Whether these extra assumptions are necessary remains an interesting question for future work (see the discussion at the end of Section \ref{sec:presNSOP1} for further details).

In Sections \ref{sec:Uapp} and \ref{sec:expZ} we turn to applications of the above theorems. Recall that our initial motivation was Question \ref{ques:intro} above. Even with Theorem \ref{athm:stable-pres} in hand, one does not immediately obtain positive answers to the two parts of Question \ref{ques:intro} because of the fact that the induced structure on any infinite definable set in $(\Z,+)$ is essentially the same as $(\Z,+)$. 
So, for example, finding a strictly stable enrichment of the induced structure on a definable set in $(\Z,+)$ is roughly equivalent to the first part of Question \ref{ques:intro}. Thus we will first need to expand $(\Z,+)$ by a unary predicate with less complicated induced structure. Here we take a more general perspective and work with ``eventually indiscernible sequences" in arbitrary structures (see Definition \ref{def:EIS}). Given any structure $\cM$, if $Q\seq M$ is enumerated by such a sequence then the induced structure on $Q$ is  interdefinable with a pure set (see Lemma \ref{lem:indEI}). Moreover, if $\cM$ is \emph{weakly minimal} (i.e.,  superstable of $U$-rank $1$) then, using results from \cite{CaZi} and \cite{CoLa}, we can  name $Q$ by a predicate, without losing stability, and then apply the above preservation theorems to $T=\Th(\cM,Q)$. Given any arbitrary structure $\cN$, we  expand $\cM$ to a structure that \emph{names} $\cN$, meaning we add a predicate for $Q$ and an isomorphic copy of $\cN$ with universe $Q$. This leads to the following result, which is proved in Section \ref{sec:naming} (see Theorem \ref{thm:Urank1}).

\begin{alphatheorem}\label{athm:Urank1}
Assume $\cL$ is countable and $\cM$ is weakly minimal. Let $\cN$ be a stable (resp., superstable, $\NIP$, $\NTP 2$) countable structure.  Then there is a stable (resp., superstable, $\NIP$, $\NTP 2$) expansion of $\cM$ naming $\cN$. 
\end{alphatheorem}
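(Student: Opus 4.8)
The plan is to realize $\cN$ as the enrichment of the induced structure on a carefully chosen $\emptyset$-definable predicate sitting inside a stable expansion of $\cM$, and then to read off the tameness of the resulting enrichment from Theorem~\ref{athm:stable-pres} and its $\NIP$/$\NTP 2$ analogues (Fact~\ref{fact:JS} and Proposition~\ref{prop:CH}).

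First I would invoke the construction of Section~\ref{sec:Uapp} (which rests on \cite{CaZi} and \cite{CoLa}): since $\cL$ is countable and $\cM$ is weakly minimal, there is an infinite set $Q\seq M$ enumerated by an eventually indiscernible sequence such that, after naming $Q$ by a new unary predicate, the theory $T:=\Th(\cM,Q)$ is superstable. Two consequences of this will be used. Since $T$ is stable, the $\emptyset$-definable set $Q$ is stably embedded in $T$. And since $Q$ is enumerated by an eventually indiscernible sequence, Lemma~\ref{lem:indEI} gives that the induced structure $Q_{\indd}$ on $Q$ in $T$ is interdefinable with a pure infinite set. Next, assuming $\cN$ infinite (the finite case being trivial), so that $\cN$ and $Q$ have the same cardinality, I would transport the structure of $\cN$ along a bijection $N\to Q$ to obtain a structure $\cQ$ with universe $Q$, interdefinable with a copy of $\cN$; as $Q_{\indd}$ is interdefinable with the minimal structure on $Q$, $\cQ$ is (interdefinable with) an expansion of $Q_{\indd}$. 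Letting $\Tbig$ be the corresponding enrichment of $T$ and $\cM'\models\Tbig$ the resulting structure, $\cM'$ is an expansion of $\cM$ (because $T$ expands $\Th(\cM)$ and $\Tbig$ expands $T$), and since $Q$ is stably embedded in $T$, the structure induced on $Q$ by $\Tbig$ is exactly $\cQ$; hence $\cM'$ names $\cN$.

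It then remains only to check tameness of $\Tbig$, and this is immediate from the preservation theorems: $T$ is superstable, hence also stable, $\NIP$ and $\NTP 2$, while $\Thx=\Th(\cQ)=\Th(\cN)$ has, by hypothesis, whichever of the four properties $\cN$ has. So if $\cN$ is stable (resp.\ superstable), then $T$ and $\Thx$ are stable (resp.\ superstable) and $\Tbig$ is stable (resp.\ superstable) by Theorem~\ref{athm:stable-pres}; if $\cN$ is $\NIP$, then $T$ and $\Thx$ are $\NIP$ and $\Tbig$ is $\NIP$ by Fact~\ref{fact:JS}; and if $\cN$ is $\NTP 2$, then $T$ and $\Thx$ are $\NTP 2$ and $\Tbig$ is $\NTP 2$ by Proposition~\ref{prop:CH}. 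In every case $\cM'\models\Tbig$ is an expansion of $\cM$ of the required kind naming $\cN$.

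The one genuinely non-routine step is the first one: producing a single $Q$ that is simultaneously tame enough --- enumerated by an eventually indiscernible sequence, so that $Q_{\indd}$ collapses to a pure set with enough ``room'' to host the arbitrary structure $\cN$ --- and generic enough that naming it by a predicate does not destroy superstability of $\cM$. This is exactly what the eventually-indiscernible-set machinery of Section~\ref{sec:Uapp} is for, combining the stability-transfer criteria for predicate expansions from \cite{CaZi} with their analysis in the weakly minimal case via \cite{CoLa}. Everything after that is bookkeeping together with the preservation theorems; the only subtlety is to make sure that $\cM'$ genuinely ``names'' $\cN$ in the intended sense, i.e.\ that $Q$ carries no structure in $\Tbig$ beyond $\cQ$, which is guaranteed by the stable embeddedness of $Q$ in $T$ that is automatic once $T$ is stable.
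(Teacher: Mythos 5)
Your proposal is correct and follows essentially the same route as the paper's proof of Theorem~\ref{thm:Urank1}: find an eventually (totally) indiscernible sequence enumerating a set $Q$, use weak minimality to get boundedness and thus superstability of $T=\Th(\cM,Q)$, observe $Q_{\indd}$ collapses to a pure set, graft $\cN$ onto $Q$, and invoke the preservation results. The one small imprecision is the claim that Lemma~\ref{lem:indEI} directly gives that the induced structure on $Q$ \emph{in $T$} is a pure set --- Lemma~\ref{lem:indEI} is about the induced structure in $\cM$ (over parameters), and one also needs Remark~\ref{rem:bounded} together with Fact~\ref{fact:bounded}$(b)$ to see that boundedness means naming $Q$ adds no new induced structure; but you clearly have the boundedness machinery in view, so this is a presentational point rather than a gap.
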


The dividing lines of simplicity and NSOP$_1$ are omitted from the previous result because, in the above setup, we do not necessarily know that an eventually indiscernible sequence will enumerate a set satisfying the ``algebraic embeddedness" condition necessary for Theorem \ref{athm:NSOP1-pres}. However, in the special case that $\cM$ is $(\Z,+,0,1)$, we will show that such sequences can be found (e.g., the sequence of factorials; see Theorem \ref{thm:HinZ}). Thus we obtain the following main result regarding tame expansions of the group of integers (which combines Theorem \ref{thm:expZ} and Corollary \ref{cor:expZsimp}).

\begin{alphatheorem}\label{athm:Z}
Let $\cN$ be a  stable (resp., superstable, $\NIP$, $\NTP 2$, simple, $\NSOP 1$) countable structure.  Then there is a stable (resp., superstable, $\NIP$, $\NTP 2$, simple, $\NSOP 1$) expansion $\mathcal{Z}$ of $(\Z,+)$ naming $\cN$. Moreover, the $U$-rank of $\mathcal{Z}$ is at least that of $\cN$.
\end{alphatheorem}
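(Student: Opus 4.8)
The plan is to specialize the naming construction of Section~\ref{sec:naming} to $\cM=(\Z,+,0,1)$ and then feed the output into the preservation theorems. First I would apply Theorem~\ref{thm:HinZ} to produce a subset $A\seq\Z$ --- concretely $A=\set{n!:n\geq 1}$ --- which is enumerated by an eventually indiscernible sequence and is such that, setting $T:=\Th(\Z,+,0,1,A)$, the predicate $Q:=A$ is stably embedded and algebraically embedded in $T$. By Lemma~\ref{lem:indEI} the induced structure $Q_{\indd}$ is interdefinable with a pure infinite set, and, by the results of \cite{CaZi,CoLa} recalled in the introduction, $T$ is superstable, hence in particular stable. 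Thus $T$ and $Q$ meet the hypotheses of both Theorem~\ref{athm:stable-pres} and Theorem~\ref{athm:NSOP1-pres}.

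Next I would construct the enrichment. Fixing a bijection between $Q$ and the universe of $\cN$ (both countably infinite; the case of finite $\cN$ is trivial) and transporting the $\cN$-structure across it, I obtain an expansion $\cQ$ of $Q_{\indd}$. Since $Q_{\indd}$ is a pure set, $\cQ$ is interdefinable with this copy of $\cN$, so $\Th(\cQ)$ is stable (resp., superstable, $\NIP$, $\NTP 2$, simple, $\NSOP 1$) exactly when $\cN$ is. Let $\cZ$ be the enrichment $(\cM,\cQ)$, with theory $T[\cQ]$; by construction it is an expansion of $(\Z,+)$ naming $\cN$.

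The six assertions are then case-by-case applications of the preservation results. If $\cN$ is stable (resp., superstable) then so is $\Th(\cQ)$, and $T$ is superstable, so $T[\cQ]$ is stable (resp., superstable) by Theorem~\ref{athm:stable-pres}. If $\cN$ is $\NIP$ (resp., $\NTP 2$) then so is $\Th(\cQ)$, while $T$, being stable, is $\NIP$ (resp., $\NTP 2$); hence $T[\cQ]$ is $\NIP$ by the analogue of Theorem~\ref{athm:stable-pres} recorded in Fact~\ref{fact:JS} (resp., $\NTP 2$ by Proposition~\ref{prop:CH}). If $\cN$ is simple (resp., $\NSOP 1$) then so is $\Th(\cQ)$, and since $T$ is stable and $Q$ is algebraically embedded in $T$, $T[\cQ]$ is simple (resp., $\NSOP 1$) by Theorem~\ref{athm:NSOP1-pres}. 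For the moreover clause, the structure that $\cZ$ induces on the $\emptyset$-definable stably embedded set $Q$ is exactly $\cQ$, which is interdefinable with $\cN$; stable embeddedness makes a $\cQ$-indiscernible sequence of $Q$-tuples $\cZ$-indiscernible and preserves inconsistency, so $\cQ$-forking is inherited by $\cZ$, whence the $U$-rank of any type of $\cN$ is bounded by its $U$-rank computed in $\cZ$, and therefore $U(\cZ)\geq U(\cN)$.

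The main obstacle is the input Theorem~\ref{thm:HinZ}. For the stable, $\NIP$, and $\NTP 2$ cases one could in fact bypass it entirely, since these follow from Theorem~\ref{athm:Urank1} applied to the weakly minimal structure $(\Z,+,0,1)$. The genuinely new content is in the simplicity and $\NSOP 1$ cases, which demand a definable set in $(\Z,+,0,1)$ whose naming is not merely stable but also algebraically embedded; checking that the factorials --- or a comparably sparse sequence --- enjoy this property is an arithmetic fact about algebraic closure in such expansions rather than a formal consequence of the earlier theory, and this is where the real work of Section~\ref{sec:expZ} is concentrated.
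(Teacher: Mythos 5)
Your proposal is correct and follows essentially the same route as the paper. The paper organizes the argument as two separate statements---Theorem~\ref{thm:expZ} for the stable/superstable/NIP/NTP$_2$ cases and the $U$-rank clause (via the naming construction of Theorem~\ref{thm:Urank1}), and Corollary~\ref{cor:expZsimp} for simplicity/NSOP$_1$ (via vaporous sets and Theorem~\ref{thm:HinZ})---whereas you give a single unified construction on the factorials that covers all six dividing lines at once, correctly identifying Theorem~\ref{thm:HinZ} as the one substantive new input needed for the simple/NSOP$_1$ cases. Your sketch of the $U$-rank lower bound (transporting $\cQ$-forking to $\cZ$-forking via stable embeddedness and Proposition~\ref{prop:char types D}) is a slightly expanded version of the paper's terse ``the lower bound on the $U$-rank of $\cZ$ comes from the fact that $Q^{\cZ}_{\indd}$ is interdefinable with $\cN$.''
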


This result immediately gives a positive answer to both parts of Question \ref{ques:intro}. We also note that, while NIP expansions of $(\Z,+)$ are well known (e.g., Presburger arithmetic $(\Z,+,<)$), and there has been some work on simple expansions (see Section \ref{intro:simple}), to our knowledge there are no properly NTP$_2$ or NSOP$_1$ expansions of $(\Z,+)$ appearing in previous literature. 

Our main preservation results can also be used to answer an assortment of other related questions about tame expansions of $(\Z,+)$ and weakly minimal structures in general. In the remainder of this extended introduction, we will briefly summarize several of these applications. 

\subsection{Expansions of $(\Z,+)$ by unary predicates}\label{intro:unary}

With the exception of \cite{HaZ}, essentially all of the previous literature on stable expansions of $(\Z,+)$ focuses on expansions of the form $(\Z,+,A)$ where $A$ is some subset of $\Z$. It is thus natural to ask if a positive answer to Question \ref{ques:intro} can be obtained using expansions of $(\Z,+)$ by unary predicates, rather than by more complicated structures. In Section \ref{sec:unaryZ}, we prove the following result (a special case of Theorem \ref{thm:unaryZ}), which shows  that any countable graph can be coded into an expansion of $(\Z,+)$ by some unary predicate, while preserving various kinds of model theoretic complexity.

\begin{alphatheorem}\label{athm:unaryZ}
Let $Q=\{n!:n\in\N\}$, and suppose $E$ is an arbitrary graph relation  on $Q$. Define $A=Q\cup \{a+b:(a,b)\in E\}$.
\begin{enumerate}[$(a)$]
\item $(\Z,+,A)$ is interdefinable with $(\Z,+,Q,E)$. 
\item $(\Z,+,A)$ is stable (resp., superstable, simple, $\NIP$, $\NTP 2$, $\NSOP 1$) if and only if $(Q,E)$ is stable (resp., superstable, simple, $\NIP$, $\NTP 2$, $\NSOP 1$). Moreover, the $U$-rank of $(\Z,+,A)$ is at least that of $(Q,E)$.
\end{enumerate}
\end{alphatheorem}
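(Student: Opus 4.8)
The plan is to establish part (a) first, then derive part (b) as a direct combination of part (a) with the preservation theorems (Theorems A, B, and the NIP/NTP$_2$ versions) applied to $T = \Th(\Z,+,Q)$ together with the main results on expansions of $(\Z,+)$ from Section \ref{sec:expZ}. For part (a), I would argue interdefinability in two directions. The set $Q = \{n! : n \in \N\}$ is itself definable in $(\Z,+,A)$ because the factorials grow fast enough that $Q$ can be recovered from $A$ via a purely arithmetic condition on gaps: every element of $A \setminus Q$ is a sum $a+b$ with $a,b \in Q$ distinct, hence lies strictly between the two consecutive factorials $a < b$ and, say, above $2a$; using that consecutive factorials satisfy $b/a \geq$ (arbitrarily large) and $a+b < 2b$, one checks that the elements of $Q$ are exactly those $x \in A$ such that no $y \in A$ satisfies $x < y < $ (next gap), i.e. $Q$ is the set of elements of $A$ that are not strictly dominated in an appropriate sense — the key point being the super-increasing growth of $n!$, which makes the decomposition $a+b$ with $a,b \in Q$ unique when it exists. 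Once $Q$ is definable, $E$ is definable since $(a,b) \in E$ iff $a,b \in Q$, $a \neq b$, and $a + b \in A$; and conversely $A$ is definable from $Q$ and $E$ by its given defining formula. So $(\Z,+,A)$ and $(\Z,+,Q,E)$ are interdefinable.

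For part (b), I would invoke Theorem \ref{thm:HinZ} (or its statement as quoted in the introduction): the sequence $(n!)_{n \in \N}$ is eventually indiscernible in $(\Z,+,0,1)$, so by Lemma \ref{lem:indEI} the induced structure on $Q$ in $(\Z,+)$ is interdefinable with a pure set, and moreover — using results of \cite{CaZi} and \cite{CoLa} on weakly minimal structures — naming $Q$ by a unary predicate keeps the theory stable; call this stable theory $T = \Th(\Z,+,Q)$, in which $Q$ is stably embedded with induced structure a pure set. Crucially, in the $(\Z,+)$ case one also knows (again Theorem \ref{thm:HinZ}) that $Q$ is \emph{algebraically embedded} in $T$, which is exactly the extra hypothesis needed for Theorem \ref{athm:NSOP1-pres}. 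Now the graph $(Q,E)$ is an expansion $\cQ$ of the induced (pure-set) structure $Q_{\indd}$, and $(\Z,+,Q,E) = T[\cQ]$ in the notation of the preservation theorems. Applying Theorem \ref{athm:stable-pres} gives the stable/superstable equivalences; the NIP and NTP$_2$ statements follow from Fact \ref{fact:JS} and Proposition \ref{prop:CH}; and since $T$ is stable and $Q$ is algebraically embedded, Theorem \ref{athm:NSOP1-pres} gives the simple and NSOP$_1$ equivalences. By part (a), $(\Z,+,A)$ is interdefinable with $(\Z,+,Q,E) = T[\cQ]$, so it inherits all of these. Finally, the $U$-rank bound: the $U$-rank of $\cQ = (Q,E)$ is at most that of $T[\cQ]$ because $Q$ is a definable set in $T[\cQ]$ on which the induced structure is (a reduct containing) $\cQ$, so any long forking chain witnessed inside $\cQ$ lifts to $T[\cQ]$; this is presumably packaged in the "$U$-rank at least that of $\cN$" clause of Theorem \ref{athm:Z}, which I would cite directly.

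The main obstacle is the combinatorial content of part (a): one must verify carefully that $Q$ is definable from $A$ purely in the language $(+,A)$, with no appeal to an ordering. The delicate point is that although $n!$ is super-increasing, the sumset $\{a+b : (a,b)\in E\}$ can in principle interleave with $Q$ in complicated ways depending on $E$, so the definition of $Q$ inside $A$ must be uniform in $E$. The trick is that if $x = n! \in Q$ and $x = a+b$ with $a,b \in Q$ distinct, then $\max(a,b) < x \leq 2\max(a,b)$, so $\max(a,b) = (n-1)!$ forces $x \leq 2(n-1)! < n! = x$ for $n \geq 3$, a contradiction; hence for $n \geq 3$ no element of $Q$ is itself a sum of two distinct elements of $Q$, and every element of $A \setminus Q$ (beyond a finite initial segment) satisfies $\max(a,b) < y$ but $y \leq 2\max(a,b)$, i.e. $y$ lies in the "doubling window" just above some factorial. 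One then expresses "$x \in Q$" in $(\Z,+,A)$ as: $x \in A$ and $x$ does not lie in the doubling window of any element of $A$ — a condition that unwinds to an existential $(+,A)$-formula. The remaining finitely many small exceptions are handled by adding finitely many parameters or by absorbing them into the formula, which is routine since any finite set is $(+)$-definable with parameters and, being finite, is first-order definable without them after naming constants — here we have $1 \in \dcl(\Z,+)$ is false, but the relevant small elements of $Q$ (namely $1,2,6$) are definable in $(\Z,+,A)$ as the few elements of $A$ below the first doubling window, so no genuine difficulty arises.
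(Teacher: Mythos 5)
Your part (b) is essentially the paper's argument and is sound: you correctly identify the chain Corollary~\ref{cor:vaporous}, Lemma~\ref{lem:indEI}, boundedness via Fact~\ref{fact:bounded}, algebraic embeddedness via Theorem~\ref{thm:HinZ}, and then the preservation theorems (Theorem~\ref{T:pres. of stab}, Fact~\ref{fact:JS}, Proposition~\ref{prop:CH}, Theorem~\ref{thm:KPnsop1}/Corollary~\ref{cor:KPsimple}). The only technicality is that the paper takes $T=\Th(\Z,+,0,1,Q)$ (with the constant $1$) and lets $\cQ$ be $(Q,E)$ expanded by constants for all elements of $Q$, so that $\cQ$ is genuinely an expansion of $Q^T_{\indd}$; adding these constants costs nothing model-theoretically, but you should be explicit about it.

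Part (a), however, has a genuine gap, in two places. First, your proposed defining condition for $Q$ inside $A$ --- ``$x\in A$ and $x$ does not lie in the doubling window of any element of $A$'' --- is not a $(+,A)$-formula. The doubling window $(a,2a]$ is an order interval, and the ordering on $\Z$ is not definable in $(\Z,+,A)$: for instance, when $E=\emptyset$ the structure $(\Z,+,A)=(\Z,+,Q)$ is stable, so it cannot define $\N$. You flag this issue (``with no appeal to an ordering'') but then your fix still invokes ``doubling windows,'' which presuppose the order. Second, even at the level of the arithmetic, you only rule out $x\in Q$ being a sum of two distinct elements of $Q$. But the witnesses $y,z$ in any candidate formula range over $A$, not $Q$, and an element of $A\setminus Q$ is already a sum of two factorials; so you must rule out $x\in Q$ being a sum of $2$, $3$, or $4$ factorials (with possible repetitions), not just $2$. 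The paper's approach handles both problems at once and is strictly simpler than what you attempted: take $\varphi(x)$ to say ``$x\in A$ and there exist distinct $y,z\in A$ with $x=y+z$.'' This is a pure $(+,A)$-formula, it holds of every element of $A\setminus Q$ by construction, and Lemma~\ref{lem:vaporous} (applied with $m=1$, $n\in\{2,3,4\}$, $r=0$, noting that a sum of several positive factorials always strictly exceeds each summand, so the two maxima in the lemma are automatically distinct) shows that only finitely many elements of $Q$ satisfy it. Hence $A(x)\wedge\neg\varphi(x)$ defines a cofinite subset of $Q$, which suffices (the finitely many remaining elements can be named as parameters). The formula $\psi(x,y)$ recovering $E$ is handled similarly with $m=2$, $n\in\{1,2\}$, $r=0$.
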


The previous result applies more directly to  \cite[Question 7.10]{CoSS} (which motivated Question \ref{ques:intro} above). In particular, this question from \cite{CoSS} focuses  on the $(\Z,+,0,1)$-induced structure on a subset $A\seq\Z$, which we denote $A^{\Z}_{\indd}$. The question asks whether there is a subset $A\seq\N$ such that $A^{\Z}_{\indd}$ is strictly stable, and also  which (countable) ordinals $\alpha$ can be realized as the $U$-rank of $A^\Z_{\indd}$ for some $A\seq\N$.\footnote{In \cite{CoSS}, the focus on subsets of $\N$, rather than $\Z$, is done for thematic reasons related to the discussion at the start of Section \ref{intro:MA}.} We will use Theorem \ref{athm:unaryZ} to provide a positive answer to the first question about strictly stable induced structure (see Corollary \ref{cor:Uind}$(a)$).  As for the latter question about possible $U$-ranks in induced structure, we first note for context that in the current literature,  all examples of stable structures $(\Z,+,A)$, with $A\seq\N$, yield induced structure $A^{\Z}_{\indd}$ of $U$-rank $1$. Shortly after \cite{CoSS} appeared, the fourth author pointed out that one can obtain finite $U$-ranks using sumsets. However, before the present work, there was no known example of a set $A\seq\Z$ with $A^{\Z}_{\indd}$ superstable of infinite $U$-rank. See Corollary \ref{cor:Uind} for details.

\subsection{Simple expansions of $(\Z,+)$}\label{intro:simple}
 In \cite{KS17}, Kaplan and Shelah showed that if $P$ is the set of integers whose absolute value is prime then, assuming  a number-theoretic hypothesis called \emph{Dickson's Conjecture}, $(\Z,+,P)$ is supersimple of SU-rank $1$ and unstable. Using a similar strategy, Bhardwaj and Tran \cite{BT21} showed that if $S$ is the set of squarefree integers then $(\Z,+,S)$ is supersimple of SU-rank $1$ and unstable (without any conditional hypotheses). For both of these results, the proofs involve substantial machinery from number theory (e.g, the proof of instability for $(\Z,+,P)$ uses the work of Green and Tao on arithmetic progressions in primes). Thus it is natural to ask for a more combinatorially straightforward way to obtain a properly simple expansion of $(\Z,+)$. In particular, Theorem \ref{athm:unaryZ} shows that one can simply add a random graph on top of the factorials.

A common feature in the examples $(\Z,+,P)$ and $(\Z,+,S)$ above is that both $P$ and $S$ are subsets of $\Z$ that are neither bounded above nor bounded below. This property is crucial for simplicity in these examples. Indeed, using classical number-theoretic facts, it is not hard to show that if one expands $(\Z,+)$ by a predicate for the primes, or a predicate for the \emph{positive} squarefree integers, then the resulting structure defines the ordering on $\Z$ (e.g., this follows from \cite[Corollary 8.17]{CoSS}). These observations led the first author to ask whether one could obtain a properly simple expansion of $(\Z,+)$ by naming a subset of $\N$ (see \cite[Question 8.19]{CoSS}). The intuition for a possible negative answer was  that sparse subsets of $\N$ tend to yield stable expansions of $(\Z,+)$, while  dense subsets can be used to define the ordering via sumsets. Nevertheless, Theorem \ref{athm:unaryZ} provides  a positive answer.

\subsection{Expansion by mutually algebraic structure}\label{intro:MA}
A recurring theme in the study of stable expansions of $(\Z,+)$  is that given some $A\seq\N$, if $(\Z,+,A)$ is stable then $A$ must be quite sparse. For example, \emph{any}  sumset $A+\ldots+A$ must have upper Banach density $0$ (see \cite[Theorem 8.8]{CoSS} or \cite[Corollary 6.2]{CoLSGT}). This drastic level of sparsity led the first author to  conjecture (in a preliminary version of \cite{CoSS}) that if $A\seq\N$ and $(\Z,+,A)$ is stable, then $(\Z,+,B)$ is stable for any $B\seq A$. Other evidence was the fact that this property holds for all examples from the sources mentioned above. However,  the fourth author soon noted that the conjecture is false (see Proposition \ref{prop:RGsum} below).

It was later understood that the hereditary nature of stability in the examples from previous work could be accounted for by finer model-theoretic properties of the induced structure.  Then in all of the examples from \cite{CoSS,CoMS,LaPo,PaSk}  of sets $A\seq\Z$ for which $(\Z,+,A)$ is stable, it turns out that $A^{\Z}_{\indd}$ is superstable of $U$-rank $1$ with trivial forking (e.g., for many examples, $A_{\indd}^{\Z}$ ends up being interdefinable with an expansion of $(\N,x\mapsto x+1)$ by  unary predicates). In this case, it follows from work of Laskowski \cite{LaskMAS} that for any $B\seq A$, the expansion $(A^{\Z}_{\indd},B)$ is again superstable of $U$-rank $1$ (with trivial forking), and thus the same holds for $B^{\Z}_{\indd}$. It then follows that $(\Z,+,B)$ is stable (e.g., using Fact \ref{fact:bounded} below). 

The conclusions of the above discussion were previously accounted for in \cite[Theorem 3.21]{CoLa} (in a  general group-theoretic context). However, the following more nuanced question was left open. Specifically, suppose $A\seq\Z$ is as in the previous paragraph, and $B\seq A$. Then $(\Z,+,A)$ and $(\Z,+,B)$ are both stable; but what about $(\Z,+,A,B)$? Even more, suppose we expand $(\Z,+)$ by predicates for \emph{all} subsets of $A$. Is this still stable? A  related situation is considered by Lambotte and Point \cite{LaPo}, who show that if $A\seq\N$ is enumerated by a ``regular" sequence, and $s_{\!A}$ denotes the successor function on $A$, then $(\Z,+,A,s_{\!A})$ is stable.\footnote{In some cases, $s_{\!A}$ is definable in $(\Z,+,A)$, but this need not hold for all $A$ in \cite{LaPo}.} In Section \ref{sec:MA}, we use our preservation result for superstability, together with work of Laskowski \cite{LaskMAS}, to prove a broad generalization.  Given any set $A$, we let $A^{\ma}$ denote the collection of all \emph{mutually algebraic}  relations on $A$ (see Definition \ref{def:MA}; examples of such relations include subsets of $A$ and injective functions from $A$ to itself). 

\begin{alphatheorem}\label{athm:MA}
Suppose $\cM$ is weakly minimal and has constants for an elementary substructure.  Assume $A\seq M$ is such that the $\cM$-induced structure on $A$ is weakly minimal with trivial forking. Then $(\cM,A^{\ma})$ is  superstable.
\end{alphatheorem}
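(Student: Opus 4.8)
The plan is to invoke Theorem \ref{athm:stable-pres} (preservation of superstability) with $T = \Th(\cM, A)$ as the base theory and $\cQ = (A, A^{\ma})$ as the enrichment of the induced structure. For this we need three things: that $\Th(\cM, A)$ is superstable, that $A$ is stably embedded in $\Th(\cM, A)$, and that $(A, A^{\ma})$ is a superstable structure expanding $A_{\indd}$. The first two points should come from the hypothesis that $\cM$ is weakly minimal together with the cited results from \cite{CaZi} and \cite{CoLa}: naming a subset $A$ of a weakly minimal structure $\cM$ (when the induced structure on $A$ is itself weakly minimal with trivial forking) keeps the expansion $(\cM, A)$ stable, and in fact superstable, and makes $A$ stably embedded. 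One should be careful here to cite the precise statements being used — presumably the same machinery that underlies Theorem \ref{athm:Urank1} — rather than re-prove them. The key input for the third point is Laskowski's work \cite{LaskMAS}: since the $\cM$-induced structure $A_{\indd}$ is weakly minimal with trivial forking, adjoining all mutually algebraic relations on $A$ yields a structure that is still superstable (indeed weakly minimal with trivial forking). So $(A, A^{\ma})$ is superstable, and it visibly expands $A_{\indd}$ since every relation appearing in $A_{\indd}$ is mutually algebraic (as $A_{\indd}$ is weakly minimal with trivial forking, its atomic relations are mutually algebraic by Laskowski's characterization).

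Granting these, Theorem \ref{athm:stable-pres} gives that $T[\cQ]$ is superstable. It then remains to observe that $(\cM, A^{\ma})$ is (interdefinable with) a reduct of $T[\cQ]$: the language of $T[\cQ]$ consists of the language of $\cM$, the predicate $A$, and the relations of $\cQ$ on $A$, i.e. all mutually algebraic relations on $A$; and $(\cM, A^{\ma})$ has exactly the language of $\cM$ together with those mutually algebraic relations (note $A$ itself is mutually algebraic on $A$, trivially, and in any case is definable as the domain of any of these relations). Hence $(\cM, A^{\ma})$ is a reduct of $T[\cQ]$ — more precisely, $T[\cQ]$ and $\Th(\cM, A^{\ma})$ have the same models up to adding/forgetting the predicate $A$ — and superstability passes to reducts, so $(\cM, A^{\ma})$ is superstable.

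The main obstacle I expect is the bookkeeping around exactly which flavour of "tameness of the induced structure" is needed and where. Specifically: one must check that the hypothesis "$A_{\indd}$ weakly minimal with trivial forking" is genuinely enough to (i) apply the \cite{CaZi}, \cite{CoLa} results ensuring $(\cM, A)$ is superstable with $A$ stably embedded, and (ii) apply Laskowski's theorem to conclude $(A, A^{\ma})$ is superstable. Point (ii) is the crux: Laskowski \cite{LaskMAS} shows that expanding a structure whose theory is "trivial, weakly minimal" (or the appropriate notion — one should pin down the exact statement) by mutually algebraic predicates preserves stability, indeed superstability and even $U$-rank $1$ with trivial forking. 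One should verify that the induced structure $A_{\indd}$ fits the hypotheses of whichever Laskowski result is being cited, possibly after noting that a weakly minimal structure with trivial forking has weakly minimal, trivially-forking theory in the relevant sense. Everything else is a routine chaining of the black boxes: preservation of superstability (Theorem \ref{athm:stable-pres}), stable embeddedness in weakly minimal expansions, and Laskowski's mutual-algebraicity theorem.
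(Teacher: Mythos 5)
Your plan is essentially the paper's proof: the paper first establishes, for a \emph{definable} set $A$ in a stable (resp.\ superstable) $\cM$ with $A^{\cM}_{\indd}$ mutually algebraic, that $(\cM, A^{\ma})$ is stable (resp.\ superstable), via Fact~\ref{fact:Lask} and Theorem~\ref{T:pres.\ of stab}; and then reduces the weakly-minimal case to this via Fact~\ref{fact:bounded} and Remark~\ref{rem:bounded}. Two small points you should tighten. First, it is not true that ``every relation appearing in $A_{\indd}$ is mutually algebraic'': Laskowski's characterization says a mutually algebraic structure's definable sets are Boolean combinations of mutually algebraic relations, and a Boolean combination (e.g.\ the complement of a $2$-to-$2$ relation) can fail to be mutually algebraic. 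So $(A, A^{\ma})$ is not literally an expansion of $A_{\indd}$; the clean way out, as in the paper, is to set $\cQ = \cA :=$ the expansion of $A_{\indd}$ by all of $A^{\ma}$ (mutually algebraic by Fact~\ref{fact:Lask}$(b)$, hence superstable by $(a)$), apply Theorem~\ref{T:pres.\ of stab} to that, and conclude by noting $(\cM,\cA)$ is interdefinable with $(\cM, A^{\ma})$. Second, once you pass to $T = \Th(\cM,A)$, the induced structure you must feed into Theorem~\ref{athm:stable-pres} is $A^{(\cM,A)}_{\indd}$, not $A^{\cM}_{\indd}$; you need Remark~\ref{rem:bounded} (which uses Fact~\ref{fact:bounded}$(b)$, i.e.\ boundedness of $A$ in the weakly minimal $\cM$) to know these coincide up to adding constants for a small model, so the weak-minimality-with-trivial-forking hypothesis indeed carries over to the $T$-induced structure. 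Neither is a serious obstruction, but both should be said explicitly.
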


The previous result is proved in Corollary \ref{cor:MAexp} below. 
A corresponding version also holds when $\cM$ is only stable/superstable, provided that the set $A$ is assumed to be definable in $\cM$ (see Theorem \ref{thm:MAexp}). Returning to our discussion of the integers, suppose $A\seq\Z$ is such that $A^{\Z}_{\indd}$ is weakly minimal with trivial forking (recall that this includes all examples from \cite{CoSS,CoMS,LaPo,PaSk}). Then $(\Z,+,A^{\ma})$ is superstable. By the definition of mutual algebraicity (see Example \ref{ex:MA}), this expansion includes predicates for \emph{all} subsets of $A$, \emph{all} bounded-to-one functions on $A$ (e.g., the successor function $s_{\!A}$), \emph{all} bounded degree graphs on $A$, and more.

\subsection*{Notation and conventions}

We use small letters $a,b,c$ for tuples and capital letters $A,B,C$ for sets. We also employ the standard model-theoretic abuse of notation and write $a\in A$ to denote that $a$ is a \emph{tuple} of elements from $A$, when the length of the tuple is immaterial or understood from context. When working in a monster model $\cU$ of some theory $T$, we assume that all tuples and sets are strictly smaller in cardinality than the saturation of $\cU$, unless otherwise stated or implied from context. We also abuse notation and write $AB$ for $A\cup B$.

Given a structure $\cM$ with universe $M$, and an arbitrary subset $X\subseteq M$, we denote by $X_{\indd}^{\cM}$ the relational structure with universe $X$ and  induced structure from $\emptyset$-definable sets in $\cM$.
When $\cM$ is understood from context we may use $X_\indd$ in place of $X^{\cM}_{\indd}$. Given a parameter set $A\seq M$, we let $\cM_A$ denote the canonical expansion of $\cM$ by constants naming all elements of $A$.

If $\cM_1$ and $\cM_2$ are structures with the same universe $M$ (but possibly different languages), then we say that $\cM_1$ is a \emph{definable reduct} of $\cM_2$  if every subset of $M^n$ definable in $\cM_1$ (with parameters) is definable in $\cM_2$. We say that $\cM_1$ and $\cM_2$ are \emph{interdefinable} if they are definable reducts of each other.

We identify an automorphism of a structure $\cM$ with its canonical extension to $\cM^{\eq}$.
When working with a fixed theory $T$, we use the letters $M,N$ for small models of $T$, and thus use the same notation for the universe of the model.

\subsection*{Acknowledgments} This work was started while the authors were in residence at the Fields Institute during the 2021 Thematic Program on Trends in Pure and Applied Model Theory. We thank the Fields Institute for their hospitality.

We also thank Martin Hils for allowing us to include Corollary \ref{cor:Hils}, and Martin Ziegler for pointing us to \cite{BousZi}. Special thanks are due to Itay Kaplan for  useful remarks and discussions, which were crucial for several results in Section \ref{sec:pres stable}. 

Conant was partially supported by NSF grant DMS-2204787.
Halevi was partially supported by ISF grant No. 555/21 and 290/19. Rideau-Kikuchi was partially supported by GeoMod AAPG2019 (ANR-DFG), ``Geometric and Combinatorial Configurations in Model Theory". D'Elb\'{e}e, Halevi, and Jimenez were supported by the Fields Institute for Research in Mathematical Sciences. Funding for Conant's travel and accommodation at Fields was provided by a Simons Distinguished Visitorship held by Anand Pillay.

Finally, we would like  to thank the anonymous referee for valuable
comments.

\section{Preliminaries on stably embedded sets}\label{sec:stably}
\numberwithin{theorem}{section}

Let \(T\) be a theory in a (possibly multi-sorted) language \(\cL\).


\begin{definition}
A definable set $Q$ is \textbf{stably embedded} in $T$ if for any model $M$ of $T$, any $M$-definable subset $X \subseteq Q(M)^n$ is  $Q(M)$-definable.
\end{definition}

Throughout this section, we let $Q$ be a fixed $\emptyset$-definable stably embedded set in $T$.
We note the following easy observation.

\begin{remark}\label{rem:constants}
Stable embeddedness of $Q$ is preserved after naming constants, i.e., for any $M\models T$ and $A\seq M$, $Q$ is stably embedded in $\Th(M_A)$.
\end{remark}

We wish to understand types over $Q$ by means of a type with small domain. This will be done in the following results, some of which are folklore but never written down to the best of our knowledge.

We first consider the case that $T$ is stable. Note that in this case, the assumption that $Q$ is  stably embedded is redundant.

\begin{lemma}\label{lem:finding c-stable}
Assume $T$ is stable, and let $M$ be a model of $T$. Fix a finite tuple $a\in M$ and a set $B\seq M$.
\begin{enumerate}[$(a)$]
        \item There is a tuple $d\in Q(M)$, with $|d|\leq |T|$, such that for every $N\succeq M$, $\tp^{T}(a/Bd)\vdash \tp^{T}(a/BQ(N))$.
    \item If $T$ is countable and $\omega$-stable then in $(a)$ we may find $d\in Q(M)$ with $|d|$ finite.
\end{enumerate}
\end{lemma}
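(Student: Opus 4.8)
\textbf{Proof proposal for Lemma~\ref{lem:finding c-stable}.}

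\emph{Setup and reduction to a single global type.} Work in a monster model $\cU \succeq M$ of $T$. Since $T$ is stable, the type $p(x) = \tp^T(a/\cU)$ does not fork over some small set, and in particular $p$ is definable over $\acl^{\eq}(Bab')$ for some small $b' \in \cU$; more usefully, $p \restriction BQ(\cU)$ is definable over a small subset of $BQ(\cU)$. The goal is to pin down $\tp^T(a/BQ(N))$ for all $N \succeq M$ by a small piece of $Q(M)$, so it is enough to produce a small $d \in Q(M)$ such that the restriction of $p$ to $BQ(\cU)$ is definable over $Bd$ (together with, possibly, parameters from $B$), because then $\tp^T(a/Bd)$ together with the scheme of definitions determines $\tp^T(a/BQ(N))$ for every intermediate model $N$. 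Concretely: for each $\cL$-formula $\varphi(x;y)$ with $y$ ranging over tuples from $Q$, the $\varphi$-definition $d_p\varphi(y)$ is, by stable embeddedness of $Q$ (which in the stable case is automatic, as remarked before the lemma), equivalent to a formula with parameters in $Q(\cU)$; and since $p$ is finitely satisfiable in (hence definable over) $M$, we may take these $\varphi$-definitions with parameters in $Q(M)$.

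\emph{Collecting the parameters.} For each $\varphi(x;y)$ as above, fix a tuple $d_\varphi \in Q(M)$ over which $d_p\varphi$ is defined (with parameters from $B$ allowed as well, since we are working over $B$ throughout). Let $d$ be the concatenation of the $d_\varphi$ over all $\cL$-formulas $\varphi(x;y)$ with $y$ a finite tuple of $Q$-sort variables; since $|\cL| \le |T|$ and each $d_\varphi$ is finite, we get $|d| \le |T|$. Now given any $N \succeq M$ and any $\cL$-formula $\varphi(x;y)$ and any tuple $e \in Q(N)$, whether $\varphi(x;e) \in \tp^T(a/BQ(N))$ is decided by $\models d_p\varphi(e)$, and $d_p\varphi$ has parameters among $Bd$; hence $\tp^T(a/Bd) \vdash \tp^T(a/BQ(N))$. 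Conversely $Bd \seq BQ(N)$ so the implication is genuinely an extension statement. This proves (a).

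\emph{The $\omega$-stable case.} For (b), assume $T$ is countable and $\omega$-stable. The point is that we no longer need a separate parameter tuple for each formula: in an $\omega$-stable (indeed totally transcendental) theory, $p = \tp^T(a/\cU)$ has finite Morley rank and degree, so there is a single formula $\psi(x) \in p$ with parameters in $\acl^{\eq}(B \cup Q(\cU))$ — in fact in $Bd_0$ for some \emph{finite} $d_0 \in Q(\cU)$ — that isolates $p$ among all its non-forking extensions, equivalently such that $RM(\psi) = RM(p)$ and $\psi$ has Morley degree $1$ over any model containing its parameters. Since $p$ is realized in $M$-definable fashion and finitely satisfiable in $M$, we may take $d_0 \in Q(M)$. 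Then for any $N \succeq M$, the unique type over $BQ(N)$ of Morley rank $RM(p)$ containing $\psi$ is $\tp^T(a/BQ(N))$, and membership of $\varphi(x;e)$ in this type is determined by whether $RM(\psi(x) \wedge \varphi(x;e)) = RM(\psi)$ — a condition on $Bd_0 e$ alone. Hence $\tp^T(a/Bd_0) \vdash \tp^T(a/BQ(N))$ with $d_0$ finite.

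\emph{Main obstacle.} The delicate point is making sure the canonical parameters of the $\varphi$-definitions (resp.\ the isolating formula $\psi$) can be found \emph{inside $Q(M)$} rather than merely inside $Q(\cU)$ or $\acl^{\eq}$ thereof. For part (a) this is handled by finite satisfiability of $p$ in $M$ together with stable embeddedness; for part (b) one must additionally check that passing to a single rank-and-degree witness does not force the use of imaginary parameters outside $Q^{\eq}(M)$ — here one uses that $M \models T$ is a model, so $\tp^T(a/M)$ is stationary and its canonical base live over $M$, and an $\omega$-stable theory has enough definable (not just $\acl^{\eq}$) machinery to realize the canonical base of $p \restriction Q(\cU)$ over $Q(M)$. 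Verifying this last point carefully is the only place where $\omega$-stability, as opposed to superstability, is genuinely used.
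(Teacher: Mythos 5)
Your proposal for part (a) follows essentially the same strategy as the paper's: use definability of the type of $a$ over $BQ$, pull the $\varphi$-definition parameters into $Q(M)$ via stable embeddedness (and the model case), collect them into a tuple $d$ of size $\le|T|$, and conclude. However, the final step, i.e., the passage from ``the $\varphi$-definitions all have parameters in $Bd$'' to ``$\tp^T(a/Bd)\vdash\tp^T(a/BQ(N))$'', is precisely the point that needs an argument, and you leave it implicit. The reason it works is that the definition scheme is itself carried by $\tp^T(a/Bd)$: for each $\varphi$ and $b\in B$, the formula $\forall z\in Q\,(\varphi(x,b,z)\leftrightarrow\psi_\varphi(b,z,\ldots))$ is a formula in the free variable $x$ with parameters in $Bd$, it holds of $a$ in $M$, hence it persists to every $N\succeq M$ by elementarity and is in $\tp^T(a/Bd)$. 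So any $a'\models\tp^T(a/Bd)$ in $N$ obeys the same definition scheme over $Q(N)$, which is what forces $a'\equiv_{BQ(N)}a$. The paper spells this out; your ``hence'' skips it, and it is the one step a reader cannot reconstruct from ``$\tp^T(a/Bd)$ together with the scheme of definitions determines $\tp^T(a/BQ(N))$'' --- you must convince yourself the scheme is determined by $\tp^T(a/Bd)$ and not an external piece of data.

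For part (b), your route is genuinely different from the paper's and less economical. The paper simply invokes the standard fact that in a countable $\omega$-stable theory every complete type is definable over a finite subset of its domain, applies this to $\tp^T(a/BQ(M))$, and says ``the rest is as in $(a)$.'' Working directly with $\tp^T(a/BQ(M))$ means the finitely many defining parameters automatically lie in $BQ(M)$ (they appear in a formula of the type), so the concern you raise in your closing paragraph --- whether the isolating formula's parameters can be found inside $Q(M)$ rather than merely in $Q(\cU)$ or in $\acl^{\eq}$ --- never arises. By instead working with the global type $p$ restricted to $BQ(\cU)$, and with Morley rank and a single rank-and-degree witness, you create exactly that difficulty and then flag it as unresolved (``Verifying this last point carefully is the only place where $\omega$-stability\ldots is genuinely used''). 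That is not a verification; it is a gap. Moreover, your appeal to ``$p$ is realized in $M$-definable fashion and finitely satisfiable in $M$'' to move the parameters into $Q(M)$ is too vague to substitute for the missing argument. The claim in (b) can certainly be proved by your method, but as written the crucial step --- placing the finitely many parameters of the isolating formula into $Q(M)$ --- is neither obvious from what you wrote nor carried out. I would recommend replacing the Morley-rank detour with the paper's direct application of finite definability of types over the fixed model-relative domain $BQ(M)$, after which the argument of (a) goes through verbatim with a finite $d$.
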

\begin{proof}

Part $(a)$. As $\tp^T(a/BQ(M))$ is definable, for every formula $\varphi(x,y,z)$ there is a formula $\psi_\varphi(y,z,b_{\varphi,a},d_{\varphi,a})$, with $b_{\varphi,a}\in B$ and $d_{\varphi,a}\in Q(M)$, such that  
for any $b\in B$, 
\[
M\models \forall z\in Q\,(\varphi(a,b,z)\leftrightarrow \psi_\varphi(b,z,b_{\varphi,a},d_{\varphi,a})).
\]
Let $d=(d_{\varphi,a})_{\varphi}$. Suppose  $a'\in N\succeq M$ is such that $\tp^T(a/Bd) = \tp^T(a'/Bd)$, and assume that $\varphi(a,b,c)$ holds for some $\cL$-formula $\varphi(x,y,z)$ and parameters $b\in B$ and  $c\in Q(N)$. Since the formula $\forall z\in Q\,( \varphi(x,b,z)\leftrightarrow \psi_\varphi(b,z,b_{\phi,a},d_{\phi,a}))$ is in $\tp^T(a/Bd)$, it follows that $\phi(a',b,c)$ holds.

Part $(b)$. Assume $T$ is $\omega$-stable. Then every complete type $\tp^T(a/BQ(M))$ is definable over a finite subset of $BQ(M)$ (see \cite[Corollary 6.3.6]{Marker}). The rest is as in part $(a)$.
\end{proof}

If $T$ is not stable, we can still find such elements, but we might need to relax the bound on the cardinality. In order to have some canonicity, imaginaries will be required.
As $Q$ is $\emptyset$-definable, any $\emptyset$-definable equivalence relation $E$ on $Q^{n}$ extends to an $\emptyset$-definable equivalence relation $E'(x,y)\coloneqq E(x,y)\vee(x\not\in Q^n\wedge y\not\in Q^n)$ on all $n$-tuples. Thus the quotient $Q^{n}/E$ can be identified with an $\emptyset$-definable set $D_E$ in the $E'$ sort of $T^{\eq}$. Given a model $M\models T$, we let $Q^{\eq}(M)$ denote the union $\bigcup_E D_E(M^{\eq})$  where the index ranges over all such $E$ on $Q^n$ for all $n$.

\begin{lemma}\label{lem:finding c-general}
Let $M$ be a model of $T$. 
\begin{enumerate}[$(a)$]
    \item For any $B\subseteq M$ and finite tuple $a\in M$, there exists $d\in Q(M)$, with $|d|\leq |B|+|T|$, such that for every $N\succeq M$, $\tp^{T}(a/Bd)\vdash \tp^{T}(a/BQ(N))$.
\item For any $\cL$-formula $\varphi(x,z)$ (with $z$ a $Q$-variable), there exists an $\mathcal{L}^\eq$-formula $\psi_\varphi(u,z)$ and an $\mathcal{L}^\eq$-definable function $f_\varphi(x)$ into $Q^\eq(M)$ such that for any $a\in M$, $\varphi(a,Q(M))=\psi_\varphi(f_\varphi(a),Q(M))$. 
    \item Fix $B\subseteq M$ and a finite tuple $a\in M$. Define the tuple 
    \[
    c=(f_\varphi(a,b):b\in B,\, \varphi(x,y,z) \text{ an $\cL$-formula})\in Q^\eq(M)\cap \dcl^{\eq}(aB).
    \]
    Then $|c|\leq |B|+|T|$ and, for every $N\succeq M$, $\tp^{T^\eq}(a/Bc)\vdash \tp^{T}(a/BQ(N))$.
\end{enumerate}
\end{lemma}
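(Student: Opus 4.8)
The plan is to reduce all three parts to a single fact --- the ``uniform, canonical-parameter'' form of stable embeddedness, which is essentially part $(b)$ --- and then read off $(a)$ and $(c)$ by bookkeeping. \emph{Step 1 (uniform stable embeddedness).} I would first prove: for each $\cL$-formula $\varphi(x,z)$ with $z$ a $Q$-variable there are finitely many $\cL$-formulas $\chi_{\varphi,1}(w_1,z),\dots,\chi_{\varphi,k}(w_k,z)$, each $w_j$ a $Q$-variable, such that in every model of $T$ and for every tuple $a$ (of the sort of $x$) there are $j\le k$ and $e\in Q^{|w_j|}$ with $\varphi(a,Q)=\chi_{\varphi,j}(e,Q)$. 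This is the usual compactness argument: in a monster $\cU$, for each $p\in S_x(\emptyset)$ pick $a_p\models p$; by stable embeddedness $\varphi(a_p,Q(\cU))$ is $Q(\cU)$-definable, say by an $\cL$-formula $\chi(w,z)$ with a parameter from $Q(\cU)$, so $a_p$ --- hence every realization of $p$ --- satisfies the $\cL$-formula $\rho_\chi(x):=\exists w\,(w\in Q\wedge\forall z\in Q\,(\varphi(x,z)\leftrightarrow\chi(w,z)))$. Thus every element of $\cU$ satisfies some $\rho_\chi$, so $\{\lnot\rho_\chi:\chi\}$ is inconsistent, and compactness yields finitely many $\chi_{\varphi,j}$ (the resulting statement, being first-order, then holds in every model of $T$). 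Since there are only $|T|$ many $\cL$-formulas $\varphi$ and each $k$ is finite, passing to $T^\eq$ one can moreover amalgamate, for each $\varphi$, the $\chi_{\varphi,j}$ into a single $\cL^\eq$-formula $\chi_\varphi(w,z)$ whose parameter sort is a $\emptyset$-definable subset of $Q^\eq$ --- a finite disjoint union of powers of $Q$, using that $Q^\eq$ is closed under finite disjoint unions --- so that every $\varphi(a,Q)$ equals $\chi_\varphi(e,Q)$ for some $e$.

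\emph{Step 2 (part $(b)$).} Fix $\varphi$ and $\chi_\varphi$ as above. Let $E_\varphi(w,w')$ be the equivalence relation ``$\forall z\in Q\,(\chi_\varphi(w,z)\leftrightarrow\chi_\varphi(w',z))$'' on the parameter sort of $\chi_\varphi$ (it is $\emptyset$-definable in $T^\eq$), let $S_\varphi$ be its quotient ($S_\varphi\subseteq Q^\eq$), and let $\psi_\varphi(u,z)$ be the $\cL^\eq$-formula ``$\exists w\,(w/E_\varphi=u\wedge\chi_\varphi(w,z))$'', which is well-defined on $S_\varphi$ by the choice of $E_\varphi$. For each tuple $a$ of the sort of $x$, the set $\varphi(a,Q)$ equals $\psi_\varphi(u,Q)$ for a unique $u\in S_\varphi$ (existence from Step 1, uniqueness from the definition of $E_\varphi$); set $f_\varphi(a):=u$. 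Then $f_\varphi$ is a total function, $\emptyset$-definable in $T^\eq$ (it is cut out by ``$u\in S_\varphi\wedge\forall z\in Q\,(\varphi(x,z)\leftrightarrow\psi_\varphi(u,z))$''), with $f_\varphi(a)\in\dcl^\eq(a)\cap S_\varphi\subseteq Q^\eq(M)$ for $a\in M$, and by construction $\varphi(a,Q(M))=\psi_\varphi(f_\varphi(a),Q(M))$. This proves $(b)$; note for later that $\forall x\,\forall z\in Q\,(\varphi(x,z)\leftrightarrow\psi_\varphi(f_\varphi(x),z))$ is an $\cL^\eq$-sentence true in every model.

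\emph{Step 3 (parts $(a)$ and $(c)$).} For $(a)$: apply Step 1 to each $\cL$-formula $\varphi(x,y,z)$ ($z$ a $Q$-variable), taking $(x,y)$ as the parameter variable; for each $b\in B$ fix $j=j_{\varphi,b}$ and $e_{\varphi,b}\in Q(M)$ with $\varphi(a,b,Q(M))=\chi_{\varphi,j}(e_{\varphi,b},Q(M))$, and let $d$ list all the $e_{\varphi,b}$, so $|d|\le|B|+|T|$. For $N\succeq M$ and $a'\equiv^T_{Bd}a$ (say in a monster over $N$), and for any $\varphi$, $b\in B$, $c\in Q(N)$: the formula $\forall z\in Q\,(\varphi(x,b,z)\leftrightarrow\chi_{\varphi,j_{\varphi,b}}(e_{\varphi,b},z))$ has parameters in $Bd$ and holds of $a$ in $N$ (it holds in $M$, and $M\preceq N$), hence lies in $\tp^T(a/Bd)=\tp^T(a'/Bd)$ and holds of $a'$; specializing $z$ to $c$ gives $\varphi(a,b,c)\leftrightarrow\chi_{\varphi,j_{\varphi,b}}(e_{\varphi,b},c)\leftrightarrow\varphi(a',b,c)$, where the middle formula mentions neither $a$ nor $a'$. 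So $\tp^T(a/Bd)\vdash\tp^T(a/BQ(N))$. For $(c)$: apply $(b)$ to each $\varphi(x,y,z)$ (parameter variable $(x,y)$) and set $c:=(f_\varphi(a,b):b\in B,\ \varphi\ \text{an}\ \cL\text{-formula})$; then $c\in Q^\eq(M)$, each coordinate is in $\dcl^\eq(aB)$, and $|c|\le|B|+|T|$. For $N\succeq M$ and $a'\equiv^{T^\eq}_{Bc}a$, and for any $\varphi$, $b\in B$, $c_0\in Q(N)$: the equation ``$f_\varphi(x,b)=f_\varphi(a,b)$'' is over $Bc$ and holds of $a$, hence of $a'$, so $f_\varphi(a',b)=f_\varphi(a,b)$; the $\cL^\eq$-sentence from Step 2 then gives $\varphi(a,b,c_0)\leftrightarrow\psi_\varphi(f_\varphi(a,b),c_0)\leftrightarrow\varphi(a',b,c_0)$. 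So $\tp^{T^\eq}(a/Bc)\vdash\tp^T(a/BQ(N))$.

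\emph{Main obstacle.} The real work is Steps 1--2: extracting a uniform family of defining formulas for the sets $\varphi(a,Q)$ and canonicalizing their parameters inside $Q^\eq$ --- this is exactly where stable embeddedness and the passage to imaginaries are used, and where one must take a little care in assembling a single formula and sort. Once $(b)$ is available, $(a)$ and $(c)$ are routine transport of formulas along $M\preceq N$ and along equality of types over $Bd$ (resp.\ $Bc$); the bound $|B|+|T|$ rather than $|T|$ simply reflects that, without definability of types as in Lemma~\ref{lem:finding c-stable}, one records a separate $Q$-parameter for each $b\in B$.
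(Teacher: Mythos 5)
Your proposal is correct and follows essentially the same route as the paper: a compactness argument to uniformize stable embeddedness, then passing to canonical parameters in $Q^\eq$ for part $(b)$, with parts $(a)$ and $(c)$ read off by transporting the defining formulas along elementarity and type-equality over $Bd$ (resp.\ $Bc$). The only cosmetic difference is that you carefully keep the finitely many $\chi_{\varphi,j}$ and amalgamate them in $Q^\eq$, whereas the paper asserts a single $\cL$-formula $\psi^*_\varphi(v,z)$ directly; your version is in fact slightly more scrupulous about that step, and the two are interchangeable.
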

\begin{proof}
Recall that $Q$ is definable and stably embedded. So by compactness, for any $\cL$-formula $\varphi(x,z)$, with $z$ a $Q$-variable, there exists an $\cL$-formula $\psi^*_\phi(v,z)$ such that for any $M\models T$ and $a\in M$, there is some $d_{\varphi, a}\in Q(M)$ such that $\varphi(a,Q(M))=\psi^*_\varphi(d_{\varphi, a},Q(M))$.

Part $(a)$. Fix $B\seq M$ and $a\in M$. Define the tuple $d=(d_{\varphi,a,b})_{\varphi,b}$ from $Q(M)$ where $d_{\varphi,a,b}$ is as above, and the index ranges over all $\cL$-formulas $\varphi(x,y,z)$ and tuples $b\in B$. Now fix $N\succeq M$. We show $\tp^T(a/Bd)\vdash \tp^T(a/BQ(N))$. Without loss of generality, assume $N$ is sufficiently saturated. Let $a'\in N$ be an arbitrary realization of $\tp^T(a/Bd)$. 
Fix a formula $\varphi(x,b,c)\in \tp^T(a/BQ(N))$ with $b\in B$ and $c\in Q(N)$. By elementarity, we have $\varphi(a,b,Q(N))=\psi^*_{\varphi}(d_{\varphi,a,b},Q(N))$. Thus $\varphi(a',b,Q(N))=\psi^*_\varphi(d_{\varphi,a,b},Q(N))$ since $a'\equiv^T_{Bd}a$. So $\varphi(a',b,c)$ holds.

Part $(b)$. Let $c_{\varphi,b,a}$ be the canonical parameter for the formula $\psi^*_\varphi(d_{\varphi,b,a},z) \wedge z\in Q$. Then $c_{\varphi,b,a}\in Q^{\eq}(M)\cap \dcl^{eq}(aB)$. Let $\psi_{\varphi}(u,z)$ be the projection of $\psi^*_\varphi(v,z)$ to the sort of $c_{\varphi,b,a}$, and let $f_{\varphi}(x,y)$ be the $\emptyset$-definable function (in $T^{\eq}$) mapping $a$ to $c_{\varphi,b,a}$.

Part $(c)$. This follows from part $(b)$ using an argument similar to part $(a)$, but working in $T^{\eq}$ with $\psi_\varphi$ instead of $\psi^*_\varphi$.
\end{proof}

\begin{remark}\label{R:induced structure}
The following are immediate consequences of Lemma \ref{lem:finding c-general}.
\begin{enumerate}[$(a)$]
    \item For any $M\models T$ and $B\seq M$, every $B$-definable subset of $Q(M)^n$ is definable over $\dcl^\eq(B)\cap Q^\eq(M)$. In particular, pulling back to $Q$ by a $\emptyset$-definable map, the same holds for definable subsets of $Q^\eq$.
    \item On the other hand, if $B$ is an elementary substructure of $M$, then no imaginaries are needed, i.e., every $B$-definable subset of $Q(M)^n$ is $Q(B)$-definable. 
\end{enumerate}
\end{remark}

 Given $M\models T$ and  $B\seq M$, let $Q^B_{\indd}(M)$ denote the induced structure on $Q(M)$ from $B$-definable sets in $M$ (in other words, $Q^B_{\indd}(M)$ is $Q(M)^{M_B}_{\indd}$). 
 We write $Q_{\indd}(M)$ for $Q^\emptyset_{\indd}(M)$.

We now fix an arbitrary expansion $\cQ$ of $Q_{\indd}$ in some language $\cL_{\cQ}$, which we assume (without loss of generality) to be disjoint from $\cL$.  Let $\Lbig=\cL\cup\cL_{\cQ}$ and let $\Tbig$ be the corresponding enrichment of $T$ by $\Th(\cQ)$ on $Q$. In the following results, we assume the existence of saturated models. This can be achieved under some set-theoretic assumptions which do not affect the validity of what we want to prove, see e.g. \cite{HaKa}.

 \begin{proposition}\label{prop: D' stab emb}
Let $M,N\models\Tbig$ be saturated models of the same cardinality. Then any $\cL_{\cQ}$-isomorphism from $\cQ(M)$ to $\cQ(N)$  extends to an $\Lbig$-isomorphism from $M$ to $N$. Consequently,  $\Tbig$ is complete, the definable set $Q$ is stably embedded in $\Tbig$, and the $\Lbig$-induced structure on $Q$ is $\cQ$.
\end{proposition}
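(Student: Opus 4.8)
The plan is to build the extension of a given $\cL_\cQ$-isomorphism $\sigma_0 \colon \cQ(M) \to \cQ(N)$ by a back-and-forth argument, using the type-characterization machinery of Lemma \ref{lem:finding c-general} to reduce statements about $\Lbig$-types over a subset $A \subseteq M$ to statements about $T$-types (resp.\ $T^\eq$-types) over $A$ together with finitely much data from $Q^\eq$. Concretely, I would attempt to show: whenever $A \subseteq M$ and $\sigma \colon A \to N$ is a partial $\Lbig$-elementary map whose restriction to $Q(M) \cap A$ is coherent with $\sigma_0$ (in fact I would set things up so that $Q(M) \subseteq A$ and $\sigma\restriction Q(M) = \sigma_0$ from the start, enlarging $A$ by saturation), then for any $a \in M$ the map extends to $Aa$. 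The point is that, since $M, N$ are saturated of the same cardinality, such a back-and-forth over a suitable scaffold produces the desired $\Lbig$-isomorphism.

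The heart is the extension step, and here is where stable embeddedness of $Q$ in $T$ does the work. Given $a \in M$ and the partial map $\sigma$ with $Q(M)$ in its domain, I want to realize in $N$ the $\Lbig$-type of $a$ over $A$. Decompose this type: the $\cL$-part is an $\cL$-type over $A$, and the $\cL_\cQ$-part only mentions elements of $Q(M)$, hence is already transported by $\sigma_0$. The key observation is that in $\Tbig$ every formula is equivalent to a Boolean combination of $\cL$-formulas and $\cL_\cQ$-formulas with parameters only from $Q$ — this is essentially the definition of the enrichment $\Tbig$ — and moreover, by Lemma \ref{lem:finding c-general}(c) (applied in $T$), the relevant $Q$-parameters needed to witness the $\cL$-type of $a$ over $A$ can be replaced by a small tuple $c \in Q^\eq(M) \cap \dcl^\eq(aA)$ such that $\tp^{T^\eq}(a/Ac) \vdash \tp^T(a/AQ(M))$. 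So to find a copy of $a$ in $N$: first use $\sigma$ to transport $\tp^\eq$ of the finitely-much-over-$A$-and-$Q^\eq$ data, then use saturation of $N$ and the fact that $\sigma_0$ is already an $\cL_\cQ$-isomorphism on $Q(N) = \sigma_0(Q(M))$ to find $a' \in N$ realizing the transported $T$-type over $\sigma(A)Q(N)$; the $\cL_\cQ$-type of $a'$ over $A$ is then automatically correct because it is determined by the $\cL_\cQ$-type of the $Q$-tuples, which $\sigma_0$ preserves. I would need to be slightly careful that $\sigma_0$ extends canonically to $Q^\eq$, which it does since an $\cL_\cQ$-isomorphism preserves all $\emptyset$-$\cL$-definable (hence $\emptyset$-$\Lbig$-definable) structure on $Q$, in particular all the equivalence relations defining $Q^\eq$.

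For the three consequences: completeness of $\Tbig$ follows because any two saturated models of the same (large enough) cardinality have isomorphic $\cL_\cQ$-reducts on $Q$ — indeed $\cQ(M) \equiv \cQ(N)$ since $\Th(\cQ)$ is a fixed complete theory — hence are $\Lbig$-isomorphic by the main assertion; then a standard argument gives elementary equivalence of all models. Stable embeddedness of $Q$ in $\Tbig$: take $M \models \Tbig$ saturated, $X \subseteq Q(M)^n$ an $M$-definable set; by a standard automorphism criterion for stable embeddedness it suffices to show $X$ is fixed setwise by every $\cL_\cQ$-automorphism of $\cQ(M)$ that extends to an $\Lbig$-automorphism of $M$ — but by the main assertion every $\cL_\cQ$-automorphism of $\cQ(M)$ so extends, and one invokes the characterization that $Q$ stably embedded iff $\mathrm{Aut}(\cQ(M))$-orbits control $M$-definable subsets of $Q^n$. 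Finally, that the $\Lbig$-induced structure on $Q$ is exactly $\cQ$: it certainly contains $\cQ$; conversely any $\Lbig$-$\emptyset$-definable subset of $Q^n$ is, by the Boolean-combination description of $\Lbig$-formulas together with Remark \ref{R:induced structure}, built from $\cQ$-definable sets and from $\cL$-induced (equivalently $Q_\indd$-, hence $\cQ$-) definable sets, so it is $\cQ$-definable.

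The main obstacle I expect is bookkeeping in the back-and-forth: one must maintain throughout that $Q(M)$ (or at least enough of it) sits inside the current domain and is mapped by $\sigma_0$, while simultaneously handling imaginary parameters from $Q^\eq$; the cleanest route is probably to do the back-and-forth on pairs (a subset of $M$ containing all of $Q(M)$, its image in $N$) and exploit saturation to absorb the $Q^\eq$-tuples produced at each step, rather than tracking them explicitly. A secondary subtlety is making precise the claim that every $\Lbig$-formula is a Boolean combination of $\cL$-formulas and $\cL_\cQ$-formulas with $Q$-parameters — this is immediate for atomic $\Lbig$-formulas and propagates through connectives, but quantifiers over the home sort require the stable-embeddedness/definability input (Lemma \ref{lem:finding c-general}) to eliminate, which is exactly the content being used.
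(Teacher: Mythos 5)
Your proposal is correct in its essentials, but it takes a genuinely different route from the paper's. The paper's proof is a short composition argument: since $T$ is complete, pick some $\cL$-isomorphism $i\colon M\to N$; then $i_0^{-1}f$ (with $i_0=i\restriction Q(M)$) is an automorphism of $Q_{\indd}(M)$, which by the Chatzidakis--Hrushovski lifting lemma for stably embedded sets extends to an $\cL$-automorphism $g$ of $M$; the composite $\sigma=ig$ is then an $\cL$-isomorphism agreeing with $f$ on $Q(M)$, hence an $\Lbig$-isomorphism. Your approach instead re-proves the lifting directly for isomorphisms between two saturated models via a back-and-forth scaffolded on $Q(M)$, with Lemma~\ref{lem:finding c-general} supplying the small $Q$-tuple $c$ that controls $\tp^T(a/AQ)$ at each extension step. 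Both are valid; yours is more self-contained (it does not invoke the Chatzidakis--Hrushovski lemma as a black box) at the price of being more laborious, while the paper's composition trick is the cleaner reduction. One caveat worth flagging: you assert that ``in $\Tbig$ every formula is equivalent to a Boolean combination of $\cL$-formulas and $\cL_\cQ$-formulas with parameters only from $Q$'' and call this ``essentially the definition of the enrichment $\Tbig$.'' It is not a definition but a theorem --- indeed, a weaker form of it (with a block of $Q$-quantifiers in front) is precisely the Corollary the paper derives \emph{after} this proposition --- and you cannot assume it here. Fortunately your back-and-forth does not actually need it: once you maintain the invariant that the partial maps are $T$-elementary with $Q(M)$ in the domain and restricting to $\sigma_0$ there, the union of a back-and-forth chain is automatically a bijection preserving atomic $\Lbig$-formulas (the $\cL$-atoms by $T$-elementarity, the $\cL_\cQ$-atoms because they only concern $Q$ where the map equals $\sigma_0$), hence an $\Lbig$-isomorphism; the extension step reduces to transporting $\tp^T(a/Ac)$ using saturation and the definability data from the proof of Lemma~\ref{lem:finding c-general}$(a)$. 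So the overstated claim should be dropped, but the underlying argument survives.
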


\begin{proof}
Pick an $\cL_{\cQ}$-isomorphism $f\colon \cQ(M)\to \cQ(N)$. Since $T$ is complete, we may also choose an $\cL$-isomorphism $i\colon M\to N$. Let $i_0$ be the restriction of $i$ to $Q(M)$. Then $i_0^{\text{-}1}f$ is an automorphism of $Q_{\indd}(M)$, and thus by stable embeddedness of $Q$ in $T$, $i_0^{\text{-}1}f$ extends to an $\cL$-automorphism $g$ of $M$ (see \cite[Lemma 1, Appendix]{ChHr}
). Now define $\sigma=i g\colon M\to N$. Then $\sigma$ is an $\cL$-isomorphism, which extends $f$ by construction, and therefore must also be an $\Lbig$-isomorphism, as desired. 

It now follows that $\Tbig$ is complete and, moreover, $Q$ is stably embedded in $\Tbig$ (again by \cite{ChHr}).  Finally, suppose $X$ is an $\Lbig$-definable subset of $Q^n$ for some $n\geq 1$. Then by the above, $X(M)$ is invariant under automorphisms of $\cQ(M)$. By saturation, it follows that $X$ is $\cL_{\cQ}$-definable. 
\end{proof}

 
\begin{proposition}\label{prop:char types D}
Fix \(M\models \Tbig\), $B \subseteq M$, and \(a\in M\). Suppose \(c\in Q^{\eq}(M)\) is such that \(\tp^{T^{\eq}}(a/Bc)\vdash \tp^{T}(a/BQ(N))\) for some saturated $N\succeq M$ with $|N|>|B|$. 
\begin{enumerate}[$(a)$]
\item Suppose $E\seq Q^{\eq}(M)$ is such that any automorphism of $Q_{\indd}(N)$ fixing $E$ is an automorphism of $Q^B_{\indd}(N)$.
Then
\[
\tp^{T^{\eq}}(ac/B) \cup \tp^{\cQ^{\eq}}(c/E) \vdash \tp^{\Tbig^{\eq}}(ac/B).
\] 
\item If $E=\dcl^{\eq}(B)\cap Q^{\eq}(M)$ then $\tp^{T^{\eq}}(ac/B) \cup \tp^{\cQ^{\eq}}(c/E) \vdash \tp^{\Tbig^{\eq}}(ac/B)$.
\item If $B=\emptyset$ or $B$ is an elementary substructure of $M$ then 
\[
\tp^T(ac/B) \cup \tp^{\cQ}(c/Q(B)) \vdash \tp^{\Tbig}(ac/B).
\]
\end{enumerate}
\end{proposition}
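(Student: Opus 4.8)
The proposition characterizes types in the enrichment $\Tbig$ by combining a type in $T^\eq$ (or $T$) with a type in $\cQ^\eq$ (or $\cQ$). The natural strategy is a back-and-forth / automorphism argument using the extension property from Proposition \ref{prop: D' stab emb}, together with the key hypothesis that $\tp^{T^\eq}(a/Bc)$ implies $\tp^T(a/BQ(N))$, which lets us reduce statements about $\Lbig$-definable sets to statements about $\cL$-definable sets with parameters in $Q(N)$. So the plan is: first prove $(a)$ directly by an automorphism argument in a saturated model; then obtain $(b)$ and $(c)$ as special cases of $(a)$ by checking that the indicated choice of $E$ satisfies the hypothesis ``any automorphism of $Q_\indd(N)$ fixing $E$ is an automorphism of $Q^B_\indd(N)$'', invoking Remark \ref{R:induced structure}.

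For part $(a)$: pass to a sufficiently saturated $N \succeq M$ (enlarging the given one if needed; note $c \in Q^\eq(M) \subseteq Q^\eq(N)$ and the divisibility statement persists). Take $a'c' \in N$ realizing $\tp^{T^\eq}(ac/B) \cup \tp^{\cQ^\eq}(c/E)$; I want an $\Lbig^\eq$-automorphism of $N$ fixing $B$ sending $ac$ to $a'c'$. Since $\tp^{\cQ^\eq}(c/E) = \tp^{\cQ^\eq}(c'/E)$, there is an $\cL_\cQ$-automorphism $\tau$ of $\cQ(N)$ fixing $E$ with $\tau(c) = c'$ (here one uses that $c, c'$ lie in $Q^\eq(N)$ and saturation of $N$ as a model of $\Tbig$, whose induced structure on $Q$ is $\cQ$ by Proposition \ref{prop: D' stab emb}). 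By the hypothesis on $E$, $\tau$ is also an automorphism of $Q^B_\indd(N)$, i.e. it fixes setwise every $B$-definable (in $\cL$) subset of $Q(N)^n$; in particular, since $B \seq \dcl$ considerations and $\tau$ fixes $E \supseteq$ the relevant canonical parameters, $\tau$ is $\cL$-elementary over $B$ on $Q(N)$. Extend $\tau$ to an $\Lbig$-automorphism $\hat\tau$ of $N$ via Proposition \ref{prop: D' stab emb} (applied to $N$ with itself); $\hat\tau$ fixes $B$ (as it extends an $\cL$-elementary-over-$B$ map, or after composing with an $\cL$-automorphism realizing $\tp^T$-equality over $B$ — I'd combine $\tau$ with the $\cL$-automorphism $g$ sending $ac \mapsto a'c'$ over $B$ exactly as in the proof of Proposition \ref{prop: D' stab emb}). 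Now $\hat\tau(c) = c'$. Finally use the divisibility hypothesis: $\tp^{T^\eq}(\hat\tau(a)/Bc') = \tp^{T^\eq}(a'/Bc')$ (both extend $\tp^{T^\eq}(a/Bc)$ after transport), so $\tp^{T^\eq}(\hat\tau(a)/BQ(N)) = \tp^T(a'/BQ(N))$; hence there is an $\cL$-automorphism of $N$ fixing $BQ(N)$ — in particular fixing $\cQ(N)$ pointwise, hence $\Lbig$-elementary — sending $\hat\tau(a) \mapsto a'$. Composing gives the desired $\Lbig^\eq$-automorphism over $B$ with $ac \mapsto a'c'$, proving $\tp^{\Tbig^\eq}(ac/B) = \tp^{\Tbig^\eq}(a'c'/B)$.

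For parts $(b)$ and $(c)$: when $E = \dcl^\eq(B) \cap Q^\eq(M)$, Remark \ref{R:induced structure}$(a)$ says every $B$-definable subset of $Q(N)^n$ is definable over $E$, so any automorphism of $Q_\indd(N)$ fixing $E$ pointwise preserves all such sets, i.e. is an automorphism of $Q^B_\indd(N)$ — exactly the hypothesis of $(a)$; and one should note $c \in Q^\eq(M)$ with $\tp^{T^\eq}(a/Bc) \vdash \tp^T(a/BQ(N))$ is available from Lemma \ref{lem:finding c-general}$(c)$. For $(c)$, when $B = \emptyset$ or $B \preceq M$, Remark \ref{R:induced structure}$(b)$ says no imaginaries are needed: every $B$-definable subset of $Q(N)^n$ is $Q(B)$-definable, so we may take $c \in Q(B) \seq Q(N)$ (via Lemma \ref{lem:finding c-general}$(a)$, or just $c$ empty and absorb into $B$), take $E = Q(B)$, and run the same argument with $\cQ$ in place of $\cQ^\eq$ and $T$ in place of $T^\eq$ throughout; the condition that automorphisms of $Q_\indd(N)$ fixing $Q(B)$ are automorphisms of $Q^B_\indd(N)$ is again Remark \ref{R:induced structure}$(b)$.

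\textbf{Main obstacle.} The delicate point is bookkeeping with the two automorphisms and making sure the composed map genuinely fixes $B$ while being simultaneously $\cL$-elementary (to use the divisibility hypothesis) and $\cL_\cQ$-sending $c \mapsto c'$ — this is precisely where one must mimic the ``$\sigma = ig$'' bootstrapping trick of Proposition \ref{prop: D' stab emb} rather than naively extending $\tau$. A secondary subtlety is checking that saturation of $N$ as an $\Lbig$-model really does give the needed homogeneity of $\cQ(N)$ over $E$ for imaginary tuples $c, c'$; this rests on Proposition \ref{prop: D' stab emb} identifying the $\Lbig$-induced structure on $Q$ with $\cQ$, so that $\cQ(N)$ is itself saturated/homogeneous.
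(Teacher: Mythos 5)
Your proof is correct and follows essentially the same route as the paper: pass to the saturated model, use saturation of $\cQ(N)$ and the hypothesis on $E$ to get an $\cL_\cQ$-automorphism between $c$ and $c'$ fixing $E$, extend it over $B$ to an $\Lbig$-automorphism via Proposition \ref{prop: D' stab emb} (invoking Remark \ref{rem:constants}), transport and compare using the divisibility hypothesis, and close with an $\cL$-automorphism fixing $BQ(N)$ pointwise; parts (b) and (c) then reduce to (a) via Remark \ref{R:induced structure} exactly as in the paper. The only cosmetic difference is the direction of your automorphism ($\tau\colon c\mapsto c'$ rather than $\sigma\colon c'\mapsto c$), and you should tidy the explanation of why $\hat\tau$ can be taken to fix $B$ (the clean justification is to apply Proposition \ref{prop: D' stab emb} in $\cL_B$ using Remark \ref{rem:constants}, rather than relying on the extension ``automatically'' fixing $B$).
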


\begin{proof} 
Part $(a)$. It suffices to consider the case where $M$ is already saturated with $|M|>|B|$ (and so we take $M=N$). 

Fix \(a'c'\in M\) such that \(ac\equiv^{T^{\eq}}_B a'c'\) and \(c \equiv^{\cQ^{\eq}}_E c'\). Then by saturation of $\cQ(M)$, there is an automorphism \(\sigma\) of \(\cQ(M)\) sending $c'$ to $c$ and fixing $E$ pointwise. By assumption, $\sigma$ is an automorphism of $Q^B_{\indd}(M)$. 
Therefore, in light of Remark \ref{rem:constants}, we can apply  Proposition \ref{prop: D' stab emb} in the setting where $\cL$ includes constants for $B$, and extend $\sigma$  to an \(\Lbig\)-automorphism of $M$ fixing $B$, which we still denote by $\sigma$. 

Let \(a'' = \sigma(a')\). Then $a''c\equiv^{\Tbig^{\eq}}_B a'c'\equiv^{T^{\eq}}_B ac$, and so $\tp^{T^{\eq}}(a''/Bc)=\tp^{T^{\eq}}(a/Bc)$. Since $\tp^{T^{\eq}}(a/Bc)\vdash \tp^T(a/BQ(M))$, we have \(\tp^T(a''/BQ(M)) = \tp^T(a/BQ(M))\). 
By stable embeddedness of $Q$ (even after naming $B$), there exists an $\mathcal{L}$-automorphism \(\tau\) of $M$ that fixes \(BQ(M)\) pointwise and sends \(a''\) to \(a\) (see, e.g., \cite[Lemma 10.1.5]{TZ}). Note that $\tau$ is automatically an $\Lbig$-automorphism as well, and $\tau$ fixes $c$ since $c\in Q^{\eq}(M)$. So we have \(\tau\sigma(a'c') = ac\) and hence \(a'c'\equiv^{\Tbig^{\eq}}_B ac\), as desired. 

Part $(b)$. By Remark \ref{R:induced structure}$(a)$, $E$ satisfies the assumptions in part $(a)$.

Part $(c)$. If $B=\emptyset$ then we can clearly take $E=\emptyset$ in part $(a)$. If $B\preceq M$ then $E=Q(B)$ satisfies the assumptions in part $(a)$ by Remark \ref{R:induced structure}$(b)$. 
\end{proof}

As a direct consequence of Proposition \ref{prop:char types D}, we now obtain the following corollary, which summarizes two well known results (part $(a)$ appears in \cite[Lemma 46]{ChSi}; part $(b)$ was proved in various settings and stems from \cite{Delon}). For part $(b)$, to ensure elimination of imaginaries one might need to allow $Q$ to be a small union of stably embedded definable sets. The results above can be adapted to this setting; since we will not need this corollary for later work, we leave the proof to the reader.

\begin{corollary}
Assume that $T$ and $\Thx$ have quantifier elimination with respect to $\cL$ and $\cL_{\cQ}$, respectively.
\begin{enumerate}[$(a)$]

\item  Every \(\Lbig\)-formula \(\phi(x)\) is equivalent, modulo \(\Tbig\), to  a formula of the form 
\[
\square_1y_1\in Q \ldots \square_n y_n\in Q\, \psi(x,y),
\]
where \(\square_i\in\{\forall,\exists\}\) and \(\psi(x,y)\) is a Boolean combination of quantifier-free $\cL$-formulas and quantifier-free $\cL_{\cQ}$-formulas.

\item Assume also  $\Thx$  eliminates imaginaries. Define $\Lcb=\Lbig\cup \{f_\varphi: \varphi \text{ is an $\Lbig$-formula}\}$, 
and let $\Tcb$ be the expansion of $\Tbig$ where we interpret $f_\varphi$ as in Lemma \ref{lem:finding c-general}(b). Then $\Tcb$ has quantifier elimination with respect to $\Lcb$. 
\end{enumerate}
\end{corollary}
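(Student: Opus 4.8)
The plan is to deduce both parts of the Corollary directly from the type-characterization results already established, specializing Proposition~\ref{prop:char types D}$(c)$ to the case $B=\emptyset$ (or $B\preceq M$) and combining it with the assumed quantifier elimination in $T$ and $\Thx$. Throughout I work in a monster model $\cU\models\Tbig$; since $\Tbig$ is complete (Proposition~\ref{prop: D' stab emb}), it suffices to show that any two tuples $a,a'$ with the same ``expanded quantifier-free type'' (meaning the same quantifier-free $\cL$-type and the same quantifier-free $\cL_\cQ$-type of their $Q$-parts, together with the structural data below) satisfy $a\equiv^{\Tbig}a'$; then a routine compactness argument converts the realization-equivalence into a formula-level statement of the stated syntactic shape.

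For part $(a)$: given a tuple $a$, first apply Lemma~\ref{lem:finding c-general}$(a)$ with $B=\emptyset$ to obtain $d\in Q(\cU)$ with $|d|\le|T|$ such that $\tp^T(a/d)\vdash\tp^T(a/Q(\cU))$ — note that by Remark~\ref{R:induced structure}$(b)$ with $B=\emptyset$ we do not actually need imaginaries here, so $d$ is a genuine tuple from $Q$. By Proposition~\ref{prop:char types D}$(c)$ (with $B=\emptyset$), $\tp^T(ad/\emptyset)\cup\tp^{\cQ}(d/\emptyset)$ implies $\tp^{\Tbig}(ad/\emptyset)$, and in particular $\tp^{\Tbig}(a/\emptyset)$. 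Now $\tp^T(ad/\emptyset)$ is, by quantifier elimination in $T$, determined by the quantifier-free $\cL$-formulas true of $ad$; and $\tp^{\cQ}(d/\emptyset)$ is, by quantifier elimination in $\Thx$, determined by the quantifier-free $\cL_\cQ$-formulas true of $d$. So $\tp^{\Tbig}(a/\emptyset)$ is implied by a set of formulas each of which is of the form ``there exist $y\in Q$ such that $\psi(x,y)$'' where $\psi$ is a quantifier-free $\cL$-formula conjoined with a quantifier-free $\cL_\cQ$-formula in the $Q$-variables $y$ — since $d$ is a witness. Applying the same reasoning to $\lnot\phi$ and running compactness over the space of such formulas, any $\Lbig$-formula $\phi(x)$ is equivalent modulo $\Tbig$ to a finite Boolean combination of formulas $\exists y\in Q\,\psi(x,y)$ of the desired kind; absorbing negations turns the existential quantifiers into the mixed block $\square_1y_1\in Q\dots\square_ny_n\in Q$, and collecting the $\psi$'s into a single Boolean combination of quantifier-free $\cL$- and $\cL_\cQ$-formulas gives exactly the claimed normal form. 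The small subtlety flagged in the remark — allowing $Q$ to be a small union of stably embedded sets — only affects the length and sorting of $y$, not the shape of the argument.

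For part $(b)$: the point is that $\Tcb$, the expansion by definition of $\Tbig$ by the functions $f_\varphi$ of Lemma~\ref{lem:finding c-general}$(b)$, eliminates quantifiers in $\Lcb$. Here, because $T$ and $\Thx$ eliminate imaginaries (with respect to $\cL$ and $\cL_\cQ$), the imaginary elements $c=f_\varphi(a)\in Q^\eq(\cU)$ that appeared in Proposition~\ref{prop:char types D}$(b)$ can be coded by genuine tuples from $Q$, uniformly; I would spell out that each $f_\varphi$, composed with the EI-coding in $\Thx$, becomes an $\Lcb$-term-definable function from the home sort into $Q$. Given two tuples $a,a'$ with the same quantifier-free $\Lcb$-type: applying $f_\varphi$ shows their $Q$-valued ``codes'' $c,c'$ have the same quantifier-free $\cL$-type over $\emptyset$ and the same quantifier-free $\cL_\cQ$-type over $\emptyset$ inside $\cQ$, hence $\tp^{T}(ac/\emptyset)=\tp^T(a'c'/\emptyset)$ and $\tp^{\cQ}(c/\emptyset)=\tp^{\cQ}(c'/\emptyset)$ by the respective quantifier eliminations (after elimination of imaginaries the relevant types are quantifier-free). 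By Proposition~\ref{prop:char types D}$(c)$ with $B=\emptyset$ we conclude $\tp^{\Tbig}(ac/\emptyset)=\tp^{\Tbig}(a'c'/\emptyset)$, so in particular $a\equiv^{\Tbig}a'$, and since $\Tcb$ is a definitional expansion, $a\equiv^{\Tcb}a'$. A standard back-and-forth/compactness argument then upgrades this to full quantifier elimination for $\Tcb$ in $\Lcb$.

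The main obstacle I anticipate is purely bookkeeping rather than conceptual: carefully tracking the fact that no imaginaries are needed in the $B=\emptyset$ (or $B\preceq M$) case — this is exactly the content of Remark~\ref{R:induced structure}$(b)$ — and, for part $(b)$, verifying that after imposing elimination of imaginaries on $T$ and $\Thx$ separately, the functions $f_\varphi$ genuinely land (up to $\Lcb$-term-definable coding) in the home sort $Q$ rather than in $Q^\eq$, so that ``quantifier-free $\Lcb$-type'' really does pin everything down. Handling the sortedness in the ``small union of stably embedded sets'' version of part $(b)$ adds another layer of indexing. None of this is deep, which is presumably why the authors leave it to the reader; I would present the $B=\emptyset$ case in full and remark that the $B\preceq M$ and small-union variants are identical mutatis mutandis.
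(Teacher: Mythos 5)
The paper explicitly declines to write this proof ("since we will not need this corollary for later work, we leave the proof to the reader"), so there is no written argument to compare against. That said, your proposed approach — reducing to the type characterization of Proposition~\ref{prop:char types D}$(c)$ together with Lemma~\ref{lem:finding c-general}, then applying quantifier elimination in $T$ and $\Thx$ and finishing with a type-separation/compactness argument — is the natural one and is essentially correct. Two points deserve attention.

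First, a small citation error: you invoke Remark~\ref{R:induced structure}$(b)$ with $B=\emptyset$ to conclude that the tuple $d$ may be taken from $Q$ rather than $Q^\eq$, but that remark is about the case $B$ an elementary substructure, not $B=\emptyset$. No remark is needed here anyway: Lemma~\ref{lem:finding c-general}$(a)$ directly produces $d\in Q(M)$ (not merely in $Q^\eq$), and Proposition~\ref{prop:char types D}$(c)$ explicitly covers $B=\emptyset$.

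Second, the compactness step in part $(a)$ is stated loosely enough that it is worth spelling out the separation argument it abbreviates. Let $\Sigma$ be the class of formulas of the stated bounded form, and suppose $\tp^\Sigma(a)=\tp^\Sigma(a')$ in a monster model of $\Tbig$. Taking $d$ as above, the partial type $\tp^T(ad)(a',y)\cup\tp^{\cQ}(d)(y)$ (with $y$ a tuple of $Q$-variables) is finitely satisfiable over $a'$: any finite fragment, after quantifier elimination in $T$ and $\Thx$, is a conjunction $\chi_1(x,y_0)\wedge\chi_2(y_0)$ with $\chi_1$ quantifier-free $\cL$ and $\chi_2$ quantifier-free $\cL_\cQ$, and $a\models\exists y_0\in Q\,(\chi_1\wedge\chi_2)\in\Sigma$ with witness a finite subtuple of $d$; hence $a'$ satisfies it too. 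A realization $d'$ gives $\tp^T(a'd')=\tp^T(ad)$ and $\tp^{\cQ}(d')=\tp^{\cQ}(d)$, so $a\equiv^{\Tbig}a'$ by Proposition~\ref{prop:char types D}$(c)$. This shows the existential $\Sigma$-formulas alone separate $\Tbig$-types; one then gets Boolean combinations by compactness, and pushes the $Q$-bounded quantifiers into a single prenex block (which is where the mixed $\square_i$ arise). The same shape of argument, now with $c=(f_\varphi(a))_\varphi$ coded into $Q$-tuples via elimination of imaginaries in $\Thx$ and the observation that quantifier-free $\Lcb$-formulas capture both the quantifier-free $\cL$-type of $(a,c)$ and the quantifier-free $\cL_\cQ$-type of $c$, yields part $(b)$ exactly as you describe.
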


The next two sections will focus on results showing that various forms of model-theoretic tameness are preserved when passing from $T$ and $\cQ$ to $\Tbig$ (sometimes with further assumptions). So we 
 end this section with an example showing that stable embeddedness is necessary in general for this kind of preservation result. 

\begin{example}\label{E: Stably Embed is necessary}
Let $\cL$ consist of two unary relations $P$ and $Q$, and a binary relation $E$. Let $T$ be the complete theory of the Fra\"{i}ss\'{e} limit of all finite bipartite graphs. Recall that $T$ is simple and has quantifier elimination in $\cL$. From this one can show that  $Q$ is not stably embedded, and that the ($\emptyset$-definable) induced structure on $Q$ is a pure set. 
We first note that, given some structure $\cQ$ expanding $Q_{\indd}$, it need not be the case that $\Tbig$ is complete. For example, let $\cQ$ add an infinite co-infinite unary predicate $X$ on $Q_{\indd}$. Then $\Tbig$ does not decide the sentence saying that there is some point in $P$ connected by $E$ to every point in $X$. 

In light of the previous observation, it is more natural to phrase the question of preserving tameness as a question about models. In particular, if one expands a model $M_0$ of $T$ to an $\Lbig$-structure $M$ by expanding $Q_{\indd}(M_0)$ to  $\cQ$, then does $M$ maintain the same tameness present in $T$ in $\Th(\cQ)$? We now show that this is not the case. Let $\cQ$ be  $(\Z,+)$, viewed as an expansion of $Q_{\indd}$. Then $T$ is simple and $\Th(\cQ)$ is stable. Let $M_0$ be the countable model of $T$. Let $A\seq\Z$ be an arbitrary infinite co-infinite set, and let $\cQ^*$ be the structure $(\Z,+,A)$. Since $M_0$ is universal for infinite bipartite graphs, we can  find some point $a\in P(M_0)$ whose $E$-neighborhood $E(a,M_0)$ is infinite and co-infinite. Now expand $M$ to an $\Lbig$-structure by naming $\cQ=(\Z,+)$ on $Q_{\indd}(M_0)$ in such a way that $E(a,M_0)$ coincides with $A$.  Then $\cQ^*$ is a definable reduct of $M$ since $E(a,M_0)$ is $\Lbig$-definable using the parameter $a$.

In conclusion, if $T$ and $\cQ$ are as in the previous paragraph, then any model theoretic complexity that can be obtained when expanding $(\Z,+)$ by a unary predicate can also be obtained when expanding a model $T$ by naming $\cQ$ on $Q_{\indd}$. So for example, we can obtain the strict order property by using Presburger arithmetic $(\Z,+,\N)$. More generally, it follows from Theorem \ref{thm:unaryZ} below that any countable graph can be found as a definable reduct of some $\Lbig$-structure $M\models \Tbig$ obtained as above. Consequently, any countable structure in a finite language can be interpreted in a countable model of $\Tbig$ (e.g., by  \cite[Theorem 5.5.1]{Hodges}; one can also extend to countable languages using  \cite{BousZi}).
\end{example}

\section{Preservation of stability, NIP, and NTP$_2$}\label{sec:SNN}
\numberwithin{theorem}{section}

\subsection{Stability}\label{sec:pres stable} 

Let $T$ be a complete stable theory. Fix an $\emptyset$-definable set $Q$ in $T$, and an expansion $\cQ$ of $Q_{\indd}$. Recall that, since $T$ is stable, $Q$ is automatically stably embedded.

\begin{lemma}\label{L: lemma for pres of stab}
Fix a model $M$ of $\Tbig$.
\begin{enumerate}[$(a)$]
\item $|S^{\Tbig}(M)|\leq |S^T(M)|^{|T|}\cdot |S^{\cQ}(Q(M))|^{|T|}$.
\item If $T$ is countable and $\omega$-stable, then $|S^{\Tbig}(M)|\leq |M|\cdot |S^{\cQ}(Q(M))|$.
\item Given $\lambda>|\Tbig|$, if $M{\upharpoonright}\cL$ and $\cQ(M)$ are $\lambda$-saturated then so is $M$. Thus if $T$ and $\Thx$  have saturated models of size $\lambda>|\Tbig|$, then so does $\Tbig$.
\end{enumerate}
\end{lemma}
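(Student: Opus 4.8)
The whole proof rests on the type-counting characterization from Proposition \ref{prop:char types D}, so I would organize everything around it. For part $(a)$: fix $M \models \Tbig$ and take a saturated $N \succeq M$ with $|N| > |M|$. Given a complete $\Lbig$-type $p(x) \in S^{\Tbig}(M)$ realized by some $a$, first apply Lemma \ref{lem:finding c-general}(c) (or Lemma \ref{lem:finding c-stable}(a), exploiting stability of $T$) to obtain $d \in Q(M)$ with $|d| \leq |T|$ and $\tp^T(a/Md) \vdash \tp^T(a/MQ(N))$. Actually, since $T$ is stable I can work over $Q(M)$ directly without imaginaries — but to keep the bookkeeping uniform I would invoke Proposition \ref{prop:char types D}(c) in the case $B = M$ (an elementary substructure of itself is allowed), which gives $\tp^T(ad/M) \cup \tp^{\cQ}(d/Q(M)) \vdash \tp^{\Tbig}(ad/M)$. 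Hence the map $p \mapsto \big(\tp^T(ad/M), \tp^{\cQ}(d/Q(M))\big)$, together with the finite data recording which coordinates of $(x,d)$ are $x$ and which are $d$ (there are at most $|T|$ choices of $d$-layout), is injective on $S^{\Tbig}(M)$. Since $|d| \leq |T|$, the number of $T$-types over $M$ in the variables $(x,d)$ is at most $|S^T(M)|^{|T|}$ (finitely many $x$-coordinates, at most $|T|$-many $d$-coordinates — strictly one should count types in $|x| + |d|$ variables, bounded by $|S^T(M)|$ raised to that finite-or-$|T|$ power), and likewise the number of $\cQ$-types of $d$ over $Q(M)$ is at most $|S^{\cQ}(Q(M))|^{|T|}$. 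Multiplying gives the bound in $(a)$.

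For part $(b)$: the only change is that $\omega$-stability of countable $T$ lets us take $d$ \emph{finite} via Lemma \ref{lem:finding c-stable}(b), and $\omega$-stability of $T$ also gives $|S^T(M)| = |M|$ for infinite $M$ (counting types in finitely many variables). So the same injection sends $S^{\Tbig}(M)$ into a set of size at most $|M| \cdot |S^{\cQ}(Q(M))|$ — here I should be slightly careful that the "finite layout data" and the finitely-many-variable type count both just contribute a factor absorbed into $|M|$ (for $M$ infinite), which is routine.

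For part $(c)$: suppose $M{\upharpoonright}\cL$ and $\cQ(M)$ are $\lambda$-saturated, $\lambda > |\Tbig|$; I want to realize every type $p \in S^{\Tbig}(B)$ with $|B| < \lambda$ in $M$. Pass to a saturated $N \succeq M$, realize $p$ by some $a \in N$, and apply Lemma \ref{lem:finding c-general}(a) to get $d \in Q(N)$ with $|d| \leq |B| + |T| < \lambda$ and $\tp^T(a/Bd) \vdash \tp^T(a/BQ(N'))$ in a further saturated extension. Now: $\tp^{\cQ}(d/Q(B))$ is a type over a subset of $\cQ(M)$ of size $< \lambda$, so by $\lambda$-saturation of $\cQ(M)$ it is realized by some $d^* \in Q(M)$ with $d^* \equiv^{\cQ}_{Q(B)} d$; then by stable embeddedness of $Q$ in $\Tbig$ (Proposition \ref{prop: D' stab emb}) we may move $d$ to $d^*$ inside $N$ by an $\Lbig$-automorphism fixing $B$, so WLOG $d \in Q(M)$. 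Next, $\tp^T(a/Bd)$ is an $\cL$-type over $Bd \subseteq M$ of size $< \lambda$, so by $\lambda$-saturation of $M{\upharpoonright}\cL$ it is realized by some $a^* \in M$. Then by Proposition \ref{prop:char types D}(c) (with $B$ replaced by $B$, using that $\tp^T(a/Bd) \vdash \tp^T(a/BQ(N))$ and $\tp^{\cQ}(d/Q(B))$ is the same on both sides since $d$ itself is fixed), $\tp^{\Tbig}(a^*d/B) = \tp^{\Tbig}(ad/B)$, so $a^* \models p$. Finally, "saturated models of size $\lambda$ exist for $T$ and $\Thx$ $\Rightarrow$ for $\Tbig$": build $M \models \Tbig$ with $M{\upharpoonright}\cL$ the saturated $T$-model of size $\lambda$ and $\cQ(M)$ the saturated $\Thx$-model of size $\lambda$ — one must check such an $M$ exists, i.e., that the induced structure $Q_{\indd}(M{\upharpoonright}\cL)$ on the saturated $T$-model, which has a saturated $T$-reduct, can be expanded to the saturated $\Thx$-model; since any two models of $\cQ$ of size $\lambda$ that are $\lambda$-saturated are isomorphic, and $Q(M{\upharpoonright}\cL)$ has cardinality $\lambda$ with $Q_{\indd}$-reduct $\lambda$-saturated, one needs $Q_{\indd}(M{\upharpoonright}\cL)$ to be $\lambda$-saturated as a pure structure — which follows because a saturated model of $T$ has saturated induced structure on any definable set — and then pick the expansion isomorphic to the saturated $\Thx$-model; then $M$ is $\lambda$-saturated of size $\lambda$ by the first half of $(c)$.

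The main obstacle I anticipate is part $(c)$: specifically the interleaving of the two saturation hypotheses, making sure that after realizing the $\cQ$-part of the type in $\cQ(M)$ and transporting via a $\Tbig$-automorphism, the remaining $\cL$-type to be realized genuinely lives over a subset of $M$ of size $< \lambda$ and that Proposition \ref{prop:char types D}(c) applies cleanly (in particular that the hypothesis "$\tp^{T^\eq}(a/Bc) \vdash \tp^T(a/BQ(N))$" transfers to the new realization). The type-counting in $(a)$ and $(b)$ is essentially bookkeeping once Proposition \ref{prop:char types D} is invoked. I would also double-check the exponent conventions ($|S^T(M)|^{|T|}$ vs.\ $|S^T(M)|^{|x|+|T|}$) match the paper's intent, but since $|S^T(M)| \geq 2$ and $|x|$ is finite this is harmless.
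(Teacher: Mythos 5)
Your proposal is correct and follows essentially the same route as the paper: reduce to counting pairs of types via Lemma \ref{lem:finding c-stable} and Proposition \ref{prop:char types D}$(c)$ for parts $(a)$ and $(b)$, and use the two saturation hypotheses to transfer a realization into $M$ for part $(c)$. One point needs tightening in $(c)$: you realize a type over an arbitrary $B$ with $|B|<\lambda$, but both the invocation of Proposition \ref{prop:char types D}$(c)$ and the step ``move $d$ to $d^*$ by an $\Lbig$-automorphism fixing $B$'' require $B$ to be an elementary substructure (or $\emptyset$) --- otherwise, by Remark \ref{R:induced structure}$(a)$, the $B$-induced structure on $Q$ may involve imaginary parameters not captured by $Q(B)$, so an automorphism of $\cQ$ over $Q(B)$ need not preserve $Q^B_{\indd}$. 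The fix is the routine first step (which the paper builds into its phrasing of $\lambda$-saturation): since $\lambda>|\Tbig|$, extend $B$ to some $N\preceq M$ with $|N|<\lambda$ and realize $p$'s restriction to $N$. A small slip in $(a)$: your $d$ lives in $Q(N)$, not $Q(M)$, since $a$ is a realization of $p$ in the saturated extension $N\succeq M$; this has no effect on the count. Your sketch of the ``final claim'' in $(c)$ is more detailed than the paper's one-line remark and correctly identifies the needed ingredients (saturated $T$-model has saturated $Q_{\indd}$-reduct; transport the $\cL_\cQ$-structure along a $Q_{\indd}$-isomorphism so the result is a model of $\Tbig$).
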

\begin{proof}
Part $(a)$. Let $N\succ M$ be a saturated extension. By Lemma \ref{lem:finding c-stable}$(a)$ and Proposition \ref{prop:char types D}$(c)$, for every finite tuple $a\in N$ there is some $c_a\subseteq Q(N)$ with $|c_a|\leq |T|$  such that $\tp^T(ac_a/M)\cup \tp^{\cQ}(c_a/Q(M))\vdash \tp^{\Tbig}(ac_a/M)\vdash \tp^{\Tbig}(a/M)$. This yields the desired inequality.

Part $(b)$. This follows using the same argument as in part $(a)$, except that by Lemma \ref{lem:finding c-stable}$(b)$ we may further assume that each $c_a$ is finite.

Part $(c)$. Let $N\preceq M$ with $|N|\leq \lambda$, let $p\in S^{\Tbig}(N)$, and let $\cU$ be some monster model of $\Tbig$. Let $a\models p$  in $\cU$. By Lemma \ref{lem:finding c-stable}$(a)$ and Proposition \ref{prop:char types D}$(c)$, there is some $c\in Q(\cU)$ such that $|c|\leq |T|$ and $\tp^T(ac/N)\cup \tp^{\cQ}(c/Q(N))\vdash \tp^{\Tbig}(ac/N)$.  Since $\cQ(M)$  is $\lambda$-saturated, we may find $c'\models \tp^{\cQ}(c/Q(N))$ inside $Q(M)$.  
Since $M{\upharpoonright}\cL$ is $\lambda$-saturated, we can then  find $a'\in M$ such that $\tp^T(a'c'/N) = \tp^T(ac/N)$. Since we also have $c'\models \tp^{\cQ}(c/Q(N))$, it follows that $a'c'\models \tp^{\Tbig}(ac/N)$ and, in particular, $a'\models p$.
So we have shown that $M$ is $\lambda$-saturated. The final claim now follows using the assumption that $\cQ$ expands $Q_{\indd}$. 
\end{proof}

Although it will not be directly used in the subsequent results, we note the following consequence of Lemma \ref{L: lemma for pres of stab}$(a)$. 

\begin{remark}
Given $\lambda\geq \max\{|T|,|\Thx|\}$ with $\lambda^{|T|}=\lambda$, if $T$ and $\Thx$ are $\lambda$-stable then $\Tbig$ is also $\lambda$-stable. 
\end{remark}

We now state the main result of this section, which gives preservation of stability (in various forms) when expanding the induced structure on a definable set. We repeat the standing assumptions for clarity, and we recall again that definable sets in stable theories are automatically stably embedded.

\begin{theorem}\label{T:pres. of stab}
Fix a complete theory \(T\) and an $\emptyset$-definable set $Q$ in $T$. Let $\cQ$ be an arbitrary structure expanding $Q_{\indd}$.
\begin{enumerate}[$(a)$]
    \item If \(T\) and \(\Thx\) are stable, then so is \(\Tbig\).
    \item If $T$ and $\Thx$ are countable and $\omega$-stable, then so is $\Tbig$.
    \item If $T$ and $\Thx$ are superstable, then so is $\Tbig$.
\end{enumerate}
\end{theorem}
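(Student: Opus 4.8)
The plan is to treat the three parts separately, in each case feeding the relevant form of tameness of $T$ and $\Thx$ into the type-counting and saturation estimates recorded in Lemma~\ref{L: lemma for pres of stab}.

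Parts $(a)$ and $(b)$ are short. For $(a)$, recall that a theory is stable precisely when it is $\lambda$-stable for some $\lambda$, and that a stable theory is $\lambda$-stable whenever $\lambda^{|T'|}=\lambda$. So I would fix $\lambda=2^{|\Tbig|}$, which satisfies $\lambda^{|T|}=\lambda^{|\Thx|}=\lambda$ and $\lambda>|\Tbig|$; since $T$ and $\Thx$ are stable they are $\lambda$-stable, and then the remark following Lemma~\ref{L: lemma for pres of stab} applies (alternatively, Lemma~\ref{L: lemma for pres of stab}$(a)$ gives $|S^{\Tbig}(M)|\leq\lambda^{|T|}\cdot\lambda^{|T|}=\lambda$ for any $M\models\Tbig$ with $|M|=\lambda$), so $\Tbig$ is $\lambda$-stable, hence stable. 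For $(b)$, since $\cL$ and $\cL_{\cQ}$ are countable so is $\Lbig$, hence $\Tbig$ is a countable theory; and for any countable $M\models\Tbig$, Lemma~\ref{L: lemma for pres of stab}$(b)$ combined with $\omega$-stability of $\Thx$ (and $|Q(M)|\leq\aleph_0$) gives $|S^{\Tbig}(M)|\leq|M|\cdot|S^{\cQ}(Q(M))|\leq\aleph_0$, so $\Tbig$ is $\omega$-stable.

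Part $(c)$ is the substantial case, and here the counting bound of Lemma~\ref{L: lemma for pres of stab}$(a)$ is of no direct use: its exponent $|T|$ would only yield $\lambda$-stability of $\Tbig$ at cardinals with $\lambda^{|T|}=\lambda$, which does not force superstability. Instead I would detect superstability via the saturation spectrum, invoking Shelah's characterization \cite[Theorem~VIII.4.7]{classification}, whose relevant consequence is: \emph{a stable theory $T'$ is superstable if and only if $T'$ has a saturated model of cardinality $\lambda$ for all sufficiently large $\lambda$.} Granting this, the argument is brief. By part $(a)$, $\Tbig$ is stable. Since $T$ and $\Thx$ are superstable, each has a saturated model of cardinality $\lambda$ for all large enough $\lambda$; picking such a $\lambda$ that is moreover $>|\Tbig|$, Lemma~\ref{L: lemma for pres of stab}$(c)$ produces a saturated model of $\Tbig$ of cardinality $\lambda$. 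Hence $\Tbig$ has a saturated model in all sufficiently large cardinalities, so by the criterion above (applied to the stable theory $\Tbig$) it is superstable.

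The step I expect to be the main obstacle is invoking \cite[Theorem~VIII.4.7]{classification} in the correct form. The nontrivial content is that a superstable theory genuinely has saturated models in \emph{all} large cardinalities — including singular ones of cofinality $\aleph_0$ — whereas a strictly stable theory fails to at cofinally many such cardinals; this dichotomy is governed by the cardinal invariant $\kappa_r(T')$, which equals $\aleph_0$ exactly in the superstable case, and a careful statement of Shelah's theorem is needed to capture it. A weaker formulation, e.g. one restricting to $\lambda$ with $\lambda^{\aleph_0}=\lambda$, would not separate superstable from strictly stable theories and so would not close the argument. Everything else — the cardinal arithmetic in $(a)$, the countability bookkeeping in $(b)$, and the comparisons of $|T|$, $|\Thx|$, $|\Tbig|$ — is routine.
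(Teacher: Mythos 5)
Your proof is correct and follows essentially the same route as the paper: parts $(a)$ and $(b)$ come directly from Lemma~\ref{L: lemma for pres of stab}, and part $(c)$ combines Lemma~\ref{L: lemma for pres of stab}$(c)$ with Shelah's saturation-spectrum theorem \cite[Theorem~VIII.4.7]{classification}. The one expository difference is in $(c)$: the paper argues by contradiction, exhibiting the specific cardinal $\lambda=\beth_{\aleph_0}(|\Tbig|)$ (of cofinality $\aleph_0$, so $\lambda<\lambda^{\aleph_0}$ by K\"onig) at which a strictly stable $\Tbig$ would lack a saturated model, whereas you absorb that cardinal arithmetic into the clean criterion ``stable and superstable iff saturated models exist in all sufficiently large cardinalities'' and argue positively --- same content, differently packaged, and you correctly flag the cofinality-$\aleph_0$ dichotomy as the crux.
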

\begin{proof}
Part $(a)$. Assume that $T$ and $\Thx$ are both stable. We need to check that $|S(M)|\leq |M|^{|\Tbig|}$ for every model $M\models \Tbig$. This follows easily from Lemma \ref{L: lemma for pres of stab}$(a)$ and stability of $T$ and $\Thx$.

Part $(b)$. This is immediate from Lemma \ref{L: lemma for pres of stab}$(b)$. 

Part $(c$). Assume that $T$ and $\Thx$ are both superstable and that $\Tbig$ is  not superstable. Then $\Tbig$ is strictly stable by part $(a)$. So by \cite[Theorem II.3.14]{classification}, $\Tbig$ is not $\lambda$-stable for any $\lambda$ satisfying $\lambda<\lambda^{\aleph_0}$.

Let $\lambda =\beth_{\aleph_0}(|\Tbig|)$. By K\"onig's theorem, $\lambda<\lambda^{\text{cf}(\lambda)}\leq \lambda^{\aleph_0}\leq \lambda^{<\lambda}$ and obviously $\lambda>|S(\Tbig)|$. 
By \cite[Theorem VIII.4.7]{classification}, $\Tbig$ has no saturated model of cardinality $\lambda$. By Lemma \ref{L: lemma for pres of stab}$(c)$, either $T$ or $\Thx$ has no saturated model of cardinality $\lambda$. Without loss of generality assume that it is $T$; then by \cite[Theorem VIII.4.7]{classification} again, $T$ is not $\lambda$-stable. On the other hand, $T$ is superstable hence $\lambda$-stable for every $\lambda\geq 2^{|T|}$ (\cite[Theorem II.3.14]{classification}), contradiction.
\end{proof}

 \begin{remark}\label{rem:stabJS}
 Part $(a)$ of Theorem \ref{T:pres. of stab} can also be proved in similar fashion to \cite[Proposition 2.5]{JaSi}, using the fact that $\varphi(x,y)$ has the order property if and only if there is an indiscernible sequence $(a_i)_{i< \omega 2}$ and $b$ such that $\varphi(a_i,b)$ holds if and only if $i<\omega$. One must then also prove a corresponding version of \cite[Lemma 2.2]{JaSi}.
 \end{remark}

We take the opportunity now to state a specific application of the previous theorem, since it was one of the seeds that later developed to the results in this section. For motivation, recall that by \cite[Lemma 2.2]{DoGoStrong} every (not necessarily pure) field of finite dp-rank eliminates $\exists^\infty$, and also that, conjecturally, every strongly dependent pure field is of finite dp-rank \cite[Proposition 3.11]{HaHaJa}. During a talk  by the third author in GTM Paris, Martin Hils noted that given Theorem \ref{T:pres. of stab}$(b)$, one can construct an $\omega$-stable (expansion of a) field that does not eliminate $\exists^\infty$, and thus is strongly dependent but not of finite dp-rank.\footnote{DCF$_0$ is also an example of a strongly dependent field which is not of finite dp-rank.}

\begin{corollary}[Hils]\label{cor:Hils}
There exists an expansion $\mathcal{K}$ of a field $(K,+,\cdot)$ such that $\Th(\mathcal{K})$ is  $\omega$-stable, but does not eliminate $\exists^\infty$. 
In particular, $\Th(\mathcal{K})$ is strongly dependent but not of finite dp-rank.
\end{corollary}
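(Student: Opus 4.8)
The plan is to build the field $\mathcal{K}$ as a $\Tbig$-construction where $T$ is the theory of an $\omega$-stable field and $\cQ$ is an $\omega$-stable structure that, once glued onto a definable set, destroys elimination of $\exists^\infty$. The natural choice for $T$ is $\mathrm{ACF}_0$ (or $\mathrm{ACF}_p$) in the language of rings; it is $\omega$-stable, and any infinite $\emptyset$-definable set $Q$ (e.g., the field itself, or the affine line) is stably embedded with induced structure interdefinable with $(K,+,\cdot)$. However, the induced structure on the line is again a field, so expanding it is essentially the original problem. To get something genuinely simpler we instead take $Q$ to be an infinite set whose induced structure in $T$ is as trivial as possible --- but in a pure algebraically closed field there is no such set. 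The fix is to start one level down: let $T_0 = \mathrm{ACF}$ and let $Q$ be the algebraic closure construction is not available, so instead I would take $\cM$ to be an $\omega$-stable field together with a $\emptyset$-definable predicate $Q$ whose induced structure is a pure set. Concretely, take $T$ to be the theory of a pure set with an $\emptyset$-definable copy of an algebraically closed field and an $\emptyset$-definable infinite set $Q$ disjoint from the field sort; this is $\omega$-stable, and $Q$ is stably embedded with $Q_{\indd}$ a pure set.

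First I would fix $\cQ$ to be a countable $\omega$-stable structure on the set $Q$ that does not eliminate $\exists^\infty$: the standard example is an equivalence relation $E_n$ for each $n<\omega$, each refining the previous, where $E_{n+1}$ splits each $E_n$-class into infinitely many classes, with everything as generic as possible (equivalently, the model companion of the theory of a descending sequence of equivalence relations). This theory is $\omega$-stable of infinite Morley rank and fails to eliminate $\exists^\infty$, since ``$x$ lies in an $E_n$-class with infinitely many sub-$E_{n+1}$-classes'' is uniform but the quantifier $\exists^\infty$ over such classes is not definable. Then by Theorem \ref{T:pres. of stab}$(b)$, the enrichment $\Tbig$ is $\omega$-stable. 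By Proposition \ref{prop: D' stab emb}, the $\Lbig$-induced structure on $Q$ is exactly $\cQ$, so $\Tbig$ does not eliminate $\exists^\infty$ --- the relevant failure already occurs inside $Q$, which is $\Lbig$-definable.

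Next I would arrange for a \emph{field} rather than merely a structure containing a field. The cleanest route: instead of a two-sorted theory, take $\cM$ to be an algebraically closed field $(K,+,\cdot)$ of, say, characteristic $0$, pick an infinite $\emptyset$-definable set $Q$, and note we cannot get $Q_{\indd}$ to be a pure set here. So one genuinely needs the extra sort. To convert a multi-sorted $\omega$-stable structure containing a field into a single-sorted field, one uses the standard trick of coding the auxiliary sort into the field via a definable injection into $K$ (possible because $K$ is infinite and $\omega$-stable, so every sort of $\Tbig^{\eq}$ of the same cardinality embeds definably after adding parameters) --- or, more simply, one observes that ``being a field that does not eliminate $\exists^\infty$'' only requires the field operations to be a definable reduct of $\Th(\mathcal{K})$, and $\exists^\infty$-elimination is a property of the whole theory. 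Thus I would let $\mathcal{K}$ be the structure on $K$ obtained by interpreting $\Tbig$ (with $K = Q$ the field sort, or with $Q$ an auxiliary set coded into $K$) in a single sort, keeping $+,\cdot$; $\Th(\mathcal{K})$ is $\omega$-stable by Theorem \ref{T:pres. of stab}$(b)$ and fails $\exists^\infty$-elimination because $\cQ$ does and $Q$ is definable. The final sentence --- strongly dependent but not of finite dp-rank --- follows formally: $\omega$-stable implies strongly dependent, while \cite[Lemma 2.2]{DoGoStrong} says a field of finite dp-rank eliminates $\exists^\infty$, so $\mathcal{K}$ cannot have finite dp-rank.

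The main obstacle is the bookkeeping needed to make $Q$ genuinely a $\emptyset$-definable set \emph{inside a one-sorted field} while keeping $Q_{\indd}$ trivial enough that an arbitrary $\omega$-stable $\cQ$ can be glued on. In an algebraically closed field every infinite definable set has induced structure interdefinable with the field, so one cannot literally do this; the honest construction must pass through a multi-sorted $\omega$-stable structure whose home sort is a field and whose auxiliary sort $Q$ has pure-set induced structure, then either work with that multi-sorted object directly (interpreting $\exists^\infty$-failure and dp-rank there) or code it down to one sort. I expect the cleanest write-up keeps everything multi-sorted and simply notes that the home sort carries a field structure; the reduction to a literal one-sorted field, if desired, is routine coding and is where the only real care is needed.
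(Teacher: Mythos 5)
The gap is in manufacturing a $\emptyset$-definable set $Q$ with pure-set induced structure \emph{inside a one-sorted field}. You correctly note that no infinite definable set in a pure ACF can have trivial induced structure, but you then conclude that ``one genuinely needs the extra sort'' and launch into a multi-sorted construction plus a vague coding step — and it is exactly this coding step that is never made to work. The missing idea is to expand the field \emph{first} by naming a new unary predicate: take $K$ an uncountable algebraically closed field, let $I\subseteq K$ be a countable $\cL_{\mathrm{ring}}$-indiscernible sequence, and consider $(K,+,\cdot,I)$. By Baldwin--Benedikt \cite{BB00} (which the paper later recasts in Sections~\ref{sec:Uapp}--\ref{sec:expZ} via ``smallness''/``boundedness''), $\Th(K,+,\cdot,I)$ is still $\omega$-stable, $I$ is $\emptyset$-definable in this expansion, and the induced structure $I_{\indd}$ is a pure set. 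This is precisely what you were looking for and could not find, because you were only searching among sets definable in the pure field rather than among sets nameable by a new predicate. Once $I$ is in place, the rest of your plan is correct (gluing on any $\omega$-stable structure failing $\exists^\infty$-elimination via Theorem~\ref{T:pres. of stab}$(b)$, then invoking \cite[Lemma 2.2]{DoGoStrong}); the paper uses an even simpler $\cQ$, namely a single equivalence relation with exactly one class of each finite size, rather than your infinite refining chain, but both choices work. Your coding-to-one-sort detour is not only unnecessary but also unconvincing as stated: defining an injection from an auxiliary sort into $K$ would require parameters and would typically destroy the assertion that the resulting structure is ``an expansion of a field'' in the sense needed, so this should be abandoned in favor of the named-predicate route.
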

\begin{proof}
Let $K$ be an uncountable  algebraically closed field and let $I\subseteq K$ be a countable indiscernible sequence. By \cite{BB00}, $(K,+,\cdot,I)$ is $\omega$-stable and the induced structure on $I$ is a pure set. Let $E$ be a new equivalence relation on $I$ with exactly one equivalence class of size $n$ for every $n<\omega$. By Theorem \ref{T:pres. of stab}$(b)$, $\mathcal{K}=(K,+,\cdot,I,E)$ is still $\omega$-stable. On the other hand, it obviously does not eliminate $\exists^\infty$ and the result follows. 
\end{proof}

Note that in the previous proof we took an arbitrary uncountable algebraically closed field, expanded it by naming a countable indiscernible sequence equipped with  the canonical fcp  theory of an equivalence relation, and concluded that the resulting expansion is still $\omega$-stable. After consulting \cite{BB00}, one can see that the argument works with an arbitrary $\aleph_1$-saturated $\omega$-stable structure in place of the field $K$. Theorem \ref{thm:Urank1} will provide a closely related (and very general) result  about  expanding  \emph{any} (not necessarily saturated) structure whose theory is superstable of $U$-rank $1$ (e.g., a model of ACF).

\subsection{NIP and NTP$_2$}\label{sec:NIPNTP2}

In this subsection, we briefly discuss analogues of Theorem \ref{T:pres. of stab} in the context of NIP and NTP$_2$ theories. As stated in the introduction, the NIP analogue appears in previous literature; see  \cite[Lemma 48]{ChSi} and \cite[Proposition 2.4]{JaSi}.

\begin{fact}[Chernikov-Simon; Jahnke-Simon]\label{fact:JS}
Fix a complete theory $T$ and an $\emptyset$-definable set $Q$, which is stably embedded  in $T$. Let $\cQ$ be an arbitrary structure expanding $Q_{\textnormal{ind}}$. If $T$ and $\Thx$ are $\NIP$, then so is $\Tbig$.
\end{fact}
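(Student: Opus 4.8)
The plan is to use the characterization of types in $\Tbig$ provided by Proposition \ref{prop:char types D}, exactly in parallel with the proof of stability preservation in Theorem \ref{T:pres. of stab}$(a)$. Recall that a formula $\phi(x,y)$ has IP if and only if there is an indiscernible sequence $(a_i)_{i<\omega}$ and a parameter $b$ such that $\phi(a_i,b)$ holds if and only if $i$ is even (equivalently, $\phi$ has IP with respect to \emph{some} indiscernible sequence). So the strategy is: suppose for contradiction that $\Tbig$ has IP, witnessed by some $\Lbig$-formula $\phi(x,y)$, an $\Lbig$-indiscernible sequence $(a_i)_{i<\omega}$, and a tuple $b$ with $\phi(a_i,b)$ iff $i$ even. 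I would then produce, from this data, either an instance of IP in $T$ or an instance of IP in $\Thx$, contradicting the hypothesis.

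First I would reduce the parameter $b$ to a pair: by Lemma \ref{lem:finding c-general}$(c)$ (working over the set $B$ enumerating the $a_i$'s, or over a model containing everything), there is a small tuple $c\in Q^{\eq}$ with $c\in\dcl^{\eq}(b B)$ such that $\tp^{T^{\eq}}(b/Bc)\vdash\tp^T(b/BQ(\cU))$; then by Proposition \ref{prop:char types D}$(b)$, the $\Lbig^{\eq}$-type of $b$ over $B$ is determined by $\tp^{T^{\eq}}(bc/B)$ together with $\tp^{\cQ^{\eq}}(c/E)$ where $E=\dcl^{\eq}(B)\cap Q^{\eq}$. The key consequence is that whether $\phi(a_i,b)$ holds is controlled by two separate pieces of data: an $\cL^{\eq}$-formula $\theta(x,y',z)$ applied to $(a_i, b, c)$ and an $\cL_\cQ^{\eq}$-formula $\chi(z)$ applied to $c$ — more precisely, $\phi(x,b)$ is $\Tbig$-equivalent over $B$ to a Boolean combination of $\cL$-formulas and $\cL_\cQ$-formulas in $x$ and $bc$, by (the proof of) the quantifier-reduction corollary after Proposition \ref{prop:char types D}. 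I would extract an honest finite such Boolean combination: there are finitely many $\cL$-formulas $\alpha_j(x, bc)$ and finitely many $\cL_\cQ$-formulas $\beta_k(x', c)$ (where $x'$ ranges over the $Q^{\eq}$-part of a representative of $x$, coming from $f_\varphi(x,\ldots)$) such that the truth value of $\phi(a_i,b)$ is a fixed Boolean function of the truth values of $\alpha_j(a_i,bc)$ and $\beta_k(a_i', c)$.

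Now I would split into cases according to how the alternating pattern is realized. Pass to a subsequence on which the sequence $(a_i a_i')_{i<\omega}$ is indiscernible over $bc$ in the Morleyization (one can arrange $(a_i)_i$ indiscernible over $bc$ after the usual Ramsey/compactness move, since $bc$ is a fixed finite tuple — replace the original witness by an indiscernible sequence in the same EM-type, which still witnesses IP of $\phi$). On an indiscernible sequence over $bc$, each $\cL$-formula $\alpha_j(a_i, bc)$ has constant truth value in $i$, and likewise each $\cL_\cQ$-formula $\beta_k(a_i', c)$ has constant truth value in $i$ — \emph{unless} the relevant sequence fails to be indiscernible for that formula, which cannot happen once we've taken it indiscernible over $bc$ in both languages. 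Hence $\phi(a_i,b)$ would have constant truth value, contradicting the alternating pattern. The resolution of this apparent contradiction is that indiscernibility of $(a_i)$ over $bc$ \emph{itself} is the thing we must be careful about: we can only assume $(a_i)$ is $\Lbig$-indiscernible over $\emptyset$ (or over $B_0$, the base), not over $bc$. So the correct version: the alternation in $\phi(a_i,b)$ must come from alternation in at least one of the $\alpha_j(a_i,bc)$ (an $\cL$-formula, giving IP in $T$, since $(a_i)$ is then an $\cL$-indiscernible-enough sequence and $bc$ a parameter — use that $c\in Q^{\eq}\subseteq\dcl^{\eq}$ of things, and that IP in $T^{\eq}$ is equivalent to IP in $T$) or from alternation in some $\beta_k(a_i',c)$ (an $\cL_\cQ$-formula; here $(a_i')$ is a sequence in $Q^{\eq}$, it is $\cQ^{\eq}$-indiscernible because $Q$'s induced $\Lbig$-structure is $\cQ$ by Proposition \ref{prop: D' stab emb}, and $c\in Q^{\eq}$, giving IP in $\Thx$). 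Either way we contradict the NIP hypothesis.

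I expect the main obstacle to be the bookkeeping around imaginaries and around which sequences are indiscernible in which reduct: one must check that the $Q^{\eq}$-tuples $a_i' = (f_\varphi(a_i,\ldots))$ form a sequence that is genuinely indiscernible in $\cQ^{\eq}$ (this follows from $\Lbig$-indiscernibility of $(a_i)$ together with $Q$ being stably embedded in $\Tbig$ with induced structure $\cQ$, and the $f_\varphi$ being $\cL^{\eq}$-definable, hence $\Lbig^{\eq}$-definable), and that alternation genuinely localizes to a single $\cL$- or $\cL_\cQ$-formula rather than requiring the full Boolean combination — the latter is immediate since a Boolean function of eventually-constant $\{0,1\}$-sequences is eventually constant. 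An alternative, cleaner route, which I would mention as Remark-worthy, is to avoid the direct combinatorial argument entirely and instead count: by Lemma \ref{L: lemma for pres of stab}$(a)$ and its NIP analogue via counting types over indiscernible sequences / $\mathrm{alt}$-rank, one bounds the number of $\Tbig$-types over an indiscernible sequence by the product of the corresponding quantities for $T$ and $\Thx$; but since the excerpt already has the combinatorial Lemma \ref{lem:finding c-general} machinery, the type-characterization route is the natural one to follow, mirroring exactly the hint in Remark \ref{rem:stabJS} for the stable case.
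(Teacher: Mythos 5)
The paper does not prove Fact~\ref{fact:JS}: it is stated as a citation to \cite[Lemma 48]{ChSi} and \cite[Proposition 2.4]{JaSi}. So there is no in-paper proof to compare against; your attempt is a reconstruction of the Jahnke--Simon-style argument, which is indeed the route the paper alludes to in Remark~\ref{rem:stabJS} for the stable case (with the explicit caveat there that ``one must then also prove a corresponding version of [JaSi, Lemma 2.2]'').

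The place where your argument has a real gap is exactly that missing lemma. You want a decomposition of the truth value of $\phi(a_i,b)$ into a fixed Boolean function of $\cL$-instances $\alpha_j(a_i,bc)$ and $\cL_\cQ$-instances $\beta_k(a_i',c)$, and then in Case~2 you need the sequence $(a_i')$ to be $\cQ^{\eq}$-indiscernible. Those two requirements pull in opposite directions, and the paper's tools do not reconcile them. Proposition~\ref{prop:char types D}(b) together with Lemma~\ref{lem:finding c-general}(c) decomposes $\tp^{\Tbig}(a_i/b)$ via a $Q^{\eq}$-tuple $c_i\in\dcl^{\eq}(a_ib)\cap Q^{\eq}$, i.e.\ via the \emph{joint} projection of $a_i$ and $b$; this $c_i$ depends on $b$, so the sequence $(a_i,c_i)_i$ is no longer $\Lbig$-indiscernible and the IP argument in $\Thx$ breaks. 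If instead you take $a_i'=f(a_i)\in\dcl^{\eq}(a_i)\cap Q^{\eq}$ (no $b$), the sequence is indiscernible, but then $\tp^{T^{\eq}}(a_ia_i'/bc)\cup\tp^{\cQ^{\eq}}(a_i'/c)$ need not determine $\phi(a_i,b)$ -- that is not what Proposition~\ref{prop:char types D} gives you, and in general the joint projection of $(a_i,b)$ carries strictly more information than the pair of individual projections. Your appeal to ``the proof of the quantifier-reduction corollary'' does not close this: Corollary~2.8(a) requires quantifier elimination hypotheses, and even then the output has a prefix of $Q$-bounded quantifiers that you never eliminate, so it does not yield a genuine Boolean combination in $(a_i,a_i',b,c)$. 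You correctly identify the pitfall of passing to a subsequence indiscernible over $bc$ (which destroys the alternation), but the proposed ``correct version'' just asserts the needed decomposition rather than deriving it. Filling this gap is precisely the content of \cite[Lemma 2.2]{JaSi}, which would have to be proved in this setting before your two-case analysis is legitimate. (Your closing sketch of a ``counting'' alternative via a putative NIP analogue of Lemma~\ref{L: lemma for pres of stab}(a) is too vague to assess; counting types over models detects stability, not NIP, so the analogue would have to be formulated with some care, e.g.\ via alternation/VC bounds.)
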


The NTP$_2$ analogue  is essentially proved by Chernikov and Hils in \cite{CheHil}, although not explicitly. So we take the opportunity to spell out the details. 

\begin{proposition}\label{prop:CH}
Fix a complete theory $T$ and an $\emptyset$-definable set $Q$, which is stably embedded  in $T$. Let $\cQ$ be an arbitrary structure expanding $Q_{\textnormal{ind}}$. If $T$ and $\Thx$ are $\NTP 2$, then so is $\Tbig$.
\end{proposition}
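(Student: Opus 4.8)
The plan is to adapt the strategy behind Fact \ref{fact:JS} (the NIP case), replacing the relevant combinatorial characterization of IP by the corresponding one for TP$_2$, and using the machinery of Chernikov--Hils \cite{CheHil} on preservation of NTP$_2$ in certain ``enriched'' settings. Concretely, suppose toward a contradiction that $\Tbig$ has TP$_2$, witnessed by an $\Lbig$-formula $\varphi(x,y)$ and an array $(b_{i,j})_{i,j<\omega}$ in a monster model $\cU$ of $\Tbig$: the rows $(b_{i,j})_j$ are mutually indiscernible, each row $(\varphi(x,b_{i,j}))_j$ is $2$-inconsistent, and each vertical path $(\varphi(x,b_{i,\eta(i)}))_i$ is consistent. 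The first step is to reduce $\varphi$ to a manageable form using the characterization of $\Lbig$-formulas modulo $\Tbig$ from the corollary at the end of Section~\ref{sec:stably}: every $\Lbig$-formula is a bounded alternation of $Q$-quantifiers over a Boolean combination of $\cL$-formulas and $\cL_{\cQ}$-formulas (one must first pass to reducts with quantifier elimination, or else work directly with the type-characterization in Proposition~\ref{prop:char types D}, which is the more robust route). Equivalently, and this is the key structural input, by Proposition~\ref{prop:char types D}(c), for each element $a$ of the array's ``column solutions'' there is a tuple $c_a \in Q^\eq(\cU)$ of bounded size with $\dcl^\eq(a)\cap$\ldots\ controlling $\tp^{\Tbig}$ via $\tp^{T^\eq}(ac_a/\emptyset) \cup \tp^{\cQ^\eq}(c_a/\emptyset)$; so the TP$_2$ pattern in $\Tbig$ is governed by a TP$_2$-like pattern built from a pattern in $T$ and a pattern in $\cQ$.

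The heart of the argument is then to show that a TP$_2$ array in $\Tbig$ forces TP$_2$ either in $T$ or in $\Thx=\Th(\cQ)$. Here I would invoke the relevant result of Chernikov--Hils: in \cite{CheHil} they analyze NTP$_2$ for structures built over a stably embedded sort (their motivating case being valued fields with an enriched residue field), and they isolate exactly the statement that if the base theory and the theory of the enriched sort are both NTP$_2$ then so is the whole structure. So the plan is (i) to verify that $\Tbig$ falls within the scope of their framework --- this uses Proposition~\ref{prop: D' stab emb} (stable embeddedness of $Q$ in $\Tbig$ with induced structure exactly $\cQ$) and the type-characterization of Proposition~\ref{prop:char types D}, which together say that $\Tbig$ is the ``free-amalgamation-style'' enrichment to which their results apply --- and then (ii) cite their preservation theorem. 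Alternatively, if a direct citation is not available in the exact form needed, one argues by hand: extract from the array $(b_{i,j})$, after passing to mutually indiscernible rows and applying Ramsey/Erdős--Rado to the associated $Q^\eq$-tuples, an array in the $Q$-sort witnessing TP$_2$ for an $\cL_{\cQ}$-formula (contradicting NTP$_2$ of $\Thx$) unless the inconsistency is already visible at the level of the $\cL$-part, in which case one gets TP$_2$ for an $\cL$-formula (contradicting NTP$_2$ of $T$); the dichotomy comes from the fact that $2$-inconsistency of a row is a closed condition that must be realized in one of the two ``coordinates'' of the type-decomposition.

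The main obstacle I anticipate is precisely this last dichotomy: unlike IP, which is detected by a single indiscernible sequence and an ``alternation'' condition (so that the Jahnke--Simon argument splits cleanly), TP$_2$ involves a two-dimensional array and requires simultaneously preserving mutual indiscernibility of rows, $2$-inconsistency within each row, and consistency along paths, all while decomposing along the $T$/$\cQ$ split. Getting all three conditions to descend to one side requires care with the order in which one applies indiscernibility extraction to the $Q^\eq$-coordinates versus the ambient $T$-coordinates, and it is here that the Chernikov--Hils toolkit (in particular their treatment of ``stably embedded pairs'' and burden/TP$_2$ in that context) does the real work. If the cited results of \cite{CheHil} can be applied as a black box once the setup is matched via Propositions~\ref{prop: D' stab emb} and~\ref{prop:char types D}, the proof is short; if not, the combinatorial descent argument sketched above, while routine in spirit, will need to be executed carefully.
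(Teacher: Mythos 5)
Your high-level plan is the right one, and you correctly identify the three main ingredients the paper actually uses: stable embeddedness of $Q$ in $\Tbig$ with induced structure $\cQ$ (Proposition~\ref{prop: D' stab emb}), the type-decomposition machinery (Lemma~\ref{lem:finding c-general} and Proposition~\ref{prop:char types D}), and the Chernikov--Hils toolkit from \cite{CheHil}. However, the proposal stops short of a proof in exactly the place where the real content lies, and your proposed ``dichotomy'' is not how the argument actually goes.

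The paper does not split into the two cases ``TP$_2$ in $\cQ$'' versus ``TP$_2$ in $T$''; it uses NTP$_2$ of \emph{both} theories in sequence. Concretely: one first normalizes a putative TP$_2$ witness via \cite[Lemma 3.9]{CheHil} to get a strongly indiscernible array $(a_{ij})$, a realization $c$ of the first column, and mutual indiscernibility of rows over $c$. Lemma~\ref{lem:finding c-general}$(a)$ then produces $d\in Q$ with $\tp^T(ca_{00}/d)\vdash\tp^T(ca_{00}/Q)$. The crucial step --- which is where NTP$_2$ of $\cQ$ enters, and which is absent from your sketch --- is \cite[Lemma 3.8]{CheHil}: since $Q$ is stably embedded in $\Tbig$ with NTP$_2$ induced structure, one may assume $d\subseteq a_{00}$, i.e.\ the $Q$-controlling parameters can be absorbed into the array entry itself. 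After that absorption, Proposition~\ref{prop:char types D}$(c)$ yields $\tp^T(c/a_{00})\vdash\tp^{\Tbig}(c/a_{00})$, which is exactly the hypothesis needed to apply \cite[Lemma 3.11]{CheHil} and conclude (from NTP$_2$ of $T$) that the first row $\{\phi(x,a_{0j}):j<\omega\}$ is consistent --- a contradiction. Your hedge (``if the cited results of \cite{CheHil} can be applied as a black box \ldots the proof is short; if not \ldots'') signals the gap: you don't identify \emph{which} results, and the fallback dichotomy argument you sketch would have to reprove precisely the content of Lemmas 3.8 and 3.11 of \cite{CheHil}, which is nontrivial and not something one can wave through by ``Ramsey/Erd\H{o}s--Rado on the $Q^\eq$-tuples.'' So the proposal has the right orientation but is missing the two specific lemmas that carry the argument, and the combinatorial structure you suggest (a case split) is not the one the proof uses.
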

\begin{proof}
As indicated above, the proof is largely a matter of combining several key results from \cite{CheHil}, along with a few  basic tools from Section \ref{sec:stably}. For a contradiction, assume $\Tbig$ has TP$_2$. By \cite[Lemma 3.9]{CheHil}, we find an $\Lbig$-formula $\varphi(x,y)$ and strongly indiscernible array \((a_{ij})_{i,j\in\omega}\) witnessing TP$_2$ (see \cite[Definition 3.1 and 3.4]{CheHil}), along with a realization \(c\models \{\phi(x,a_{i0}): i<\omega\}\) such that the sequence of rows \((\overline{a}_{i})_{i<\omega}\) is indiscernible over \(c\). Using Lemma \ref{lem:finding c-general}$(a)$, let \(d\in Q\) be a tuple such that \(\tp^T(ca_{00}/d)\vdash \tp^T(ca_{00}/Q(N))\) for every  $N\models \Tbig$ containing $c$ and $a_{00}$. Note that, by Proposition \ref{prop: D' stab emb}, \(Q\) is stably embedded in \(\Tbig\) with induced structure \(\cQ\), which is NTP$_2$. Therefore, by \cite[Lemma 3.8]{CheHil}, we may assume \(d\subseteq a_{00}\). 
Now we have $\tp^T(ca_{00})\cup \tp^{\cQ}(d)\vdash \tp^{\Tbig}(ca_{00})$ by Proposition \ref{prop:char types D}$(c)$, and thus $\tp^T(c/a_{00})\vdash \tp^{\Tbig}(c/a_{00})$.
Since \(T\) is NTP$_2$, it follows from \cite[Lemma 3.11]{CheHil} that \(\{\phi(x,a_{0j}): j<\omega\}\) is consistent, contradicting that \((a_{ij})_{i,j<\omega}\) witnesses TP$_2$ for \(\phi(x,y)\).
\end{proof}

\section{Preservation of NSOP$_1$ and simplicity}\label{sec:NSOP1}
\numberwithin{theorem}{section}

\subsection{Preliminaries}

Let $T$ be a complete theory with monster model $\cU$.

\begin{definition}
 Let $\ind$ be an invariant ternary relation on small subsets of $\cU$. We define the following axioms.
\begin{enumerate}[$(1)$]
\item (\setword{normality}{NOR}) If $A\ind_C B$ then $A\ind_C BC$.
\item (\setword{monotonicity}{MON}) If $A\ind_C BD$ then $A\ind_C B$.
\item (\setword{base monotonicity}{BMON}) If $A\ind_C BD$ then $A\ind_{CD} B$.
\item (\setword{finite character}{FIN}) If $a\ind_C B$ for all finite $a\seq A$, then $A\ind_C B$.
  \item (\setword{existence}{EX}) $A\ind_C C$ for any $A$ and $C$.
  \item (\setword{extension}{EXT}) If $A\ind_C B$ then for any $D$ there is $A'\equiv_{BC} A$ with $A'\ind_C BD$.
  \item (\setword{symmetry}{SYM}) If $A\ind_C B$ then $B\ind_C A$.
  \item (\setword{transitivity}{TRA}) Given $C\seq D\seq A$, if $A\ind_{D} B$ and $D\ind_C B$ then $A\ind_C B$.
  \item (\setword{local character}{LOC}) For every $A$ and $B$ there exists $C\subseteq B$ such that $\abs{C}\leq \abs{A}+\abs{T}$ and $A\ind_{C} B$.
  \item (\setword{the independence theorem}{INDTHM} over models) Let $M$ be a small model, and assume $A\ind_M B$, $C_1\ind_M A$, $C_2\ind_M B$, and $C_1\equiv_M C_2$. Then there is a set $C$ such that $C\ind_M AB$, $C\equiv_{MA}C_1$, and $C\equiv_{MB}C_2$.
  \item (\setword{stationarity}{STAT} over models) Let $M$ be a small model, and assume $C_1\ind_M A$, $C_2\ind_M A$, and $C_1\equiv_M C_2$. Then $C_1\equiv_{MA} C_2$.
\end{enumerate}
We also use the terminology \emph{monotonicity} (resp., \emph{finite character}, \emph{symmetry}, \emph{existence}, \emph{extension}) \emph{over models} to mean the axiom obtained by restricting to the case when $C$ is a small model $M\models T$. By \emph{base monotonicity} (resp., \emph{transitivity}) \emph{over models}, we mean the axiom obtained by restricting to the case when $C$ and $D$ are small models $M$ and $N$, with $M\seq N$.
\end{definition}

\begin{remark}\label{rem:fork-basics}
Let $\indi T$ denote forking independence in $T$. Then $\indi T$ satisfies \ref{NOR}, \ref{MON}, \ref{BMON}, \ref{FIN}, \ref{EXT}, and \ref{TRA}  (and also satisfies the natural ``lefthand" analogues of normality and monotonicity). Moreover, given $A,B,C\seq\cU$, we have:
\begin{enumerate}[$(i)$]
\item $A\indi T_C B$ $~\Leftrightarrow~$ $A\indi T_{\acl(C)} B$ $~\Leftrightarrow~$ $\acl(A)\indi T_C B$ $~\Leftrightarrow~$ $A\indi T_C\acl(B),~$ and
\item $A\indi T_C B$ $~\Rightarrow~$ $\acl(A)\cap\acl(B)\seq\acl(C)$.
\end{enumerate}
\end{remark}

Recall  that $T$ is simple if and only if forking independence satisfies symmetry. In this case, forking independence coincides with dividing independence, and also satisfies existence, local character, and the independence theorem over models. If $T$ is stable, then forking independence satisfies stationarity over models.

Given tuples $a,b\in\cU\models T$, and a small model $M$, we say that $\tp(a/Mb)$ \textbf{does not Kim-divide over $M$} if  for any formula $\phi(x,b)\in \tp(a/Mb)$ and any Morley sequence $(b_i)_{i<\omega}$ in a global $M$-invariant extension of $\tp(b/M)$, the partial type $\set{\phi(x,b_i): i<\omega}$ is consistent. This leads to the natural analogue of \emph{Kim-forking}, and to the ternary relation of \emph{Kim-independence over models}. By a standard exercise analogous to the case of forking, Kim-forking can be defined by forcing extension: \(\tp(a/Mb)\) does not Kim-fork over \(M\) if and only if for every \(b' \supseteq b\), there exists \(a'\equiv_{Mb}a\) such that \(\tp(a'/Mb')\) does not Kim-divide over \(M\).

Recall that $T$ is NSOP$_1$ if and only if Kim-independence satisfies symmetry over models. Moreover, if $T$ is simple, then Kim-independence and forking independence coincide. See \cite{KR20} for further details. 


\begin{remark}\label{rem:acl K-fork}
Using arguments similar to the case of forking independence, one can show that  Kim-independence satisfies conditions $(i)$ and $(ii)$ of Remark \ref{rem:fork-basics} when $C$ is a small model $M$. We will only need to use condition $(i)$, which is proved for NSOP$_1$ theories in \cite[Corollary 5.17]{KR20}. So we sketch the proof for arbitrary theories. Note that since $C=M$ is  algebraically closed, the analogue of first equivalence in $(i)$ is trivial. The analogue of the last equivalence in $(i)$ is proved using extension for Kim-forking (exactly as in the case of forking). So we only need to prove the analogue of the second equivalence in $(i)$. 

First,  if $\tp(\acl(a)/Mb)$ does not Kim-fork over $M$ then neither does $\tp(a/Mb)$ by the proof of \cite[Corollary 5.17]{KR20}. So it remains to show that  if \(\tp(a/Mb)\) does not Kim-fork over \(M\), then neither does \(\tp(\acl(Ma)/Mb)\). Let us first consider Kim-dividing. If $\tp(\acl(aM)/bM)$ Kim-divides over $M$ then there is a formula $\phi(x,b)$ and a Morley sequence $(b_i)_{i<\omega}$ in $\tp(b/M)$ such that for some $c\in\acl(aM)$ such that $\phi(c,b)$, we have that $\bigwedge_i \phi(z,b_i)$ is inconsistent. If $\theta(z,x) \in\tp(ca/M)$ is such that \(\theta(M,e)\) is always finite, then $\bigwedge_{i} \exists z\phi(z,b_i)\wedge \theta(z,x)$ is also inconsistent, hence $\tp(a/bM)$ Kim-divides. If \(\tp(a/Mb)\) does not Kim-fork over \(M\) then, for every \(b'\supseteq b\), there is an \(a'\equiv_{Mb}a\) such that \(\tp(a'/Mb')\) does not Kim-divide over \(M\). But then \(\tp(\acl(Ma')/Mb')\) does not Kim-divide over \(M\) and \(\acl(Ma') \equiv_{Mb} \acl(Ma)\), so  \(\tp(\acl(Ma)/Mb)\) does not Kim-fork over \(M\). 
\end{remark}


 We will make use of the following observation.

\begin{lemma}\label{lem:kim-dividing-reduct}
 Let  $T_0$ be a simple reduct of $T$. For any $M\models T$ and $a,b\in\cU$, if $\tp^T(a/\M b)$ does not Kim-divide over $\M$ (in the sense of $T$), then $\tp^{T_0}(a/\M b)$ does not fork over $\M$ (in the sense of $T_0$). 
\end{lemma}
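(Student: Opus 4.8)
The plan is to unwind the definitions and reduce everything to a statement about Morley sequences in the reduct $T_0$. First I would recall the standard fact (see \cite{KR20}) that in any theory, if $\tp^T(a/Mb)$ does not Kim-divide over $M$, then there is \emph{some} Morley sequence $(b_i)_{i<\omega}$ over $M$ in a global $M$-invariant extension of $\tp^T(b/M)$ such that $\{\phi(x,b_i):i<\omega\}$ is consistent for every $\phi(x,b)\in\tp^T(a/Mb)$; in fact for Kim-dividing over a model the choice of global invariant extension does not matter, so every such Morley sequence works. Concretely, fix a global $M$-invariant type $q(y)\supseteq\tp^T(b/M)$ (e.g.\ a coheir), let $(b_i)_{i<\omega}$ be a Morley sequence generated by $q$ over $M$, and pick $a'$ realizing $\bigcup_i \tp^T(a/Mb_i)$ with $a'b_0\equiv_M ab$.

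The key observation is that $q$, being $M$-invariant as a $T$-type, restricts to an $M$-invariant global $T_0$-type extending $\tp^{T_0}(b/M)$: invariance of the $\cL$-type under $\mathrm{Aut}(\cU/M)$ immediately gives invariance of its restriction to the smaller language $\cL_0\subseteq\cL$. Hence $(b_i)_{i<\omega}$ is also a Morley sequence over $M$ in an $M$-invariant global extension of $\tp^{T_0}(b/M)$, i.e.\ it is an $\cL_0$-indiscernible $M$-invariant Morley sequence. Now for any $\cL_0$-formula $\theta(x,b)\in\tp^{T_0}(a/Mb)$, we have $\theta(x,b)\in\tp^T(a/Mb)$ as well (since $\cL_0\subseteq\cL$), so by the choice of $a'$ the set $\{\theta(x,b_i):i<\omega\}$ is consistent. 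This shows $\tp^{T_0}(a/Mb)$ does not divide over $M$ with respect to the Morley sequence $(b_i)_i$, hence in particular $\tp^{T_0}(a/Mb)$ does not divide over $M$ in the sense of $T_0$. Finally, since $T_0$ is simple, dividing and forking coincide over any set, so $\tp^{T_0}(a/Mb)$ does not fork over $M$.

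The main thing to be careful about—and the only place where there is a genuine subtlety rather than bookkeeping—is the quantifier structure in the definition of Kim-dividing: the definition requires that $\{\phi(x,b_i):i<\omega\}$ be consistent for \emph{every} Morley sequence in \emph{some} (equivalently any, over a model) global $M$-invariant extension of $\tp(b/M)$, so I must make sure I am allowed to use the particular sequence $(b_i)_i$ generated by my chosen $q$. This is exactly where one invokes that over a model the notion of Kim-dividing is independent of the choice of global invariant extension (Kim's lemma for Kim-dividing in the NSOP$_1$ setting, or more elementarily the fact that any two Morley sequences of an $M$-invariant type are $M$-conjugate), which is part of the machinery cited in \cite{KR20}; given that, the witness $a'$ exists and the argument above goes through. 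I would flag that the lemma as stated only needs the ``some sequence works'' direction, so even the weaker form of Kim's lemma suffices. No further obstacle is expected; the rest is the trivial observation that an $\cL$-invariant type stays invariant on restriction to $\cL_0$ and that $\cL_0$-formulas are $\cL$-formulas.
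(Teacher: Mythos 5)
Your proof is correct and takes essentially the same route as the paper: fix an $M$-invariant (e.g.\ coheir) global extension of $\tp^T(b/M)$, generate a Morley sequence, use Kim-nondividing to get consistency of $\{\phi(x,b_i)\}$, observe that the sequence remains a Morley sequence in $T_0$ since invariance/finite satisfiability passes to the reduct, and conclude via Kim's Lemma and forking-equals-dividing in the simple theory $T_0$. The only cosmetic differences are your introduction of the realization $a'$ (unnecessary, since one may work formula-by-formula as the paper does) and your cautionary remark about the quantifier in the definition of Kim-dividing, which is moot given the paper's ``for any Morley sequence'' formulation.
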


\begin{proof}Let $p$ be a global  extension of $\tp^T(b/\M)$, which is finitely satisfiable in $M$, and $(b_i)_{i<\omega}$  a Morley sequence in $p$ over $M$, by which we mean that $b_n\models p{\upharpoonright} M(b_i)_{i<n}$ for all $n<\omega$. Let $\phi(x,b)\in \tp^{T_0}(a/\M b)$. We show that $\varphi(x,b)$ does not fork over $M$ (in the sense of $T_0$). By simplicity of $T_0$, it is enough to check that $\phi(x,b)$ does not divide over $\M$. As $\tp^T(a/\M b)$ does not Kim-divide over $\M$ and $(b_i)_{i<\omega}$ is a Morley sequence in the $M$-invariant type $p$, the partial type $\{\phi(x,b_i):i<\omega\}$ is consistent. Since $(b_i)_{i<\omega}$ is still a Morley sequence in the sense of $T_0$ (note that the restriction of $p$ to $T_0$ is still finitely satisfiable in $M$), it then follows from Kim's Lemma that $\varphi(x,b)$ does not divide over $M$.
\end{proof}

\begin{remark}\label{rem:kim-dividing-reduct}
In the previous proof, the only facts used about $T_0$ are  Kim's Lemma (dividing with respect to one indiscernible sequence is equivalent to dividing with respect to any indiscernible sequence) and the equivalence of forking and dividing. Both of these facts have analogues for Kim-independence in the setting of NSOP$_1$ theories \cite{KR20}. Thus the proof of Lemma \ref{lem:kim-dividing-reduct} also yields that if $T_0$ is an $\NSOP 1$ reduct of $T$, and $\tp^T(a/\M b)$ does not Kim-divide over $\M$, then $\tp^{T_0}(a/\M b)$ does not Kim-fork over $\M$. We also note that if $T_0$ is assumed to be stable with elimination of imaginaries, then Lemma \ref{lem:kim-dividing-reduct} has a generalization over $\acl$-closed sets  (this is well-known in the literature; see, e.g., \cite[Lemma 2.1]{BMPW15}).
\end{remark}

Recall that in \cite{KP97}, Kim and Pillay characterized simple theories using the existence of invariant ternary relation satisfying various axioms, which characterize the behavior of forking independence in the simple setting. An analogous treatment of NSOP$_1$ theories was developed in the work of Chernikov and Ramsey \cite[Proposition 5.8]{CR16} and of Kaplan and Ramsey \cite[Theorem 9.1]{KR20}. This led to a characterization of Kim-independence in NSOP$_1$ theories which, while similar to the Kim-Pillay theorem, was not a direct generalization in the strict sense. In \cite{DK21}, Dobrowolski and Kamsma extended key results on NSOP$_1$ theories to the setting of  \emph{positive logic}. In doing so, they gave an axiomatization of Kim-independence over models in NSOP$_1$ theories which, when translated to the first-order setting, yields a more direct correspondence to the result of Kim and Pillay. In particular, their axiomatization can be obtained from \cite[Theorem 4.2]{KP97} by specializing all axioms to be over models, removing \ref{BMON}, and replacing \ref{LOC} by the following variation, which we refer to as \ref{LOCS}. 

 (\setword{chain local character}{LOCS}) Let $a$ be a finite tuple and $\kappa >\abs{T}$ a regular cardinal. For every continuous chain $(\M_i)_{i<\kappa}$ of models with $\abs{\M_i}<\kappa$ for all $i<\kappa$ and $\M=\bigcup_{i<\kappa} \M_i$, there is $j<\kappa$ such that $a\ind_{\M_{j}} \M$.
 

\begin{fact}[Chernikov-Ramsey; Kaplan-Ramsey; Dobrowolski-Kamsma]\label{fact:KPnsop1} 
A complete theory $T$ is $\NSOP 1$ if and only if there is an  invariant ternary relation $\ind$ on small subsets of $\cU$, which satisfies \ref{SYM} over models, \ref{EX} over models, \ref{FIN} over models, \ref{MON} over models, \ref{TRA} over models, \ref{EXT} over models,  \ref{INDTHM} over models,  and \ref{LOCS}.
Moreover, in this case $\ind$ is Kim-independence over models.
\end{fact}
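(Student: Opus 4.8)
The plan is to prove the two implications separately. For the ``only if'' direction I would take $\ind$ to be Kim-independence over models, written $\indi K$, and verify the axioms one by one. Invariance is immediate, and \ref{EX}, \ref{FIN} and \ref{MON} over models are essentially formal: a type Kim-divides over $\M$ precisely when some formula in it does, and Kim-dividing of a formula is monotone in its parameters. The four remaining axioms are the substantive theorems of the structure theory of $\NSOP 1$ theories developed in \cite{CR16,KR20}: \ref{SYM} over models is the symmetry theorem for Kim-independence; \ref{TRA} and \ref{EXT} over models are transitivity and extension, the latter resting on Kim's Lemma for Kim-dividing; \ref{INDTHM} over models is the independence theorem; and \ref{LOCS} is chain local character (which is why the ordinary local character axiom \ref{LOC} is replaced by \ref{LOCS} in this formulation). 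Together with the final sentence of the statement for this choice of $\ind$, this settles the ``only if'' direction.

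For the ``if'' direction, suppose $\ind$ is an invariant ternary relation satisfying the listed axioms. I would first prove that $T$ is $\NSOP 1$, and then that $\ind$ must equal $\indi K$. For the first part, I would argue by contradiction: if $T$ has $\mathrm{SOP}_1$, it is witnessed by a binary tree of tuples in which every branch is consistent but certain incomparable pairs are inconsistent. Using \ref{EXT}, \ref{SYM} and \ref{MON} over models one places two branches sharing a common stem so that their respective continuations are in $\ind$-general position over a model containing the stem, and then \ref{INDTHM} over models amalgamates these two continuations into a single realization lying on both branches, contradicting the inconsistency condition built into $\mathrm{SOP}_1$. This is the abstract form, isolated in \cite{CR16}, of the classical incompatibility between $\mathrm{SOP}_1$ and the independence theorem.

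For the second part, I would establish both inclusions. To see $A\ind_\M B\Rightarrow A\indi K_\M B$: fix $\varphi(x,b)\in\tp(A/\M b)$ with $b\in B$; using \ref{EXT} and \ref{SYM} over models one extracts an $\M$-indiscernible, $\ind$-independent sequence in $\tp(b/\M)$ and realizes the corresponding instances of $\varphi$ by iterating \ref{INDTHM} over models along $\omega$, and a Kim's-Lemma-type comparison of indiscernible sequences transfers this consistency to a coheir Morley sequence over $\M$ (legitimate now that $T$ is known to be $\NSOP 1$); hence $\varphi(x,b)$ does not Kim-divide over $\M$. For the reverse inclusion $A\indi K_\M B\Rightarrow A\ind_\M B$, I would run the Kim--Pillay-style canonicity argument, combining \ref{EX}, \ref{EXT} and \ref{TRA} over models with \ref{LOCS} and the independence theorem --- now available for both $\ind$ and $\indi K$ --- in place of the stationarity argument used in the simple case (the absence of \ref{STAT} from the axiom list being exactly what forces this substitution). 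This yields $\ind=\indi K$, completing the proof.

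The \emph{main obstacle} is the ``if'' direction. Deriving $\NSOP 1$ from the bare existence of $\ind$ requires delicate combinatorics with $\mathrm{SOP}_1$-witnessing trees, and the uniqueness argument requires careful bookkeeping of Morley sequences together with repeated, nested applications of the independence theorem over models. On the ``only if'' side, the one genuinely hard input is that Kim-independence satisfies \ref{LOCS}, which is itself a substantial theorem about $\NSOP 1$ theories.
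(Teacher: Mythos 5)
Your proposal takes a genuinely different route from the paper. The paper does not re-derive the characterization: it observes that Dobrowolski and Kamsma proved precisely this statement in the setting of \emph{positive logic} (\cite[Theorem 9.1]{DK21}), and the content of the paper's proof is a translation argument --- pass to the Morleyization $T^{\mathrm{mor}}$ viewed as a positive theory, verify it is ``boolean'' (hence Hausdorff, semi-Hausdorff, and thick), and check that $\NSOP 1$, Lascar strong types over models, and Kim-dividing all agree between the positive-logic and first-order settings. You instead propose to re-derive the theorem directly by assembling the machinery of \cite{CR16} and \cite{KR20}.

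That direct route has a gap your plan does not address. Both results you lean on for the ``if'' direction --- \cite[Proposition 5.8]{CR16} to get $\NSOP 1$ from the existence of $\ind$, and \cite[Theorem 9.1]{KR20} for canonicity of Kim-independence --- take \emph{strong} finite character as a hypothesis: if $a\nind_M b$, there is $\phi(x,b)\in\tp(a/Mb)$ such that every $a'\models\phi$ satisfies $a'\nind_M b$. That axiom is what ties the abstract relation $\ind$ to formulas, and hence to the combinatorics of an $\mathrm{SOP}_1$-witnessing tree. The Fact you are proving lists only plain \ref{FIN} (finite character in the \emph{left} argument), which is far weaker and carries no such link, together with \ref{LOCS}. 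The novelty of the Dobrowolski--Kamsma formulation --- the one the paper states and uses --- is precisely that strong finite character is dispensed with, which is what makes their axiom list a direct analogue of Kim--Pillay; the price is a reworked argument in which \ref{LOCS} shoulders the load. Your sketch neither derives strong finite character from the listed axioms (not obviously possible) nor explains how the CR16/KR20 arguments run without it. Without closing that, the plan amounts to re-proving the Dobrowolski--Kamsma theorem in the first-order setting with its hardest step silently skipped, whereas the paper avoids all of this by citing \cite{DK21} and verifying that the positive-logic result specializes correctly.
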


\begin{proof}
We need to check that \cite[Theorem 9.1]{DK21} implies the above. Let $T$ be any theory and $T^{\mathrm{mor}}$ be its Morleyization viewed as a theory in positive logic. Then $T^{\mathrm{mor}}$ is \textit{boolean} in the sense of \cite[Definition 2.13]{DK21}, hence in particular, Hausdorff, semi-Hausdorff, and thick (see also Remark 2.14). Thus $T$ is $\NSOP 1$ if and only if $T^\mathrm{mor}$ is $\NSOP 1$ in positive logic (\cite[Definition 2.26, Remark 2.27]{DK21}). Obviously, models of $T^\mathrm{mor}$ are existentially closed. Moreover, having the same Lascar strong type in the positive sense for $T^{\mathrm{mor}}$ is equivalent to having the same Lascar strong type in the first order sense in a model of $T$ (by \cite[Remark 3.4]{DK21}, as $T^\mathrm{mor}$ is semi-Hausdorff). When the base set is a model, this coincides with having the same type in $T$. Finally, we need to check that the positive logic definition of Kim-dividing  in $T^\mathrm{mor}$ is equivalent to the first-order logic definition in $T$. This follows from Kim's Lemma (\cite[Proposition 4.3]{DK21}) since  Lstp-invariance and invariance coincide over models (see \cite[Definition 4.1]{DK21}).
\end{proof}

\subsection{Canonical bases and algebraic embeddedness}\label{sec:H}

Let $T$ be a complete $\cL$-theory, and fix an $\emptyset$-definable stably embedded set $Q$. Let $\cU$ be a sufficiently saturated monster model of $T$. We identify $Q$ with $Q(\cU)$.

Recall that the main goal of this section is to prove preservation results for simplicity and NSOP$_1$, when enriching $Q_{\indd}$ by some further structure. For these tameness properties, we will require  an additional assumption on $Q$ (among other things). Thus the purpose of this subsection is to define this assumption, establish some basic facts, and point to existing examples from the literature.

Let $Q^{\eq}$ denote $Q^{\eq}(\cU)$ (as defined before Lemma \ref{lem:finding c-general}).
We  define an operator on subsets of $\cU$, which will act as a canonical base for types over $Q$.

\begin{definition}
Given $A\seq\cU$, set $\ol{A}=\dcl^{\eq}(A)\cap Q^{\eq}$.
\end{definition}


\begin{remark}\label{rem:movebar}
Note that if $A\equiv^T_{\ol{A}}A'$ then $\ol{A'}=\ol{A}$ (for a given choice of enumeration, induced by that of \(A\) and \(A'\)).
\end{remark}

\begin{lemma}\label{lem:S1}
If $A\seq \cU$, then $\acl^{\eq}_T(A)\cap Q^{\eq}=\acl^{\eq}_T(\ol{A})\cap Q^\eq$. In particular, if $B\seq A$, then $\ol{\acl^{\eq}_T(B\ol{A})}=\acl_T^{\eq}(\ol{A})\cap Q^{\eq}$.
\end{lemma}

\begin{proof}
It is immediate that \(\acl^{\eq}_T(\ol{A})\cap Q^\eq \subseteq \acl^{\eq}_T(A)\cap Q^{\eq}\). Conversely, consider $e\in\acl^{\eq}(A)\cap Q^{\eq}$. Then $e$ is contained in a finite $A$-definable subset $C$ of (some sort in) $Q^{\eq}$. By Remark \ref{R:induced structure}$(a)$, $C$ is $\ol{A}$-definable, and  thus $e\in\acl^{\eq}_T(\ol{A})$. 

As for the second equality, we have $\acl^{\eq}_T(\ol{A})\cap Q^{\eq}\seq\acl^{\eq}_T(B\ol{A})\cap Q^{\eq}=\ol{\acl_T^{\eq}(B\ol{A})}$. Conversely, since $B\ol{A}\seq\dcl^{\eq}_T(A)$, we have 
\[
\ol{\acl^{\eq}_T(B\ol{A})}=\acl^{\eq}_T(B\ol{A})\cap Q^{\eq}\seq\acl^{\eq}_T(A)\cap Q^{\eq}=\acl^{\eq}_T(\ol{A})\cap Q^{\eq},\] as needed.
\end{proof}

Throughout this section, $\indi T$ denotes forking independence in $T^{\eq}$. \emph{We assume that every \(A\seq\cU\) is an extension base for \(\indi T\); this holds in particular if \(T\) is simple.}

\begin{lemma}\label{L: fact c}
If $A\seq\cU$ then $A\indi T_{\ol{A}}C$ for any $C\seq Q^{\eq}$. 
\end{lemma}
\begin{proof}
Note that $\tp^{\eq}(A/\ol{A})\vdash \tp(A/Q)$ by Lemma \ref{lem:finding c-general}$(c)$. Since $Q^{\eq}\seq\dcl^{\eq}_T(Q)$ and \(A\indi T_{\ol{A}}\ol{A}\), this yields the claim. 
\end{proof}


\begin{definition}\label{D:H}
We say that $Q$ is \textbf{algebraically embedded} in $T$ if, for any $A,B,C\seq \cU$, if $A\ind^T_C B$ and $C$ contains a model,  then $\ol{ABC}\seq \acl^{\eq}_T(\ol{AC},\ol{BC})$.
\end{definition}

\begin{remark}
The only place where we use algebraic embeddedeness with  $C$ not \emph{equal} to a model is in the proof of Lemma \ref{lem:S2}. In that case, $C=MQ(N)$ for some $M\prec\cN\prec\cU$. That being said, we note that in all of the examples we will consider below, algebraic embeddedness holds with $C$ being arbitrary. Thus the reader might argue that it would be more natural to phrase Definition \ref{D:H} with $C$ arbitrary, leading to a stronger condition. However, one can construct examples (even in stable theories) where $Q$ is algebraically embedded as defined above, but the stronger condition with $C$ arbitrary does not hold (briefly: consider a $2$-sorted structure with $(\Q,+)$ in one sort and $\Q$ as a pure set in the other, with the natural action of the former sort on the latter; let $Q$ be the pure set sort). As a matter of fact, we have so far been unable to find an example of a simple (or even stable) theory and a stably embedded definable set  that is not algebraically embedded (as defined above).
\end{remark}

\begin{remark}\label{rem:about(H)}$~$
\begin{enumerate}[$(a)$] 
\item $Q$ is algebraically embedded in $T$ if and only if, for any $A,B, C\seq \cU$, if $C$ contains a model and $A\ind^T_C B$ then $\ol{ABC}\subseteq\acl^{\eq}_{Q_{\indd}}(\ol{AC},\ol{BC})$.
So, loosely speaking, any element of $Q^\eq$ that is algebraic over parameters $ab$ with $a\in A$ and $b\in B$, must also be algebraic over parameters $a'b'$ with $a'\in A$, $b'\in B$, and $a'b'$ from $Q^\eq$. This observation is behind our choice of the terminology.

\item Suppose $\Th(Q_{\indd})$ has geometric elimination of imaginaries. Given $A\seq\cU$, let $\ol{A}^r=\acl_T(A)\cap Q$. Then $\ol{A} \subseteq \acl_T^\eq(\ol{A}^r)$. So $Q$ is algebraically embedded in $T$ if and only if for any $A,B,C\seq\cU$, if $C$ contains a model and $A\indi T_C B$ then $\ol{ABC}^r\seq\acl_T(\ol{AC}^r,\ol{BC}^r)$.

\end{enumerate}
\end{remark}

We now describe some examples  of theories naming an algebraically embedded predicate from the existing literature. We first prove the following lemma, which can be used to characterize algebraically embedded definable sets in simple theories.

\begin{lemma}\label{lm:equivH}
Suppose $T$ is simple. Fix $A,B,C\seq\cU$ and assume $A\indi T _C B$. Then the following are equivalent:
\begin{enumerate}[$(i)$]
    \item $A\indi T _{C\ol{ABC}} B$.
    \item $ABC\indi T _{\ol{AC},\ol{BC}} \ol{ABC}$.
    \item $\ol{ABC}\subseteq \acl^{\eq}_T(\ol{AC},\ol{BC})$.
\end{enumerate}
\end{lemma}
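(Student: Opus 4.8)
The three conditions are all phrased in terms of forking independence in the simple theory $T^{\eq}$, so the plan is to move between them using the standard forking calculus (Remark \ref{rem:fork-basics}), transitivity, and the basic fact from Lemma \ref{L: fact c}$(b)$ that $A\indi T_{\ol A}C$ for any $C\seq Q^{\eq}$. Throughout I will freely enlarge $A$, $B$, $C$ to their algebraic closures in $T^{\eq}$, which changes none of the three statements (by Remark \ref{rem:fork-basics}$(i)$ and the fact that $\ol{\,\cdot\,}$ only sees $\acl^{\eq}$); in particular I may assume $\ol{AC}\seq\acl^{\eq}(AC)$, etc.

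\emph{$(i)\Leftrightarrow(ii)$.} This is the heart of the matter and should come out of a single application of transitivity together with Lemma \ref{L: fact c}$(b)$. Since $\ol{ABC}\seq Q^{\eq}$, Lemma \ref{L: fact c}$(b)$ gives $B\indi T_{\ol{BC}}\ol{ABC}$, and similarly on the $A$-side; combined with base monotonicity and the hypothesis $A\indi T_C B$, a chain-rule computation should show that $(i)$ — namely $A\indi T_{C\ol{ABC}}B$ — is equivalent to $ABC\indi T_{\ol{AC}\,\ol{BC}}\ol{ABC}$. Concretely: assuming $(i)$, one builds $\indi T$ between $ABC$ and $\ol{ABC}$ over $\ol{AC}\,\ol{BC}$ by first noting $ABC\indi T_{\ol{AC}\,\ol{BC}}\emptyset$-type statements follow from $A\indi T_C B$ plus $\ol{AC},\ol{BC}\seq\acl^{\eq}(AC),\acl^{\eq}(BC)$, then transferring along the Morley sequence / transitivity; conversely $(ii)$ plus transitivity with $A\indi T_{\ol{AC}}\ol{ABC}$-type facts recovers $(i)$. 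I expect this to be the main obstacle, because getting the base sets to line up ($C$ versus $\ol{AC}\,\ol{BC}$ versus $C\ol{ABC}$) requires care; the key auxiliary observation is that $C\indi T_{\ol C} Q^{\eq}$ lets one "add $\ol{AC}\ol{BC}$ to the base for free" relative to $C$.

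\emph{$(ii)\Leftrightarrow(iii)$.} The implication $(iii)\Rightarrow(ii)$ is immediate: if $\ol{ABC}\seq\acl^{\eq}(\ol{AC},\ol{BC})$ then $ABC\indi T_{\ol{AC}\,\ol{BC}}\ol{ABC}$ holds trivially since the right-hand side is algebraic over the base (using Remark \ref{rem:fork-basics}$(i)$, $X\indi T_Y Z$ whenever $Z\seq\acl^{\eq}(Y)$). For $(ii)\Rightarrow(iii)$, I would use Remark \ref{rem:fork-basics}$(ii)$: from $ABC\indi T_{\ol{AC}\,\ol{BC}}\ol{ABC}$ we get $\acl^{\eq}(ABC)\cap\acl^{\eq}(\ol{AC},\ol{BC},\ol{ABC})\seq\acl^{\eq}(\ol{AC},\ol{BC})$; since $\ol{ABC}\seq\acl^{\eq}(ABC)$ and $\ol{ABC}$ is trivially inside the second set, this forces $\ol{ABC}\seq\acl^{\eq}(\ol{AC},\ol{BC})$, which is $(iii)$.

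Putting the two equivalences together yields the cycle $(i)\Leftrightarrow(ii)\Leftrightarrow(iii)$. One small point to watch: Lemma \ref{L: fact c}$(b)$ is stated for $C\seq Q^{\eq}$ and here it is applied with $C$ replaced by $\ol{ABC}$, $\ol{BC}$, etc., all of which are indeed subsets of $Q^{\eq}$, so the hypothesis is met. I also rely on symmetry of $\indi T$, which holds since $T$ is simple; everything else is normality, monotonicity, base monotonicity, and transitivity, all valid in any simple theory.
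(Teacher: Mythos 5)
Your overall structure is sound, and the $(ii)\Leftrightarrow(iii)$ part is correct and essentially what the paper does for $(ii)\Rightarrow(iii)$ (your $(iii)\Rightarrow(ii)$ is a trivial variant of the paper's $(iii)\Rightarrow(i)$; either closes the cycle). The interesting part is $(i)\Rightarrow(ii)$, which you leave as a sketch. Here your route genuinely diverges from the paper: the paper introduces an auxiliary pair $A'B'\equiv^T_{\ol{AB}}AB$ with $A'B'\indi T_{\ol{AB}}AB$ (via existence and extension) and proves an intermediate claim $A\indi T_{CA'}BB'$, before returning to the desired base. Your sketch proposes to go directly via Lemma~\ref{L: fact c}$(b)$, base monotonicity (on the left, which simplicity gives you), and right transitivity, and this in fact works without any auxiliary pair. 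Concretely (with $C\seq A\cap B$, $A,B$ $\acl^{\eq}$-closed): from $A\indi T_{\ol A}\ol{AB}$ and $(i)$, right transitivity gives $A\indi T_{C\ol A}\ol{AB}B$; adding $\ol B$ to the base and then moving $B$ across (symmetry plus base monotonicity on the left, noting $C\seq B$ so $C$ is absorbed) gives $\ol{AB}\indi T_{\ol A\ol B B}A$; combining by right transitivity with $\ol{AB}\indi T_{\ol A\ol B}B$ (from $B\indi T_{\ol B}\ol{AB}$) gives $\ol{AB}\indi T_{\ol A\ol B}AB$, i.e.\ $(ii)$. So your approach buys a shorter proof that avoids the existence/extension construction entirely.

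One correction to your heuristic: the ``key auxiliary observation'' you name — $C\indi T_{\ol C}Q^{\eq}$ — is not what makes the base sets line up. The $C$ that appears in intermediate bases is removed by the much more pedestrian fact that $C\seq B$, so when $B$ is pulled into the base by base monotonicity on the left, $C$ is automatically absorbed into $\acl^{\eq}$ of the new base, and when $B$ is later pulled back out, $C$ goes with it. Also, there is no Morley sequence involved; the phrase ``transferring along the Morley sequence'' is a red herring. Since you flagged this direction as ``the main obstacle'' and did not actually carry out the chain-rule computation, I'd record this as a sketch that turns out to be salvageable rather than a complete proof — but the ideas you identified do suffice, and lead somewhere cleaner than the paper.
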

\begin{proof}
Without loss of generality, we may assume $C\seq A\cap B$ (replacing $A$ and $B$ with $AC$ and $BC$ has no effect on the statement). We will make tacit use of the fact that since $T^{\eq}$ is simple, $\indi T$ is symmetric and so base monotonicity holds on the left and transitivity holds on the right. To ease notation, we write $\acl$ and $\dcl$ for $\acl_T^{\eq}$ and $\dcl_T^{\eq}$.

$(i)\Rightarrow (ii)$. Assume $A\indi T _{C\ol{AB}} B$. By existence and extension there is $A'B'\seq\cU$ such that $A'B'\equiv^T_{\ol{AB}} AB$ and $A'B'\indi T _{\ol{AB}} AB$. \medskip

\noindent \textit{Claim.} $A\indi T _{CA'} B B'$.

 \noindent\textit{Proof.} Note first that $A\indi T_{\ol{A}}\ol{A'B'}$ by Lemma \ref{L: fact c}. So, as $C\subseteq A$, $A\indi T_{C\ol{A}}C\ol{A'B'}$ $(\ast)$ by (left) base monotonicity. Since $\ol{AB}=\ol{A'B'}$ (as sets), we  have $A\indi T_{C\ol{A'B'}} B$. Using transitivity with $(\ast)$  yields $A\indi T_{C\ol{A}}B\ol{A'B'}$ $(\ast\ast)$. Next, since $AB\indi T_{\ol{A'B'}}A'B'$, we have $A\indi T_{B\ol{A'B'}}A'BB'$ by base monotonicity. Using transitivity with $(\ast\ast)$ then yields $A\indi T_{C\ol{A}}A'BB'$. So $A\ind_{C\ol{A}A'}BB'$ by base monotonicity. Since $\ol{A}=\ol{A'}\subseteq \dcl(A')$, we have $\acl(C\ol{A}A')=\acl(CA')$.  So $A\indi T_{C A'} BB'$ by Remark \ref{rem:fork-basics}$(i)$.
 \clqed
\medskip

Now, as $A\indi T _{\ol{AB}} A'$ and $A\indi T _{\ol{A}} \ol{AB}$, we have $A\indi T _{\ol{A}} A'$ by transitivity, and hence $A\indi T _{C\ol{A}} C A'\ (*)$ by base monotonocity. We also have $B\indi T _{\ol{B}} \ol{AB}$, and hence $B\indi T _{\ol{A},\ol{B}} \ol{AB}\  (**)$ by base monotonicity. From the claim, we have $A\indi T _{C A'} B B'$, hence by $(*)$ we have $A\indi T _{C\ol{A}} B A'B'$. As $\ol{AB} = \ol{A'B'}$ we get $A\indi T _{C\ol{A}} B \ol{AB}$, hence $A\indi T _{B\ol{A}} \ol{AB}$. Using $(**)$ we obtain $AB\indi T _{\ol{A},\ol{B}} \ol{AB}$.

$(ii) \Rightarrow (iii)$. This follows from Remark \ref{rem:fork-basics}$(ii)$ and the fact that
 $\ol{AB}\seq\acl(AB)$.

$(iii)\Rightarrow (i)$. First recall that $\ol{A}\subseteq \dcl(A)$ and $\ol{B}\subseteq \dcl(B)$. Thus we have $A\ind_{C\ol{A},\ol{B}} B$ by base monotonicity and the assumption that $A\indi T_C B$. Therefore $(i)$ follows from $(iii)$ (using Remark \ref{rem:fork-basics}$(i)$ and base monotonicity).
\end{proof}

Recall that a \emph{belle paire} of models of a theory $T_0$ is a pair $(M,N)$ where $M,N\models T_0$, $N\preceq M$, $N$ is $|T|^+$-saturated, and for any finite $A\seq M$, any type over $NA$ is realized in $M$. If $T_0$ is stable with nfcp then the theory of \emph{belle paires} of $T_0$ is stable (and nfcp).  For further background, see \cite{Po83,BPV03}. 

\begin{corollary}
Suppose $T_0$ is a stable theory with  nfcp. Let $T$ be the theory of belle paires of $T_0$, and let $Q$ name the elementary submodel. Then $Q$ is algebraically embedded in $T$.
\end{corollary}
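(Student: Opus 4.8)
The goal is to verify the algebraic embeddedness condition from Definition~\ref{D:H} for $T = $ the theory of belle paires of $T_0$, with $Q$ naming the elementary submodel. Since $T$ is stable (as $T_0$ is stable with nfcp), forking independence satisfies symmetry and stationarity over models, so Lemma~\ref{lm:equivH} applies. The strategy is therefore: given $A, B \seq \cU$ and $M \prec \cU$ (a model of $T$) with $A \indi{T}_M B$, I want to show $\ol{ABM} \seq \acl^{\eq}(\ol{AM}, \ol{BM})$, and by the equivalence $(i) \Leftrightarrow (iii)$ of Lemma~\ref{lm:equivH} (applied with base $C = M$), this is equivalent to showing $A \indi{T}_{M\ol{ABM}} B$. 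So the real content is to upgrade $A \indi{T}_M B$ to independence over $M$ together with the algebraic closure (in $Q^{\eq}$) of $ABM$.

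\textbf{Key steps.} First I would recall the explicit description of forking independence in belle paires: for a stable theory $T_0$ with nfcp, the theory $T$ of belle paires is stable, and there is a well-known characterization (from \cite{Po83, BPV03}) of non-forking in $(M,N)$ in terms of non-forking in $T_0$ together with a condition saying that the relevant tuples are ``independent from the small model $N$'' in an appropriate sense. Concretely, $\tp^T(a/Bb)$ does not fork over $B$ iff (roughly) $a \indi{T_0}_B Bb$ and $\acl^{T_0}(aB) \cap N \seq \acl^{T_0}(B) \cap N$-type conditions hold — I would look up and cite the precise statement. Second, I would identify what $Q^{\eq}$ and the operator $\ol{\,\cdot\,}$ amount to here: since $Q$ is the elementary submodel $N$ of $T_0$ (with its full induced structure, which is just $T_0$ on $N$ because a belle paire makes $N$ stably embedded), $Q^{\eq}$ is essentially $N^{\eq}$ in the sense of $T_0$, and $\ol{A} = \acl^{\eq,T}(A) \cap N^{\eq}$ should coincide with $\acl^{\eq,T_0}(A) \cap N^{\eq}$ (one needs that algebraic closure in the pair does not add new elements of $N$ beyond those algebraic in $T_0$ — this uses stable embeddedness of $N$ and nfcp). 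Third, with these two descriptions in hand, the inclusion $\ol{ABM} \seq \acl^{\eq}(\ol{AM}, \ol{BM})$ becomes a statement purely about $T_0$: it says that an element of $N^{\eq}$ algebraic over $ABM$ (in $T_0$) is already algebraic over $(\acl(AM) \cap N^{\eq}) \cup (\acl(BM) \cap N^{\eq})$, given $A \indi{T_0}_{M_0} B$ for the appropriate $T_0$-base $M_0$ derived from $M$. This is exactly the content of Lemma~\ref{lm:equivH} $(iii)$ applied \emph{in $T_0$} (which is stable, hence simple), combined with the fact that any model $M$ of $T$ restricts to a model $M {\upharpoonright} \cL_0$ of $T_0$ and $A \indi{T}_M B$ implies $A \indi{T_0}_{M{\upharpoonright}\cL_0} B$ (forking in a reduct is weaker, by Remark~\ref{rem:kim-dividing-reduct}-type reasoning, or directly since $T_0$ is a reduct of $T$). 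The point is that the $Q^{\eq}$-part of algebraic closure is controlled entirely by $T_0$, and there $\ol{\,\cdot\,}$ behaves additively over forking-independent sides — but one should double-check this against Lemma~\ref{lm:equivH}, since there the conclusion $(iii)$ is not automatic; rather it is one of three equivalent conditions. So in fact what I need is that in $T_0$, for \emph{any} forking-independent $A', B'$ over a model, $\ol{A'B'M_0}^{(T_0)} \seq \acl(\ol{A'M_0}, \ol{B'M_0})$ — i.e., that $T_0$ with its submodel sort already has the algebraic embeddedness property, which is not automatic and must be derived from nfcp.

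\textbf{Main obstacle.} The crux is Step three: proving that in $T_0$, an element of $N^{\eq}$ algebraic over $ABM_0$ is algebraic over $\acl(AM_0)\cap N^{\eq}$ together with $\acl(BM_0)\cap N^{\eq}$, whenever $A\indi{T_0}_{M_0}B$. This is where nfcp (and the fact that $N$ is a ``generic'' large submodel in the belle paire sense) must be used — without finiteness of the cover property there would be room for pathological algebraic dependencies that spread the canonical base of a type across both sides. I expect the argument to go: take $e \in N^{\eq}$ with $e \in \acl(ABM_0)$; the canonical base of $\tp^{T_0}(AB/M_0 e)$, or more precisely a suitable finite piece, lives in $\dcl^{\eq}(ABM_0)$ and by stationarity/stable-embeddedness considerations $e$ is interalgebraic over $M_0$ with something built from the $A$-side and $B$-side canonical data; the nfcp ensures the relevant types are based on finite sets and that the pair structure doesn't interfere. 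This is essentially a known fact about belle paires — I would search \cite{BPV03} and the literature on lovely pairs for the statement that the submodel (or ``small'' sort) is algebraically well-behaved — and cite it, or reduce to Lemma~\ref{lm:equivH} applied within $T_0$ after checking that $T_0$'s submodel sort $Q$ is stably embedded and that forking in $T$ restricts correctly to $T_0$. If a clean citation is unavailable, I would prove it directly using the description of forking in belle paires together with finite equivalence relations and nfcp; this is the one step that is not purely formal bookkeeping.
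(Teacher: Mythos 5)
Your high-level framing is correct and matches the paper: you correctly observe that $T$ is stable (since $T_0$ is stable with nfcp), that Lemma~\ref{lm:equivH} reduces the problem to showing $A \indi{T}_{M\ol{ABM}} B$, and that the key tool should be the characterization of forking in belle paires from \cite{BPV03}. But your ``Step three'' and ``Main obstacle'' sections are where the real content lives, and there you do not actually have an argument — you explicitly defer to an unnamed fact about nfcp and say this is ``the one step that is not purely formal bookkeeping.'' That step is the entire proof, and the paper handles it with a specific device you do not identify: the \emph{canonical base operator over $Q$}. For $D \seq \cU^{\eq}$, let $D^c \seq Q^{\eq}$ be the canonical base of $\tp^{T_0}(D/Q)$; then $D^c \seq \ol{D}$ and $D \indi{0}_{D^c} E$ for all $E \seq Q^{\eq}$. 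With this operator, \cite[Proposition~7.3]{BPV03} says $A \indi{T}_C B$ iff $A \indi{0}_{CQ} B$ and $(AC)^c \indi{0}_{C^c} (BC)^c$ (where $\indi{0}$ is forking in $T_0^{\eq}$). The paper then directly checks these two conditions at the base $C\ol{ABC}$: the first is immediate since $\ol{ABC} \seq Q^{\eq} \seq \dcl^{\eq}(Q)$; the second because $(AC\ol{ABC})^c$ and $(BC\ol{ABC})^c$ both lie inside $\ol{ABC}$, which in turn lies in $\dcl^{\eq}((C\ol{ABC})^c)$. No new fact ``purely about $T_0$'' needs to be established, and there is no auxiliary claim that $\acl^{\eq}_T(A) \cap Q^{\eq} = \acl^{\eq}_{T_0}(A) \cap Q^{\eq}$ (which you propose but do not justify, and which is delicate in pair structures — naming a predicate can enlarge algebraic closure).

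Your proposed route of ``reduce to algebraic embeddedness inside $T_0$'' is also conceptually off: $T_0$ has no submodel sort, so the property you want to invoke ``in $T_0$'' is not a statement about $T_0$ at all but about the pair. The correct move is the one the paper makes — use the $T_0$-canonical base over the submodel $Q$ as a bridge between the $\ol{\phantom{X}}$ operator and the $\indi{T}$ characterization, and then everything is two lines of forking calculus. So: right scaffolding, genuine gap at the crux.
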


\begin{proof}
 Given a set $D\seq\cU^{\eq}$, let $D^c\seq Q^{\eq}$ be the canonical base of $D$ over $Q$ in the sense of $T_0$. In particular, we have $D^c\seq \ol{D}$ and for all $B\subseteq Q^{\eq}$, $D\indi 0_{D^c} B$, where $\indi 0$ denotes forking independence in  $T_0^{\eq}$.  Then by \cite[Proposition 7.3]{BPV03}, $A\indi T_C B$ if and only if $A\indi 0 _{CQ} B$ and $(AC)^c \indi 0 _{C^c} (BC)^c$. Now suppose $A\indi T_C B$. Then, as $\ol{ABC}\subseteq Q^{\eq}$, we have $A\indi 0 _{C\ol{ABC} Q} B$.   
 Note also that $(AC\ol{ABC})^c$ and $(BC\ol{ABC})^c$ are contained in $\ol{ABC}$, and that $\ol{ABC}\indi 0_{(C\ol{ABC})^c}\ol{ABC}$. 
 Altogether, we have $A\indi T_{C\ol{ABC}} B$. The result now follows from Lemma \ref{lm:equivH}.
\end{proof}

Suppose $T_0$ is a superstable theory with $U$-rank $1$. Then an \emph{$H$-structure} of $T_0$ is a structure $(M,H)$ where $M$ is a model of $T$ and $H$ is a new unary predicate naming an algebraically independent subset of $M$ that satisfies certain density and extension properties for types.  See \cite{BV16} for details (the construction works more generally for any geometric theory, but we focus on the $U$-rank $1$ case). In this case, there is a well-defined  theory of \emph{$H$-structures of $T_0$}. Moreover, $T$ is superstable (see \cite[Theorem 5.5, Proposition 5.24]{BV16}) and so $H$ is stably embedded in $T$.

\begin{corollary}\label{cor:HstrucH}
Let $T_0$ be a superstable theory of SU-rank 1, and let $T$ be the theory of $H$-structures of $T_0$. Then $H$ is algebraically embedded in $T$.
\end{corollary}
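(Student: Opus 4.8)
The natural strategy is to imitate the proof just given for belle paires: identify a characterization of forking independence in $T$ (the theory of $H$-structures of $T_0$) in terms of forking in $T_0$ together with an independence condition on canonical bases over $H$, and then feed this into Lemma \ref{lm:equivH}. Concretely, for $D\seq\cU^{\eq}$ let $D^c\seq H^{\eq}$ denote the canonical base of $\tp^{T_0}(D/H)$ in the sense of $T_0$, so that $D^c\seq\ol{D}$ and $D\indi 0_{D^c} B$ for any $B\seq H^{\eq}$ (using that $T_0$ is superstable of $U$-rank $1$, so $H$ is a strongly minimal-like independent set and $\indi 0$ is well-behaved). The key input I would look up or reprove is the analogue of \cite[Proposition 7.3]{BPV03} for $H$-structures — a description of the form: $A\indi T_C B$ if and only if $A\indi 0_{CH} B$ and $(AC)^c\indi 0_{C^c}(BC)^c$, or something close to it. Such a description is available in the $H$-structures literature (the forking calculus in \cite{BV16}), and once we have it the argument is essentially formal.

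\textbf{Main steps.} First, recall/cite the characterization of $\indi T$ in terms of $\indi 0$ and canonical bases over $H$. Second, assume $A\indi T_C B$; we must verify the hypothesis $(i)$ of Lemma \ref{lm:equivH}, namely $A\indi T_{C\ol{ABC}}B$, after which $(iii)$ of that lemma gives $\ol{ABC}\seq\acl^{\eq}(\ol{AC},\ol{BC})$, which (together with Remark \ref{rem:about(H)}, replacing $C$ by a model $M$) is exactly algebraic embeddedness. To check $(i)$: since $\ol{ABC}\seq H^{\eq}$, the defining property of canonical bases gives $A\indi 0_{C\ol{ABC}H}B$ (as $\ol{ABC}H = H$ up to interalgebraicity with the relevant closure, one gets $A\indi 0_{C\ol{ABC}\,H}B$ directly from $A\indi 0_{CH}B$ by the fact that $H\seq\dcl^{\eq}$ or by base monotonicity inside $H^{\eq}$). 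For the canonical base part, observe that $(AC\ol{ABC})^c$ and $(BC\ol{ABC})^c$ both lie in $\ol{ABC}$, and that $\ol{ABC}\indi 0_{(C\ol{ABC})^c}\ol{ABC}$ trivially; combining with $(AC)^c\indi 0_{C^c}(BC)^c$ yields $(AC\ol{ABC})^c\indi 0_{(C\ol{ABC})^c}(BC\ol{ABC})^c$. Putting the two halves together via the characterization gives $A\indi T_{C\ol{ABC}}B$, completing the verification.

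\textbf{Where the difficulty lies.} The genuinely load-bearing step is the forking characterization in $H$-structures: unlike belle paires, where \cite[Proposition 7.3]{BPV03} is a clean citable statement, the $H$-structure forking calculus in \cite{BV16} may be phrased in terms of the ``$H$-basis" or dimension functions rather than a verbatim ``$A\indi 0_{CH}B$ and $(AC)^c\indi 0_{C^c}(BC)^c$'' formula, so some translation is required. I would expect to spend most of the effort pinning down the precise statement (possibly using that in $U$-rank $1$ the $H$-structure is a ``lovely pair''-type object and invoking the corresponding results, or directly computing with the $U$-rank-$1$ pregeometry). Once that characterization is in hand, the bookkeeping with $\ol{\,\cdot\,}$ and canonical bases is routine and mirrors the belle paires corollary line for line — indeed, one could phrase a single lemma subsuming both cases, but here it is cleaner to just cite the relevant forking description from \cite{BV16} and conclude via Lemma \ref{lm:equivH} exactly as above.
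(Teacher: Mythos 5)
Your high-level plan---reduce to Lemma \ref{lm:equivH} via a characterization of $\indi T$ in terms of $\indi 0$ and data over $H$---is the right one and does match the paper. But there is a real gap at the step you yourself flag as load-bearing: the characterization you posit, namely $A\indi T_C B$ iff $A\indi 0_{CH}B$ and $(AC)^c\indi 0_{C^c}(BC)^c$, is not what the $H$-structure literature supplies, and the needed translation is not routine. The relevant result (which is \cite[Theorem 5.3]{BCV17}, not \cite{BV16}) is phrased via the $H$-basis $\textsc{HB}(a/C)$---the unique minimal finite $H_0\seq H$ with $a\indi 0_{CH_0}H$---and reads: for $\acl_T$-closed $B,C$, $A\indi T_C B$ iff $A\indi 0_{CH}B$ and $\textsc{HB}(A/C)=\textsc{HB}(A/BC)$. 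This is an \emph{equality} of finite subsets of $H$, not an independence of canonical bases. Even granting that $(AC)^c$ is interalgebraic with $\textsc{HB}(AC)$ inside $H^{\eq}$ in the $U$-rank $1$ setting, establishing that the $\textsc{HB}$-equality condition is equivalent to your canonical-base independence condition is itself a nontrivial lemma, and your proof never discharges it; it is assumed implicitly when you write that the bookkeeping ``mirrors the belle paires corollary line for line.''

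The paper's actual argument avoids this detour. After citing the $\textsc{HB}$-equality characterization, it proves a short Claim that $\ol{A}=\acl_T^{\eq}(\textsc{HB}(A))\cap H^{\eq}$ (using that characterization together with Remark \ref{rem:fork-basics}); this lets it replace $\ol{\,\cdot\,}$ by $\textsc{HB}(\cdot)$ in condition $(i)$ of Lemma \ref{lm:equivH}, reducing to showing $A\indi T_C B\Rightarrow A\indi T_{C\textsc{HB}(ABC)}B$, which is then verified directly by a short $\indi 0$ computation from $ABC\indi 0_{\textsc{HB}(ABC)}H$. So while both proofs target Lemma \ref{lm:equivH}$(i)$, the intermediate machinery differs substantively: $H$-structures simply do not come packaged with a belle-paires-style canonical-base formalism, and you would need either to prove that formalism from the $\textsc{HB}$ one (extra work your sketch omits) or to adopt the $\textsc{HB}$ formalism and the accompanying Claim as the paper does.
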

\begin{proof}
We work in a monster model $\cU$ of $T$, and let $H=H(\cU)$. Let $\indi 0$ denote forking independence in $T_0$.
By \cite[Proposition 4.5]{BCV17}, for every  $C\seq\cU$ and finite tuple $a\in \cU$, there is a unique minimal finite subset $H_0\seq H$ such that $a\indi 0_{CH_0} H$. This subset $H_0$ is denoted $\textsc{HB}(a/C)$. 
Given sets $A,C\seq\cU$, define $\textsc{HB}(A/C)$ to be the union of $\textsc{HB}(a/C)$ for all finite tuples $a$ from $A$. Then $A\indi 0_{C\textsc{HB}(A/C)} H$ by \ref{FIN} and \ref{BMON} for $\indi 0$.  By \cite[Theorem 5.3]{BCV17}, for any $A,B,C\seq\cU$ with $B=\acl_T(B)$ and $C=\acl_T(C)$, we have $A\indi T _C B$ if and only if $A\indi 0 _{CH} B$ and $\textsc{HB}(A/C) = \textsc{HB}(A/BC)$. Let $\textsc{HB}(A)$ denote $\textsc{HB}(A/\emptyset)$.\medskip

\noindent\textit{Claim.} If $A\seq\cU$ then $\acl^{\eq}_T(C\textsc{HB}(A))=\acl^{\eq}_T(C\ol{A})$. 

\noindent\textit{Proof.} It suffices to assume $C=\emptyset$. So fix $A\seq\cU$. Then $\textsc{HB}(A)\seq\acl_T(A)$ by \cite[Corollary 4.13]{BCV17}. So using Lemma \ref{lem:S1}, we have  $\textsc{HB}(A)\seq \acl^{\eq}_T(A)\cap H^{\eq}\seq \acl^{\eq}(\ol{A})$, hence $\acl^{\eq}_T(\textsc{HB}(A))\seq\acl^{\eq}_T(\ol{A})$. For the other containment, first note that  $\textsc{HB}(A/\textsc{HB}(A)) = \emptyset=\textsc{HB}(A/H)$. Therefore $A\indi T _{\textsc{HB}(A)} H$ by  the above characterization of $\indi T$.   Using Remark \ref{rem:fork-basics}$(ii)$, we then have $\ol{A}\seq \acl_T^\eq(A)\cap\ \acl^{\eq}_T(H)\subseteq \acl_T^\eq(\textsc{HB}(A))$. So $\acl^{\eq}_T(\ol{A})\seq\acl^{\eq}_T(\textsc{HB}(A))$.\clqed\medskip




Now, for any $A,B,C,D\seq\cU$, if $A\indi T_{C\textsc{HB}(D)} B$ then $A\indi T_{C\ol{D}} B$ by the claim and Remark \ref{rem:fork-basics}$(i)$. It then follows from  Lemma \ref{lm:equivH} that in order to prove  $H$ is algebraically embedded in $T$, it is enough to prove that for any $A,B,C\seq\cU$, if $A\indi T _C B$ then $A\indi T _{C\textsc{HB}(ABC)} B$.

 Fix $A,B,C\seq\cU$ with $A\indi T _C B$. Set $D=\textsc{HB}(ABC)$. We want to show $A\indi T_{CD}B$. Without loss of generality, we may assume $B=\acl_T(B)$ and $C=\acl_T(C)$. So $A\indi 0_{CH}B$, which implies $A\indi 0_{CD H} B$ since $D\seq H$. Further, we have $ABC\indi 0 _{D} H$ so $A\indi 0 _{CD} H$ and $A\indi 0 _{BCD} H$. It follows that 
 \[
 \textsc{HB}(A/CD) = \emptyset = \textsc{HB}(A/BCD).
 \] 
 Thus we conclude that $A\indi T_{CD} B$, as desired.
\end{proof}

In Section \ref{sec:vapQ}, we will provide more examples of algebraically embedded definable sets in stable expansions of $(\Z,+)$. In fact, we currently do not know an example of a simple theory $T$ and definable set $Q$ that is not algebraically embedded.
On the other hand, if $T$ is NSOP$_1$ then it is more natural to consider a stronger version of algebraic embeddedness in which $\indi T$ is replaced by Kim-independence. In this case, we can give a counterexample.

\begin{example}\label{Hcex:NSOP1}  We describe an NSOP$_1$ theory with a stably embedded $\emptyset$-definable set that is not algebraically embedded with respect to Kim-independence (as explained above). Let $p$ be a fixed positive prime and let ACFG be the theory of generic algebraically closed fields of characteristic $p$ with a distinguished additive subgroup. Then ACFG is NSOP$_1$ and not simple (see \cite{dE21B}). Let $(K,G)$ be a monster model of ACFG and let $\pi$ be the canonical projection from the additive group of $K$ to the quotient group $V = K/G$. Consider the two sorted structure $(K,V,\pi:K\rightarrow V)$, whose theory is denoted $T$. From \cite[Theorem 3.15]{dE21B}, $T$ has weak elimination of imaginaries. One can also deduce the following results from \cite[Section 3]{dE21B}:
\begin{enumerate}[(1)]
    \item $V$ is stably embedded in $(K,V,\pi)$ and $V_\indd$ is a pure $\F_p$-vector space of infinite dimension; in particular it has weak elimination of imaginaries.
    \item For any $A\subseteq K$, $\acl_T^\eq(A) = \acl_\ACF(A)\cup \vect{\pi(\acl_\ACF(A))}$. In particular, $\ol{A} \subseteq \pi(\acl_{\ACF}(A))$.
\end{enumerate}
 Fix $(M,\pi(M))\prec (K,V)$ and let $A,B$ be algebraically closed subsets of $K$ containing $M$. Then $A$ and $B$ are Kim-independent over $(M,\pi(M))$ if and only if $A\indi{\ACF}_M\  B$ and  $\pi(A)\cap \pi(B) = \pi(M)$. It is left to the reader to check that there exists $A$ and $B$ Kim-independent over $(M,\pi(M))$ such that \[\pi(\acl_\ACF(AB))\nsubseteq  \pi(A)\oplus_{\pi(M)} \pi(B) = \acl^\eq(\pi(A),\pi(B))\cap V.\]
We conclude that $V$ is not algebraically embedded (for Kim-independence) in $T$. 
\end{example}

\subsection{Adding $\cQ$}
We now fix an arbitrary structure  $\cQ$ expanding $Q_{\indd}$, and let $\cU$ be a monster model of $\Tbig$. As before, we identify $Q$ with $Q(\cU)$ and $\cQ$ with its interpretation in $\cU$ (so $\cQ$ is a monster model of $\Th(\cQ)$). We write $\equiv^{\cQ}$ for $\equiv^{\Th(\cQ^{\eq})}$.

Given a tuple \(a\) in \(\cU\), we will uniformly choose an enumeration for \(\ol{a}\). Precisely, we proceed as follows: given an ordinal $\kappa$, we fix an enumeration of all (potentially partial) $\emptyset$-definable functions, in the variables $(x_i)_{i<\kappa}$, into $Q^{\eq}$ . For any tuple $a$ of $\U$ of length $\kappa$ we apply these definable functions in order to $a$  (substituting $a_i$ for $x_i$) whenever it makes sense. We denote \(\ol{a}\) the resulting tuple in \(Q^\eq\).

\begin{remark}
Let $a,a'$ be two small tuples from $\U$.
We note that if $a\equiv a'$ then $a\ol{a}\equiv a'\ol{a'}$ and that if $a\equiv_{\ol{a}}a'$ then $\ol{a}=\ol{a'}$ (as tuples).
\end{remark}

\begin{lemma}\label{lem:barfacts}
Let $c$ be a tuple from $\U$.
\begin{enumerate}[$(a)$]
\item Given $d,e\in Q^{\eq}$, $d\equiv^{\Tbig}_{c}e$ if and only if $d\equiv^{\cQ}_{\ol{c}}e$.
\item Given $a\in \cU$ and $e\in Q^{\eq}$, if $e\equiv_{\ol{c}}^{\cQ} \ol{a}$ then there is some $b\in\cU$ such that $b\equiv^{\Tbig}_c a$ and $\ol{b}=e$.
\item Given $a,b\in \cU$, we have $a\equiv^{\Tbig}_c b$ if and only if $a\equiv^T_c b$ and $\ol{ac}\equiv^{\cQ}_{\ol{c}}\ol{bc}$.
\end{enumerate}
\end{lemma}

\begin{proof}
Part $(a)$. The forward direction is clear. Conversely, let $\sigma$ be a $\cQ$-automorphism over $\ol{c}$ sending $d$ to $e$. As in the proof of Proposition \ref{prop:char types D}$(a)$, we may assume $\cL$ includes constants for $c$ and thus, by stable embeddedness, we may extend $\sigma$ to an $\Lbig$-automorphism of $\cU$ over $c$.

Part $(b)$. Assume $e\equiv^{\cQ}_{\ol{c}}\ol{a}$. Then $e\equiv^{\Tbig}_{c}\ol{a}$ by part $(a)$. So there is some $b\in\cU$ such that $be\equiv^{\Tbig}_c a\ol{a}$. Since $\ol{a}\seq\dcl^{\eq}_T(a)$, and we have chosen a canonical enumeration, it follows that $\ol{b}=e$.  

Part $(c)$. The forward direction is clear. For the converse, assume $a\equiv^T_c b$ and $\ol{ac}\equiv^{\cQ}_{\ol{c}} \ol{bc}$.  Note that  $a\equiv^T_c b$ implies $a\ol{ac}\equiv^T_c b\ol{bc}$ (since have we chosen canonical enumerations of $\ol{ac}\seq\dcl^{\eq}_T(ac)$ and $\ol{bc}\seq\dcl^{eq}_T(bc)$). By Lemma \ref{lem:finding c-general}$(c)$, we have $\tp^T(a/c\ol{ac})\vdash \tp^T(a/cQ)$. So we can apply Proposition \ref{prop:char types D}$(a)$, where $c$ is $\ol{ac}$, $B$ is $c$ and $E$ is $\ol{c}$, to conclude $a\equiv^{\Tbig}_c b$. 
\end{proof}

\begin{lemma}\label{lem:forking-in-X}
Suppose $M\prec\cU$ and $B\seq \cU$ contains $M$. Fix a tuple $a\in Q$ and assume $\tp^{Q_{\indd}}(a/\ol{B})$ does not divide over $\ol{M}$ in $\Th(Q_{\indd})$. Then $\tp^T(a/B)$ does not divide over $M$ in $T$.
\end{lemma}

\begin{proof}
Let $b$ enumerate $B$ and choose an $M$-indiscernible sequence $(b_i)_{i<\omega}$ with $b_0=b$. Then $(\ol{b}_i)_{i<\omega}$ is $M$-indiscernible in $T$, and hence $\ol{M}$-indiscernible in $\Th(Q_{\indd})$. By assumption, there is some $e\in Q(\cU)$ such that $e\ol{b}_i\equiv^{Q_{\indd}}_{\ol{M}}a\ol{b}$ for all $i<\omega$. Given $i<\omega$, since $b\equiv^T_M b_i$, we find some \(e_i\in Q(\cU)\) with \(ab \equiv^T_M e_ib_i\). Then \(e \ol{b_i}\equiv^{Q_{\indd}}_{\ol{M}}a\ol{b} \equiv^{Q_{\indd}}_{\ol{M}}e_i\ol{b_i}\) and, by Lemma~\ref{lem:barfacts}$(a)$, \(e \equiv^T_{b_i} e_i\). So \(eb_i \equiv^T_{M} e_i b_i \equiv^T_{M} a b\), for all \(i < \omega\).
\end{proof}


\subsection{Strong types over \(Q\) and the structure \(H\)}\label{ss: strong types over Q and H}

The strategy to prove preservation of simplicity and NSOP$_1$ is to identify forking and apply Kim-Pillay type results \cite{KP97}, see Theorem~\ref{thm:KPnsop1} and Corollary~\ref{cor:KPsimple}. One of the more technical steps is to prove that the potential forking-independence verifies the independence theorem, or equivalently 3-amalgamation. Our approach to proving this fact is to first amalgamate in \(Q\) and then to amalgamate over \(Q\). This however requires understanding strong types over \(Q\). We solve that issue by replacing \(Q\) by a collection of definable sets \(H = \bigcup_{X\in\cD} X\), such that strong types over \(Q(\cU)\) coincide with types over \(H(\cU)\); provided \(T\) eliminates hyperimaginaries.

Let $T$ be a complete $\cL$-theory, and fix an $\emptyset$-definable stably embedded set $Q$. Let $\cU$ be a sufficiently saturated monster model of $T$. Fix a small model $M\prec\cU$. Let $\cD$ be the  collection of $M$-definable subsets of $\cU^{\eq}$ that admit an $M$-definable finite-to-one map to $Q^{\eq}$.

\begin{proposition}\label{prop0}
$\cD(\cU) = \bigcup_{X\in\cD}X(\cU) = \acl^\eq_T(MQ)$.
\end{proposition}
\begin{proof}
The left-to-right containment is clear. Conversely, suppose $e\in\acl^{\eq}_T(MQ)$. Let $\varphi(x,y)$ be an $\cL^{\eq}_M$-formula such that, for some tuple $b$ from $Q^\eq$, $\varphi(x,b)$ isolates $\tp^{T^{\eq}}(e/MQ)$. By modifying $\varphi(x,y)$ if necessary, we may assume that $\varphi(x,b')$ is algebraic for any $b'$, and that for any $b',b''$, $\varphi(x,b')$ and $\varphi(x,b'')$ are either equal or disjoint. Now let $X$ be defined by $\exists y\varphi(x,y)$, and define $f\colon X\to Q^{\eq}$ such that, given $a\in X(\cU)$, $f(a)$ is the canonical parameter of $\varphi(x,b')$ for some/any $b'$ from $Q$ such that $\varphi(a,b')$ holds. 
\end{proof}

We now define $H$. The sorts of $H$ are given by $\cD$. For each $X\in\cD$ we interpret the universe $X(H)$ as a new copy of $X(\cU)$. Let $\iota_X\colon X(\cU)\to X(H)$ be a bijection witnessing the copy.
 We put the $\cL^{\eq}_M$-induced structure on (the universe of) $H$ as follows. For any $X_1,\ldots,X_n\in\cD$ and any $\cL^{\eq}_M$-definable $Y\seq X_1\times\ldots\times X_n$, we add a relation $R$ on $X_1(H)\times\ldots\times X_n(H)$ such that $(\iota_{X_1}(a_1),\ldots,\iota_{X_n}(a_n))\in R$ if and only if $(a_1,\ldots,a_n)\in Y$. 
 In particular, $\iota_X$ is an isomorphism between $X(\cU)$ and $X(H)$ (as models of the $T_M$-induced structure on $X$).

We also define the structure $\cU_H=(\cU^{\eq},H,(\iota_X)_{X\in\cD})$ in which we add $H$ to $\cU^{\eq}$ in its own set of sorts. Note that any automorphism of \(\cU\) over \(M\) uniquely extends to \(\cU_H\).

\begin{remark}\label{H rem}$~$
\begin{enumerate}
\item\label{H rem 2} The  universe of $H$ is a union of sorts  $\cU_M^{\eq}$ (using the right definition), and $H$ is precisely the induced structure on this universe. Note also that the $\iota_X$ maps are already included in $\cU_M^{\eq}$. So altogether $\cU_H$ is a reduct of $\cU_M^{\eq}$. We chose to keep them separate in the hope of a clearer setup.
\item\label{H rem 3} By construction, the $\Th(\cU_H)$-induced structure on the universe of $H$ is  $H$.
\item Let $H(M)$ denote the substructure of $\cH$ consisting of $X(M)$ for all sorts $X$ in $H$. Then $H(M)\prec\cH$. In fact, \(H(M)\) is the definable closure of \(\emptyset\) in \(H\) and thus a prime model. Note that if $m$ enumerates $M^{\eq}$ then $\hat{m}$ (as defined in the next paragraph) enumerates $H(M)$.
\end{enumerate}
\end{remark}

Let us now define a map from tuples in $\acl^\eq_T(MQ)$ to tuples in $H$.
First consider a singleton $e\in \acl^{\eq}_T(MQ)$. Let $\cD_e=\{X\in\cD:e\in X(\cU)\}$. We define $\hat{e}=(\iota_X(e))_{X\in\cD_e}\in \prod_{X\in\cD_e}X(H)$; where we implicitly choose a total order on the elements of \(\cD\). Then, given a tuple $(e_i)_{i\in I}$ from $\acl^{\eq}_T(MQ)$, let $\hat{e}=(\hat{e}_i)_{i\in I}\in \prod_{i\in I}\prod_{X\in \cD_{e_i}}X(H)$. Note that if $a,b$ are tuples from $\acl^{\eq}_T(MQ)$ then $\widehat{ab}=\hat{a} \hat{b}$.

Let us prove some basic facts about the structure \(H\):

\begin{lemma}\label{lem:S4}
Suppose $e$ is a tuple from $H$ and $e\equiv^H \hat{b}$ for some tuple $b\in \acl^{\eq}_T(MQ)$. Then there is some tuple $a\in \acl^{\eq}_T(MQ)$ such that $e=\hat{a}$.
\end{lemma}

\begin{proof}
It follows from Remark~\ref{H rem}$(2)$, that $e\equiv^{\cU_H} \hat{b}$. So we find \(a \in \cU\) such that \(ea \equiv^{\cU_H} \hat{b}b\). But \(\hat{b}\in\dcl_{\cU_H}(b)\) and therefore \(e = \hat{a}\).
\end{proof}

\begin{lemma}\label{lem:H im}
\(H\) eliminates imaginaries.
\end{lemma}

\begin{proof}
Since sorts of \(H\) are closed under products, it suffices, given \(X \in \cD\), some \(\emptyset\)-definable equivalence relation $E$ on $X$ and some \(a\in X(H)\), to find some tuple $b\in H$ interdefinable with $a_E$. Let $c=\iota_X\inv (a)\in X(\cU)$. Note that $E$ induces an $M$-definable equivalence relation on $X(\cU)$ (via $\iota_X$), also denoted \(E\). Then \(c_E \in \dcl^\eq_T(Mc) \seq \acl^\eq_T(MQ)\). Then \(a_E\) and \(b = \widehat{c_E}\) are interdefinable in \(\cU_H^\eq\) and therefore, by Remark~\ref{H rem}$(2)$, in \(H^\eq\).
\end{proof}

From now on, we will assume that any subset of \(\cU\) is an extension basis for nonforking in \(T\), denoted \(\indi T\). This holds in particular if \(T\) is simple. We also assume that $Q$ is algebraically embedded in $T$. 

We say that (the collection of sorts of) \(H\) is \emph{stably embedded} if any $\cU_H$-definable subset $Y \subseteq X(M)^n$, for some \(X\in \cD\), is $H$-definable. Recall that sorts of \(H\) are closed under products.

\begin{lemma}\label{lem:S2}
The collection of the sorts of \(H\) is stably embedded in $\cU_H$.
\end{lemma}

\begin{proof}
It suffices to show that \(\iota_X^{-1}(Y) \subseteq X(U)\) is definable over some tuple \(e\in \acl^{\eq}_T(MQ)\). Indeed \(Y\) is then definable over \(\hat{e} \subseteq \cD(\cU)\). So for the rest of the proof we work only in $T^{\eq}$, and we identify $X$ with $X(\cU)$.

Let $\varphi(x,y)$ be an $\cL^{\eq}$-formula such that $\varphi(x,a)$ defines $Y$ for some real tuple $a$ from $\cU$. Let $e$ be the canonical parameter for $\varphi(x,a)$. To show that $Y$ is definable over $\acl^{\eq}_T(MQ)$, it suffices to show that $e\in \acl^{\eq}_T(MQ)$. 

Choose some $N\preceq \cU$ containing $M$ and $a$. Since $X\in\cD$, we have an $M$-definable finite-to-one function $f\colon X\to Q^{\eq}$. Let $S=\{\tp(c/Q^{\eq}(N)):c\in f(X)\}$.  Since $MQ(N)$ is an extension basis, we may fix a Morley sequence $(a_ie_i)_{i<\kappa}$ in $\tp^T(ae/MQ(N))$  with $a_0e_0=ae$ and $\kappa>2^{|S|}+\aleph_0$. Let $Y_i$ be the set defined by $\varphi(x,a_i)$, which has canonical parameter $e_i$. \medskip

\noindent\textit{Claim.} $Y_i=Y_j$ for some $i<j<\kappa$.\medskip

Note that if the claim holds, then $e_i=e$ for all $i<\kappa$, and so $e\in\acl^{\eq}_T(MQ(N))$. So it suffices to prove the claim. \medskip

\noindent\textit{Proof of the claim.} For $i<\kappa$, let $A_i=a_iMQ(N)$. Then $Q(N)\seq A_0\seq N$ and so $\ol{A_0}=Q^{\eq}(N)$. So $\ol{A_i}=Q^{\eq}(N)$ for all $i<\kappa$ by Remark \ref{rem:movebar}. For any $i<j<\kappa$, we have $A_i\indi T_{MQ(N)} A_j$ and so, since $Q$ is algebraically embedded in $T$, 
\[
\ol{A_iA_j}\seq \acl^{\eq}_T(\ol{A_i},\ol{A_j})\cap Q^{\eq}=\acl^{\eq}_T(Q^{\eq}(N))\cap Q^{\eq}=Q^{\eq}(N).
\]
Note that each $Y_i$ is contained in $X$ since $a_i\equiv_M a$.
Given $c\in f(X)$ and $i<\kappa$, let $Y_{i,c}=\{x\in Y_i:f(x)=c\}$. Given $i<j$, let $Z_{i,j}=\{c\in f(X): Y_{i,c}=Y_{j,c}\}$. Then each $Z_{i,j}$ is an $a_ia_jM$-definable subset of $f(X)$. Since $Q$ is stably embedded, it follows from Remark \ref{R:induced structure}$(a)$ that each $Z_{i,j}$ is definable over $\ol{a_ia_jM}\seq \ol{A_iA_j}\seq Q^{\eq}(N)$. 

Now fix $p\in S$. Define a map $g_p$ on $\kappa$ such that $g_p(i)=\{x\in Y_i:f(x)\models p\}$. We claim that $g_p$ has finite image. Indeed, suppose $I\seq \kappa$ is infinite and fix some $c\models p$. Since each $Y_{i,c}$ is a subset of $f^{\text{-}1}(c)$, which is finite, there are distinct $i,j\in I$ such that $Y_{i,c}=Y_{j,c}$, i.e., $c\in Z_{i,j}$. Since $Z_{i,j}$ is definable over $Q^{\eq}(N)$, we therefore have $Y_{i,d}=Y_{j,d}$ for all $d\models p$, and so $g_p(i)=g_p(j)$.

Finally, define the map $g$ on $\kappa$ such that $g(i)=(g_p(i))_{p\in S}$. By the above, the image of $g$ has size at most $2^{|S|}+\aleph_0$. So there are $i<j<\kappa$ such that $g(i)=g(j)$, i.e., $Y_i=Y_j$. 
\end{proof}

We now fix an expansion $\cQ$ of $Q_{\indd}$ and assume $\cU\models \Tbig$ is a monster model. Let $H$ be as above. We use $\cU^T_H$ to denote the structure $((\cU|_T)^{\eq},H,(\iota_X)_{X\in\cD})$ and let $\cU_\cH$ be the expansion of $\cU^T_H$ obtained by expanding \(Q\) with \(\cQ\). Let $\cH$ be the structure on (the universe of) $H$ induced by $\Th(\cU_\cH)$. Let $\iota_H$ denote the surjective function from (the universe of) $H$ to $\acl^{\eq}_T(MQ)$ (inside $\mathcal{U}$) given by $\bigcup_{X\in\cD}\iota\inv_X$. Given a set $A\seq H$, we let $\widehat{A}$ denote the set $\widehat{\iota_H(A)}$, and given a tuple $a$ from $H$, we let $\hat{a}$ denote the tuple $\widehat{\iota_H(a)}$.

We now indicate how some of the results stated in Section \ref{sec:stably} on a single stably embedded set, namely \(Q\), generalise to \(H\) which is a stably embedded collection of \(\emptyset\)-definable sets. For any tuple \(a\) in \(\cU\), let $\ol{a}^H$ (uniformly) enumerate $\dcl^{\eq}_{\cU_H}(a)\cap H$.

\begin{lemma}\label{lem:st emb aut}
For any tuple \(a\) in \(\cU\), \(\tp(a/\ol{a}^H)\vdash \tp(a/H)\) and any automorphism of \(H\) over \(\ol{a}^H\) extends to an automorphism of \(\cU^T_H\) over \(a\). In particular, any automorphism of \(\cH\) over \(\ol{a}^H\) extends to an automorphism of \(\cU_{\cH}\) over \(a\).
\end{lemma}

\begin{proof}
For the first statement, we proceed as in Lemma~\ref{lem:finding c-general}$(c)$, using the fact that \(H\) eliminates imaginaries (Lemma \ref{lem:H im}) to find canonical parameters in \(H\) directly.

Since \(H\) is stably embedded (even with \(a\) named) and we just showed that the \(a\)-induced structure on \(H\) in \(\cU^T_H\) is given by naming \(\ol{a}^H\), the second statement follows from the proof of \cite[Lemma 1, Appendix]{ChHr}, which extends word for word to the context of a stably embedded collection of \(\emptyset\)-definable sets.
\end{proof}

\begin{lemma}\label{lem:S5} Suppose $a$ is a tuple from $\cU$ containing $M$, and let $c$ enumerate $\acl^{\eq}_T(M\ol{a})$. Then $\ol{a}^H\seq \hat{c}$.
\end{lemma}

\begin{proof}
Fix $e\in\dcl^{\eq}_{\cU_H}(a)\cap H$. Then there is some $X\in\cD$ and some $b\in X(\cU)$ such that $e=\iota_X(b)$. Since $\iota_X$ is a bijection, $b\in\dcl^{\eq}_{\cU_H}(e)\seq\dcl^{\eq}_{\cU_H}(a)\cap \cU^{\eq}=\dcl^{\eq}_T(a)$ (recall that $a$ contains $M$). By assumption, there is a $M$-definable finite-to-one map $f\colon X(\cU)\to Q^{\eq}$. Then $f(b)\in\dcl^{\eq}_T(b)\seq\dcl^{\eq}_T(a)$, and so $f(b)\in\ol{a}$. Therefore $b\in c$ since $f$ is finite-to-one and $M$-definable. So $e=\iota_X(b)\in\hat{c}$.
\end{proof}

\begin{lemma}\label{lem:S4.375}
Suppose $a,b,c$ are tuples in $H$. Then $a \equiv^{\cH}_{c} b$ if and only if $\hat{a}\equiv^{\cH}_{\hat{c}}\hat{b}$, if and only if $\iota_H(a) \equiv^{\Tbig}_{\iota_H(c)} \iota_H(b)$.
\end{lemma}

\begin{proof}
Since \(a\seq \hat{a}\seq \dcl_H(a)\), the first equivalence follows. As for the second, it follows from Lemma~\ref{lem:st emb aut} and the fact that any automorphism of \(\cU_{\cH}\) preserves \(\iota_X\).
\end{proof}

\begin{lemma}\label{lem:S6}
Suppose $a,b$ are tuples from $\acl^{\eq}_T(MQ)$, and $c$ is a tuple from $\cU$ containing $M$. Let $c_*$ enumerate $\acl^{\eq}_T(M\ol{c})$, and assume $\hat{a}\equiv^{\cH}_{\hat{c}_*}\hat{b}$. Then $a\equiv^{\Tbig}_c b$.
\end{lemma}

\begin{proof}
By Lemma~\ref{lem:S5}, we have $\ol{c}^H\seq \hat{c}_*$ and hence  $\hat{a}\equiv^{\cH}_{\ol{c}^H}\hat{b}$. Let $\sigma$ be a $\cH$-automorphism over $\ol{c}^H$ sending $\hat{a}$ to $\hat{b}$. Then, by Lemma~\ref{lem:st emb aut}, \(\sigma\) extends to an automorphism of \(\cU_{\cH}\) over \(c\), which commutes with \(\iota_X\) and hence sends \(a\) to \(b\).
\end{proof}

We can also generalise Lemma~\ref{lem:forking-in-X} to \(H\), following the same argument but using Lemma \ref{lem:st emb aut} instead of Lemma~\ref{lem:barfacts}$(a)$. A proof of a very similar result (but for Kim-forking) is given in detail in the proof of Claim 3 of Lemma~\ref{lem:S7}.

\begin{lemma}\label{lem:forking-in-H}
Suppose $N\prec\cU$ contain \(M\) and $B\seq \cU$ contains $N$. Fix a tuple $a\in H$ and assume $\tp^{H}(a/\ol{B}^H)$ does not divide over $\ol{N}^H$ in $\Th(H)$. Then $\tp^{\cU^T_H}(a/B)$ does not divide over $N$ in $\Th(\cU^T_H)$.
\end{lemma}

Given a set $A\seq H$, we let $\ol{A}$ denote the set $\ol{\iota_H(A)}$, and given a tuple $a$ from $H$, we let $\ol{a}$ denote the tuple $\ol{\iota_H(a)}$. 

\begin{remark}\label{rem:bar complete}
Note that, for any tuples \(a,b,c\in H\), if \(a \equiv^\cH_c b\), then \(\ol{a} \equiv^\cQ_{\ol{c}} \ol{c}\). This follows from Lemmas~\ref{lem:S4.375} and \ref{lem:barfacts}$(a)$. 
\end{remark}

Let $\cQ^*$ denote the expansion of $Q^{\eq}_{\indd}$ by $\cQ$ (so $\cQ^*$ is an intermediate structure between $\cQ$ and $\cQ^{\eq}$). Note that if $N\prec\cH$ then  $\ol{N}\prec\cQ^*$ and \(\iota_H(N)\) contains \(M\).

We now assume that $\Th(\cQ)$ is $\NSOP1$. Let $\indi h$ and $\indi q$ denote Kim-dividing independence in $\cH$ and $\cQ^\eq$, respectively.

\begin{lemma}\label{lem:S7}
Suppose $N\prec\cH$ and $a,b\in H$ are tuples containing $N$. Then 
\[
a\indi {h}_N b\iff\ol{a}\indi q_{\ol{N}}\ol{b}.
\]
\end{lemma}

\begin{proof}
Given a tuple $e$ from $H$, let $\tilde{e}$ denote $\hat{\ol{e}}$. Note that $\tilde{e}\in\dcl_{\cU^T_H}(e)\cap H=\dcl_H(e)$.

Now suppose $a\indi h_N b$.   Fix an $N$-invariant global $\cH$-type $q$ extending $\tp^{\cH}(b/N)$, and a Morley sequence $(b_i)_{i<\omega}$ in $q$ over $N$ with $b_0=b$. By \cite[Lemma 3.18]{KR20}, there exists $c \equiv^{\cH}_{b} a$ such that $(b_i)_{i<\omega}$ is $Nc$-indiscernible in $\cH$. Then, by Remark~\ref{rem:bar complete}, $\ol{c}\equiv^{\cQ}_{\ol{b}}\ol{a}$.
 
 Let \(\ol{q}\) denote the type (in \(\cQ^*\)) whose restriction to any \(A \subseteq Q^\eq\) is the type of \(\ol{b}\) for any \(b\models q|_{\widehat{A}}\). This is a complete type by Remark~\ref{rem:bar complete}. Then \(\ol{q}\) is \(\ol{N}\)-invariant, since, by Lemmas \ref{lem:S4.375} and \ref{lem:barfacts}$(a)$, for any $e_1,e_2\in Q^{\eq}$, if $e_1\equiv^{\cQ}_{\ol{N}} e_2$ then $\hat{e}_1\equiv^{\cH}_{N} \hat{e}_2$.\medskip
 
 \noindent\textit{Claim 1.} ~
\begin{enumerate}[$(i)$]
\item $(\ol{b}_i)_{i<\omega}$ is a Morley sequence in $\ol{q}$ over $\ol{N}$.
\item $(\ol{b}_i)_{i<\omega}$ is indiscernible over $\ol{N}\ol{c}$.
\end{enumerate}
 
 \noindent\textit{Proof.} 
Part $(i)$. Fix $i<\omega$. We have \(b_i \models q|_{N b_{<i}}\) and hence, since \(\tilde{b}_i \subseteq \dcl_H(b_i)\),  \(b_i \models q|_{N \tilde{b}_{<i}}\). It follows, by definition, that \(\ol{b}_i \models \ol{q}|_{N \ol{b}_{<i}}\).

Part $(ii)$.  Fix $i_1<\ldots<i_n<\omega$ and $j_1<\ldots<j_n<\omega$. Then $b_{i_1}\ldots b_{i_n}\equiv^{\cH}_{Nc}b_{j_1}\ldots b_{j_n}$, and so, by Remark~\ref{rem:bar complete}, $\ol{b}_{i_1}\ldots \ol{b}_{i_n}\equiv^{\cQ}_{\ol{N}\ol{c}}\ol{b}_{j_1}\ldots\ol{b}_{j_n}$.\clqed\medskip
 
Recall that $\ol{c}\equiv^{\cQ}_{\ol{b}}\ol{a}$. Since $\Th(\cQ)$ is $\NSOP 1$, we have $\ol{a}\indi q_{\ol{N}}\ol{b}$ by Claim 1, \cite[Lemma 3.18]{KR20}, and  \cite[Theorem 3.16]{KR20}.\medskip

Conversely, suppose $\ol{a}\indi q_{\ol{N}}\ol{b}$.\medskip

\noindent\textit{Claim 2.} $\tilde{a}\indi h_N b$. 

\noindent\textit{Proof.}
Fix an $N$-invariant global $\cH$-type $q$ extending $\tp^{\cH}(b/N)$, and a Morley sequence $(b_i)_{i<\omega}$ in $q$ over $N$ with $b_0=b$. Let $\ol{q}$ be as above. By part $(i)$ of Claim 1, $(\ol{b}_i)_{i<\omega}$ is a Morley sequence in $\ol{q}$ over $\ol{N}$. Since $\ol{a}\indi q_{\ol{N}}\ol{b}$, there is some $e\in Q^{\eq}$ such that for all $i<\omega$, $e\ol{b}_i\equiv^{\cQ}_{\ol{N}} \ol{a}\ol{b}$. Given $i<\omega$, by Lemma~\ref{lem:S4.375} we have $\iota_H(b)\equiv^{\Tbig}_{\iota_H(N)}\iota_{H}(b_i)$ and hence $\ol{a}\iota_H(b)\equiv^{\Tbig}_{\iota_H(N)}e_i\iota_{H}(b_i)$, for some \(e_i \in Q^\eq\). Then \(e \equiv^\cQ_{\ol{b_i}} e_i\) whence $e \iota_H(b_i)\equiv^{\Tbig}_{\iota_H(N)}e_i\iota_{H}(b_i) \equiv^{\Tbig}_{\iota_H(N)} \ol{a}\iota_H(b)$, which implies $\hat{e} b_i\equiv^{\cH}_N\tilde{a}b$, for all $i<\omega$. \clqed
\medskip

 \noindent\textit{Claim 3.} $a\in \acl_H(\tilde{a})$.

\noindent\textit{Proof.} We may assume $a\in X(H)$ for some $X\in\cD$. By assumption, there is an $M$-definable finite-to-one map $f\colon X(\cU)\to Q^{\eq}$. So $f(\iota_X\inv (a))\in \ol{a}$ and hence \(\hat{f}(a) \in \tilde{a}\), where \(\hat{f}\) is the map, definable in \(H\) sending any \(x\) to (any component of) \(\widehat{f(\iota_X\inv (x))}\). Since \(\hat{f}\) has finite fibers, we do have $a\in\acl_H(\tilde{a})$. \clqed\medskip

Now $a\indi h_N b$ follows from Claims 2 and 3 and Remark~\ref{rem:acl K-fork}.
\end{proof}
 
\begin{corollary}\label{corA}
$\Th(\cH)$ is $\NSOP 1$.
\end{corollary}
\begin{proof}
Recall that $\Th(\cQ)$ is assumed to be $\NSOP 1$. So $\indi q$ is symmetric \cite{KR20}. Therefore $\indi h$ is symmetric by Lemma \ref{lem:S7}, which implies $\Th(\cH)$ is $\NSOP 1$ by \cite[Proposition 3.22]{KR20} (and its proof).
\end{proof}

Now we let $\indi \cQ$ and $\indi \cH$ denote Kim-independence in (the $\NSOP 1$ theories) $\Th(\cQ^\eq)$ and $\Th(\cH)$, respectively. 

\begin{corollary}\label{corB}
Suppose $a,b$ are tuples in $\acl^{\eq}_T(MQ)$ containing $M$. Then $\ol{a}\indi \cQ_{\ol{M}}\ol{b}$ if and only if $\hat{a}\indi\cH_{H(M)}\hat{b}$.
\end{corollary}
\begin{proof}
Note $\iota_H(H(M))=M^\eq$. Moreover, $\iota_H(\hat{e})=e$ (as sets)  for any tuple $e\in\acl^{\eq}_T(M)$. So in  light of Corollary \ref{corA}, the claim is a special case of  Lemma \ref{lem:S7}.
\end{proof}

\subsection{NSOP$_1$ and simplicity of $\Tbig$}\label{sec:presNSOP1}

We now prove our main preservation result for NSOP$_1$. Unlike the previous results involving stability, NIP, and NTP$_2$, we will need to add some extra assumptions. As discussed before, one assumption is algebraic embeddedness for the set $Q$. In addition to this, we will need to assume that the base theory $T$ is simple with elimination of hyperimaginaries, rather than just NSOP$_1$. This is due to an extensive use of forking calculus in \(T\), and also to ensure Lascar strong types coincide with Shelah strong types (see  Remark \ref{rem:explain-obstacles} for further discussion).

\begin{theorem}\label{thm:KPnsop1}
Assume $T$ is simple with elimination of hyperimaginaries and $Q$ is algebraically embedded in $T$. Then $\Tbig$ is $\NSOP 1$ if and only if $\Th(\cQ)$ is $\NSOP 1$. Moreover, in this case Kim-independence over models in $\Tbig$ is given by 
\[
 A\ind_M B\miff A\indi T _{\M} B \mand \ol{A\M}\indi{\cQ}_{\ol{\M}} ~\ol{B\M},
 \]
where $\indi T$ is forking independence  in $T$ and $\indi{\cQ}$ is  Kim-independence in $\Th(\cQ^\eq)$.
\end{theorem}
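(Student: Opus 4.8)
The strategy is to apply the Kim--Pillay-type criterion for NSOP$_1$ (Fact \ref{fact:KPnsop1}) to the candidate relation $\indd$ defined by $A\ind_M B \iff A\indi T_M B \text{ and } \ol{AM}\indi{\cQ}_{\ol M}\ol{BM}$. One direction is easy: since $Q$ is stably embedded in $\Tbig$ with induced structure $\cQ$ (Proposition \ref{prop: D' stab emb}), and a stably embedded definable set in an NSOP$_1$ theory has NSOP$_1$ induced theory, if $\Tbig$ is NSOP$_1$ then so is $\Th(\cQ)$. For the converse, assume $\Th(\cQ)$ is NSOP$_1$ (so $\indi{\cQ}$ is Kim-independence in $\cQ$, satisfying all the axioms of Fact \ref{fact:KPnsop1} in $\cQ^{\eq}$), and also recall that since $T$ is stable, $\indi T$ satisfies all the usual axioms including stationarity over models. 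I would then verify, one by one, that the relation $\indd$ (restricted to bases that are models) satisfies \ref{SYM} over models, \ref{EX} over models, \ref{FIN} over models, \ref{MON} over models, \ref{TRA} over models, \ref{EXT} over models, \ref{LOCS}, and \ref{INDTHM} over models. Fact \ref{fact:KPnsop1} then gives both that $\Tbig$ is NSOP$_1$ and that $\indd$ coincides with Kim-independence over models, which is exactly the "moreover" clause.

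Most of the axioms should reduce to combining the corresponding axiom for $\indi T$ (in the stable theory $T$) with the corresponding axiom for $\indi{\cQ}$ (in the NSOP$_1$ theory $\cQ$), using the toolbox from Section \ref{sec:H}: Lemma \ref{L: fact c} (types over $BQ$ are controlled by $\ol{AB}$ and $A\indi T_{\ol A} C$ for $C\seq Q^{\eq}$), Lemma \ref{L:facts from section} (the characterization $A\equiv^{\Tbig}_B A' \iff A\equiv^T_B A'$ and $\ol{AB}\equiv^{\cQ}_{\ol B}\ol{A'B}$, and the automorphism-extension statement), and Lemma \ref{lem:forking-in-X} (if $\ol{AM}\indi{\cQ}_{\ol M}\ol{BM}$ then $\ol{AM}\indi T_M B$). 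Symmetry, monotonicity, finite character, and existence over models are essentially formal once one unwinds the definition of $\ol{\cdot}$ and uses that $\indi T$ and $\indi{\cQ}$ have these properties — with finite character needing a small argument that $\ol{AM}=\bigcup\{\ol{aM}: a\in A \text{ finite}\}$ up to interalgebraicity, so one also uses finite character of $\indi{\cQ}$. Extension over models combines extension for $\indi T$ with extension for $\indi{\cQ}$: given $A\ind_M B$ and a target $D$, first extend in $\cQ$ to move $\ol{AM}$ off $\ol{BMD}$ while keeping its type over $\ol{BM}$, realize this back in $\cU$ using Lemma \ref{L:facts from section}(a), then use extension and stationarity of $\indi T$ over $M$ to further adjust so that $A\indi T_M BD$; algebraic embeddedness is what guarantees $\ol{AMBD}$ behaves well under these moves, i.e. that the $\cQ$-side independence is not destroyed. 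Chain local character \ref{LOCS} follows from \ref{LOCS} for the stable $T$ (where it is just ordinary local character applied along the chain) together with \ref{LOCS} for the NSOP$_1$ theory $\cQ$ applied to the chain $(\ol{M_i})_i$, noting that $\ol M=\bigcup_i\ol{M_i}$ since $\acl^{\eq}$ is finitary.

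The main obstacle will be the independence theorem over models, \ref{INDTHM}. Suppose $A\ind_M B$, $C_1\ind_M A$, $C_2\ind_M B$, and $C_1\equiv^{\Tbig}_M C_2$. I would first apply the independence theorem in $T$ (valid since $T$ is stable, hence simple) to the three $\indi T$-independences to get a $T$-candidate $C$ with $C\indi T_M AB$, $C\equiv^T_{MA}C_1$, $C\equiv^T_{MB}C_2$; and separately apply the independence theorem in $\cQ$ to $\ol{AM}\indi{\cQ}_{\ol M}\ol{BM}$, $\ol{C_1M}\indi{\cQ}_{\ol M}\ol{AM}$, $\ol{C_2M}\indi{\cQ}_{\ol M}\ol{BM}$ (with $\ol{C_1M}\equiv^{\cQ}_{\ol M}\ol{C_2M}$, which holds by Lemma \ref{L:facts from section}(b)) to obtain a $\cQ$-side witness. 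The real work is amalgamating these two solutions into a single $C$ realizing the right $\Tbig$-type over $MA$ and over $MB$ simultaneously: here one uses Lemma \ref{L:facts from section}(b) to see that the $\Tbig$-type of $C$ over $MA$ is determined by its $T$-type over $MA$ together with the $\cQ$-type of $\ol{CMA}$ over $\ol{MA}$, and similarly over $MB$; algebraic embeddedness is crucial to express $\ol{CMA}$ and $\ol{CMB}$ in terms of $\ol{CM},\ol{AM},\ol{BM}$ so that the two demands are compatible; and stationarity of $\indi T$ over the model $M$ is what lets one pin down the $T$-type of the amalgam uniquely after fixing the behavior on the $Q^{\eq}$ part. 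I expect this amalgamation step — reconciling the $T$-independence-theorem output with the $\cQ$-independence-theorem output via Lemma \ref{L:facts from section} and algebraic embeddedness, using stationarity to glue — to be the technical heart of the argument, and the place where the hypotheses "$T$ stable" and "$Q$ algebraically embedded" are genuinely needed. Finally, \ref{INDTHM} for $\indd$ together with the verified axioms lets Fact \ref{fact:KPnsop1} conclude, and the simplicity statement (Corollary \ref{cor:KPsimple}) follows by the analogous argument with the Kim--Pillay theorem in place of its NSOP$_1$ variant and symmetry of $\indi{\cQ}$ upgraded accordingly.
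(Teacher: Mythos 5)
Your high-level framework is identical to the paper's: define the candidate relation $\ind$ and verify the axioms of Fact \ref{fact:KPnsop1} one by one, using the toolbox from Section \ref{sec:H}, with algebraic embeddedness and stationarity of $\indi T$ doing the hard work in \ref{INDTHM}. But two of your axiom verifications differ from the actual proof in ways worth noting.

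For \ref{INDTHM}, you propose to apply the independence theorem separately in $T$ and in $\cQ$ and then amalgamate, flagging that amalgamation as the technical heart. The paper invokes \ref{INDTHM} only in $\cQ$: it produces $E\seq Q^{\eq}$, upgrades via algebraic embeddedness to $\ol{EA}\equiv^{\cQ}_{\ol{A}}\ol{C_1A}$ and $\ol{EB}\equiv^{\cQ}_{\ol{B}}\ol{C_2B}$, uses Lemma \ref{L:facts from section}(a) to find an $\Lbig$-automorphism $\sigma$ over $A$ carrying $\ol{C_1A}$ to $\ol{EA}$, sets $C'=\sigma(C_1)$, and then needs only \ref{EX}, \ref{EXT}, \ref{TRA} for $\indi T$ together with Lemma \ref{lem:forking-in-X} to land on $C\indi T_M AB$ with $\ol{C}=E$. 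Stationarity then hands you $C\equiv^T_A C_1$ and $C\equiv^T_B C_2$ for free: in a stable theory the independence theorem over a model is subsumed by stationarity, so the separate $T$-side application of \ref{INDTHM} in your plan is an unnecessary detour, and it is what manufactures the amalgamation problem rather than solving it.

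For \ref{LOCS}, your plan to ``apply \ref{LOCS} for $\cQ$ to the chain $(\ol{M_i})_i$'' has a genuine gap: the tuple whose $\cQ$-independence is required, namely $\ol{aM_j}$, is not finite and moreover depends on $j$, so \ref{LOCS} cannot be applied to it. The paper instead first uses ordinary local character of $\indi T$ to find a small model $B\preceq M$ with $|B|\leq|T|$ and $a\indi T_B M$, then invokes the \emph{club} characterization of local character from \cite[Corollary 3.11]{KRS19} applied to the $\leq|T|$-sized tuple $\ol{aB}$, and intersects the resulting club in $[\ol{M}]^{<\kappa}$ with the club $\set{\ol{M_i}: B\subseteq M_i}$ to get a single $j$; algebraic embeddedness (via $aB\indi T_B M_j$) then converts $\ol{aB}\indi{\cQ}_{\ol{M_j}}\ol{M}$ into the needed $\ol{aM_j}\indi{\cQ}_{\ol{M_j}}\ol{M}$. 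The club argument is an ingredient your sketch does not anticipate, and without some version of it the verification of \ref{LOCS} does not go through. (A minor further point: the paper's proof of \ref{EXT} uses only \ref{EXT}, \ref{BMON}, \ref{TRA}, and Lemma \ref{lem:forking-in-X}, not stationarity as your sketch suggests.)
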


\begin{proof}
Clearly, if $\Tbig$ is $\NSOP 1$ then so is $\Th(\cQ)$. Assume $\Th(\cQ)$ is $\NSOP 1$.
Let $\ind$ be as defined in the theorem. Note  that $\ind$ is $\Lbig$-automorphism invariant.
We will show that $\ind$ satisfies the axioms listed  in Fact \ref{fact:KPnsop1}. First note that, since $T$ is stable, $\indi T$ satisfies all of these axioms, as well as \ref{BMON} and \ref{STAT} over models. As usual, we will also use $\indi T$ for forking independence in $T^{\eq}$. 

It is straightforward to check that \ref{EX}, \ref{MON}, \ref{SYM}, and \ref{TRA} (all over models) transfer directly from $\indi T$ and $\indi \cQ$ to $\ind$.  For clarity, we provide the details for \ref{TRA}. 




\emph{Transitivity over models.} Suppose $A\ind_N B$ and $N\ind_M B$, with $M\preceq N\prec\cU$ and $A,B\seq\cU$. Then $A\indi T_N B$ and $N\indi T_M B$, and so $A\indi T_M B$ by transitivity for $\indi T$. Also $\ol{A}\indi\cQ_{\ol{N}}\ol{BN}$ (hence $\ol{A}\indi \cQ_{\ol{N}}\ol{BM}$) and $\ol{N}\indi\cQ_{\ol{M}}\ol{BM}$, and so $\ol{A}\indi \cQ_{\ol{M}}\ol{BM}$ by transitivity for $\indi\cQ$. Altogether, $A\ind_M B$.

\emph{Finite character over models.} Fix $A,B\seq\cU$ and $M\prec\cU$, and suppose $a\ind_M B$ for all finite $a\seq A$. Then $a\indi T_M B$ for all finite $a\seq A$, and so $A\indi T_M B$ by \ref{FIN} for $\indi T$. Now fix some finite $c\seq\ol{AM}$. Since $\ol{AM}\seq\acl^{\eq}_T(AM)$, there is a finite set $a\seq A$ such that $c\seq \acl^{\eq}_T(aM)$. Since $\ol{AM}\seq Q^{\eq}$, we have $c\seq \ol{aM}$. Therefore $c\indi\cQ_{\ol{M}}\ol{BM}$ since $a\ind_M B$ implies $\ol{aM}\indi\cQ_{\ol{M}}\ol{BM}$. By \ref{FIN} for $\indi\cQ$, we now have $\ol{AM}\indi\cQ_{\ol{M}}\ol{BM}$. Altogether, $A\ind_M B$.

\emph{Extension over models.} Assume that $a\ind _M b$. We may assume $M\subseteq a\cap b$. Let $d$ be arbitrary. First, as $\ol{a}\indi{\cQ}_{\ol{M}} \ol{b}$, by \ref{EXT} for $\indi\cQ$ there is $e\subseteq Q^{\eq}$ with $e\equiv_{\ol{b}}^{\cQ} \ol{a}$ such that $e\indi{\cQ}_{\ol{M}} \ol{bd}$. By Lemma \ref{lem:barfacts}$(b)$, there exists $a'$ such that $\ol{a'} = e$ and $a' \equiv^{\Tbig}_b a$. Therefore $a'\indi T_M b$. Since $e=\ol{a'}\seq\dcl^{\eq}_T(a')$, we then have $a'\indi T_{Me} be$ by (left) base monotonicity for $\indi T$ and Remark \ref{rem:fork-basics}$(i)$. Since $Q$ is algebraically embedded and $a'\indi T_M b$, we have $\ol{a'b}\seq\acl^{\eq}_T(be)$, and thus $a'\indi T_{Me} b\ol{a'b}$.

By \ref{EXT} for $\indi T$, there exists $a''\equiv^T_{b\ol{a'b}} a'$ such that $a''\indi{T}_{Me} bd$. In particular, we have $\ol{a''} = e = \ol{a'}$  and $\ol{a''b} = \ol{a'b}$. Hence, by Lemma \ref{lem:barfacts}$(c)$, $a'' \equiv^{\Tbig}_b a' \equiv^{\Tbig}_b a$. 
Also, since $e \indi {\cQ}_{\ol{M}}\ol{bd}$,  we have $e\indi T_M bd$ by Lemmas~\ref{lem:kim-dividing-reduct} and \ref{lem:forking-in-X}. Together with $a''\indi T_{Me}bd$, we have  $a''\indi T_M bd$ by \ref{TRA} for $\indi T$.  Therefore $a''\ind _M bd$.



\emph{Chain local character.}  Let $a$ be a finite tuple, $\kappa>\abs{T}$ a regular cardinal and $\M = \bigcup_{i<\kappa} M_i$ a continuous chain of models $(M_i)_{i<\kappa}$, with $\abs{M_i}<\kappa$. We show that there exists $j<\kappa$ such that $a\ind_{\M_{j}} \M$. First, using \ref{LOC} for $\indi T$, there is a set $B\subseteq \M$ such that $\abs{B}\leq \abs{T}$ and $a\indi T _{B} \M$. Using \ref{BMON} for $\indi T$, we may assume without loss of generality that $B\preceq M$ is a model.  By \cite[Corollary 3.11]{KRS19} and existence, the set of $N\prec \ol{\M}$ such that $\abs{T}\leq \abs{N}<\kappa$ and $\ol{aB}\indi{\cQ}_{N} \ol{M}$ is a club in $[\ol{M}]^{<\kappa}$ (the set of subsets of $\ol M$ of size $<\kappa$).\footnote{For basics on clubs, see e.g. \cite[Definition 2.10]{KRS19}.} In particular, the set 
\[\textstyle \mathcal{C}_0:=\set{N\prec \ol{\M}: \ol{B}\subseteq N,~ \abs{N}<\kappa,~ \ol{aB}\indi{\cQ}_{N}~ \ol{M}}\] is a club of $[\ol{M}]^{<\kappa}$.  As $\kappa$ is regular and the chain is continuous, the set $\set{M_i : i < \kappa}$ is a club set in $[M]^{< \kappa}$. In particular $\mathcal{C}:=\set{M_i: B\subseteq M_i, i<\kappa}$ is also a club of $[M]^{<\kappa}$, and so $\ol{\mathcal{C}} := \set{\ol{M_i}: M_i\in \mathcal C}$ is a club of $[\ol{M}]^{<\kappa}$. The two clubs $\mathcal{C}_0$ and $\ol{\mathcal C}$ intersect, hence there exists $j<\kappa$ such that $\ol{aB}\indi{\cQ}_{\ol{M_j}} \ol{M}$ and $B\subseteq M_j$. Since $Q$ is algebraically embedded in $T$ and $aB\indi T _B M_j$, we obtain $\ol{aM_j} \indi{\cQ}_{\ol{M_j}} \ol{\M}$. By \ref{BMON} for $\indi T$, we have $a\indi T _{M_j} M$, hence $a\ind_{M_j} M$.

\emph{The independence theorem over models.}
Let $a,b,c_1,c_2\in \cU$ contain a small \(M\models\Tbig\) with $a\ind_M b$, $c_1\ind_M a$, $c_2\ind_M b$, and $c_1\equiv^{\Tbig}_M c_2$. Fix an enumeration $e_1$ of $\acl^{\eq}_T(M\ol{c_1})$. Since $e_1\seq\acl^{\eq}_T(Mc_1)$, and $c_1\equiv^{\Tbig}_M c_2$, we may then choose an enumeration $e_2$ of $\acl^{\eq}(M\ol{c_2})$ such that $c_1e_1\equiv^{\Tbig}_M c_2e_2$.  Let $a_*$ and $b_*$ enumerate $\acl^{\eq}_T(M\ol{a})$ and $\acl^{\eq}_T(M\ol{b})$, respectively.\medskip

\noindent\textit{Claim 1.} $\hat{e}_1\equiv^{\cH}_{H(M)} \hat{e}_2$, $\hat{a}_*\indi\cH_{H(M)}\hat{b}_*$, $\hat{e}_1\indi\cH_{H(M)}\hat{a}_*$, and $\hat{e}_2\indi\cH_{H(M)}\hat{b}_*$. 

\noindent\textit{Proof.} 
Since $e_1\equiv^{\Tbig}_M e_2$, we have  $\hat{e}_1\equiv^{\cH}_{H(M)} \hat{e}_2$ by Lemma \ref{lem:S4.375}.

Next we have $a\ind_M b$, and so $\ol{a}\indi\cQ_{\ol{M}}\ol{b}$. By Lemma \ref{lem:S1}, $\ol{a_*}=\acl^{\eq}_T(\ol{a})\cap Q^{\eq}$ and $\ol{b_*}=\acl^{\eq}_T(\ol{b})\cap Q^{\eq}$. So $\ol{a_*}\indi\cQ_{\ol{M}}\ol{b_*}$. Therefore $\hat{a}_*\indi\cH_{H(M)}\hat{b}_*$ by Corollary \ref{corB}. Using similar arguments, we have that $c_1\ind_M a$ implies $\hat{e}_1\indi\cH_{H(M)}\hat{a}_*$, and $c_2\ind_M b$ implies $\hat{e}_2\indi\cH_{H(M)}\hat{b}_*$. \clqed\medskip

Recall that $\cH$ is $\NSOP 1$ by Corollary \ref{corA}. So by Claim 1, we can apply the independence theorem over models to find some  tuple $e'\in\cH$ such that $e'\indi{\cH}_{H(M)} \hat{a}_*\hat{b}_*$, $e'\equiv^{\cH}_{\hat{a}_*}\hat{e}_1$, and $e'\equiv^{\cH}_{\hat{b}_*}\hat{e}_2$.

Recall that $a\ind_M b$, and so $a\indi T_M b$. Since $Q$ is algebraically embedded in $T$, it follows that $\ol{ab}\seq\acl^{\eq}_T(\ol{a}\ol{b})$, and so 
\[
\acl^{\eq}_T(M\ol{ab})\seq \acl^{\eq}_T(M\ol{a}\ol{b})= \acl^{\eq}_T(a_*b_*).
\] 
Let $g$ enumerate $\acl^{\eq}_T(M\ol{ab})$. Then, since every (\(M\)-)definable relation in \(T\) between elements \(c,d\in \cD(\cU)\) gives rise to a relation between (any of the components of) \(\hat{c}\) and \(\hat{d}\) in \(H\), $\hat{g}\seq\acl_{H}(\widehat{a_*b_*})=\acl_{H}(\hat{a}_*\hat{b}_*)$. Thus 
 $e'\indi\cH_{H(M)}\hat{g}$.

Now, by Lemma \ref{lem:S4}, choose some tuple $e$ from $\acl^{\eq}_T(MQ)$ such that $\hat{e}=e'$. So $\hat{e}\indi \cH_{H(M)}\hat{g}$, $\hat{e}\equiv^{\cH}_{\hat{a}_*}\hat{e}_1$, and $\hat{e}\equiv^{\cH}_{\hat{b}_*}\hat{e}_2$. By Lemma \ref{lem:S6}, we have $e\equiv^{\Tbig}_a e_1$ and $e\equiv^{\Tbig}_b e_2$. Choose $c'_1$ and $c'_2$ such that $c'_1e\equiv^{\Tbig}_{a} c_1e_1$ and $c'_2e\equiv^{\Tbig}_{b} c_2e_2$.\medskip

\noindent\textit{Claim 2.} $c'_1\equiv^T_{e}c'_2$, $c'_1\indi T_e a$, $c'_2\indi T_e b$, and $a\indi T_e b$.

\noindent\textit{Proof.} First, we have $c'_1e\equiv^T c_1e_1\equiv^T c_2e_2\equiv^T c'_2e$, and so $c'_1\equiv^T_e c'_2$.

Next, recall that $c_1\indi T_M a$ and $e_1\seq \acl^{\eq}_T(Mc_1)$, and so $c_1e_1\indi T_M a$. Since $c'_1e\equiv^T_a c_1e_1$, we have $c'_1e\indi T_M a$ by invariance, and so $c'_1\indi T_e a$ by base monotonicity. By a similar argument, we get $c'_2\indi T_e b$.

It remains to prove $a\indi T_e b$. Let us first observe that it suffices to prove $e\indi T_M ab$. Indeed, given this we get $e\indi T_{aM} b$ by base monotonicity. Together with $a\indi T_M b$, we get $ea\indi T_M b$ by transitivity. So $a\indi T_e b$ by base monotonicity. 

So let us prove that $e\indi T_M ab$. Recall that $\hat{e}\indi\cH_{H(M)}\hat{g}$. Since $\Th(H)$ is simple and $H(M)\preceq \cH$, we get $\hat{e}\indi H_{H(M)}\hat{g}$ by Lemma~\ref{lem:kim-dividing-reduct}, where $\indi H$ denotes forking independence in $H$. By Lemma \ref{lem:S5}, \(\ol{M}^H = H(M)\) and \(\ol{ab}^H\seq \hat{g}\). So we have $\hat{e}\indi H_{\ol{M}^H}\ol{ab}^H$.  Then $\hat{e}\indi {\cU^T_H}_M ab$ by Remark \ref{H rem}$(3)$ and Lemma \ref{lem:forking-in-H}. Since $e\seq\dcl_{\cU^T_H}(\hat{e})$, we have $e\indi {\cU^T_H}_M ab$, and thus $e\indi T_M ab$ since \(\cU^T_H\) is bi-interpretable over \(M\) with its reduct to a model of \(T\). \clqed\medskip

Let $a'$ enumerate $\acl^{\eq}_T(ae)$ and $b'$ enumerate $\acl^{\eq}_T(be)$. Then the previous claim implies $c'_1\equiv^T_{e}c'_2$, $c'_1\indi T_e a'$, $c'_2\indi T_e b'$, and $a'\indi T_e b'$. Note that $e=\acl^{\eq}_T(e)$ since $e\equiv_a^{\Tbig} e_1$ and $e_1$ enumerates an $\acl^\eq$-closed set. Since $T$ is simple and eliminates hyperimaginaries, by \cite[Proposition 5.1.19]{kimbook} we can apply the independence theorem over $e$ to obtain some $c$ such that $c\indi T_e a'b'$, $c\equiv^T_{a'}c'_1$, and $c\equiv^T_{b'}c'_2$.\medskip

\noindent\textit{Claim 3.} $c\ind_M ab$.

\noindent\textit{Proof.} 
We need to show $c\indi T_M ab$ and $\ol{c}\indi {\cQ}_{\ol{M}}\ol{ab}$. Note first that we have $c\indi T_e ab$ and $e\indi T_M ab$, and so $c\indi T_M ab$ by transitivity. Next, recall that $\hat{e}\indi \cH_{H(M)}\hat{g}$, and so $\ol{e}\indi\cQ_{\ol{M}}\ol{g}$ by Corollary \ref{corB}. Since $\ol{ab}\seq \ol{g}$, we have $\ol{e}\indi \cQ_{\ol{M}}\ol{ab}$. Recall that  $e_1$ enumerates $\acl^{\eq}_T(M\ol{c_1})$ and thus \(\ol{c}_1 \seq \ol{e}\). Since $c_1e_1\equiv^T c'_1e\equiv^T ce$, it follows that $\ol{c}\seq\ol{e}$. Therefore $\ol{c}\indi \cQ_{\ol{M}}\ol{ab}$. \clqed\medskip

Finally, we prove $c\equiv^{\Tbig}_a c_1$ and $c\equiv^{\Tbig}_b c_2$. Recall from the proof of Claim 3 that $\ol{c}\seq e$. Since $c\indi T_M a$ (by Claim 3), and $Q$ is algebraically embedded in $T$, we have
\[
\ol{ca}\seq\acl^{\eq}_T(\ol{c},\ol{a})\seq\acl^{\eq}_T(e\ol{a})\seq a'.
\] 
Since $c\equiv^T_{a'} c'_1$, it follows that $\ol{ca}=\ol{c'_1a}$. So we have $c\equiv^T_{a}c'_1$ and $\ol{ca}=\ol{c'_1a}$ which, by Lemma~\ref{lem:barfacts}(c), yields $c\equiv^{\Tbig}_a c'_1$. Since $c'_1\equiv^{\Tbig}_a c_1$, we get $c\equiv^{\Tbig}_a c_1$. Finally, $c\equiv^{\Tbig}_b c_2$ follows from a similar argument. 
\end{proof}

\begin{corollary}\label{cor:KPsimple}
Assume that $T$ is simple with elimination of hyperimaginaries and $Q$ is algebraically embedded in $T$. Then $\Tbig$ is simple if and only if $\Th(\cQ)$ is simple.  Moreover, in this case forking independence over models in $\Tbig$ is given by 
\[
A\ind_M B\miff A\indi T _{M} B \mand \ol{AM}\indi{\cQ}_{\ol{M}} ~\ol{BM},
 \]
where $\indi T$  and $\indi{\cQ}$ are forking independence in $T$ and $\Th(\cQ)$, respectively.  
\end{corollary}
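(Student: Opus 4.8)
The plan is to deduce Corollary~\ref{cor:KPsimple} from Theorem~\ref{thm:KPnsop1} together with the general fact that a theory is simple if and only if it is $\NSOP 1$ and its Kim-independence satisfies base monotonicity over models (equivalently, if and only if it is $\NSOP 1$ and Kim-independence coincides with forking independence). Since $\Th(\cQ)$ simple clearly implies $\Th(\cQ)$ is $\NSOP 1$, Theorem~\ref{thm:KPnsop1} already gives that $\Tbig$ is $\NSOP 1$ and that Kim-independence over models in $\Tbig$ is the relation
\[
A\ind_M B \miff A\indi T_M B \mand \ol{A\M}\indi{\cQ}_{\ol{\M}} \ol{B\M},
\]
where now, because $\Th(\cQ)$ is simple, $\indi{\cQ}$ is forking independence in $\Th(\cQ)$. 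Conversely, if $\Tbig$ is simple then it is $\NSOP 1$, so $\Th(\cQ)$ is $\NSOP 1$; and $\Th(\cQ)$ is interpretable as the induced structure on the stably embedded set $Q$ in $\Tbig$ (Proposition~\ref{prop: D' stab emb}), and reducts/interpretable structures of simple theories are simple, so $\Th(\cQ)$ is simple. This settles the biconditional and identifies the independence relation; it remains to check that when $\Th(\cQ)$ is simple, the relation $\ind$ above is symmetric over models (which, by the Kim-Pillay-style criterion already verified for $\ind$ in the proof of Theorem~\ref{thm:KPnsop1}, upgrades $\Tbig$ to simple) and agrees with forking independence in $\Tbig$ over models.

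First I would record symmetry of $\ind$: $\indi T$ is symmetric since $T$ is stable, and $\indi \cQ$ (now forking independence in the simple theory $\Th(\cQ)$) is symmetric, so $\ind$ is manifestly symmetric over models. Combined with the verification in the proof of Theorem~\ref{thm:KPnsop1} that $\ind$ satisfies the axioms of Fact~\ref{fact:KPnsop1}, the Kim-Pillay theorem for simple theories (or the observation that an $\NSOP 1$ theory whose Kim-independence is symmetric \emph{and} satisfies base monotonicity is simple) yields simplicity of $\Tbig$; alternatively one checks directly that $\ind$ additionally satisfies \ref{BMON} over models, using base monotonicity of $\indi T$ and of $\indi \cQ$ together with the identity $\acl^{\eq}(\M N) \cap Q^{\eq} = \ol{\M N}$ and the fact that $\ol{A\M}$ is determined by $A$ and $\M$ via $\acl^{\eq}$. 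Once $\Tbig$ is known to be simple, Kim-independence and forking independence over models coincide in $\Tbig$ by the general theory of \cite{KR20}, so the displayed formula for $\ind$ is exactly forking independence over models, now with $\indi T, \indi \cQ$ both forking independence — which is the assertion of the corollary.

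The only genuinely new point beyond invoking Theorem~\ref{thm:KPnsop1} is base monotonicity of $\ind$, and this is where I expect the mild obstacle to lie: one must check that if $A\ind_{\M} BN$ with $\M\preceq N$ a small model contained in $B$-side data, then $A\ind_N B$, i.e. that $A\indi T_N B$ (immediate from \ref{BMON} for $\indi T$) and $\ol{A N}\indi{\cQ}_{\ol{N}}\ol{BN}$. For the second conjunct one starts from $\ol{A\M}\indi{\cQ}_{\ol{\M}}\ol{BN\M}$ and wants to base-change to $\ol{N}$; the subtlety is that $\ol{AN}$ need not be contained in $\acl^{\eq}(\ol{A\M},\ol{N})$ \emph{unless} one uses that $A\indi T_{\M} N$ and algebraic embeddedness of $Q$, which gives $\ol{AN}\seq \acl^{\eq}(\ol{A\M},\ol{N})$ — precisely the place where algebraic embeddedness is again essential. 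After that, base monotonicity for $\indi \cQ$ (valid since $\Th(\cQ)$ is simple) finishes the argument. Everything else is formal bookkeeping with $\ol{(-)}$, Lemma~\ref{L:facts from section}, and the axioms already assembled in the proof of Theorem~\ref{thm:KPnsop1}.
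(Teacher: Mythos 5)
Your proposal matches the paper's proof: the easy direction is identical, and the substantive direction reduces (via Theorem~\ref{thm:KPnsop1} and \cite[Propositions 8.4 and 8.8]{KR20}) to checking \ref{BMON} over models for $\ind$, which you verify by the same combination of base monotonicity for $\indi T$ and $\indi{\cQ}$ plus algebraic embeddedness giving $\ol{AN}\seq\acl_{\cQ}(\ol{AM},\ol{N})$. The detour through symmetry in your second paragraph is redundant --- symmetry over models is already part of the axiom list in Fact~\ref{fact:KPnsop1} and hence already verified in the proof of Theorem~\ref{thm:KPnsop1} --- but this does not affect correctness.
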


\begin{proof}
Once again, if $\Tbig$ is simple then so is $\Thx$. So assume $\Thx$ is simple.
Let $\ind$ be as defined above. By Theorem \ref{thm:KPnsop1}, $\Tbig$ is NSOP$_1$ and $\ind$ coincides with Kim-independence over models. Therefore, to prove the corollary, it suffices by \cite[Propositions 8.4 and 8.8]{KR20} to show that $\ind$ satisfies base monotonicity over models. So fix small models $M\seq N$, and suppose $A\ind_M BN$.  Then $A\indi T_M BN$, and so $A\indi T_{N} B$ by base monotonicity for $\indi T$. We also have $\ol{AM}\indi \cQ_{\ol{M}}\ol{BN}$. So $\ol{AM}\indi \cQ_{\ol{N}}\ol{B}$ by base monotonicity for $\indi\cQ$.  As $AM\indi T_M N$ and $Q$ is algebraically embedded in $T$, we have $\ol{AN}\seq\acl_{\cQ}(\ol{AM},\ol{N})$, and thus $\ol{AN}\indi \cQ_{\ol{N}}\ol{B}$ (using Remark \ref{rem:fork-basics}$(i)$). Altogether, $A\ind_{N} B$, as desired.
\end{proof}


\begin{remark}\label{rem:explain-obstacles}
Let us discuss the extra assumptions present in Theorem \ref{thm:KPnsop1}. The reader will have noticed that many steps of the proof involving forking calculus require algebraic embeddedness of $Q$. 
As for simplicity of $T$, it is used in the following places:
\begin{enumerate}[$(1)$]
\item We regularly use that all sets are extension bases for nonforking, e.g., in the proofs of Lemmas \ref{L: fact c} and \ref{lem:S2}. 
\item The proof of \ref{EXT} for $\ind$ uses \ref{BMON} (on the left) and \ref{TRA} (on the right) for $\indi T$ over arbitrary base sets.
\item The proof of \ref{LOCS} for $\ind$ uses \ref{LOC} and \ref{BMON} for $\indi T$ over arbitrary base sets. 
\item The proof of \ref{INDTHM} over models for $\ind$ uses \ref{BMON} for $\indi T$.
\end{enumerate}

The remaining extra assumption, namely elimination of hyperimaginaries for $T$, could likely be avoided with more work. For example, it is very probable that all of what we do extends to continuous logic where the distinction between imaginaries and hyperimaginaries is irrelevant, although it might be tedious to develop  the necessary tools. On the other hand, recall that elimination of hyperimaginaries is known if $T$ is stable \cite{PP87} or  supersimple  \cite[Corollary 5.9]{BPW01}. Indeed, it is conjectured that every simple theory has this property. 
\end{remark}

Altogether, it remains an open question whether, assuming $Q$ is algebraically embedded in $T$, we have that NSOP$_1$ and/or simplicity pass from $T$ and $\Th(\cQ)$ to $\Tbig$. In the NSOP$_1$ case, it would also be natural to work with a weaker form of algebraic embeddedness in which $\indi T$ is Kim-independence in $T$. That being said,  we  do not know whether algebraic embeddedness (in any form) is necessary in Theorem \ref{thm:KPnsop1} and Corollary \ref{cor:KPsimple}. Indeed, an interesting question for future work is whether preservation results for simplicity and NSOP$_1$ can be obtained using a combinatorial approach along the lines of what is done for NIP by Jahnke and Simon \cite{JaSi} and for NTP$_2$ by Chernikov and Hils \cite{CheHil} (as mentioned in Remark \ref{rem:stabJS}, such an approach also works for preserving stability).  On the other hand, this method would not directly lead to a characterization of forking/Kim-independence in $\Tbig$. Indeed, note that if $T$ and $\Th(\cQ)$ are both stable then, assuming $Q$ is algebraically embedded in $T$, Corollary \ref{cor:KPsimple}  provides a characterization of forking independence over models in $\Tbig$, which does not immediately follow from the proof of Theorem \ref{T:pres. of stab}.

\section{Connection to interpolative fusions}\label{sec:interpol}

In this section, we summarize the overlap between the previous results and the work of Kruckman, Tran, and Walsberg \cite{KTW1,KTW2} on \emph{interpolative fusions}. We first show that any theory $\Tbig$ defined as in Section \ref{sec:stably} can be built as an interpolative fusion of two theories over a common base.

Let $T$ be a complete $\cL$-theory, and fix an $\emptyset$-definable stably embedded set $Q$. Let $Q_{\indd}$ denote the $\emptyset$-induced structure on $Q$. Without loss of generality, assume $\cL$ contains a relation for $Q$ and relations $R_{\varphi}(x)$ for any $\cL$-formula $\varphi(x)$, and $T$ contains the sentence $\forall x(R_\varphi(x)\leftrightarrow (\varphi(x)\wedge\bigwedge Q(x_i)))$. Let $\cL_{\cap}=\{Q\}\cup\{R_\varphi:\varphi(x)\text{ an $\cL$-formula}\}$, and let $T_\cap$ be the $\cL_\cap$-reduct of $T$. So $T_\cap$ is the theory of $Q_{\indd}$ on $Q$ and a pure set on $\neg Q$. 

Now let $\cQ$ be some arbitrary expansion of $\cQ_{\indd}$. Without loss of generality, we can view $\Th(\cQ)$ as expanding $T_\cap$ in a language $\cL_2\supseteq\cL_\cap$, with $\cL_2\cap\cL=\cL_\cap$. In other words, add to $\cQ$ a pure set for $\neg Q$  with no interaction to $\cQ$. Let $\cL_1=\cL$ and $T_1=T$. Then we have $T_1\cap T_2=T_\cap$, and $T_1\cup T_2=\Tbig$. In particular, $T_1\cup T_2$ is a complete theory. 

\begin{proposition}\label{prop:interpol}
Any model of $\Tbig=T_1\cup T_2$ is interpolative (over $T_\cap$).
\end{proposition}
\begin{proof}
Fix $M\models \Tbig$. For $i\in \{1,2\}$, let $X_i\seq M^n$ be an $\cL_i(M)$-definable set, and assume $X_1\cap X_2=\emptyset$. We need to find an $\cL_\cap(M)$-definable set $Y\seq M^n$ such that $X_1\seq Y$ and $X_2\cap Y=\emptyset$. Given $I\seq\{1,\ldots,n\}$, let $P_I=\prod_{i=1}^n P_{I,i}$ where $P_{I,i}$ is $Q(M)$ if $i\in I$ and $\neg Q(M)$ otherwise. Then $(P_I)_I$ forms a partition of $M^n$ into $\cL_\cap$-definable sets. So it suffices to fix some $I$, and find an $\cL_\cap(M)$-definable set $Y_I\seq P_I$ such that $X_1\cap P_I\seq Y_I$ and $X_2\cap Y_I=\emptyset$ (since we may then take $Y=\bigcup_I Y_I$). So fix some $I$. After permuting coordinates, we may assume without loss of generality, that $I$ is an initial segment of $\{1,\ldots,n\}$. Write $P_I=A\times B$ where $A=Q(M)^{|I|}$ and $B=(\neg Q(M))^{n-|I|}$. 

For $t\in\{1,2\}$, let $X'_t=X_t\cap P_I$. We want to find an $\cL_\cap(M)$-definable set $Y_I\seq P_I$ such that $X'_1\seq Y_I$ and $X'_2\cap Y_I=\emptyset$.
Since $M|_{\cL_2}$ is a pure set outside of $Q$, and $X_2$ is $\cL_2(M)$-definable, we may write $X'_2 =\bigcup_{j\in J}C_j\times D_j$ where each $C_j$ is an $\cL_2(M)$-definable subset of $A$ and each $D_j$ is an $\cL_\cap(M)$-definable subset of $B$ (more specifically, each $D_j$ is the trace on $B$ by some formula in the language of equality). For each $j\in J$, we will find an $\cL_\cap(M)$-definable set $Y_{I,j}\seq P_I$ such that $X'_1\seq Y_{I,j}$ and $(C_j\times D_j)\cap Y_{I,j}=\emptyset$; and then let $Y_I=\bigcap_{j\in J}Y_{I,j}$.  

Fix some $j\in J$. Define $V\coloneqq A\times (B\backslash D_j)$, and note that $V$ is an $\cL_\cap(M)$-definable subset of $A\times B$. Then $X''_1\coloneqq X'_1\backslash V$ is $\cL(M)$-definable, and so we may fix an $\cL$-formula $\varphi(x,x',y)$, where $x=(x_i)_{i\in I}$, $x'=(x_i)_{i\not\in I}$, and $X''_1$ is defined by $\varphi(x,x',c)$ for some $c\in M$. Let $\psi(x,y)$ be $\exists x'\varphi(x,x',y)$. Then $\psi(x,c)$ defines the projection of $X''_1$ to the coordinates in $I$, which we denote by $W$. Note that $W\seq A$. By stable embeddedness, there is an $\cL$-formula $\theta(x,z)$ and some $d\in Q(M)$ such that $\theta(Q(M),d)=\psi(Q(M),c)$. Therefore $W$ is $\cL_\cap(M)$-definable via the formula $R_\theta(x,d)\wedge \bigwedge_{i\in I}Q(x_i)$. 

Define $Y_{I,j}\coloneqq (W\times B)\cup V$. Then $Y_{I,j}$ is $\cL_\cap(M)$-definable. We show that $X'_1\seq Y_{I,j}$ and $(C_j\times D_j)\cap Y_{I,j}=\emptyset$. So first fix some $(a,b)\in X'_1$. If $(a,b)\in V$ then $(a,b)\in Y_{I,j}$. So assume $(a,b)\not\in V$. Then $(a,b)\in X''_1$, and so $a\in W$, hence $(a,b)\in W\times B\seq Y_{I,j}$.

Finally, we show $(C_j\times D_j)\cap Y_{I,j}=\emptyset$. Note that $(C_j\times D_j)\cap V=\emptyset$ by definition of $V$. So it suffices to show $(C_j\times D_j)\cap (W\times B)=\emptyset$. For this, we show that $C_j\cap W=\emptyset$. Toward a contradiction, suppose there is some $a\in C_j\cap W$. Since $a\in W$, there is some $b\in B$ such that $(a,b)\in X''_1$. By definition of $X''_1$, it follows that $(a,b)\not\in V$ and so, in particular, $b\in D_j$ by definition of $V$. But then $(a,b)\in X''_1\cap (C_j\times D_j)\seq X_1\cap X_2$, which is a contradiction. 
\end{proof}

Next we directly translate two main preservation results proved in \cite{KTW2} for \emph{arbitrary} interpolative fusions to the setting of $\Tbig$. 

\begin{corollary}\label{cor:KTWpres}
Let $T$ be a complete $\cL$-theory, $Q$ an $\emptyset$-definable stably embedded set, and $\cQ$ an expansion of $Q_{\indd}$. 
\begin{enumerate}[$(a)$]
\item Suppose $T$ and $\Th(\cQ)$ are $\NSOP1$, and $\Th(Q_{\indd})$ is stable with trivial forking. Then $\Tbig$ is $\NSOP1$.
\item  Suppose $\Th(Q_{\indd})$ is stable, and both  $T$ and $\Th(\cQ)$ are  simple and disintegrated relative to $\Th(Q_{\indd})$.\footnote{In \cite{KTW2}, a theory $T_1$ is called \emph{disintegrated relative to} a reduct $T_0$ if, for any $M\models T_1$ and $A\seq M$, $\acl_{T_1}(A)=\acl_{T_0}(\bigcup_{a\in A}\acl_{T_1}(a))$ (where $a$ ranges over \emph{singletons}).} Then $\Tbig$ is simple.
\end{enumerate}
\end{corollary}
\begin{proof}
In light of Proposition \ref{prop:interpol}, parts $(a)$ and $(b)$ follow from \cite[Corollary 4.8]{KTW2} and \cite[Theorem 4.11]{KTW2}, respectively.
\end{proof}

\begin{remark}$~$
\begin{enumerate}[$(a)$]
\item Corollary \ref{cor:KTWpres}$(a)$ is orthogonal to our preservation result for $\NSOP1$ (Theorem \ref{thm:KPnsop1}) since we do not need to assume $\Th(Q_{\indd})$ is stable with trivial forking, but we do need to assume $T$ is simple and eliminates hyperimaginaries (hence $\Th(Q_{\indd})$ is simple as well) and $Q$ is algebraically embedded. However, the main preservation result for $\NSOP1$ in \cite{KTW2} actually works under the weaker assumption  that Kim-independence in $T$ and $\Th(\cQ)$ both satisfy the ``$\Th(Q_{\indd})$-generic independence theorem" (see \cite[Theorem 4.6]{KTW2}). Evidently, this condition holds in all known examples of an $\NSOP1$ theory and a stable reduct. Thus, conjecturally, one only needs to assume $\Th(Q_{\indd})$ is stable in part $(a)$ in order to preserve NSOP$_1$ using interpolative fusions.

\item Corollary \ref{cor:KTWpres}$(b)$ is also orthogonal to our preservation result for simplicity (Corollary \ref{cor:KPsimple}) since we do not need to assume stability of $\Th(Q_{\indd})$ or disintegration, but we do need to assume $T$ is simple with elimination of hyperimaginaries and $Q$ is algebraically embedded. However,  if $T$ is disintegrated relative to $\Th(Q_{\indd})$, and $\Th(Q_{\indd})$ has geometric elimination of imaginaries, then $Q$ is automatically algebraically embedded in $T$ (e.g., it is easy to check the characterization in Remark \ref{rem:about(H)}$(b)$). 
\end{enumerate}
\end{remark}

There are a number of other preservation theorems in \cite{KTW2} which, when applied to the setting of $\Tbig$, yield special cases of what we have proved above. For example, NIP, stability, superstability, and $\omega$-stability can be preserved in general interpolative fusions satisfying additional restrictions on $T_1$ and $T_2$ (see \cite[Section 4.1]{KTW2}). In fact, when working with \emph{arbitrary} interpolative fusions, one does not expect unconditional preservation results. For example, it is possible to interpret the generic triangle-free graph in the interpolative fusion of two simple theories (see \cite[Section 5.2]{KTW2}). Further examples are constructed in \cite{KTW2}, which illustrate that many of their preservation results are sharp for general interpolative fusions.  However, it is worth pointing out that by \cite[Corollary 4.10]{KTW2},  if the interpolative fusion of two stable theories exists and is NIP, then it must be stable. Combined with preservation of NIP in $\Tbig$ (Fact \ref{fact:JS}), this gives an alternative proof for preservation of stability in $\Tbig$ (Theorem \ref{T:pres. of stab}$(a)$).

\section{Tame expansions of weakly minimal structures}\label{sec:Uapp}
\numberwithin{theorem}{section}

The rest of this article is devoted to applications of the preservation results proved above. Our focus will now shift to expansions of specific \emph{structures} rather than theories. Given a structure $\cM$, we will use the previous theorems to build expansions of $\cM$ that simultaneously introduce new model theoretic complexity, while also preserving some desired level of tameness. The general approach will be to add new structure on a (definable, stably embedded) set $Q$ in $\cM$. This presents the main obstacle of understanding the induced structure on $Q$.  
Thus we will first investigate subsets of structures that are not necessarily definable, but do admit very trivial induced structure. Then, in order to apply our preservation results, we will name such a set by a new predicate. Of course, this risks losing whatever model-theoretic tameness we might have had at the start. Therefore, we will need to work in a setting where one can freely name new predicates with sufficiently trivial induced structure, without introducing too much complexity. Moreover, there is also the issue of stable embeddedness for the named predicate. By  known results (see Fact \ref{fact:bounded}), superstable theories of $U$-rank $1$ provide setting in which we can resolve all of these issues. Thus the main results of this section will be about  expansions of structures that are \textbf{weakly minimal} (i.e., superstable of $U$-rank $1$).

\subsection{Eventually indiscernible sequences}

Recall that our focus is now on structures rather than theories. So throughout this subsection, we fix an arbitrary $\cL$-structure $\cM$ with universe $M$.

The goal of this subsection is to identify subsets of $M$ that are both easy to find and also admit induced structure that is as trivial as possible. In the stable setting, a typical example would be a set enumerated by an  indiscernible sequence. However, there is no guarantee that such a sequence can be found  inside a \emph{particular structure}. For example, in $(\Z,+)$ there are no non-constant indiscernible sequences. Thus we will work instead with ``eventually" indiscernible sequences.

\begin{definition}\label{def:EIS}
A sequence $(a_i)_{i<\omega}$ from $M$ is \textbf{eventually indiscernible over $A\seq M$} if for any $\cL_A$-formula $\varphi(x_1,\ldots,x_n)$, there is some $k_\varphi>0$ such that if $k_\varphi<i_1<\ldots<i_n$ and $k_\varphi<j_1<\ldots<j_n$ then $\cM\models\varphi(a_{i_1},\ldots,a_{i_n})\leftrightarrow \varphi(a_{j_1},\ldots,a_{j_n})$.
\end{definition}

Unlike the situation with indiscernible sequences, there is always a plentiful supply of eventually indiscernible sequences inside a given structure in a countable language. In particular, we have the following fact, whose proof is a standard application of Ramsey's Theorem (see \cite[Section 4]{GanSA} and \cite[Section 2.1]{Sim-invariant} for related discussion). 

\begin{fact}\label{fact:EIexist}
Suppose $\cL$ is countable and $A$ is a countable subset of $M$. Then any infinite sequence from $M$ has a subsequence that is eventually indiscernible over $A$.
\end{fact}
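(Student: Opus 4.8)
The plan is to prove Fact \ref{fact:EIexist} by a standard diagonalization argument built on finitely many applications of the infinite Ramsey theorem, one for each formula in the countable language (with parameters from the countable set $A$). First I would fix an enumeration $(\varphi_m(x_1,\ldots,x_{n_m}))_{m<\omega}$ of all $\cL_A$-formulas; this is possible since both $\cL$ and $A$ are countable, so $\cL_A$ is countable. Starting from the given infinite sequence, which I identify with an infinite subset $I_0\subseteq M$ (or more precisely an infinite index set together with the map $i\mapsto a_i$; to keep the ordering one works with infinite subsets of $\omega$ and passes to the induced subsequence), I would build a decreasing chain of infinite sets $I_0\supseteq I_1\supseteq I_2\supseteq\cdots$ such that $I_{m+1}$ is homogeneous for $\varphi_m$.

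The key step is the single Ramsey application: given an infinite set $J$ of indices and a formula $\varphi_m$ with $n_m$ free variables, color each $n_m$-element subset $\{j_1<\cdots<j_{n_m}\}$ of $J$ by the truth value of $\varphi_m(a_{j_1},\ldots,a_{j_{n_m}})$ in $\cM$ — a $2$-coloring of $[J]^{n_m}$. By the infinite Ramsey theorem there is an infinite $J'\subseteq J$ on which this coloring is constant, i.e. the truth value of $\varphi_m$ on increasing tuples from $J'$ does not depend on the tuple. Applying this with $J=I_m$ produces $I_{m+1}=J'$. Finally I would diagonalize: choose $i_0<i_1<i_2<\cdots$ with $i_m\in I_m$, and set $b_m=a_{i_m}$; the subsequence $(b_m)_{m<\omega}$ is the desired eventually indiscernible subsequence. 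Indeed, for the formula $\varphi_m$, take $k_{\varphi_m}=m$: any increasing tuple from $\{b_{m+1},b_{m+2},\ldots\}$ corresponds to an increasing tuple of indices all lying in $I_{m+1}$, on which $\varphi_m$ has constant truth value, which gives exactly the required equivalence in Definition \ref{def:EIS}.

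I do not expect a serious obstacle here — this is genuinely routine — but the one point needing a little care is the bookkeeping that makes the ``eventually'' quantifier work: one must be sure that for each formula $\varphi_m$, \emph{all} sufficiently late increasing tuples of the final subsequence have indices inside the homogeneous set $I_{m+1}$, which is why the diagonal indices must be chosen so that $i_k\in I_k\subseteq I_{m+1}$ for all $k\ge m+1$ (using that the $I_k$ are decreasing). Another minor point is that homogeneity of $I_{m+1}$ for $\varphi_m$ persists when passing to any infinite subset, which is what licenses the later shrinking steps not to destroy earlier ones. One should also note that since the length $n$ of tuples in Definition \ref{def:EIS} ranges over all of $\omega$, and a formula has a fixed finite number of free variables, it is enough to handle each formula once — there is no need to separately homogenize for each arity. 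I would state the proof in two short paragraphs: set up the enumeration and the Ramsey-driven shrinking chain, then do the diagonal extraction and verify Definition \ref{def:EIS} with $k_\varphi$ equal to the index of $\varphi$ in the enumeration.
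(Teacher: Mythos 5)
Your proof is correct and is exactly the argument the paper has in mind: the paper does not spell it out but explicitly describes the statement as "a standard application of Ramsey's Theorem," which is precisely the enumerate--shrink--diagonalize scheme you give, with the homogeneous set $I_{m+1}$ for $\varphi_m$ furnishing the threshold $k_{\varphi_m}=m$ in Definition~\ref{def:EIS}.
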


If $\cL$ is countable then, using the previous fact, one can construct an infinite sequence $(a_i)_{i<\omega}$ in $M$ which is eventually indiscernible over itself, or even over all of $M$ (if $M$ is countable). This  provides a stark contrast to the behavior of  indiscernible sequences.

We will soon analyze the induced structure on eventually indiscernible sequences. In general, this structure may include a total order; but of course in a stable context this does not happen.

\begin{fact}\label{fact:ETI}
Assume $\cM$ is stable and $(a_i)_{i<\omega}$ is a sequence from $M$, which is eventually indiscernible over some $A\seq M$. Then $(a_i)_{i<\omega}$ is \textbf{eventually totally indiscernible over $A$}, i.e., for any $\cL_A$-formula $\varphi(x_1,\ldots,x_n)$, there is some $k_\varphi>0$ such that if $i_1,\ldots,i_n>k_\varphi$ are pairwise distinct and $j_1,\ldots,j_n>k_\varphi$ are pairwise distinct then $\cM\models\varphi(a_{i_1},\ldots,a_{i_n})\leftrightarrow \varphi(a_{j_1},\ldots,a_{j_n})$.
\end{fact}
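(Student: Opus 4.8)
The statement to prove is Fact~\ref{fact:ETI}: if $\cM$ is stable and $(a_i)_{i<\omega}$ is eventually indiscernible over $A$, then it is eventually \emph{totally} indiscernible over $A$. The plan is to reduce to the standard fact that, in a stable theory, an infinite indiscernible sequence is automatically an indiscernible set, and then transfer this through a compactness/ultrapower argument to the ``eventual'' setting. First I would fix an $\cL_A$-formula $\varphi(x_1,\dots,x_n)$ and pass to the tail of the sequence: by definition of eventual indiscernibility over $A$, after discarding finitely many terms (taking $k$ larger than $k_\psi$ for each $\psi$ among $\varphi$ and the finitely many formulas obtained from $\varphi$ by permuting variables), the tail sequence $(a_i)_{i\geq k}$ is genuinely indiscernible over $A$ with respect to $\varphi$ and all its variable-permutations. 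So it suffices to show: an $A$-indiscernible sequence $(b_i)_{i<\omega}$ in a stable structure is totally $A$-indiscernible, i.e.\ for any $\varphi$ the truth value of $\varphi(b_{i_1},\dots,b_{i_n})$ depends only on which indices coincide, not on their order.

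The cleanest route for that reduction is to invoke the well-known theorem (e.g.\ \cite[Theorem 8.1.3]{TZ} or the analogous statement in \cite{classification}) that in a stable theory every infinite indiscernible sequence is an indiscernible set. To apply it I would work in a monster model $\cU\succeq\cM$, noting that $(b_i)_{i<\omega}$ remains $A$-indiscernible in $\cU$ (indiscernibility is a property of the type, hence preserved), and that $\Th(\cU_A)=\Th(\cM_A)$ is stable since naming finitely or countably many constants preserves stability. The cited theorem then gives that $(b_i)_{i<\omega}$ is an $A$-indiscernible set in $\cU$, which is exactly the statement that for every $\cL_A$-formula $\varphi$ and every permutation $\pi$ of $\{1,\dots,n\}$ and increasing tuple $i_1<\dots<i_n$, we have $\cU\models\varphi(b_{i_1},\dots,b_{i_n})\leftrightarrow\varphi(b_{i_{\pi(1)}},\dots,b_{i_{\pi(n)}})$. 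Transporting this back along $\cM\preceq\cU$ and combining with the already-established order-indiscernibility of the tail yields: for pairwise distinct indices $i_1,\dots,i_n>k$ and pairwise distinct $j_1,\dots,j_n>k$, both tuples have the same $\varphi$-value as the corresponding sorted tuple, hence the same $\varphi$-value as each other. Setting $k_\varphi=k$ completes the argument.

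I do not expect a serious obstacle here; the content is entirely in the black-box ``indiscernible sequences are sets'' theorem, and the only care needed is bookkeeping: choosing $k$ uniformly large enough to absorb all variable-permutations of the given $\varphi$ (there are only $n!$ of them, each with its own $k_\psi$ from the hypothesis), and confirming that the passage to $\cU$ and the addition of parameters from $A$ do not disturb stability (which is immediate since $|A|$ is small and $\Th(\cM_A)$ is a reduct-free naming of constants). One alternative, if one wants to avoid citing the set-vs-sequence theorem, is to argue directly: suppose for contradiction that for arbitrarily large $k$ there are distinct tuples above $k$ with different $\varphi$-values; a standard extraction then produces a genuinely $A$-indiscernible sequence that is not totally indiscernible, contradicting stability via the order property (a non-symmetric indiscernible sequence lets one build a formula with the order property). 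But the first route is shorter and I would present that.
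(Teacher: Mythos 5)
Your first route has a gap at the reduction step. After passing to the tail $(a_i)_{i\geq k}$ with $k$ chosen for $\varphi$ and its $n!$ variable-permutations, you only have indiscernibility with respect to those finitely many formulas. The tail is \emph{not} a fully $A$-indiscernible sequence: for a formula $\psi$ with $k_\psi>k$, the tail may still fail $\psi$-indiscernibility. So the assertion that ``$(b_i)_{i<\omega}$ remains $A$-indiscernible in $\cU$'' does not follow from what you have established, and the black-box theorem you invoke --- that $A$-indiscernible \emph{sequences} in stable theories are $A$-indiscernible \emph{sets} --- cannot be applied directly to the tail.

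The paper's one-line proof (``essentially identical to the argument that an indiscernible sequence in a stable theory is totally indiscernible'') is pointing at exactly the resolution: the standard proof of that black-box theorem is \emph{local}, i.e.\ it works formula-by-formula, using only indiscernibility with respect to the given formula (and its permutations / bi-partitions of variables) and then deriving the order property for a stable formula. That local argument applies verbatim to your tail. The remark immediately after Fact~\ref{fact:ETI} in the paper makes this locality explicit by recording the single-formula version. Your ``alternative'' route is the other honest way to salvage the black-box reduction: use compactness to realize the eventual Ehrenfeucht--Mostowski type of $(a_i)$ over $A$ as a genuinely $A$-indiscernible sequence $(b_i)$ in an elementary extension, apply the theorem to $(b_i)$ to conclude the eventual EM-type is symmetric under permutations, and transfer back to obtain eventual total indiscernibility. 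That works, and is a genuinely different packaging of the argument, but as written it is too telegraphic --- you should make the extraction and the pigeonhole step (isolating one permutation that eventually witnesses asymmetry if the conclusion fails) explicit, since ``distinct tuples with different $\varphi$-values'' only contradicts eventual indiscernibility after you fix a permutation pattern.
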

\begin{proof}
This is essentially identical to the argument that an indiscernible sequence in a stable theory is totally indiscernible. 
\end{proof}

\begin{remark}
Like with indiscernibility, the definition of eventual indiscernibility can be localized to a particular set $\Delta$ of formulas $\varphi(x_1,\ldots,x_n)$. So we note that in this more general setting, the analogue of the previous fact still holds as long as every formula in $\Delta$ is stable (under any bi-partitioning of the free variables). 
\end{remark}

\begin{lemma}\label{lem:indEI}
Suppose $Q\seq M$ is enumerated by an injective sequence $(a_i)_{i<\omega}$, and fix $A\seq M$ such that $a_i\in A$ for all $i<\omega$. Then $(a_i)_{i<\omega}$ is eventually totally indiscernible over $A$ if and only if $Q^{\cM_A}_{\indd}$ is interdefinable with $(Q,=)$.
\end{lemma}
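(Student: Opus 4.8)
The plan is to reduce both implications to a single statement about families of definable sets. Since $a_i\in A$ for all $i$, we have $Q\seq A$, so every subset of $Q^n$ definable with parameters from $Q$ in $\cM_A$ is already $\emptyset$-definable there; hence the parameter-definable subsets of $Q^n$ in $Q^{\cM_A}_{\indd}$ are exactly the traces $\varphi(M^n)\cap Q^n$ with $\varphi(x_1,\dots,x_n)$ an $\cL_A$-formula. On the other side, quantifier elimination for the theory of an infinite set shows that the parameter-definable subsets of $Q^n$ in $(Q,=)$ are exactly the finite Boolean combinations of the diagonal conditions $\{x_i=x_j\}$ and the conditions $\{x_i=c\}$ with $c\in Q$. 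Moreover $(Q,=)$ is always a definable reduct of $Q^{\cM_A}_{\indd}$, since equality is a basic relation and each singleton $\{a_i\}$ is $\emptyset$-definable in $\cM_A$. Thus the lemma amounts to the assertion that $(a_i)_{i<\omega}$ is eventually totally indiscernible over $A$ if and only if every such trace is a Boolean combination of the above form.

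For the direction ``$\Leftarrow$'', I would take an $\cL_A$-formula $\varphi(x_1,\dots,x_n)$, write $\varphi(M^n)\cap Q^n$ as a Boolean combination of diagonal conditions and conditions $\{x_i=c\}$ for finitely many $c\in Q$, and let $k_\varphi$ be large enough that all these $c$ lie among $a_0,\dots,a_{k_\varphi}$. For indices $i_1,\dots,i_n>k_\varphi$, injectivity of the sequence forces every parameter-atom $x_\ell=c$ to be false at $(a_{i_1},\dots,a_{i_n})$, so the truth value of $\varphi(a_{i_1},\dots,a_{i_n})$ is determined by the diagonal atoms alone, i.e.\ by the equality pattern of $i_1,\dots,i_n$; in particular it is symmetric in them. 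This is exactly eventual total indiscernibility over $A$.

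For the direction ``$\Rightarrow$'', I would first upgrade the hypothesis: applying eventual total indiscernibility to each of the finitely many formulas obtained from a given $\varphi(x_1,\dots,x_n)$ by identifying variables according to a partition of $\{1,\dots,n\}$, and taking the maximum of the resulting thresholds, one gets a single $k_\varphi$ such that the truth of $\varphi(a_{i_1},\dots,a_{i_n})$ depends only on the equality pattern of $(i_1,\dots,i_n)$ whenever all $i_\ell>k_\varphi$. Then I would prove by induction on $n$ that $\varphi(M^n)\cap Q^n$ is definable with parameters in $(Q,=)$: with $S=\{a_0,\dots,a_{k_\varphi}\}$, split this set into its intersection with the ``large'' tuples (no coordinate in $S$), which is cut out in $(Q,=)$ by a Boolean combination of diagonal conditions together with the conditions $x_\ell\neq c$ for $c\in S$, plus, for each nonempty $I\seq\{1,\dots,n\}$ and each tuple $(c_\ell)_{\ell\in I}$ from $S$, the set defined by $\bigwedge_{\ell\in I}(x_\ell=c_\ell)$ conjoined with the pullback of $\varphi'(M^{I^c})\cap Q^{I^c}$, where $\varphi'$ is the $\cL_A$-formula obtained from $\varphi$ by substituting $c_\ell$ for $x_\ell$ ($\ell\in I$); since $\varphi'$ has fewer than $n$ free variables, the induction hypothesis applies. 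As there are only finitely many terms, the whole set is definable with parameters in $(Q,=)$.

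The main obstacle I anticipate is precisely this last bookkeeping. Eventual total indiscernibility, as formulated, only constrains tuples of pairwise distinct indices lying above a threshold, so it must first be promoted to control arbitrary equality patterns, and then one still has to account for coordinates landing in the fixed finite initial segment $S$ — the need to reduce such coordinates away is exactly what forces the induction on the arity $n$, and arranging the threshold $k_\varphi$ to be uniform enough for this is the point requiring care.
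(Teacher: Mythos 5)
Your proof is correct and follows essentially the same route as the paper's: the forward direction decomposes the trace $\varphi(M^n)\cap Q^n$ by whether coordinates lie in the finite initial segment $a_{\leq k_\varphi}$ (or repeat), settling the ``large injective'' part by (upgraded) indiscernibility and handling the exceptional cases via induction on the arity $n$ by substituting constants or merging variables, while the backward direction writes each trace as an equality-pattern condition over finitely many parameters from $Q$ and takes the threshold past those parameters. One small caveat worth flagging: the preliminary assertion that the parameter-definable subsets of $Q^n$ in $Q^{\cM_A}_{\indd}$ are \emph{exactly} the traces $\varphi(M^n)\cap Q^n$ does not follow from $Q\seq A$ alone (quantifiers in the induced structure range over $Q$ rather than $M$, so the induced structure can in general define strictly more than the traces), but since the reduction of interdefinability to ``every trace is a Boolean combination of diagonals and $x_i=c$'' goes through by simply translating the primitive relations in each direction, this misstatement does not affect the validity of your argument.
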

\begin{proof}
Suppose first that $(a_i)_{i<\omega}$ is eventually totally indiscernible over $A$. Given an $\cL_A$-formula $\varphi(x_1,\ldots,x_n)$, let $R_\varphi=\{\dbar\in Q^n:\cM\models\varphi(\dbar)\}$. We want to show that each $R_\varphi$ is definable in $(Q,=)$.   Let $k=k_\varphi<\omega$ be given by the assumption that $(a_i)_{i<\omega}$ is eventually totally indiscernible. Let $Y_\varphi$ be the set of tuples $\dbar\in R_\varphi$ such the coordinates of $\dbar$ are pairwise distinct, and  no coordinate of $\dbar$ is in $a_{\leq k}$. Set $Z_\varphi=R_\varphi\backslash Y_\varphi$. We show that $Y_\varphi$ and $Z_\varphi$ are both definable in $(Q,=)$.

By choice of $k$, $Y_\varphi$ is either empty or consists of all injective tuples from $Q^n$ such that no coordinate is in $a_{\leq k}$. In either case,  $Y_\varphi$ is definable in $(Q,=)$. As for $Z_\varphi$, note that any tuple in $Z_\varphi$ either has two coordinates that are equal, or has some coordinate from $a_{\leq k}$. Since $a_{\leq k}\seq A$, it follows that $Z_\varphi$ is definable in the structure on $Q$ induced  from $\cL_A$-formulas with fewer than $n$ free variables. Arguing by induction on $n$, we conclude that $Z_\varphi$ is definable in $(Q,=)$.  

Conversely, suppose that the $Q^{\cM_A}_{\indd}$ is interdefinable with $\cQ\coloneqq (Q,=)$. Fix an $\cL_A$-formula $\varphi(x_1,\ldots,x_n)$. Then there is some  formula $\psi(\xbar,\ybar)$ in the language of equality, and some tuple $\bbar$ from $Q^{\ybar}$, such that given $\dbar\in Q^n$, we have $\cM\models\varphi(\dbar)$ if and only if $\cQ\models \psi(\dbar,\bbar)$. Since $\cQ\preceq \cM|_=$, it follows that if $\dbar\in Q^n$ then $\cM\models\varphi(\dbar)\leftrightarrow \psi(\dbar,\bbar)$. Let $k=k_\varphi<\omega$ be such that $a_i$ does not appear in $\bbar$ for all $i>k$. Then $(a_i)_{i>k}$ is totally indiscernible over $\bbar$ with respect to the language of equality. So for any pairwise distinct $i_1,\ldots,i_n>k$ and pairwise distinct $j_1,\ldots,j_n>k$, we have 
\[
\cM\models\psi(a_{i_1},\ldots,a_{i_n},\bbar)\leftrightarrow \psi(a_{j_1},\ldots,a_{j_n},\bbar),
\]
and thus $\cM\models\varphi(a_{i_1},\ldots,a_{i_n})\leftrightarrow\varphi(a_{j_1},\ldots,a_{j_n})$. 
\end{proof}

\subsection{Naming a structure}\label{sec:naming}

We continue working with a fixed $\cL$-structure $\cM$ with universe $M$.

So far we have shown that if $Q\seq M$ is enumerated by an eventually indiscernible sequence over some  set $A\seq M$, then $Q^{\cM_A}_{\indd}$ is interdefinable with $(Q,=)$. However, in order to apply our preservation theorems, we need $Q$ to be \emph{definable} in $\cM$. As discussed above, our strategy is to make $Q$ definable by expanding the language. So we are now back in the classical setting of preserving tameness when naming a new predicate. The following is a fundamental notion from this area of the literature (e.g., \cite{CaZi}).

\begin{definition}\label{def:CaZi1}
Fix a subset $Q\seq M$. Consider the structure $(\cM,Q)$ obtained by expanding $\cM$ by a new predicate interpreted as $Q$. A formula of $(\cM,Q)$ is \textbf{bounded} if it is of the form 
    \[
    \square_1y_1\in Q \ldots \square_n y_n\in Q\, \psi(x,y),
    \]
    where \(\square_i\in\{\forall,\exists\}\) and \(\psi(x,y)\) is a an $\cL$-formula. We say $Q$ is \textbf{bounded in $\cM$} if all formulas of $(\cM,Q)$ are equivalent (modulo $\Th(\cM,Q)$) to bounded formulas.
\end{definition}

The following observation is a straightforward exercise.  

\begin{remark}\label{rem:bounded}
If $Q\seq M$ is bounded in $M$, then $Q^{(\cM,Q)}_{\indd}$ coincides with $Q^{\cM}_{\indd}$.
\end{remark}

We can now explain the relevance of weakly minimal structures.

\begin{fact}\label{fact:bounded}
Let $Q$ be a subset of $M$.
\begin{enumerate}[$(a)$]
\item \textnormal{\cite{CaZi}} Suppose $Q$ is bounded in $M$. Then for any  $\lambda\geq|\cL|+\aleph_0$, $(\cM,Q)$ is $\lambda$-stable if and only if $\cM$ and $Q^{\cM}_{\indd}$ are $\lambda$-stable.
\item \textnormal{\cite{CoLa}} If $\cM$ is weakly minimal, then $Q$ is bounded in $\cM_{A}$ for any $\cA\preceq \cM$.
\end{enumerate}
\end{fact}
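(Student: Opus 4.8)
Both parts are established in the cited literature — part $(a)$ is \cite{CaZi} and part $(b)$ is \cite{CoLa} — so I will sketch how the arguments run rather than give full details.

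For part $(a)$, the plan is to first reduce $(\cM,Q)$-types to purely $\cL$-data. Using Definition \ref{def:CaZi1}, boundedness of $Q$ means every $(\cM,Q)$-formula over a set $B\seq M$ is equivalent to one of the form $\square_1 y_1\in Q\ldots\square_n y_n\in Q\,\psi(x,y,b)$ with $\psi\in\cL$ and $b\in B$; since satisfaction of such a formula at a tuple $a$ — including whether individual coordinates of $a$ lie in $Q$ — depends only on $\tp^{\cL}(a/BQ(M))$, one gets that $\tp^{(\cM,Q)}(a/B)$ is determined by $\tp^{\cL}(a/BQ(M))$. The left-to-right direction of the equivalence is then immediate: $\cM$ is a reduct of $(\cM,Q)$, and by Remark \ref{rem:bounded} the structure $Q^{(\cM,Q)}_{\indd}$ equals $Q^{\cM}_{\indd}$, whose $n$-types over parameters in $Q$ are coarsenings of $(\cM,Q)$-types, so $\lambda$-stability descends. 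For the converse, assume $\cM$ and $Q^{\cM}_{\indd}$ are $\lambda$-stable and fix $B$ with $\abs{B}\leq\lambda$. Since $\cM$ is stable, $Q$ is stably embedded in $\Th(\cM)$, so Lemma \ref{lem:finding c-stable}$(a)$ yields for each finite tuple $a$ a tuple $d_a\in Q(M)$ with $\abs{d_a}\leq\abs{\cL}+\aleph_0$ such that $\tp^{\cL}(a/Bd_a)\vdash\tp^{\cL}(a/BQ(M))$; hence $\tp^{(\cM,Q)}(a/B)$ is recovered from $\tp^{\cL}(ad_a/B)$, which splits into the ``$\cM$-side'' information $\tp^{\cL}(a/Bd_a)$ and the ``$Q$-side'' information $\tp^{\cL}(d_a/B)$, the latter being by stable embeddedness a type of $(Q^{\cM}_{\indd})^{\eq}$ over $\dcl^{\eq}(B)\cap Q^{\eq}(M)$ (cf. Remark \ref{R:induced structure}$(a)$), a set of size $\leq\lambda$. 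The main obstacle — and the genuine content of \cite{CaZi} — is that $d_a$ may be an \emph{infinite} tuple, so these two counting steps cannot simply be multiplied (a naive product bound would only give $\lambda^{\abs{\cL}}$, which can exceed $\lambda$); instead one must organize the count formula-by-formula, bounding $\abs{S^{(\cM,Q)}(B)}$ by a carefully managed product of $\lambda$-sized pieces, drawing $\lambda$-stability of $\cM$ for the $\cM$-side and of $Q^{\cM}_{\indd}$ for the $Q$-side.

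For part $(b)$, the task is to show that when $\cM$ is weakly minimal, naming \emph{any} subset $Q\seq M$ over a fixed $\cA\preceq\cM$ produces a bounded expansion $(\cM_{A},Q)$. The plan is an induction on the complexity of $(\cM_{A},Q)$-formulas; the atomic and Boolean cases and $Q$-bounded quantifiers are trivial, and the only delicate step is applying an ordinary (non-$Q$) quantifier to a formula already carrying bounded $Q$-quantifiers, the goal being to commute the outer quantifier past the $Q$-quantifiers at the cost of replacing $\cL_{A}$-subformulas by other $\cL_{A}$-subformulas. The key inputs are the structure theory of $U$-rank-$1$ theories over a model: every complete type over $\cA$ is definable, $\cM$ eliminates $\exists^{\infty}$, and forking over a model is ``trivial'' enough that the $\cL$-structure and the named set $Q$ sit in a sufficiently independent position over $\cA$. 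Concretely, given $\theta(x,\bar z;\bar b)\in\cL_{A}$ one uses elimination of $\exists^{\infty}$ to split, uniformly in the $Q$-tuple $\bar z$, the solution set $\theta(\cM,\bar z,\bar b)$ into an algebraic part of uniformly bounded size — which contributes only finitely many witnesses and is absorbed into bounded quantification — and a generic part, which is controlled by definability of the relevant $\cL_{A}$-types over $\cA$. The main obstacle is exactly this uniform algebraic/generic decomposition together with the verification that the generic part never forces genuinely new, unbounded definability; this is the technical heart of \cite{CoLa}, carried out there in a more general group-theoretic framework.
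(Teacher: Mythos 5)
This statement is presented in the paper as a \emph{Fact} cited to \cite{CaZi} and \cite{CoLa}; the paper gives no proof of its own, so there is no in-paper argument to compare your sketch against. With that caveat, a few points on the sketch itself.

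For part $(a)$, the forward direction is fine: $\cM$ is a reduct and $Q^{\cM}_{\indd}$ is (a reduct of) a structure interpretable in $(\cM,Q)$ on the sort $Q$, so $\lambda$-stability descends. For the converse, however, invoking Lemma~\ref{lem:finding c-stable}$(a)$ is not quite legitimate: that lemma is formulated for an $\emptyset$-\emph{definable} set $Q$ in a stable theory, and its statement refers to $Q(N)$ for $N\succeq M$; here $Q$ is an arbitrary subset of $M$, so ``$Q$ is stably embedded in $\Th(\cM)$'' does not parse and the lemma as stated does not apply. What one can legitimately use is definability of $\cL$-types in the stable theory $\Th(\cM)$ — but even granting a tuple $d_a\in Q$ of small length over which $\tp^{\cL}(a/BQ)$ is defined, the sketch elides a real subtlety: recording $\tp^{\cL}(a/Bd_a)$ and $\tp^{\cL}(d_a/B)$ separately does not determine $\tp^{\cL}(ad_a/B)$, so the ``split into $\cM$-side and $Q$-side'' must be organized around $\phi$-definitions rather than full types. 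That reorganization is the content of the Casanovas--Ziegler count, not merely the observation that $d_a$ may be infinite.

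For part $(b)$, the sketch imports hypotheses the statement does not grant. Weak minimality does not imply nfcp or elimination of $\exists^{\infty}$: a pure equivalence relation with one class of each finite size and infinitely many infinite classes is weakly minimal but has fcp, and the formula $E(x,y)\wedge x\neq y$ witnesses failure of $\exists^{\infty}$-elimination. Nor is forking trivial in all weakly minimal theories. So an inductive quantifier-elimination driven by a uniform algebraic/generic split cannot be the argument in general. The route via \cite[Proposition 2.1]{CaZi} (small $+$ nfcp $\Rightarrow$ bounded) is genuinely different from what Conant--Laskowski do: they establish boundedness over a model \emph{directly} for arbitrary weakly minimal structures (in a group-theoretic setting in \cite{CoLa}) without passing through nfcp. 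Your sketch would prove a weaker statement — boundedness for weakly minimal \emph{nfcp} structures with $\exists^{\infty}$-elimination — rather than the cited Fact.
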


Altogether, if $\cM$ is weakly minimal then we can orchestrate a setting in which the above preservation theorems are applicable by naming an eventually indiscernible sequence. Moreover, since the induced structure on this sequence is interdefinable with a pure set, we are free to then enrich further by \emph{any} other structure. This brings us to the next definition. 

\begin{definition}
Let $\cN$ be an arbitrary structure (in some  language) whose universe has size at most $|M|$. Then an \textbf{expansion of $\cM$ naming $\cN$} is a structure  of the form $(\cM,\cQ)$, where $\cQ$ is an isomorphic copy 
of $\cN$ and the universe of $\cQ$ is some subset $Q\seq M$.
\end{definition}

In order to solidify the previous definition, we give an example of naming a structure. Let $\cN=(V,E)$ be a  graph. Then an expansion of $\cM$ naming $\cN$ is a structure of the form $(\cM,Q,R)$ where $Q$ is a new (i.e., not in $\cL$) unary predicate naming a subset of $M$ and $R$ is a new binary relation on $Q$ such that $\cQ\coloneqq (Q,R)$ is an isomorphic copy of $\cN=(V,E)$. 

We can now prove the main result of this subsection. 

\begin{theorem}\label{thm:Urank1}
Assume $\cL$ is countable and $\cM$ is weakly minimal. Let $\cN$ be a stable (resp., superstable, $\NIP$, $\NTP 2$) countable structure.  Then there is a stable (resp., superstable, $\NIP$, $\NTP 2$) expansion $\cM^*$ of $\cM$ naming $\cN$. Moreover:
\begin{enumerate}[$(i)$]
\item If $\cM$ and $\cN$ are $\omega$-stable, then so is $\cM^*$.
\item If $Q\seq M$ is the universe of the copy of $\cN$ in $\cM^*$, then $Q^{\cM^*}_{\indd}$ is interdefinable with $\cN$.
\end{enumerate}
\end{theorem}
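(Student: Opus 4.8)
The plan is to build $\cM^*$ in two stages, following exactly the strategy flagged in the introduction. First I would use Fact~\ref{fact:EIexist} to produce an injective sequence $(a_i)_{i<\omega}$ from $M$ that is eventually indiscernible over $\emptyset$ (take any countable infinite subset of $M$ and pass to a subsequence; since $\cL$ is countable this is fine). Set $Q=\{a_i:i<\omega\}$. By Fact~\ref{fact:ETI} the sequence is in fact eventually \emph{totally} indiscernible over $\emptyset$, and hence, by Lemma~\ref{lem:indEI} (applied with $A=\emptyset$, noting $a_i\in M$ for all $i$), the induced structure $Q^{\cM}_{\indd}$ is interdefinable with $(Q,=)$.

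Next I would name $Q$: consider $(\cM,Q)$. Since $\cM$ is weakly minimal (so in particular superstable), Fact~\ref{fact:bounded}$(b)$ (with $\cA=\emptyset$, i.e. without naming extra parameters — or over any $\cA\preceq\cM$ if needed) gives that $Q$ is bounded in $\cM$. Then Remark~\ref{rem:bounded} yields $Q^{(\cM,Q)}_{\indd}=Q^{\cM}_{\indd}$, still interdefinable with $(Q,=)$; and Fact~\ref{fact:bounded}$(a)$, together with $\lambda$-stability of a pure set for every infinite $\lambda$, shows that $(\cM,Q)$ has the same stability-spectrum as $\cM$, hence is weakly minimal (in particular superstable). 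Crucially, $Q$ is now a $\emptyset$-definable set in $T\coloneqq\Th(\cM,Q)$; and since $T$ is stable (indeed superstable), $Q$ is automatically stably embedded in $T$. This puts us precisely in the hypotheses of Theorem~\ref{athm:stable-pres}/Theorem~\ref{T:pres. of stab} and Fact~\ref{fact:JS}/Proposition~\ref{prop:CH}.

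Now for the enrichment: since $Q^{(\cM,Q)}_{\indd}$ is interdefinable with $(Q,=)$ and $\cN$ is a countable structure, we may fix a bijection between $Q$ and the universe of $\cN$ and let $\cQ$ be the induced copy of $\cN$ with universe $Q$; then $\cQ$ expands $Q_{\indd}$ (every $\emptyset$-definable set of $(Q,=)$ is $\emptyset$-definable in $\cQ$). Let $\cM^*\coloneqq((\cM,Q),\cQ)$ be the corresponding enrichment, so that $\Th(\cM^*)=T[\cQ]$ with $\Th(\cQ)\cong\Th(\cN)$. If $\cN$ is stable (resp.\ superstable), then $T$ and $\Th(\cQ)$ are both stable (resp.\ superstable), so Theorem~\ref{T:pres. of stab} gives that $\cM^*$ is stable (resp.\ superstable); item $(i)$ follows the same way from Theorem~\ref{T:pres. of stab}$(b)$, using that $T$ is $\omega$-stable when $\cM$ is (via Fact~\ref{fact:bounded}$(a)$ and $\omega$-stability of a pure set). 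If $\cN$ is NIP (resp.\ NTP$_2$), apply Fact~\ref{fact:JS} (resp.\ Proposition~\ref{prop:CH}) instead, using that $T$ is stable hence NIP and NTP$_2$. Finally, item $(ii)$: by Proposition~\ref{prop: D' stab emb}, the $\Lbig$-induced structure on $Q$ in $\cM^*$ is exactly $\cQ$, which is an isomorphic copy of $\cN$; so $Q^{\cM^*}_{\indd}$ is interdefinable with $\cN$.

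I do not expect a genuine obstacle here — the whole point is that all the work has been done in the preservation theorems and in the analysis of eventually indiscernible sequences. The one place requiring a little care is the bookkeeping around $\emptyset$ versus parameters: Fact~\ref{fact:bounded}$(b)$ is phrased over $\cM_A$ with $\cA\preceq\cM$, and Lemma~\ref{lem:indEI} over $\cM_A$ with $a_i\in A$. Taking $A$ to be (an elementary substructure containing) the $a_i$ would force a choice of a countable $\cA\preceq\cM$, after which $Q$ is $\emptyset$-definable in $(\cM_A,Q)$ and eventually totally indiscernible over $A=\emptyset$ in the expanded language; since naming constants from a model preserves weak minimality, stable embeddedness, and does not affect the conclusion, this is harmless. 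Alternatively, if one arranges the sequence to be eventually indiscernible over $\emptyset$ directly (possible since $\cL$ is countable), no auxiliary parameters are needed at all. Either way the argument is routine, and the theorem is essentially an assembly of the tools already in hand.
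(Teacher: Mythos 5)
Your proof is essentially the paper's: pick an eventually indiscernible sequence, name it by a predicate, use boundedness (Fact~\ref{fact:bounded}) to transfer superstability, then invoke Theorem~\ref{T:pres. of stab}, Fact~\ref{fact:JS}, and Proposition~\ref{prop:CH}. One point of care you flag is indeed necessary: Lemma~\ref{lem:indEI} requires $a_i\in A$, so the initial "apply with $A=\emptyset$, noting $a_i\in M$" misreads the hypothesis; your later fix (take $\cA\preceq\cM$ countable with $Q\seq A$ and arrange eventual indiscernibility of the sequence over $A$, which contains the sequence itself) is exactly the arrangement the paper's proof also uses. One small error that doesn't affect the argument: Fact~\ref{fact:bounded}$(a)$ shows $(\cM,Q)$ has the same stability spectrum as $\cM$, hence is superstable, but this does \emph{not} make it weakly minimal (the $U$-rank can and typically does jump to $\omega$, as with $(\Z,+,Q)$ for $Q$ vaporous); fortunately only superstability is used downstream, so the conclusion stands.
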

\begin{proof}
Let $\cA\preceq \cM$ be a countable elementary substructure. Without loss of generality, we can expand $\cL$ by constants for the universe $A$ of $\cA$. By Facts \ref{fact:EIexist} and \ref{fact:ETI} we may fix a subset $Q\seq M$, which is enumerated by an eventually totally indiscernible sequence over $A$. Then $Q^{\cM}_{\indd}$ is interdefinable with $(Q,=)$ by Lemma \ref{lem:indEI}. In particular, $Q^{\cM}_{\indd}$ is $\lambda$-stable for all $\lambda\geq\aleph_0$. Applying Fact \ref{fact:bounded} we have that $T\coloneqq \Th(\cM,Q)$ is superstable. Moreover, if $\Th(\cM)$ is $\omega$-stable then so is $T$.

We now apply the material in Section \ref{sec:SNN} to the base theory $T$ and $\emptyset$-definable set $Q$ (which is stably embedded since $T$ is stable). Note that the induced structure $Q^{T}_{\indd}$ is still interdefinable with a pure set by Remark \ref{rem:bounded} and Fact \ref{fact:bounded}$(b)$. Let $\cQ$ be a copy of $\cN$ expanding $Q^{T}_{\indd}$ (without loss of generality, we may add any necessary constants to $\cN$), and let $\cM^*$ be the corresponding expansion of $(\cM,Q)$ by $\cQ$. Then $\Th(\cM^*)$ is $\Tbig$. Altogether, the preservation of stability, superstability, $\omega$-stability, NIP, and NTP$_2$ now follows from Theorem \ref{T:pres. of stab}, Fact \ref{fact:JS}, and Proposition \ref{prop:CH}. Finally, note that $Q^{\cM^*}_{\indd}$ is interdefinable with $\cN$ by Proposition \ref{prop: D' stab emb}.
\end{proof}

\begin{remark}\label{rem:Urank1H}
Theorem \ref{thm:Urank1} omits the dividing lines of simplicity and NSOP$_1$ because our preservation results for these properties require the additional hypothesis of algebraic embeddedness. So we point out that if $\cM$ is weakly minimal and  contains an eventually indiscernible sequence that enumerates a set $Q$ that is algebraically embedded in $\Th(\cM,Q)$ then, using Corollary \ref{cor:KPsimple} (resp., Theorem \ref{thm:KPnsop1}), an analogous argument works to build a simple (resp., NSOP$_1$) expansion of $\cM$ naming some arbitrary simple (resp., NSOP$_1$) countable structure $\cN$. At the moment, we do not know whether every weakly minimal structure  admits an eventually indiscernible sequence that is algebraically embedded after being named by a new predicate. However, in Section \ref{sec:vapQ} we will construct such sequences in the special case of $(\Z,+,0,1)$. 
\end{remark}

As a counterpoint to Remark \ref{rem:Urank1H}, we note that interpolative fusions \cite{KTW2} provide a different method for adding simplicity and $\NSOP1$ in Theorem \ref{thm:Urank1}.

\begin{proposition}\label{prop:Urank1NSOP1}
Assume $\cL$ is countable and $\cM$ is weakly minimal. Let $\cN$ be a simple (resp., $\NSOP1$) countable structure. Then there is a simple (resp., $\NSOP1$) expansion of $\cM$ naming $\cN$.
\end{proposition}
\begin{proof}
First assume $\cN$ is $\NSOP1$. Let $T$ and $\cQ$ be as in the proof of Theorem \ref{thm:Urank1}. Then $T[\cQ]$ is $\NSOP1$ by Corollary \ref{cor:KTWpres}$(a)$. Now, if $\cN$ is actually simple, then it is also $\NTP2$, hence $T[\cQ]$ is $\NTP2$ by Proposition \ref{prop:CH}. So $T[\cQ]$ is $\NSOP1$ and $\NTP2$, and thus is simple (see \cite[Theorem III.7.11]{classification} or \cite[Corollary 8.5]{KR20}).
\end{proof}

Next we state a quick corollary about distal structures. The notion of distality was introduced by Simon as a means to understand ``purely unstable" NIP theories. Subsequent work of Chernikov and Starchenko \cite{ChStar} established a connection between distality and ``tame combinatorics" via regularity lemmas and the Erd\H{o}s-Hajnal property for definable sets. It has since been discovered that even the property of having a distal expansion (which encompasses a much wider class within NIP) leads to similar combinatorial tameness results. In particular, there is a growing body of work on \emph{stable} structures admitting distal expansions. To our knowledge, there is no previously established example of a stable expansion of $(\Z,+)$ which is known to \emph{not} admit a distal expansion. So we take the opportunity to construct one now.

\begin{corollary}
Assume $\cL$ is countable and $\cM$ is weakly minimal. Then there is a superstable expansion $\cM^*$ of $\cM$ that does not admit a distal expansion.
\end{corollary}
\begin{proof}
By Theorem \ref{thm:Urank1} there is a superstable expansion $\cM^*$ of $\cM$ naming $\overline{\mathbb{F}}_p$ for some prime $p>0$. Then $\cM^*$ has no distal expansion by \cite[Corollary 6.3]{ChStar}.
\end{proof}

We finish this subsection with some remarks on the assumption of weak minimality, which was primarily used to get the boundedness condition on subsets of $M$ for free (via Fact \ref{fact:bounded}$(b)$). An alternate route is via the notion of smallness. 

\begin{definition}\label{def:CaZi2}
Fix a subset $Q\seq M$. Then  $Q$ is \textbf{small in $\cM$} if there is some $(\cM',Q')\equiv (\cM,Q)$ such that, for any finite set $B\seq M'$, any $\cL$-type over $BQ'$ is realized in $\cM'$.
\end{definition}

In \cite[Proposition 2.1]{CaZi}, Casanovas and Ziegler show that if $\cM$ is nfcp and $Q$ is small in $\cM$, then $Q$ is bounded in $\cM$. Thus, instead of assuming $\cM$ is weakly minimal, one could instead assume $\cM$ is nfcp and attempt to a name a new structure on an eventually indiscernible sequence that enumerates a small set in $\cM$. In general, such a sequence need not enumerate a small set, but in practice showing sets are small can often be easier than proving boundedness. For example,  if $Q\seq\Z$ is enumerated by an eventually indiscernible sequence in $(\Z,+,0,1)$, then $Q$ is automatically small in $(\Z,+,0,1)$ by Lemma \ref{lem:indEI} and \cite[Corollary 3.19$(a)$]{CoLa} (which holds more generally for any $U$-rank $1$ group with ``finite torsion"). 

Finally, we note that an NIP analogue of Fact \ref{fact:bounded}$(a)$ was proved by Chernikov and Simon \cite[Corollary 2.5]{CS13}. In particular, if $\cM$ is NIP and $Q\seq M$ is bounded in $\cM$ then $(\cM,Q)$ is NIP. Therefore, starting with the assumption that $\cM$ is NIP, if one can find a set $Q\seq M$ that is bounded in $\cM$, stably embedded in $\Th(\cM,Q)$, \emph{and} enumerated by an eventually indiscernible sequence in $M$, then one can similarly use $Q$ to build NIP (resp., NTP$_2$) expansions of $\cM$ naming  arbitrary NIP (resp., NTP$_2$) countable structures.

\subsection{Expansion by mutually algebraic structure}\label{sec:MA}

The goal of this subsection is to prove the main result stated in Section \ref{intro:MA}. We first recall the notion of a ``mutually algebraic" relation on a set $A$ (introduced by Laskowski in \cite{LaskMAS}). 

\begin{definition}\label{def:MA}
Let $A$ be a set and $R\seq A^n$ an $n$-ary relation on $A$, for some $n\geq 1$. Then $R$ is a \textbf{mutually algebraic relation on $A$} if there is some $N\geq 1$ such that for any $1\leq i\leq n$ and any $b\in A$, 
\[
\abs{\{(a_1,\ldots,a_{n-1})\in A^{n-1}: (a_1,\ldots,a_{i-1},b,a_i,\ldots,a_{n-1})\in R\}}\leq N.
\]
\end{definition}

\begin{example}\label{ex:MA} 
Fix a set $A$.
\begin{enumerate}[$(1)$]
\item Any unary relation $R\seq A$ is mutually algebraic.
\item Suppose $R\seq A\times A$ defines a graph relation on $A$. Then $R$ is mutually algebraic  if and only if the graph has bounded degree.
\item Suppose $R\seq A\times A$ defines the graph of a function from $A$ to $A$. Then $R$ is mutually algebraic  if and only if the function is $({\leq\! N})$-to-one for some $N\geq 1$. 
\end{enumerate} 
\end{example}

Now let $\cA$ be an arbitrary $\cL$-structure with universe $A$. Then an $\cL_A$-formula is \emph{mutually algebraic (in $\cA$)} if it defines a mutually algebraic relation on $A$; and the structure $\cA$ is \emph{mutually algebraic} if every $\cL_A$-formula is equivalent (in $\cA$) to a Boolean combination of mutually algebraic $\cL_A$-formulas.

\begin{fact}[Laskowski \cite{LaskMAS}]\label{fact:Lask}
Let $\cA$ be an $\cL$-structure.
\begin{enumerate}[$(a)$]
\item $\cA$ is mutually algebraic if and only if $\Th(\cA)$ is superstable of $U$-rank $1$ with trivial forking.
\item Assume $\cA$ is mutually algebraic, and let $\cA'$ be an expansion of $\cA$ by arbitrarily many mutually algebraic relations on $A$. Then $\cA'$ is mutually algebraic.
\end{enumerate}
\end{fact}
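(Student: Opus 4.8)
The plan is to route both parts through a single structural description of mutually algebraic formulas and then read off the model-theoretic content. \emph{For part (a), the direction $(\Rightarrow)$:} assume $\cA$ is mutually algebraic. The first task is a normal form — every $\cL_A$-formula $\varphi(\bar x)$ is equivalent to a Boolean combination of mutually algebraic formulas arranged to be ``disjoint enough'' that each algebraic instance of a variable is pinned by a genuine mutually algebraic positive conjunct. Granting this, I would prove the key triviality statement: for every $B\seq A$ and singleton $a$, if $a\in\acl(B)$ then $a\in\acl(b)$ for some single $b\in B$, or $a\in\acl(\emptyset)$ — since a witnessing formula $\varphi(x,\bar b)$, put in the above form, has the relevant mutually algebraic conjunct bounding $x$ over a finite subtuple of $\bar b$, and the disjointness of the Boolean structure then pushes this down to a single coordinate of $\bar b$. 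Separately, a mutually algebraic unary relation is finite, so every $\cL_A$-definable subset of $A$ is finite or cofinite. Together these give that every nonalgebraic $1$-type over any set is the unique nonforking extension of the unique nonalgebraic $1$-type over $\emptyset$ cut out by the unary formulas; hence $\Th(\cA)$ is superstable of $U$-rank $1$, and forking is trivial because dependence of a singleton on a set factors through a single element.

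\emph{For part (a), the direction $(\Leftarrow)$:} assume $\Th(\cA)$ is weakly minimal with trivial forking, and prove $\cA$ mutually algebraic by induction on $n=|\bar x|$ for a formula $\varphi(\bar x)$ with parameter tuple $\bar b$. For $n=1$, weak minimality makes $\varphi(\cA,\bar b)$ finite or cofinite, hence a Boolean combination of (unary, so mutually algebraic) formulas. For $n>1$, pass to a monster model and take $\bar a$ a generic realization of $\varphi(\bar x,\bar b)$: by $U$-rank $1$ some coordinate $a_i$ is algebraic over the others and $\bar b$, and by triviality of forking this algebraicity already holds over a single parameter — a single coordinate $a_j$ or a coordinate of $\bar b$. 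The corresponding mutually algebraic relation, together with its complement, splits $\varphi$ into pieces of strictly smaller effective arity, and the induction closes, using compactness to convert the type-level analysis into a bound uniform in $\bar b$.

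\emph{For part (b):} I would deduce this from (a) plus the normal form. By (a), mutual algebraicity of $\cA$ is equivalent to $\Th(\cA)$ being weakly minimal with trivial forking, and the normal form lets us pass (by a harmless expansion by definitions whose new symbols are again unary or mutually algebraic) to a language in which $\cA$ has quantifier elimination and every atomic relation is unary or mutually algebraic. Adjoining the new mutually algebraic relations $R_i$ keeps every atomic $\cL'$-relation unary or mutually algebraic, so it suffices to re-establish quantifier elimination for $\cA'$: the only nontrivial step is eliminating $\exists y$ from a conjunction of $\cL'_A$-literals, where the point is that a conjunction of formulas each mutually algebraic in $y$ is again mutually algebraic in $y$ (the bounds multiply), hence confines $y$ to boundedly many values nameable by definable functions with mutually algebraic graphs; substituting these turns $\exists y$ into a quantifier-free $\cL'_A$-formula. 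Iterating over all formulas gives mutual algebraicity of $\cA'$. Alternatively, one argues directly that the ``large combinatorial array'' obstruction to mutual algebraicity cannot be produced by adjoining mutually algebraic relations, since each such relation has bounded fibers once one coordinate is fixed and so cannot supply the spread needed for arbitrarily large arrays.

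\emph{Main obstacle.} The real work in both parts is the combinatorial normal-form lemma for Boolean combinations of mutually algebraic formulas: controlling how algebraicity propagates through such combinations (for $(\Rightarrow)$ of (a)), and showing the class of such combinations is closed under existential quantification modulo Boolean combination (for the quantifier elimination in (b)). A secondary but genuine difficulty is the uniformity step in $(\Leftarrow)$ of (a) — extracting a single finite fiber bound $N$ valid for all parameters from the type-level statement that forking is trivial — which requires a careful compactness argument.
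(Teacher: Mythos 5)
The paper does not prove this statement; it is quoted as a ``Fact'' with a citation to Laskowski's \emph{Mutually algebraic structures and expansions by predicates}, so there is no in-paper proof to compare against. Evaluating your sketch on its own terms, there is a concrete error that propagates through part (a): you assert that ``a mutually algebraic unary relation is finite, so every $\cL_A$-definable subset of $A$ is finite or cofinite.'' This is false, and in fact directly contradicts the paper's own Example~\ref{ex:MA}(1), which records that \emph{every} unary relation on $A$ is mutually algebraic (with $n=1$ the defining bound is vacuous). Correspondingly, a mutually algebraic structure need not have finite/cofinite definable subsets: a pure set equipped with one infinite, coinfinite unary predicate is mutually algebraic and weakly minimal with trivial forking, yet that predicate is neither finite nor cofinite. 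The same confusion reappears in the base case of your induction for $(\Leftarrow)$, where you claim that weak minimality forces $\varphi(\cA,\bar b)$ to be finite or cofinite for unary $\varphi$ --- again false, although here it is harmless because \emph{any} unary formula is automatically mutually algebraic, so the $n=1$ case is trivially fine for the wrong reason. Beyond this, the subsequent step ``every nonalgebraic $1$-type over any set is the unique nonforking extension of the unique nonalgebraic $1$-type over $\emptyset$'' is not correct as stated (there can be many nonalgebraic $1$-types over $\emptyset$, cut out by unary predicates); what you actually need for $U$-rank $1$ is an argument that any forking extension of a $1$-type is algebraic, which is where the genuine content of the normal-form lemma lives.

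For the $n>1$ step of $(\Leftarrow)$ and for part (b), the sketch leans on the right intuitions but the two load-bearing lemmas you flag as ``the main obstacle'' are genuinely where the proof is, and your outline of them has gaps. In particular, the quantifier-elimination route to (b) does not quite work as described: a conjunction of $\cL'_A$-literals in $y$ need not contain any literal that bounds $y$ at all (e.g.\ all $y$-literals could be negated mutually algebraic atoms or pure inequations), so one cannot always ``confine $y$ to boundedly many values''; one must separately dispose of the ``unconfined'' case by a genericity/cardinality argument. The alternative you mention in passing --- that adjoining bounded-fiber relations cannot create the large combinatorial arrays that characterize failure of mutual algebraicity --- is much closer to Laskowski's actual strategy (his proof goes through the notion of \emph{uniformly bounded arrays}, a combinatorial invariant preserved under mutually algebraic expansions, rather than through quantifier elimination). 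So the overall shape is reasonable, but the core lemmas are deferred and the one place you commit to a concrete claim about unary relations is wrong.
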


Given a set $A$, let $A^{\ma}$ denote the collection of all  mutually algebraic relations on $A$.

\begin{theorem}\label{thm:MAexp}
Let $\cM$ be a stable (resp., superstable) structure. Fix a definable set $A\seq M$ such that $A^{\cM}_{\indd}$ is mutually algebraic. Then $(\cM,A^{\ma})$ is stable (resp., superstable).
\end{theorem}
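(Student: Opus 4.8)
The plan is to recognize $(\cM,A^{\ma})$ as an instance of the enrichment construction $\Tbig$ of Section~\ref{sec:stably} and then to feed this into Theorem~\ref{T:pres. of stab}, using Laskowski's characterization of mutual algebraicity (Fact~\ref{fact:Lask}) to control the induced-structure side.

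First I would reduce to the case where $A$ is $\emptyset$-definable in $\cM$. If $A$ is defined by $\delta(x,\bar c)$, pass to $\cM_{\bar c}$: this preserves stability and superstability, and $A^{\ma}$ depends only on the set $A$, not on the ambient structure. Moreover, since $A$ is stably embedded in the stable theory $\Th(\cM)$, Remark~\ref{R:induced structure}$(a)$ shows that $A^{\cM_{\bar c}}_{\indd}$ is interdefinable with an expansion of $A^{\cM}_{\indd}$ by (possibly imaginary) parameters from $A^{\eq}(\cM)$; as superstability, $U$-rank $1$, and triviality of forking are all inherited under naming parameters, $\Th(A^{\cM_{\bar c}}_{\indd})$ is still superstable of $U$-rank $1$ with trivial forking, so by Fact~\ref{fact:Lask}$(a)$ the structure $A^{\cM_{\bar c}}_{\indd}$ is again mutually algebraic. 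Finally, $(\cM_{\bar c},A^{\ma})$ and $(\cM,A^{\ma})$ are reducts of one another up to the constants $\bar c$, so one is stable (resp.\ superstable) iff the other is. Thus we may assume $A=Q$ is $\emptyset$-definable; put $T=\Th(\cM)$, which is stable (resp.\ superstable), and recall that $Q$ is automatically stably embedded in $T$.

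Next I would let $\cQ$ be the expansion of $Q_{\indd}=A^{\cM}_{\indd}$ by all the relations in $A^{\ma}$. By hypothesis $A^{\cM}_{\indd}$ is mutually algebraic, so Fact~\ref{fact:Lask}$(b)$ gives that $\cQ$ is mutually algebraic, whence by Fact~\ref{fact:Lask}$(a)$ the theory $\Thx$ is superstable of $U$-rank $1$ with trivial forking; in particular $\Thx$ is both stable and superstable. Form the enrichment $\Tbig$ of $T$ by $\Thx$ on $Q$. The structure obtained by placing $\cQ$ on $A$ inside $\cM$ is a model of $\Tbig$, and it is interdefinable with $(\cM,A^{\ma})$ (the induced-structure part of $\cL_{\cQ}$ being redundant over $\cL$), so $\Th(\cM,A^{\ma})$ is interdefinable with $\Tbig$. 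Now if $\cM$ is stable, then $T$ and $\Thx$ are stable, so $\Tbig$ is stable by Theorem~\ref{T:pres. of stab}$(a)$; if $\cM$ is superstable, then $T$ and $\Thx$ are superstable, so $\Tbig$ is superstable by Theorem~\ref{T:pres. of stab}$(c)$. Either way, $(\cM,A^{\ma})$ is as claimed.

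The only genuinely new verifications are the bookkeeping in the second paragraph — that naming a defining tuple for $A$ does not destroy mutual algebraicity of the induced structure — together with the identification of $(\cM,A^{\ma})$ with a model of $\Tbig$; both are routine, and all the real content is imported from Theorem~\ref{T:pres. of stab} and Fact~\ref{fact:Lask}. Accordingly I do not expect a serious obstacle: the substance of the statement is precisely that these two inputs compose cleanly, and the mild care needed is in matching up imaginaries when reducing to the $\emptyset$-definable case.
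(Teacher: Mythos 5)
Your proof is correct and takes essentially the same route as the paper's: both set $\cA$ to be the expansion of $A^{\cM}_{\indd}$ by all relations in $A^{\ma}$, use Fact~\ref{fact:Lask} to conclude $\cA$ is mutually algebraic and hence superstable, then feed $T$ and $\Th(\cA)$ into Theorem~\ref{T:pres. of stab} and observe that $(\cM,\cA)$ is interdefinable with $(\cM,A^{\ma})$. Your second paragraph makes explicit the reduction to $\emptyset$-definable $A$ that Theorem~\ref{T:pres. of stab} technically requires and that the paper's three-line proof leaves implicit; your handling of it — via Remark~\ref{R:induced structure}$(a)$, the fact that $A^{\ma}$ is a set-theoretic invariant of $A$, and preservation of superstability, $U$-rank $1$, and trivial forking under naming constants — is sound.
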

\begin{proof}
Let $\cA$ be the expansion of $A^{\cM}_{\indd}$ obtained by naming all sets in $A^{\ma}$. Then $\cA$ is mutually algebraic by Fact \ref{fact:Lask}$(b)$, and thus superstable by Fact \ref{fact:Lask}$(a)$. So $(\cM,\cA)$ is stable (resp., superstable) by Theorem \ref{T:pres. of stab}. Finally, note that $(\cM,\cA)$ is  interdefinable with $(\cM,A^{\ma})$. 
\end{proof}

For weakly minimal structures we can use Fact \ref{fact:bounded}$(b)$ to remove the definability assumption from the previous theorem.

\begin{corollary}\label{cor:MAexp}
Let $\cM$ be a weakly minimal structure with constants for an elementary substructure. Fix a subset $A\seq M$ such that $A^{\cM}_{\indd}$ is mutually algebraic. Then $(\cM,A^{\ma})$ is  superstable.
\end{corollary}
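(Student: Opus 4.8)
The plan is to reduce Corollary \ref{cor:MAexp} to Theorem \ref{thm:MAexp} by removing the definability hypothesis on $A$, exactly as was done in passing from Theorem \ref{T:pres. of stab} to the weakly-minimal applications earlier in the paper. The only obstruction to applying Theorem \ref{thm:MAexp} directly is that $A$ need not be $\cM$-definable; but once we name $A$ by a new unary predicate, boundedness (Fact \ref{fact:bounded}$(b)$) takes over.

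\medskip

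\noindent\emph{Step 1: Name $A$.} Let $T=\Th(\cM,A)$ be the expansion of $\cM$ by a predicate for $A$. Since $\cM$ is weakly minimal, by Fact \ref{fact:bounded}$(b)$ the set $A$ is bounded in $\cM$ (taking the empty parameter set, or naming a small elementary submodel if needed), and hence by Remark \ref{rem:bounded} the induced structure $Q^{(\cM,A)}_{\indd}$ coincides with $A^{\cM}_{\indd}$, which is mutually algebraic by hypothesis. By Fact \ref{fact:bounded}$(a)$ together with mutual algebraicity giving $U$-rank $1$ (Fact \ref{fact:Lask}$(a)$, so in particular $\lambda$-stability for all $\lambda\geq\aleph_0$), and weak minimality of $\cM$ giving superstability of $\cM$, we conclude that $(\cM,A)$ is superstable. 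Thus $T$ is a superstable theory in which $A=Q$ is an $\emptyset$-definable (hence stably embedded) set whose induced structure is mutually algebraic.

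\medskip

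\noindent\emph{Step 2: Enrich by all mutually algebraic relations.} Let $\cA$ be the expansion of $Q^{T}_{\indd}$ obtained by naming every relation in $A^{\ma}$. Since $Q^{T}_{\indd}=A^{\cM}_{\indd}$ is mutually algebraic, Fact \ref{fact:Lask}$(b)$ gives that $\cA$ is still mutually algebraic, hence superstable by Fact \ref{fact:Lask}$(a)$. Now $\cA$ is an arbitrary structure expanding $Q_{\indd}$ (in $T$), so applying the superstability case of Theorem \ref{T:pres. of stab} to $T$ and $\cQ=\cA$, the enrichment $\Tbig$ is superstable. Finally, $\Tbig$ is the theory of a structure interdefinable with $(\cM,A^{\ma})$ — one direction because every relation in $A^{\ma}$ is named, the other because $A$ itself is a unary relation in $A^{\ma}$, so the predicate $Q$ is already available — whence $(\cM,A^{\ma})$ is superstable.

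\medskip

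\noindent I do not expect a serious obstacle here: both halves are bookkeeping on top of already-established results. The one point requiring a little care is making sure boundedness of $A$ is available over $\emptyset$ rather than only over a named elementary submodel — if one only gets boundedness in $\cM_{\cA}$ for $\cA\preceq\cM$, then one should first name constants for a countable $\cA$ (this does not affect superstability and does not affect mutual algebraicity of the induced structure, since those are all preserved under naming constants), and run the argument over $T=\Th(\cM_{\cA},A)$. A second minor point is verifying that $Q^{T}_{\indd}$ really is mutually algebraic and not merely $\lambda$-stable: this is immediate from Remark \ref{rem:bounded}, since boundedness forces the induced structure to be literally the same as $A^{\cM}_{\indd}$.
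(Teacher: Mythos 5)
Your proof is correct and takes essentially the same approach as the paper: name $A$ by a predicate, invoke boundedness (Fact \ref{fact:bounded}$(b)$) plus Remark \ref{rem:bounded} to preserve the induced structure, and then reduce to Theorem \ref{thm:MAexp} (which your Step~2 re-derives inline rather than citing directly, but the content is identical). The caveat about naming constants for a small elementary submodel to get boundedness is handled in the paper in exactly the same parenthetical way.
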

\begin{proof}
Note that $A^{\cM}_{\indd}$ is superstable by Fact \ref{fact:Lask}$(a)$, and so $(\cM,A)$ is superstable by Fact \ref{fact:bounded}. By Remark \ref{rem:bounded}, $A^{\cM}_{\indd}$ coincides with $A^{(\cM,A)}_{\indd}$. So we can apply Theorem \ref{thm:MAexp} to $(\cM,A)$.
\end{proof}

\section{Tame expansions of $(\Z,+)$}\label{sec:expZ}
\numberwithin{theorem}{section}

\subsection{$U$-rank in stable expansions of $(\Z,+)$}

Recall that the present article was motivated in large part by Question \ref{ques:intro}, which we restate here.

\begin{question}\label{ques:ZQs}$~$
\begin{enumerate}[$(a)$]
\item Is there a strictly stable expansion of $(\Z,+)$?
\item Is there a superstable expansion of $(\Z,+)$ with $U$-rank not equal to $\omega$?
\end{enumerate}
\end{question}

Positive answers to both questions now follow immediately from the following special case of Theorem \ref{thm:Urank1}.  

\begin{theorem}\label{thm:expZ}
Let $\cN$ be a  stable (resp., superstable, $\NIP$, $\NTP 2$) countable structure.  Then there is a stable (resp., superstable, $\NIP$, $\NTP 2$) expansion $\mathcal{Z}$ of $(\Z,+)$ naming $\cN$. Moreover, the $U$-rank of $\mathcal{Z}$ is at least that of $\cN$.
\end{theorem}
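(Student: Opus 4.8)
The plan is to derive Theorem~\ref{thm:expZ} as a direct specialization of Theorem~\ref{thm:Urank1} together with a $U$-rank computation. First I would observe that $(\Z,+,0,1)$ (equivalently $(\Z,+)$, since $0$ and $1$ are definable up to automorphism, or one simply works with the richer language) is weakly minimal: its theory is superstable of $U$-rank $1$. Since $\cL$ is finite, hence countable, all the hypotheses of Theorem~\ref{thm:Urank1} are met, and we immediately obtain a stable (resp.\ superstable, $\NIP$, $\NTP2$) expansion $\cZ$ of $(\Z,+)$ naming $\cN$, with the additional feature from Theorem~\ref{thm:Urank1}$(ii)$ that if $Q\seq\Z$ is the universe of the copy of $\cN$ inside $\cZ$, then $Q^{\cZ}_{\indd}$ is interdefinable with $\cN$.

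The remaining content is the ``moreover'' clause: $U$-rank$(\cZ)\ge U$-rank$(\cN)$. The plan here is to use that $U$-rank is monotone under passing to induced structure on a definable set. Concretely, $Q$ is $\emptyset$-definable in $\cZ$ (it is named by a predicate), and by Theorem~\ref{thm:Urank1}$(ii)$ the induced structure $Q^{\cZ}_{\indd}$ is interdefinable with $\cN$. Now if $b\in Q$ and $C\seq\cZ$, then $U(b/C)$ computed in $\cZ$ dominates $U(b/C\cap Q)$ computed in $Q^{\cZ}_{\indd}$, because every formula (hence every forking extension) available in the induced structure is also available in $\cZ$; more precisely, a forking chain of types in $Q^{\cZ}_{\indd}$ lifts to a forking chain in $\cZ$ since forking independence in $\cZ$ restricted to tuples from $Q$ over parameters from $Q$ refines that of $Q^{\cZ}_{\indd}$ (stable embeddedness gives the clean comparison, but only the ``$\ge$'' direction is needed and that holds generally). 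Taking a tuple $b$ witnessing $U$-rank$(\cN)$ (via a type of that rank in $\cN\cong Q^{\cZ}_{\indd}$, passing to the monster and using that $U$-rank of a theory is the sup of $U$-ranks of finitary types) yields $U$-rank$(\cZ)\ge U(b/\emptyset)\ge U$-rank$(\cN)$.

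I would then note the two points requiring a little care. The first is the reduction ``$(\Z,+)$ versus $(\Z,+,0,1)$'': since $(\Z,+)$ and $(\Z,+,0,1)$ are interdefinable (indeed $1$ is $\emptyset$-definable up to $\pm$, and one can pass to $\Th$ of the latter), an expansion of one naming $\cN$ is interdefinable with a corresponding expansion of the other, preserving all the listed dividing lines and the $U$-rank; alternatively one simply states Theorem~\ref{thm:Urank1} applies to the weakly minimal structure $(\Z,+,0,1)$ and a reduct argument gives an expansion of $(\Z,+)$. The second is that $U$-rank$(\cN)$ may be infinite (indeed $\cN$ could have $U$-rank $\ge\omega$, or even be merely superstable without ordinal-valued $U$-rank everywhere), so the inequality should be read as: for every finitary type $p$ in $\cN$ with $U(p)$ defined, there is a type in $\cZ$ of rank $\ge U(p)$; in particular $U$-rank$(\cZ)\ge U$-rank$(\cN)$ as (possibly infinite) ordinals or $\infty$.

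The main obstacle, and the only step that is not essentially bookkeeping, is the $U$-rank monotonicity across the induced-structure reduction: one must be careful that forking in $\cZ$ among elements of $Q$, over bases inside $Q$, does not \emph{refine} the induced forking in a way that would only help (it does only help, giving the $\ge$ direction), but to be safe one invokes stable embeddedness of $Q$ in $\cZ$ (which holds by Proposition~\ref{prop: D' stab emb}) so that forking independence in $\cZ$ restricted to $Q$ agrees with forking independence in $Q^{\cZ}_{\indd}=\cN$ over parameters from $Q$; then ranks agree on such types and in particular $U$-rank$(\cZ)\ge U$-rank$(\cN)$. Everything else is an immediate citation of Theorem~\ref{thm:Urank1}.
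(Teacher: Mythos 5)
Your proof is correct and follows essentially the same route as the paper: apply Theorem~\ref{thm:Urank1} to $(\Z,+)$, which is weakly minimal in a countable language, to obtain $\cZ$ naming $\cN$ on a set $Q$ with $Q^{\cZ}_{\indd}$ interdefinable with $\cN$, and then use this interdefinability to conclude the $U$-rank lower bound. (The paper's proof actually cites Theorem~\ref{thm:unaryZ} rather than Theorem~\ref{thm:Urank1}, but this appears to be a typographical slip, as the cited phrase ``superstable of $U$-rank $1$'' is the weak minimality hypothesis of Theorem~\ref{thm:Urank1}, which is the result that actually names an arbitrary countable structure $\cN$.) Your added remarks on $U$-rank monotonicity via stable embeddedness, and on the $(\Z,+)$ vs.\ $(\Z,+,0,1)$ distinction, are sound but not strictly necessary: Theorem~\ref{thm:Urank1} applies directly to $(\Z,+)$ in the language $\{+\}$, and the $U$-rank inequality is treated as immediate in the paper, just as you say, with only the ``$\ge$'' direction of the forking comparison being needed.
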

\begin{proof}
Since $(\Z,+)$ is superstable of $U$-rank $1$, Theorem \ref{thm:Urank1} yields the desired expansion $\mathcal{Z}$ of $(\Z,+)$, which names $\cN$ on some set $Q\seq\Z$.  The lower bound on the $U$-rank of $\cZ$ comes from the fact that $Q^{\mathcal{Z}}_{\indd}$ is interdefinable with $\cN$.
\end{proof}

In particular, we can obtain a strictly stable expansion of $(\Z,+)$ by choosing $\cN$ to be any countable strictly stable structure (e.g., a nonabelian free group); and we can obtain a superstable expansion of $U$-rank greater than $\omega$ by choosing $\cN$ to be any countable superstable structure of $U$-rank greater than $\omega$ (e.g., an $(\omega+1)$-ordered chain of infinitely expanding equivalence relations with infinite classes).

A more meticulous version of Question \ref{ques:ZQs}$(b)$ would ask for the precise identification of all ordinals $\alpha$ for which there is a superstable expansion of $(\Z,+)$ with $U$-rank $\alpha$. So let us summarize what is known. As previously mentioned, a large number of examples with $U$-rank $\omega$ can be found in previous work of several authors \cite{CoSS, CoMS, CoLa, HaZ, LaPo, PaSk}. Beyond this,  Theorem \ref{thm:expZ} implies the existence of superstable expansions of $(\Z,+)$ of arbitrarily high countable $U$-rank, although the proof does not provide a way to construct one with a specifically chosen $U$-rank. In fact, certain $U$-ranks are impossible to obtain. The following is the current state of the art in this direction (to our knowledge).

\begin{fact}[\cite{BeLa,PaSk}]
Every proper superstable expansion of $(\Z,+)$ has infinite monomial $U$-rank.
\end{fact}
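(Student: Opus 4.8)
Unwinding the definition, the statement asserts that no \emph{proper} superstable expansion $\cZ$ of $(\Z,+)$ has finite $U$-rank (equivalently, the leading term of the Cantor normal form of $U(\Th(\cZ))$ is an infinite ordinal, i.e.\ $U(\Th(\cZ))\geq\omega$). So the plan is to fix a superstable expansion $\cZ$ of $(\Z,+)$ with $U(\cZ)=n<\omega$ and show that $\cZ$ is interdefinable with $(\Z,+)$ after naming finitely many constants, contradicting properness. Work in a monster model $\cU\models\Th(\cZ)$; then $G:=(\cU,+)$ is a superstable abelian group of $U$-rank $n$, and $G$ is torsion-free (a first-order consequence of torsion-freeness of $\Z$).

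The first step is to isolate the connected component. For each $k\geq 1$ the subgroup $k\cU$ is $\emptyset$-definable of index $k$, so $G^0=\bigcap_k k\cU$ is a type-definable subgroup of bounded index; hence $U(G^0)=U(G)=n$ and $G/G^0\cong\hat\Z$. Uniqueness of $k$-th roots in the torsion-free group $G$ then shows $G^0$ is divisible, so $G^0$ is a torsion-free divisible abelian group (a $\Q$-vector space) carrying the structure induced from $\cZ$. Next, by the structure theory of superstable groups of finite $U$-rank (Berline--Lascar), $G^0$ admits a finite chain of connected type-definable subgroups $G^0=H_0\geq H_1\geq\dots\geq H_n=0$ whose successive quotients are connected divisible abelian groups of $U$-rank $1$. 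By the dichotomy for connected $U$-rank-$1$ groups (Hrushovski), each quotient $H_i/H_{i+1}$ is either one-based or interprets an infinite field; such a field, being superstable, is algebraically closed, and must be of characteristic $0$ (a characteristic-$p$ field would force $p$-torsion incompatible with the divisibility/torsion-freeness constraints inherited from $G$).

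The heart of the argument — and the step I expect to be the main obstacle — is ruling out the field case: a superstable expansion of $(\Z,+)$ does not interpret an infinite field. This is where the specific arithmetic of $(\Z,+)$ must enter: its definable subgroups (in the pure group) are exactly the ``linear-congruential'' ones, all of which are honest subgroups, and the ``finite torsion'' property $|\Z/k\Z|<\infty$ together with finiteness of the $U$-rank tightly constrains the definable subgroups of the powers of $\cU$ in $\cZ$, obstructing the group configuration needed to build a field. Granting this, every $H_i/H_{i+1}$, hence $G^0$, hence $\cZ$ itself, is one-based. To conclude, one uses that in a one-based group every definable set is a Boolean combination of cosets of $\acl^{\eq}$-definable subgroups of powers of $\cU$; a final argument — again combining finiteness of $U$-rank, Baldwin--Saxl, and the classification of definable subgroups of $\cU^k$ in $(\Z,+)$ — shows every such $\cZ$-definable subgroup is already $(\Z,+)$-definable, so that every $\cZ$-definable set is $(\Z,+)$-definable over parameters, contradicting properness of $\cZ$. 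Thus the two places requiring genuine work are (i) excluding interpreted infinite fields and, secondarily, (ii) promoting one-basedness to interdefinability with the pure group by controlling the definable subgroups of the powers of $\cU$.
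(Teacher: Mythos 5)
Your proposal misreads the statement. ``Infinite monomial $U$-rank'' means the $U$-rank is of the form $\omega^\alpha k$ with $\alpha\geq 1$ --- that is, it is \emph{both} infinite \emph{and} a monomial in Cantor normal form. You paraphrase it as merely ``the leading term of the Cantor normal form is infinite, i.e.\ $U\geq\omega$,'' which drops the monomial constraint entirely: an expansion of $U$-rank $\omega^2+\omega$, say, would satisfy your paraphrase but violate the fact. The paper's proof has two genuinely separate ingredients matching the two adjectives: the Palac\'{i}n--Sklinos theorem gives ``infinite'' (no proper stable expansion of $(\Z,+)$ has finite $U$-rank), while a Berline--Lascar decomposition theorem for superstable groups gives ``monomial.'' The second ingredient uses that any superstable group $G$ with $U(G)=\omega^{\alpha_1}k_1+\cdots+\omega^{\alpha_n}k_n$ (Cantor normal form) has a definable normal subgroup of $U$-rank $\omega^{\alpha_1}k_1+\cdots+\omega^{\alpha_r}k_r$ for each $r$; since every nontrivial subgroup of $(\Z,+)$ has finite index, the Berline--Lascar inequalities then force $n=1$. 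Your argument never touches this at all.

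Even for the part you do attempt, there is a self-acknowledged gap. You set up the one-based/field dichotomy for the connected component and then write that ruling out the field case ``is where the specific arithmetic of $(\Z,+)$ must enter,'' offering only a heuristic. But that step \emph{is} the theorem of Palac\'{i}n--Sklinos (which the paper simply cites), and their proof is not a routine consequence of Hrushovski's dichotomy --- it needs a real argument, not a gesture at ``the group configuration.'' So as it stands the proposal neither cites the input nor reproves it. In short: (i) the statement was misinterpreted, so the whole ``monomial'' half is absent, and (ii) the ``infinite'' half is reduced to a sketch with an explicit hole at the key step. If you want to follow the paper's route, replace your field-exclusion sketch with a citation to Palac\'{i}n--Sklinos and add the Berline--Lascar normal-subgroup decomposition argument to obtain monomiality.
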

\begin{proof}
First, we recall the theorem of Palac\'{i}n and Sklinos \cite{PaSk} that there is no proper stable expansion of $(\Z,+)$ with finite $U$-rank.\footnote{This was later strengthened to $dp$-rank in \cite{CoPi}.} Second, we recall a classical result of Berline and Lascar \cite[Corollary IV.2.7]{BeLa} that if $G$ is a superstable group of $U$-rank $\omega^{\alpha_1}k_1+\ldots+\omega^{\alpha_n}k_n$ (in Cantor normal form) then, for any $1\leq r\leq n$, $G$ has a definable normal subgroup of $U$-rank $\omega^{\alpha_1}k_1+\ldots+\omega^{\alpha_r}k_r$. Since any nontrivial subgroup of $(\Z,+)$ has finite index, it then follows from the Berline-Lascar inequalities for superstable groups \cite[Corollary III.8.2]{BeLa} that the $U$-rank of any superstable expansion of $(\Z,+)$ must be a monomial $\omega^\alpha k$. 
\end{proof}

\subsection{Vaporous sets of integers}\label{sec:vapQ}

Our next goal is to extend Theorem \ref{thm:expZ} to include the dividing lines of simplicity and NSOP$_1$. Recall from  Remark \ref{rem:Urank1H} that if $\cM$ is weakly minimal, and $\cN$ is an arbitrary simple (resp., NSOP$_1$) structure, then the main obstacle in using the strategy of Theorem \ref{thm:Urank1} to build a simple (resp., NSOP$_1$) expansion of $\cM$ naming $\cN$ is whether one can find an eventually indiscernible sequence in $\cM$ that enumerates a set that is algebraically embedded after being named by a new predicate. We will show here that this is possible in the special case of $\cM=(\Z,+,0,1)$.  Note that a direct application of Proposition \ref{prop:Urank1NSOP1} yields a simple (resp., NSOP$_1$) expansion of $\mathcal{M}$ naming $\mathcal{N}$, without the need to check for algebraic embeddedness. 

Throughout this section we will use the fact that the theory of $(\mathbb{Z},+,0)$ has quantifier elimination after adding adding binary relations for congruence mod $n$ for all $n\geq 2$.


\begin{definition}\label{def:vaporous}
A strictly increasing sequence $(a_n)_{n=0}^\infty$ from $\Z^+$ is \textbf{vaporous} if the following two properties hold:
\begin{enumerate}[$(i)$]
\item $\lim_{n\to\infty}\frac{a_{n+1}}{a_n}=\infty$.
\item For all $m>0$, $(a_n)_{n=0}^\infty$ is eventually constant modulo $m$.
\end{enumerate}
\end{definition}

The canonical example of a vaporous sequence is the factorials: $a_n=n!$. In \cite{PaSk}, it is shown that the $(\Z,+,0,1)$-induced structure on the factorials is a pure infinite set. Therefore the factorials are eventually totally indiscernible over $\Z$ by Lemma \ref{lem:indEI}. It turns out that the same  holds for any vaporous sequence by various general results from \cite{CoSS, CoLa, LaPo}. Nevertheless, we will give a short self-contained proof (the details of which will be useful later). 

\begin{lemma}\label{lem:vaporous}
Suppose $(a_n)_{n=0}^\infty$ is a strictly increasing sequence in $\Z^+$ such that $\lim_{n\to\infty}\frac{a_{n+1}}{a_n}=\infty$. Then for any $m,n>0$ and $r\in\Z$, the equation 
\[
x_1+\ldots+x_m=y_1+\ldots+y_n+r
\]
has only finitely many solutions in $\{a_n\}_{n\geq0}$ satisfying $\max_ix_i\neq\max_j y_j$.  
\end{lemma}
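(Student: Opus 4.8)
The plan is to argue by contradiction and exploit the superexponential growth condition $a_{n+1}/a_n\to\infty$: a summand $a_N$ of large index is far too big to be balanced, up to the fixed additive error $r$, by a bounded number of summands all of index strictly less than $N$.

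First I would suppose, toward a contradiction, that the displayed equation has infinitely many solutions $(x_1,\dots,x_m,y_1,\dots,y_n)$ from $\{a_k\}_{k\ge 0}$ with $\max_i x_i\neq\max_j y_j$. To each solution associate the index $N$ of the largest value $a_N$ occurring among the $m+n$ entries. Since for each fixed $K$ there are only finitely many $(m+n)$-tuples with all entries in $\{a_0,\dots,a_K\}$, only finitely many solutions can have $N\le K$; hence the set of indices $N$ that occur is unbounded, and I may pass to solutions with $N$ as large as desired, in particular with $N\ge 1$ and $a_{N-1}>|r|$. Now fix such a solution. Because the two maxima $\max_i x_i$ and $\max_j y_j$ are distinct, the overall maximum $a_N$ is attained on exactly one side of the equation; rewriting the equation as $y_1+\dots+y_n=x_1+\dots+x_m+(-r)$ if necessary, I may assume $a_N=\max_i x_i$. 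Then $a_N$ appears among the $x_i$, while every $y_j$ is strictly smaller than $a_N$ and therefore equals $a_k$ for some $k<N$, so $y_j\le a_{N-1}$ by strict monotonicity. Since all terms are positive integers, this gives
\[
a_N\;\le\;x_1+\dots+x_m\;=\;y_1+\dots+y_n+r\;\le\;n\,a_{N-1}+|r|\;<\;(n+1)\,a_{N-1},
\]
so $a_N/a_{N-1}<n+1$; the symmetric case gives $a_N/a_{N-1}<m+1$. Setting $\ell=\max\{m,n\}$, every $N$ arising from our infinite family of solutions satisfies $a_N/a_{N-1}<\ell+1$. But $a_N/a_{N-1}\to\infty$ as $N\to\infty$, so choosing $N$ large enough contradicts this bound.

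There is no real obstacle here: the heart of the argument is a one-line estimate once the bookkeeping is in place. The only points requiring a little care are verifying that infinitely many solutions force the maximal index $N$ to be unbounded (a finiteness count), and the symmetry reduction that puts the overall maximum on a side controlled by a fixed number ($m$ or $n$) of summands. I would also note that property $(ii)$ in the definition of a vaporous sequence plays no role in this lemma; only the growth condition $(i)$ is used.
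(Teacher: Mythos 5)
Your proof is correct and uses essentially the same idea as the paper's: bound the side not containing the overall maximum by (number of summands)$\cdot$(second-largest element)$\,+\,|r|$, then use $a_{N}/a_{N-1}\to\infty$ to rule out large indices. The paper argues directly by fixing a threshold $k$ with $a_{i+1}>\max\{m,n\}\,a_i+|r|$ for large $i$ and showing the maximal index on each side must be $\le k$, whereas you phrase it as a contradiction after observing that infinitely many solutions force unbounded maximal index — a cosmetic rather than substantive difference.
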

\begin{proof}
Fix $m,n>0$ and $r\in\Z$. Let $t=\max\{m,n\}$. By assumption, we can choose some $k\geq 0$ such that if $i>k$ then $a_{i+1}>ta_{i}+|r|$. Fix   $i_1,\ldots,i_m,j_1,\ldots,j_n\geq 0$ such that $a_{i_1}+\ldots+a_{i_m}=a_{j_1}+\ldots+a_{j_n}+r$.  Let $i_*=\max\{i_1,\ldots,i_m\}$ and $j_*=\max\{j_1,\ldots,j_n\}$, and assume $i_*\neq j_*$. We show $i_*,j_*\leq k$, which yields the lemma. Suppose instead that $\max\{i_*,j_*\}>k$.  If $i_*>j_*$ then $i_*>k$, and thus
 \[
 a_{j_1}+\ldots+a_{j_n}+r\leq ta_{j_*}+r<a_{i_*}\leq a_{i_1}+\ldots+a_{i_m}.
     \]
 On the other hand, if $i_*<j_*$ then $j_*>k$, and thus
 \[
 a_{i_1}+\ldots+a_{i_m}-r\leq ta_{i_*}-r<a_{j_*}\leq a_{j_1}+\ldots+a_{j_n}.
 \]
 So in either case, we have a contradiction.
\end{proof}

\begin{corollary}\label{cor:vaporous}
Any vaporous sequence from $\Z^+$ is eventually totally indiscernible over $\Z$ with respect to $(\Z,+)$.
\end{corollary}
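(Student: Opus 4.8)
The plan is to use quantifier elimination for $\Th(\Z,+)$ to reduce the statement to atomic formulas, and then to handle the two kinds of atomic formulas separately: congruence conditions via property $(ii)$ of vaporousness, and genuine linear equations via Lemma~\ref{lem:vaporous}. Since $(\Z,+)$ is stable, it would in fact be enough to verify eventual indiscernibility and then quote Fact~\ref{fact:ETI}; but the argument below yields eventual \emph{total} indiscernibility directly.

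Recall that $\Th(\Z,+)$ eliminates quantifiers in the language $\{+,-,0\}$ together with a unary divisibility predicate $D_m$ for each $m\geq 1$. Hence every formula of $(\Z,+)$ with parameters from $\Z$ in free variables $x_1,\dots,x_r$ is equivalent, modulo $\Th(\Z,+)$, to a Boolean combination of atomic formulas of the two forms $\sum_{\ell=1}^r c_\ell x_\ell=d$ and $\sum_{\ell=1}^r c_\ell x_\ell\equiv d\pmod m$, with $c_\ell,d\in\Z$ and $m\geq 1$ --- all parameters having been absorbed into the single constant $d$. A threshold witnessing eventual total indiscernibility for a Boolean combination can be taken as the maximum of the thresholds for its constituents, so it suffices to produce one for a single atomic formula of each form.

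For a congruence atomic formula with modulus $m$, property $(ii)$ provides $k$ and a residue $c$ with $a_i\equiv c\pmod m$ for all $i>k$; then for any indices $i_1,\dots,i_r>k$ --- distinct or not, and in any order --- one has $\sum_\ell c_\ell a_{i_\ell}\equiv c\sum_\ell c_\ell\pmod m$, a value not depending on the chosen indices, so the truth value is constant on all such tuples. For an equation $\sum_\ell c_\ell x_\ell=d$, split $\{1,\dots,r\}$ into the positions with positive, negative, and zero coefficient. If every coefficient vanishes the formula is the constant $0=d$. If the nonzero coefficients all have one sign, then $\bigl|\sum_\ell c_\ell a_{i_\ell}\bigr|\geq a_{i^\ast}$ for $i^\ast$ the largest index carrying a nonzero coefficient, so the equation forces $i^\ast$ (hence every nonzero-coefficient index) to be at most the largest $n$ with $a_n\leq|d|$; thus the formula is false on every tuple all of whose indices exceed that bound. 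Otherwise, moving negative terms to the right and rewriting each $c_\ell x_\ell$ as $|c_\ell|$ copies of $x_\ell$ exhibits the equation as an instance of the equation in Lemma~\ref{lem:vaporous}, with $\sum_{c_\ell>0}c_\ell>0$ playing the role of its $m$, $\sum_{c_\ell<0}|c_\ell|>0$ playing the role of its $n$, and $d$ playing the role of its $r$. For a tuple with pairwise distinct indices, the largest value appearing with a positive coefficient and the largest value appearing with a negative coefficient sit at distinct positions, hence are distinct since $(a_n)$ is strictly increasing; so the hypothesis $\max_ix_i\neq\max_jy_j$ of Lemma~\ref{lem:vaporous} holds, and the lemma bounds all indices occurring in any such solution, making the formula false on all tuples with sufficiently large indices.

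Combining the cases, every atomic formula --- and hence, by the reduction, every formula of $(\Z,+)$ over $\Z$ --- has a threshold past which its truth value on tuples of pairwise distinct terms of $(a_n)$ is constant and independent of the order of the indices, which is exactly eventual total indiscernibility of $(a_n)$ over $\Z$. The real content is already contained in Lemma~\ref{lem:vaporous}; the only things to be careful about are that all parameters genuinely collapse into the constant $d$ after quantifier elimination, and that Lemma~\ref{lem:vaporous} is invoked only in the two-sided case, the one-sided case being dispatched by hand using the growth of the sequence.
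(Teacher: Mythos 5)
Your proof is correct and follows essentially the same route as the paper: reduce via quantifier elimination to the two atomic shapes, dispatch congruences using condition $(ii)$ of vaporousness, and dispatch linear equations via Lemma~\ref{lem:vaporous}, observing that pairwise distinctness of the substituted indices plus strict monotonicity guarantee the $\max_i x_i\neq\max_j y_j$ hypothesis. The only difference is that you explicitly treat the one-sided case (all nonzero coefficients of one sign), which Lemma~\ref{lem:vaporous} does not literally cover since it requires $m,n>0$; the paper's proof passes over this point silently, and your extra case analysis using positivity and monotonicity of the sequence closes that small gap cleanly.
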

\begin{proof}
Fix a vaporous sequence $(a_i)_{i=0}^\infty$ in $\Z^+$.
By quantifier elimination, it suffices to consider formulas of the form:
\begin{enumerate}[$(1)$]
\item  $x_1+\ldots+x_m=y_1+\ldots+ y_n+r$ for some $m,n>0$ and $r\in\Z$, or
\item $x\equiv_m r$ for some $0\leq r<m$.
\end{enumerate} 
 If $\varphi(\xbar,\ybar)$ is of the form in $(1)$ then, by Lemma \ref{lem:vaporous}, there is some $k$ such that if $i_1,\ldots,i_m,j_1,\ldots,j_n>k$ are pairwise distinct then $\neg\varphi(a_{i_1},\ldots,a_{i_m},a_{j_1},\ldots,a_{j_n})$ holds (recall that  $(a_i)_{i=0}^\infty$ is strictly increasing). On the other hand, if $\varphi(x)$ is of the form in $(2)$, then by condition $(ii)$ of Definition \ref{def:vaporous} there is some $k>0$ such that either $\varphi(a_i)$ holds for all $i>k$ or $\neg\varphi(a_i)$ holds for all $i>k$.
\end{proof}

We call an infinite subset $Q\seq\Z^+$ \textbf{vaporous} if it can be enumerated by a (strictly increasing) vaporous sequence. The next lemma collects some model-theoretic facts about vaporous sets, which are all well-established in the literature.

\begin{lemma}\label{lem:Qfacts}
Suppose $Q\seq\Z^+$ is vaporous.
\begin{enumerate}[$(a)$]
\item $Q^{\Z}_{\indd}$ is interdefinable with $(Q,=)$.
\item $Q$ is small and bounded in $(\Z,+,0,1)$.
\end{enumerate}
\end{lemma}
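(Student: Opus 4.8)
The plan is to derive both parts from Corollary \ref{cor:vaporous} --- a vaporous enumeration of $Q$ is eventually totally indiscernible over $\Z$ with respect to $(\Z,+)$ --- together with the general machinery of Section \ref{sec:Uapp} and the cited results of \cite{CaZi,CoLa}. The structural feature of $(\Z,+,0,1)$ that I would use throughout is that $\dcl_{(\Z,+,0,1)}(\emptyset)=\Z$: every integer is $\emptyset$-definable (e.g., $\{n\}$ is isolated by $x=1+\ldots+1$ for $n>0$, by $x+1+\ldots+1=0$ for $n<0$, and by $x=0$ for $n=0$). Hence $(\Z,+,0,1)$ has no proper elementary substructure, and naming all of $\Z$ by constants changes nothing: $(\Z,+,0,1)_{\Z}$ is interdefinable with $(\Z,+,0,1)$, and the two have the same $\emptyset$-definable sets.

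For part $(a)$, fix a vaporous enumeration $(a_i)_{i<\omega}$ of $Q$, which is injective since it is strictly increasing. By Corollary \ref{cor:vaporous} it is eventually totally indiscernible over $\Z$ with respect to $(\Z,+)$, hence also with respect to $(\Z,+,0,1)$ (the constants $0,1$ already lie in $\Z$). Applying Lemma \ref{lem:indEI} to $(\Z,+,0,1)$ with $A=\Z$ --- so that $a_i\in A$ for all $i$ --- gives that $Q^{(\Z,+,0,1)_{\Z}}_{\indd}$ is interdefinable with $(Q,=)$. Since $(\Z,+,0,1)_{\Z}$ and $(\Z,+,0,1)$ have the same $\emptyset$-definable sets, $Q^{\Z}_{\indd}=Q^{(\Z,+,0,1)_{\Z}}_{\indd}$, and part $(a)$ follows.

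For part $(b)$, smallness follows from part $(a)$: since $Q^{\Z}_{\indd}$ is interdefinable with a pure set it is in particular stable, so \cite[Corollary 3.19$(a)$]{CoLa} applies (to the $U$-rank $1$ group $(\Z,+,0,1)$) and yields that $Q$ is small. For boundedness, recall that $(\Z,+,0,1)$ is weakly minimal, and its only elementary substructure is itself; thus Fact \ref{fact:bounded}$(b)$ gives that $Q$ is bounded in $(\Z,+,0,1)_{\Z}$, and since $(\Z,+,0,1)_{\Z}$ is interdefinable with $(\Z,+,0,1)$ --- so that a bounded formula over one presentation is bounded over the other --- $Q$ is bounded in $(\Z,+,0,1)$.

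None of these steps is difficult; all the real content has been front-loaded into Corollary \ref{cor:vaporous} (hence into Lemma \ref{lem:vaporous}) and into the cited literature. The only point needing a little care is matching the ``over $\Z$'' in Corollary \ref{cor:vaporous} with the hypothesis ``$a_i\in A$'' of Lemma \ref{lem:indEI}, and with the distinction between $\emptyset$-definable sets and sets definable over parameters --- which is exactly what the observation $\dcl_{(\Z,+,0,1)}(\emptyset)=\Z$ resolves.
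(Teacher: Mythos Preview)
Your proof is correct and follows essentially the same route as the paper: part $(a)$ via Corollary \ref{cor:vaporous} and Lemma \ref{lem:indEI}, and part $(b)$ via \cite[Corollary 3.19$(a)$]{CoLa} for smallness and Fact \ref{fact:bounded}$(b)$ for boundedness. The only difference is that you make explicit the observation $\dcl_{(\Z,+,0,1)}(\emptyset)=\Z$ (so $(\Z,+,0,1)$ is its own unique elementary substructure), which the paper leaves implicit; this is exactly the detail needed to reconcile the ``over $A$'' hypotheses in Lemma \ref{lem:indEI} and Fact \ref{fact:bounded}$(b)$ with the unadorned conclusions here.
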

\begin{proof}
Part $(a)$ follows from Corollary \ref{cor:vaporous} and Lemma \ref{lem:indEI}. For part $(b)$, smallness follows from part $(a)$ and \cite[Corollary 3.19$(a)$]{CoLa}; and boundedness follows from Fact \ref{fact:bounded}$(b)$.\footnote{Recall also that any small set in an nfcp structure is  bounded by \cite[Proposition 2.1]{CaZi}. We also stress  that Lemma \ref{lem:Qfacts} is largely evident from  earlier work of Palac\'{i}n and Sklinos \cite{PaSk} on the factorials, and also from various more general results in \cite{CoSS} and \cite{LaPo}.} 
\end{proof}

\begin{remark}\label{rem:vapstable}
It follows from Lemma \ref{lem:Qfacts} that if $Q\seq\Z^+$ is vaporous then $(\Z,+,Q)$ is superstable.  In fact, it is shown in \cite{CoSS} that the same conclusion holds for any set $Q\seq\Z^+$  enumerated by a sequence satisfying condition $(i)$ of Definition \ref{def:vaporous}.
\end{remark}

For the rest of this subsection, we let $Q$ be a fixed vaporous subset of $\Z^+$. Define the theory $T=\Th(\Z,+,0,1,Q)$ in the language $\cL=\{+,0,1,Q\}$.  Let $\cU\models T$ be a  monster model and let $\indi T$ denote forking independence in $T$. 

Our next goal is to show that $Q$ is algebraically embedded in $T$. Note first that Lemma \ref{lem:Qfacts}$(a)$ yields weak elimination of imaginaries for $\Th(Q^{\Z}_{\indd})$. By Remark \ref{rem:about(H)}$(b)$, this allows us to focus on the operator $\ol{A}^r=\acl_T(A)\cap Q(\cU)$ on subsets $A\seq\cU$. In order to prove that $Q$ is algebraically embedded in $T$, it suffices to show:
\begin{equation*}
    \textnormal{If $A,B,C\seq\cU$ and $\textstyle A\indi T_{C} B$, then $\ol{ABC}^r\seq\acl_T(\ol{AC}^r,\ol{BC}^r)$.}\tag{$\dagger$}
\end{equation*}

We will start by giving a precise identification of the operator $\overline{A}^r$. The first step is the following consequence of Lemma \ref{lem:vaporous}. 

\begin{corollary}\label{pro:proj is acl}
Fix $a \in \mathcal{U}$ and suppose that $ma=c_1v_1+\ldots+c_nv_n$ for some $c_1,\ldots,c_n\in\Z\backslash\{0\}$, $m\in\Z^+$, and pairwise distinct $v_1,\ldots,v_n\in Q(\cU)$.  Then $v_1,\ldots,v_n\in \acl_T(a)$.
\end{corollary}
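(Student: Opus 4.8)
The plan is to argue by contradiction, with Lemma~\ref{lem:vaporous} supplying the combinatorial input once it has been repackaged in an order-free, first-order form that survives passage to the monster model $\cU$. For each integer $M\geq 1$, let $S_M\seq Q(\cU)$ consist of those $v$ for which there are $1\leq K\leq M$ and $z_1,\dots,z_K,w_1,\dots,w_K\in Q(\cU)$ with $z_1+\cdots+z_K=w_1+\cdots+w_K$, such that $(z_1,\dots,z_K)$ and $(w_1,\dots,w_K)$ do not enumerate the same multiset and $v$ occurs strictly more often among the $z_j$ than among the $w_j$ (or vice versa). For fixed $M$ this set is $\emptyset$-definable in $T$ (equality of multisets is the disjunction over permutations $\sigma$ of $\bigwedge_j z_j=w_{\sigma(j)}$, and comparing how often $v$ occurs is a Boolean combination of equalities). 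The first thing I would prove is that each $S_M$ is finite. Working in the standard model $(\Z,+,0,1,Q)$ and deleting the common part of the two multisets, one is left with finite multisets $Z',W'$ of size at most $M$ with disjoint supports and equal sums; since elements of $Q$ are positive, equality of sums forces $Z'$ and $W'$ to be nonempty, hence $\max Z'\neq \max W'$, and so Lemma~\ref{lem:vaporous} (with $r=0$ and at most $M$ variables on each side, applied for each of the finitely many size combinations) shows that only finitely many such $Z'\cup W'$ are possible; as $v\in Z'\cup W'$, the set $S_M$ is finite in the standard model. Since $S_M$ is $\emptyset$-definable and $T$ is complete, $S_M$ is finite in $\cU$ as well.

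Next I would suppose, for contradiction, that $v_{i_0}\notin\acl_T(a)$ for some $i_0$. Then $\tp(v_{i_0}/a)$ is non-algebraic, so by compactness together with the standard Ramsey extraction there is an $a$-indiscernible sequence $(\bar v^{(t)})_{t<\omega}$ of realizations of $\tp(v_1\cdots v_n/a)$ with the $v_{i_0}^{(t)}$ pairwise distinct. Let $J$ be the set of coordinates in which $(\bar v^{(t)})_t$ is non-constant, so $i_0\in J$. Fix $t$ and put $s=t$, $s'=t+1$. Subtracting the identities $\sum_i c_iv_i^{(s)}=ma=\sum_i c_iv_i^{(s')}$, cancelling the coordinates not in $J$ (on which the two rows agree), and moving the negative coefficients to the opposite side, one obtains an identity $z_1+\cdots+z_{M'}=w_1+\cdots+w_{M'}$ in $Q(\cU)$ with $M'=\sum_{i\in J}\lvert c_i\rvert\leq M$, whose $z$-multiset has support $\{v_i^{(s)}:i\in J,\,c_i>0\}\cup\{v_i^{(s')}:i\in J,\,c_i<0\}$ and whose $w$-multiset has the dual support. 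I would then observe that these two supports are disjoint: the coordinates within a single row are pairwise distinct by hypothesis, and plain indiscernibility of $(\bar v^{(t)})_t$ forbids any equality $v_i^{(s)}=v_j^{(s')}$ with $i\neq j$ in $J$ (such an equality holds for every pair of time indices $s<s'$ or for none, and in the former case shifting forces the two distinct coordinates $v_j^{(s')}$ and $v_j^{(s'')}$ to coincide). In particular the two multisets differ, and $v_{i_0}^{(s)}$ lies in exactly one of them, so $v_{i_0}^{(t)}=v_{i_0}^{(s)}\in S_{M'}\seq S_M$. As $t$ was arbitrary, all of the pairwise distinct elements $v_{i_0}^{(t)}$ ($t<\omega$) lie in the finite set $S_M$ --- a contradiction. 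Hence every $v_i\in\acl_T(a)$.

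I expect the first step --- the finiteness of the $S_M$ --- to be the main obstacle, since Lemma~\ref{lem:vaporous} is phrased using the standard sequence and the usual order on $\Z$, neither of which is available inside $\cU$; the maneuver of encoding it as the $\emptyset$-definable sets $S_M$, and reducing an arbitrary collision of multisets to the ``$\max$ differs'' case by cancelling the common part, is what makes it transfer. The second point requiring care, though routine, is checking that $v_{i_0}^{(s)}$ actually survives into the symmetric difference of the two multisets; this is where the pairwise-distinctness of $v_1,\dots,v_n$ and the indiscernibility of the extracted sequence are both used.
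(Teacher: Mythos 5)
Your proof is correct, and it takes a genuinely different route from the paper's. The paper argues directly that the formula $ma=c_1x_1+\cdots+c_nx_n$, with pairwise distinct $x_i\in Q$, has only finitely many solutions in $\cU$: this follows from Lemma~\ref{lem:vaporous} when $a\in\Z$, and to transfer to $\cU$ one needs the number of solutions to be bounded uniformly in $a$. The paper leaves that uniformity to the reader (``a direct pigeonhole argument'') or alternatively invokes nfcp for $T$ (via Casanovas--Ziegler) to obtain elimination of $\exists^\infty$. You sidestep the uniformity issue entirely by repackaging Lemma~\ref{lem:vaporous} into the parameter-free sets $S_M$: being $\emptyset$-definable, their finiteness transfers from $\Z$ to $\cU$ for free, and algebraicity then falls out of the subtraction/indiscernible-sequence contradiction. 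Your argument is longer but self-contained, and subtracting two realizations of the same type over $a$ to cancel the parameter is essentially what the paper's pigeonhole hint is gesturing at, made explicit. Two small points worth tightening in the write-up: fix $M=\sum_i|c_i|$ at the outset (you only invoke it when writing $M'\le M$), and in the disjointness-of-supports step you should also cover the case $i=j$ in $J$, which is immediate since $i\in J$ together with indiscernibility forces $v_i^{(s)}\neq v_i^{(s')}$ for all $s\neq s'$.
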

\begin{proof}
We claim that the equation $ma=c_1x_1+\ldots+c_nx_n$ has only finitely many solutions in $Q(\cU)^n$ where the $x_i$'s are pairwise distinct. Note that if $a\in\Z$ then this follows from Lemma \ref{lem:vaporous} (and elementarity). To extend the result to $a\in\cU$, it suffices to show that in the context of Lemma \ref{lem:vaporous}, one can bound the number of solutions independently of the integer $r$. Given the statement of Lemma \ref{lem:vaporous}, this improvement follows using a direct pigeonhole argument (which we leave to the reader). Alternatively, since $Q$ is small in $(\Z,+,0,1)$, and both $\Th(\Z,+,0,1)$ and $\Th(Q^{\Z}_{\indd})$ are nfcp, one obtains nfcp for $T$ by \cite[Proposition 5.7]{CaZi}. Thus $T$ eliminates $\exists^\infty$ by \cite[Theorem II.4.4]{classification}, which also yields the desired result.
\end{proof}

Let $T_0=\Th(\Z,+,0,1)$ in the language $\cL_0=\{+,0,1\}$, and view (the $\cL_0$-reduct of) $\cU$ as a monster model of $T_0$.  By quantifier elimination, if $A\subseteq\cU$ then $\acl_{T_0}(A)$ is the (relative) divisible hull of the subgroup generated by $A$. 
The previous corollary motivates the following definition.

\begin{definition}
Given $A \subseteq \mathcal{U}$, the \textbf{$Q$-projection of $A$}, denoted $\fpr(A)$, is the set of all $v \in Q(\mathcal{U})$ for which there exist pairwise distinct $v_1,\ldots,v_n\in Q(\cU)$ and $c_1,\ldots,c_n\in\Z\backslash\{0\}$ such that $v=v_i$ for some $i$ and $\sum c_iv_i\in \acl_{T_0}(A)$.
\end{definition}

\begin{remark}\label{rem:QPr}
Fix $A\seq\cU$. 
\begin{enumerate}[$(a)$]
\item Since $\Z\seq\acl_{T_0}(A)$, it follows by definition that $Q(\Z)\seq\fpr(A)$.
\item Given $v\in Q(\cU)$, we have $v\in \fpr(A)$ if and only if $v\in \acl_{T_0}(A\cup (Q(\cU)\backslash\{v\}))$. 
\end{enumerate}
\end{remark}

Recall that for any $A\seq\cU$, $\ol{A}^r$ denotes $\acl(A)\cap Q$.

\begin{proposition}\label{prop: bar is proj}
If $A\seq\cU$ then $\overline{A}^r=\fpr(A)$. 
\end{proposition}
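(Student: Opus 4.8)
The plan is to prove the two inclusions $\fpr(A)\seq\ol{A}^r$ and $\ol{A}^r\seq\fpr(A)$ separately, using quantifier elimination for $T_0$ together with the structure of $Q^{\Z}_\indd$ as a pure set (Lemma \ref{lem:Qfacts}$(a)$) and, crucially, Corollary \ref{pro:proj is acl}.

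For the inclusion $\fpr(A)\seq\ol{A}^r$, fix $v\in\fpr(A)$, witnessed by pairwise distinct $v_1,\dots,v_n\in Q(\cU)$ and nonzero integers $c_1,\dots,c_n$ with $v=v_i$ for some $i$ and $\sum_j c_jv_j=b\in\acl_{T_0}(A)$. Since $\acl_{T_0}(A)$ is the relative divisible hull of the group generated by $A$, there is some $m\in\Z^+$ with $mb=c_1'a_1+\dots+c_k'a_k$ for elements $a_\ell\in A$ and integers $c_\ell'$; thus $a:=mb\in\dcl_{T_0}(A)\seq\acl_T(A)$, and $a=mc_1v_1+\dots+mc_nv_n$ is an integer combination of the $v_j$ with nonzero coefficients. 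Applying Corollary \ref{pro:proj is acl} (to $a$ and the equation $a=\sum_j (mc_j)v_j$) gives $v_1,\dots,v_n\in\acl_T(a)\seq\acl_T(A)$, so $v\in\acl_T(A)\cap Q(\cU)=\ol{A}^r$. (If $b=0$ one simply applies Corollary \ref{pro:proj is acl} with $a=0\in\dcl_T(\emptyset)$.)

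For the reverse inclusion $\ol{A}^r\seq\fpr(A)$, fix $v\in Q(\cU)$ with $v\in\acl_T(A)$; we must show $v\in\fpr(A)$. Using Remark \ref{rem:QPr}$(b)$, it suffices to show $v\in\acl_{T_0}(A\cup(Q(\cU)\backslash\{v\}))$, i.e.\ that $v$ is algebraic over $A$ together with \emph{all the other} points of $Q(\cU)$ in the pure group language $\cL_0$. The natural approach is via Proposition \ref{prop:char types D}: pick a saturated $N\succeq\cU$ (or work inside $\cU$, enlarging if necessary) and, using Lemma \ref{lem:finding c-general}, a tuple $d\in Q$ (here $Q$ is a pure set, so no imaginaries are needed and $d$ can be taken with $|d|\le|A|+|T|$, in fact finite by stability) with $\tp^{T}(a/Ad)\vdash\tp^{T}(a/AQ)$ where $a$ enumerates $A$. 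Then by Proposition \ref{prop:char types D}$(c)$ (with base $\emptyset$, or a small model), the $T$-type of $a$ over $AQ$ is controlled by $\tp^{T_0}(ad/A)$ together with the pure-set type of $d$ over $Q$. Concretely: suppose $v\notin\fpr(A)$; then $v\notin\acl_{T_0}(A\cup(Q\backslash\{v\}))$, so in particular the $\cL_0$-type of $v$ over $A\cup(Q(\cU)\backslash\{v\})$ has infinitely many realizations, and these can be chosen inside $Q(\cU)$ by a back-and-forth/automorphism argument: any $\cL_0$-automorphism of $\cU$ moving $v$ off $v$ while fixing $A\cup(Q(\cU)\backslash\{v\})$ extends, using stable embeddedness of $Q$ and Proposition \ref{prop: D' stab emb}, to an $\cL$-automorphism fixing $A$, contradicting $v\in\acl_T(A)$. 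I would spell this out as: the orbit of $v$ under $\mathrm{Aut}(\cU/A)$ is infinite, since the orbit of $v$ under $\mathrm{Aut}(Q^{\cU}_\indd/\,(\fpr(A)\cap Q(\cU)))$ is infinite (the induced structure is a pure set and $v$ lies outside the algebraic closure $\fpr(A)$ of the relevant parameters), and every such automorphism of $Q^\cU_\indd$ extends to an $\cL$-automorphism of $\cU$ fixing $A$ by stable embeddedness.

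The main obstacle is the reverse inclusion, and specifically making rigorous the step that ``$\acl_T(A)\cap Q$ cannot be bigger than $\fpr(A)$.'' The delicate point is that $\acl_T(A)$ may contain elements of $Q$ that arise not from a single $\cL_0$-equation over $A$ but through the interaction of the group structure with the predicate $Q$; the content of Corollary \ref{pro:proj is acl} and the small/bounded status of $Q$ (Lemma \ref{lem:Qfacts}$(b)$, giving nfcp for $T$ and hence elimination of $\exists^\infty$) is precisely what rules this out, by forcing any algebraicity over $A$ to already be visible in the reduct $T_0$ modulo adjoining other points of $Q$. I expect the cleanest writeup handles both inclusions by a direct argument with $\acl_{T_0}$ as a divisible hull plus Corollary \ref{pro:proj is acl}, rather than invoking the full type-characterization machinery; but the type-characterization route via Proposition \ref{prop:char types D} is a reliable fallback if the direct argument turns out to need more care about imaginaries or about the base set.
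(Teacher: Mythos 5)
Your easy inclusion $\fpr(A)\seq\ol{A}^r$ is fine (and in fact slightly over-engineered: one can just take $a=b:=\sum c_j v_j\in\acl_{T_0}(A)$ and apply Corollary \ref{pro:proj is acl} with $m=1$; no need to clear denominators first). The paper dispatches it in one line the same way.

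The hard inclusion has a genuine gap, and it is exactly at the step you flagged as delicate. You propose to take an automorphism $\sigma$ of $Q_\indd(\cU)$ fixing $\fpr(A)$ pointwise and moving $v$, and to conclude that ``every such automorphism of $Q_\indd$ extends to an $\cL$-automorphism of $\cU$ fixing $A$ by stable embeddedness.'' Stable embeddedness gives extension of $\sigma$ to an $\cL$-automorphism of $\cU$, but it says nothing about fixing the set $A$ outside $Q$. To extend $\sigma$ to an automorphism fixing $A$ pointwise you would need $\sigma$ to fix $\tp^T(A/Q(\cU))$ setwise, i.e.\ to fix the (imaginary) tuple in $Q^\eq$ that controls this type. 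That controlling tuple is, up to interalgebraicity, precisely $\acl_T(A)\cap Q=\ol{A}^r$ --- the thing you are trying to compute --- so assuming that fixing $\fpr(A)$ is enough is circular. Concretely: if $v\in\acl_T(A)\cap Q\setminus\fpr(A)$ were nonempty (the hypothetical you are trying to refute), any automorphism fixing $A$ would have to permute the finitely many $A$-conjugates of $v$ inside $Q$, so a generic $\sigma$ fixing only $\fpr(A)$ would not extend fixing $A$.

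What the paper actually does is quite different, and it shows why your more abstract route cannot close. Rather than taking an arbitrary automorphism of $Q_\indd$, it exhibits a very specific partial map: it fixes $A$ and $Q(\cU)\setminus\{u,v\}$ pointwise and swaps a single pair $u,v\in V=Q(\cU)\setminus\fpr(A)$. To check that this transposition is $\cL_0$-elementary it uses (i) quantifier elimination for the concrete theory $T_0=\Th(\Z,+,0,1)$, (ii) $\acl_{T_0}$-independence of $\{u,v\}$ over $A\cup(Q\setminus\{u,v\})$, which is exactly Remark \ref{rem:QPr}$(b)$ applied to $u,v\notin\fpr(A)$, and (iii) the vaporous residue condition (Definition \ref{def:vaporous}$(ii)$) to guarantee $u\equiv_m v$ for all $m$. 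Your proposal never invokes the residue condition, yet it is essential: without it the transposition fails to be $\cL_0$-elementary and the whole argument collapses. Finally, the upgrade from ``$\cL_0$-elementary'' to ``$\cL$-elementary'' uses boundedness of $Q$ in $(\Z,+,0,1)$ via \cite[Lemma 3.2]{CaZi}; this is the ingredient that makes the automorphism fix $A$ in the full language, and it is absent from your argument (you mention boundedness and $\exists^\infty$-elimination only as background context, not as the engine of the proof). In short: the intuition that $v\notin\fpr(A)$ should be movable is correct, but the mechanism you invoke (stable embeddedness of $Q$ in $T$) is the wrong one; the mechanism that actually works is the concrete transposition, QE and residues in $T_0$, and the Casanovas--Ziegler boundedness transfer.
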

\begin{proof}
Fix $A\seq\cU$. Note that $\fpr(A)$ is contained in $Q(\cU)$ by definition, and contained in $\acl_T(A)$ by Corollary \ref{pro:proj is acl}. So it suffices to show $\overline{A}^r\seq\fpr(A)$. Without loss of generality, we may assume  $|A|$ is small enough to ensure $|Q(\cU)|>|\fpr(A)|$ (recall $\fpr(A)\seq\acl_T(A)$). Set $V=Q(\cU)\backslash\fpr(A)$. We want to show  $V\cap\acl_T(A)=\emptyset$.
Since $V$ is infinite, it suffices to fix $u,v\in V$ and show that $u\equiv^T_A v$. 

Set $B=A\cup (Q(\cU)\backslash\{u,v\})$. We claim that $uv\equiv^{T_0}_B vu$. By quantifier elimination, and the fact that $\{u,v\}$ is $\acl_{T_0}$-independent over $B$ (by Remark \ref{rem:QPr}$(b)$), we only need to check that $u$ and $v$ have the same remainder modulo $n$ for all integers $n\geq 1$. Since $u,v\in Q(\cU)\backslash\Z$ (by Remark \ref{rem:QPr}$(a)$), this follows from the assumption that $Q$ is vaporous (in particular, condition $(ii)$ of Definition \ref{def:vaporous}).

We now have a partial $\cL_0$-elementary map $f$, which fixes $B$ pointwise and exchanges $u$ and $v$. Since $Q$ is bounded in $(\Z,+,0,1)$, it follows that $f$ is $\cL$-elementary (see  \cite[Lemma 3.2]{CaZi}). In particular, this shows $u\equiv^T_A v$.
\end{proof}

\begin{theorem}\label{thm:HinZ}
$Q$ is algebraically embedded in $T=\Th(\Z,+,0,1,Q)$.
\end{theorem}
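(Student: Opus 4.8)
The plan is to peel off the general reductions until only a concrete statement about the operator $\fpr$ remains, and then to prove that statement using the structure theory of $T=\Th(\Z,+,0,1,Q)$ developed above.

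\emph{Reductions.} By Lemma \ref{lem:Qfacts}$(a)$, $\Th(Q^{\Z}_{\indd})$ is the theory of an infinite pure set, which has elimination of imaginaries, so Remark \ref{rem:about(H)}$(b)$ applies: it suffices to show that for all $A,B\seq\cU$ and all $M\prec\cU$, if $A\indi T_M B$ then $\overline{ABM}^r\seq\acl_T(\overline{AM}^r,\overline{BM}^r)$. By Proposition \ref{prop: bar is proj} we have $\overline{X}^r=\fpr(X)$ for every $X$, and since $\fpr(AM)$ and $\fpr(BM)$ are subsets of $Q(\cU)$ each containing $Q(\acl_T(\emptyset))$ (Remark \ref{rem:QPr}$(a)$), while taking algebraic closure inside the pure set $Q_{\indd}$ adds nothing to a subset of $Q(\cU)$ beyond $Q(\acl_T(\emptyset))$, we get $\acl_T(\fpr(AM),\fpr(BM))\cap Q(\cU)=\fpr(AM)\cup\fpr(BM)$. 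Thus the statement becomes: $A\indi T_M B\Rightarrow\fpr(ABM)\seq\fpr(AM)\cup\fpr(BM)$. Replacing $A$ by $AM$ and $B$ by $BM$ (which changes neither side) we may assume $M\seq A\cap B$ and must prove $\fpr(AB)\seq\fpr(A)\cup\fpr(B)$.

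\emph{The crux and how to attack it.} What is left is the assertion that, over a model, nonforking in $T$ cannot produce a genuinely new element of $Q$: if $A\indi T_M B$ then every $v\in Q(\cU)$ algebraic over $AB$ is already a $\Z$-linear ``$Q$-coordinate'' of $A$ or of $B$. The route I would take: first, since $Q$ is bounded in $(\Z,+,0,1)$ over a model (Lemma \ref{lem:Qfacts}$(b)$), the Casanovas--Ziegler analysis of algebraic closure when naming a bounded predicate gives $\acl_T(AM)=\acl_{T_0}(A\cup\fpr(A))$ for $T_0=\Th(\Z,+,0,1)$, and likewise for $B$; restricting $A\indi T_M B$ to the reduct $T_0$ then yields $A\,\fpr(A)\indi{T_0}_M B\,\fpr(B)$, hence $\acl_{T_0}(A\,\fpr(A)\,M)\cap\acl_{T_0}(B\,\fpr(B)\,M)=\acl_{T_0}(M)$ by Remark \ref{rem:fork-basics}$(ii)$. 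Now suppose $v\in\fpr(AB)$ with $v\notin\fpr(A)\cup\fpr(B)$ and $v\notin Q(\acl_T(\emptyset))$; by Proposition \ref{prop: bar is proj} and Remark \ref{rem:QPr}$(b)$, expanding a witnessing relation gives $mv\in a+b+\langle F\rangle$ with $m\geq1$, $a\in\langle A\rangle$, $b\in\langle B\rangle$ and $F\seq Q(\cU)\setminus\{v\}$ finite, and after absorbing the members of $F$ lying in $\fpr(A)$ (resp. $\fpr(B)$) into $\langle A\,\fpr(A)\rangle$ (resp. $\langle B\,\fpr(B)\rangle$) one reduces to the case where the remaining $Q$-elements $F_0=F\setminus(\fpr(A)\cup\fpr(B))$ are $T_0$-generic over $A\,\fpr(A)\cup B\,\fpr(B)$ — here the scarcity of $\Z$-relations among elements of $Q(\cU)$ (Lemma \ref{lem:vaporous}, Corollary \ref{pro:proj is acl}) is used. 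One then contradicts $A\indi T_M B$, e.g. via a Morley-sequence argument on the $T$-side: take a Morley sequence $(B_i)_{i<\omega}$ of $\tp^T(B/M)$ and a realization $A^{*}$ of $\bigcup_i\tp^T(A/B_iM)$ with $A^{*}\indi T_M(B_i)_i$; transporting $v$ along these and using that nonforking forces $\acl_T(A^{*}B_iM)\cap\acl_T(A^{*}B_jM)=\acl_T(A^{*}M)$ for $i\neq j$ (Remark \ref{rem:fork-basics}$(ii)$), one derives that the transported $Q$-elements lie in $\acl_T(A^{*}M)$, contradicting their not being in $\fpr(A)$.

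\emph{Main obstacle.} The real difficulty is exactly this last step: nonforking in $T$ is strictly stronger than the conjunction of its obvious consequences $A\,\fpr(A)\indi{T_0}_M B\,\fpr(B)$ and $\fpr(AM)\cap\fpr(BM)=Q(M)$ (indeed, for a general stable theory with a definable set, the analogous ``$\fpr$ does not grow'' fails — cf. the discussion at the end of Section \ref{sec:H}), so one must genuinely extract the extra content from the $T$-structure. Making the Morley-sequence argument precise requires care in transporting the single element $v$, which — not being algebraic over $AM$ — cannot be adjoined to $A$ without destroying independence, and in the $\Z$-linear bookkeeping of which elements of $Q(\cU)$ appear in the witnessing combinations; alternatively one can invoke the Casanovas--Ziegler description of forking for a bounded predicate over a stable theory, specialized to the pure-set case, and read the non-growth of $\fpr$ off it. The remaining parts — the reductions of the first paragraph and the closure identity $\acl_T(AM)=\acl_{T_0}(A\,\fpr(A))$ — are routine given the cited results.
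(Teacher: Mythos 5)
Your reduction paragraph matches the paper's: both reach the target statement that $A\indi T_C B$ (over an $\acl_{T_0}$-closed base containing, in your version, a model) implies $\fpr(AB)\subseteq\fpr(A)\cup\fpr(B)$, via Remark~\ref{rem:about(H)}$(b)$, Proposition~\ref{prop: bar is proj}, and weak elimination of imaginaries in the pure set. That part is fine.

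The crux, however, contains a genuine gap, and you flag it yourself. Two concrete problems. First, the identity $\acl_T(AM)=\acl_{T_0}(A\cup\fpr(A))$ is not something Casanovas--Ziegler give you; their Lemma~3.2 (used in the paper's Proposition~\ref{prop: bar is proj}) is a statement about $\cL_0$-elementary maps fixing $BQ(\cU)$ being $\cL$-elementary, which yields the type characterization, not a closed-form description of $\acl_T$. You would have to prove this identity separately, and it is not obvious. Second, and more seriously, the Morley-sequence step is exactly where the content is, and it does not go through as sketched: you want ``the transported $Q$-elements lie in $\acl_T(A^*M)$,'' but if $v_i\in\fpr(A^*B_i)$ and the $v_i$ are pairwise distinct, the intersection identity $\acl_T(A^*B_iM)\cap\acl_T(A^*B_jM)=\acl_T(A^*M)$ tells you nothing about any single $v_i$. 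You need an argument that either forces the $v_i$ to coincide or produces an outright inconsistency, and that is precisely the hard step. Your statement ``making the Morley-sequence argument precise requires care'' is an accurate description of a gap, not a proof.

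The paper's actual argument uses a different and cleaner decomposition. Starting from a witness $c_0v_0+\ldots+c_nv_n=a+b$ with $a\in A$, $b\in B$, it splits on whether $b\in\acl_{T_0}(Q(\cU)C)$. If yes, one writes $mb=d_1w_1+\ldots+d_kw_k+c$ with $w_i\in Q(\cU)$, $c\in C$, and a direct $\Z$-linear rearrangement shows $v_0\in\fpr(A)$ or $v_0\in\fpr(B)$ depending on whether $v_0$ appears among the $w_i$. If no, smallness of $Q$ lets one build a sequence $(b_i)_{i<\omega}$ with $b_i\equiv^{T_0}_{Q(\cU)C}b$ and $b_i\notin\acl_{T_0}(Q(\cU)Cb_{<i})$; boundedness upgrades this to $b_i\equiv^T_C b$; and the explicit $\cL$-formula $\varphi(x,y)\coloneqq\exists v_0\in Q\ldots\exists v_n\in Q\,(c_0v_0+\ldots+c_nv_n=x+y)$ is $2$-inconsistent along $(b_i)$ (since a common solution $a^*$ would place $b_j$ in $\acl_{T_0}(Q(\cU)b_i)$), contradicting $A\indi T_C B$. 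Note this case split never invokes your hypothesis $v\notin\fpr(A)\cup\fpr(B)$ and avoids any computation of $\acl_T$ in terms of $\acl_{T_0}$; it is organized around $b$, not around the target element $v$. To complete your argument you would essentially have to rediscover this dichotomy: the case $b\in\acl_{T_0}(Q(\cU)C)$ is where the $\Z$-linear bookkeeping closes, and the complementary case is where a dividing formula can be exhibited.
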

\begin{proof}
We will show that for any $\acl_{T_0}$-closed sets $A,B,C\seq\cU$, with $C\seq A\cap B$, if $A\indi T_C B$ then $\fpr(AB)\seq \fpr(A)\cup\fpr(B)$. Note that this implies the statement $(\dagger)$ above by Proposition \ref{prop: bar is proj} and basic properties of forking independence (mainly Remark \ref{rem:fork-basics}$(i)$). So fix $\acl_{T_0}$-closed $A,B,C\seq\cU$, with $C\seq A\cap B$, and assume $A\ind^T_C B$. Fix some  $v_0\in\fpr(AB)$. Then there are $a\in A$, $b\in B$, pairwise distinct $v_1,\ldots,v_n\in  Q(\cU)$, which are distinct from $v_0$, and $c_0,c_1,\ldots,c_n\in\Z\backslash\{0\}$, such that \[c_0v_0+\ldots+c_nv_n=a+b.\] 

Since $T$ is superstable, there is a finite subset $D \subseteq C$ such that $a \indi T_D b$. Suppose first that $b\in \acl_{T_0}(Q(\cU) D)$. Then we have an identity of the form
\begin{equation*}
mb=d_1w_1+\ldots+d_kw_k+c\tag{$\dagger$}
\end{equation*}
for some pairwise distinct $w_1,\ldots,w_k\in Q(\cU)$, $c\in D$, $d_1,\ldots,d_k,m\in\Z\backslash\{0\}$. So
\begin{equation*}
mc_0v_0+\ldots+mc_nv_n=ma+d_1w_1+\ldots+d_kw_k+c\tag{$\dagger\dagger$}
\end{equation*}
If $v_0=w_i$ for some $1\leq i\leq k$, then $(\dagger)$ implies that $v_0\in\fpr(mb-c)\seq\fpr(B)$. So we may assume $v_0\neq w_i$ for all $1\leq i\leq k$. Then, after some rearranging and renaming,  $(\dagger\dagger)$ gives us an identity of the form
\[
mc_0v_0+e_1u_1+\ldots+e_\ell u_\ell=ma+c
\]
where $u_1,\ldots,u_\ell\in Q(\cU)$ are pairwise distinct, and distinct from $v_0$. So $v_0\in\fpr(ma+c)\seq\fpr(A)$. 

Finally, suppose $b\not\in\acl_{T_0}(Q(\cU)D)$. Then, since $Q$ is small in $(\Z,+,0,1)$ and D is finite, we can construct a sequence $(b_i)_{i<\omega}$ in $\cU$ such that for all $i<\omega$, $b_i\equiv^{T_0}_{Q(\cU)D} b$ and $b_i\not\in\acl_{T_0}(Q(\cU)D b_{<i})$.  Since $Q$ is bounded in $(\Z,+,0,1)$, it then follows from \cite[Lemma 3.2]{CaZi} that $b_i\equiv^T_D b$ for all $i<\omega$.

Now define the $\cL$-formula
\[
\varphi(x,y):=\exists v_0\in Q\ldots\exists v_n\in Q(c_0v_0+\ldots+c_nv_n=x+y).
\]
Note that $\varphi(a,b)$ holds.
We show that $\{\varphi(x,b_i):i<\omega\}$ is $2$-inconsistent, which contradicts $a\ind^T_D b$. So fix $i<j<\omega$ and suppose we have $a^*\models \varphi(x,b_i)\wedge\varphi(x,b_j)$. Then there are $v'_0,\ldots,v'_n,v''_1,\ldots,v''_n\in Q(\cU)$ such that $a^*+b_i=c_0v'_0+\ldots+c_nv'_n$ and $a^*+b_j=c_0v''_0+\ldots+c_nv''_n$. Then we have
\[
b_j=c_0v''_0+\ldots+c_nv''_n-a^*=c_0v''_0+\ldots+c_nv''_n-(c_0v'_0+\ldots+c_nv'_n)+b_i,
\]
Thus $b_j\in\acl_{T_0}(Q(\cU)b_i)$, which is a contradiction. 
\end{proof}

\begin{remark}
Theorem \ref{thm:HinZ} can be generalized to  any set $Q\seq\Z^+$ enumerated by a sequence satisfying just condition $(i)$ of Definition \ref{def:vaporous}. More generally, one only needs the conclusion of Lemma \ref{lem:vaporous}. In this case, $Q^{\Z}_{\indd}$ is  interdefinable with an expansion of $(Q,=)$ by various unary predicates (namely, those for $Q\cap (m\Z+r)$ for all integers $m,r$). Therefore $Q$ is still small (and thus  also bounded) in $(\Z,+,0,1)$ by, e.g., \cite[Corollary 3.19$(a)$]{CoLa}. We also note that $Q^{\Z}_{\indd}$ still has weak elimination of imaginaries since this is true for any theory involving only unary relations\footnote{It is easy to check that in such a theory, the \ref{STAT} axiom holds for algebraic independence $\indi a$ over any algebraically closed set, and thus $\indi a$ satisfies the  criterion for weak elimination of imaginaries  in \cite[Proposition 1.17]{MRK21}.}. The only other use of condition $(ii)$ of Definition \ref{def:vaporous}  was in the proof of Proposition \ref{prop: bar is proj} when analyzing the set $V$. But one could instead partition $V$ using cosets of the divisible subgroup of $\cU$, and run the same analysis on each piece. By compactness, each piece in this partition of $V$ is either empty or of unbounded cardinality, and so the argument goes through.
\end{remark}

We can now extend Theorem \ref{thm:expZ} to also include simplicity and NSOP$_1$.  

\begin{corollary}\label{cor:expZsimp}
Let $\cN$ be a simple (resp., $\NSOP 1$) countable structure. Then there is a simple (resp., $\NSOP 1$) expansion  of $(\Z,+)$ naming $\cN$. 
\end{corollary}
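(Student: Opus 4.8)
The plan is to combine Theorem \ref{thm:HinZ} with Remark \ref{rem:Urank1H} (i.e., the NSOP$_1$/simplicity analogue of the construction in Theorem \ref{thm:Urank1}), using $(\Z,+,0,1)$ in place of a general weakly minimal structure. The point is that $(\Z,+,0,1)$ is weakly minimal, and by Theorem \ref{thm:HinZ} it contains a set that is algebraically embedded after being named. Note one bookkeeping subtlety: $(\Z,+)$ and $(\Z,+,0,1)$ are interdefinable (the constant $1$ generates the group and $0$ is definable as $x+x=x$), so any expansion of one is an expansion of the other, and the final statement is phrased over $(\Z,+)$.

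First I would fix a vaporous set $Q\seq\Z^+$ — say $Q=\{n!:n\in\N\}$ — and set $T=\Th(\Z,+,0,1,Q)$ in the language $\cL=\{+,0,1,Q\}$, which by Remark \ref{rem:vapstable} is superstable (in particular stable), and in which $Q$ is $\emptyset$-definable, hence stably embedded. By Theorem \ref{thm:HinZ}, $Q$ is algebraically embedded in $T$. By Lemma \ref{lem:Qfacts}$(a)$ together with Remark \ref{rem:bounded} (and Lemma \ref{lem:Qfacts}$(b)$, which gives boundedness of $Q$), the $\cL$-induced structure $Q^{T}_{\indd}$ is interdefinable with the pure set $(Q,=)$. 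Now given the countable simple (resp.\ NSOP$_1$) structure $\cN$, since $Q$ is countably infinite we may take $\cQ$ to be an isomorphic copy of $\cN$ (after harmlessly adding constants to $\cN$ if necessary) with universe $Q$, so that $\cQ$ expands $Q^{T}_{\indd}$ (a pure set can be expanded to any structure on the same underlying set). Let $\cZ\models\Tbig$ be the corresponding expansion of $(\Z,+,0,1,Q)$ by $\cQ$; concretely $\cZ=(\Z,+,Q,\cN)$, which is an expansion of $(\Z,+)$ naming $\cN$.

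It remains to verify tameness. Since $T$ is stable and $Q$ is algebraically embedded in $T$, Corollary \ref{cor:KPsimple} applies: $\Tbig$ is simple if and only if $\Thx=\Th(\cQ)\cong\Th(\cN)$ is simple. Likewise Theorem \ref{thm:KPnsop1} gives that $\Tbig$ is NSOP$_1$ if and only if $\Th(\cN)$ is NSOP$_1$. As $\cN$ is simple (resp.\ NSOP$_1$) by hypothesis, $\Th(\cZ)=\Tbig$ is simple (resp.\ NSOP$_1$), which is the desired conclusion.

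There is no real obstacle here beyond assembling the pieces correctly; the only things to be careful about are the reduction of $(\Z,+)$ to $(\Z,+,0,1)$ and checking that the hypotheses of Corollary \ref{cor:KPsimple} and Theorem \ref{thm:KPnsop1} are genuinely met, namely that $T$ is stable (from Remark \ref{rem:vapstable}) and that $Q$ is algebraically embedded (the content of Theorem \ref{thm:HinZ}, which is where all the work went). One could also remark, as in Theorem \ref{thm:expZ}, that the $U$-rank (or SU-rank) of $\cZ$ is at least that of $\cN$ since $Q^{\cZ}_{\indd}$ is interdefinable with $\cN$ by Proposition \ref{prop: D' stab emb}; I would include this only if the surrounding text of Corollary \ref{cor:expZsimp} calls for it.
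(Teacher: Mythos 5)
Your proposal is correct and follows exactly the same route as the paper's proof: fix a vaporous set $Q\seq\Z^+$, form $T=\Th(\Z,+,0,1,Q)$, invoke Theorem \ref{thm:HinZ} for algebraic embeddedness, and then apply Corollary \ref{cor:KPsimple} (resp.\ Theorem \ref{thm:KPnsop1}) with $\cQ$ a copy of $\cN$ on $Q$. The extra bookkeeping you include (interdefinability of $(\Z,+)$ with $(\Z,+,0,1)$, boundedness of $Q$, triviality of $Q^T_{\indd}$) is all accurate and merely makes explicit what the paper leaves implicit.
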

\begin{proof}
Let $Q\seq\Z^+$ be vaporous, and set $T=\Th(\Z,+,0,1,Q)$. Then $T$ is stable, and $Q$ is algebraically embedded in $T$ by Theorem \ref{thm:HinZ}. So we may apply Corollary \ref{cor:KPsimple} (resp., Theorem \ref{thm:KPnsop1}) where $\cQ$ is a copy of $\cN$ with universe $Q$. 
\end{proof}

\subsection{Expansions by unary sets}\label{sec:unaryZ}

The goal of this subsection is to prove the results summarized in  Sections \ref{intro:unary} and \ref{intro:simple}. We first use vaporous sets to show that any countable graph can be coded into an expansion of $(\Z,+)$ by some unary predicate, while preserving various levels of model-theoretic complexity.

\begin{theorem}\label{thm:unaryZ}
Suppose $Q\seq\Z^+$ is vaporous. Let $E$ be a graph relation on $Q$, and set $A=Q\cup \{a+b:(a,b)\in E\}$.
\begin{enumerate}[$(a)$]
\item $(\Z,+,A)$ is interdefinable with $(\Z,+,Q,E)$. 
\item $(\Z,+,A)$ is stable (resp., superstable, simple, $\NIP$, $\NTP 2$, $\NSOP 1$) if and only if $(Q,E)$ is stable (resp., superstable, simple, $\NIP$, $\NTP 2$, $\NSOP 1$). Moreover, the $U$-rank of $(\Z,+,A)$ is at least that of $(Q,E)$.
\end{enumerate}
\end{theorem}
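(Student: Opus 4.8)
The plan is to prove part $(a)$ by hand, using only the additive growth of a vaporous sequence, and then to deduce part $(b)$ by feeding the resulting structure into the preservation theorems of Sections~\ref{sec:SNN} and \ref{sec:NSOP1}.

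For part $(a)$, one direction is immediate: $A$ is defined in $(\Z,+,Q,E)$ by $Q(x)\vee\exists y\,\exists z\,(Q(y)\wedge Q(z)\wedge E(y,z)\wedge x=y+z)$. For the converse I would write $Q=\{a_0<a_1<\dots\}$ for the vaporous enumeration and $S=\{a+b:(a,b)\in E\}$, so that $A=Q\cup S$, and then show that $Q$ and $E$ are each recovered from $(\Z,+,A)$ up to a finite set. The crucial points are:
\begin{enumerate}[$(1)$]
\item $S\seq A+A$ trivially, since each element of $S$ is a sum of two elements of $Q\seq A$;
\item $Q\cap(A+A)$ is finite: if $a_n=u+v$ with $u,v\in A$, then splitting into the four cases according to whether $u,v$ lie in $Q$ or in $S$ expresses $a_n$ as a sum of between two and four elements of $Q$, each summand strictly smaller than $a_n$, and by Lemma~\ref{lem:vaporous} each such equation has only finitely many solutions in $Q$, so only finitely many $n$ occur.
\end{enumerate}
Hence the $(\Z,+,A)$-definable set $A\setminus(A+A)$ is contained in $Q$ and omits only finitely many points of $Q$, so $Q$ is definable in $(\Z,+,A)$; and then $E$ agrees, outside a finite set, with the definable relation $\{(a,b):a,b\in Q,\ a\neq b,\ a+b\in A\}$, the point again being that, by Lemma~\ref{lem:vaporous}, only finitely many pairs $(a_i,a_j)$ can have $a_i+a_j\in Q$, or $a_i+a_j=a_p+a_q$ for some edge $\{p,q\}\neq\{i,j\}$ (one compares $\max\{a_i,a_j\}$ with $\max\{a_p,a_q\}$ to dispatch the degenerate coincidences). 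So $Q$ and $E$ are definable in $(\Z,+,A)$ with parameters, which gives $(a)$; moreover, once the constants $0,1$ are adjoined every integer, hence every finite subset of $\Z$, becomes $\emptyset$-definable, so $(\Z,+,0,1,A)$ and $(\Z,+,0,1,Q,E)$ are in fact interdefinable over $\emptyset$. I expect the verification that all of these additive-coincidence sets are genuinely finite to be the only real work in part $(a)$.

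For part $(b)$, since passing to $(\Z,+,0,1,A)$ changes none of the listed dividing lines nor the $U$-rank, it suffices to treat $(\Z,+,0,1,A)$, which by part $(a)$ is interdefinable over $\emptyset$ with $(\Z,+,0,1,Q,E)$. Set $T=\Th(\Z,+,0,1,Q)$ and let $\cQ:=(Q,E)$, so that $(\Z,+,0,1,Q,E)$ is precisely the enrichment $\Tbig$. By Lemma~\ref{lem:Qfacts} and Remark~\ref{rem:vapstable}, $T$ is superstable, $Q$ is bounded in $(\Z,+,0,1)$ and stably embedded in $T$, and (by Remark~\ref{rem:bounded}) $Q^T_{\indd}$ is interdefinable with a pure set, so $\cQ$ expands $Q^T_{\indd}$ up to interdefinability; and $Q$ is algebraically embedded in $T$ by Theorem~\ref{thm:HinZ}. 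Now Theorem~\ref{T:pres. of stab} handles stability and superstability, Fact~\ref{fact:JS} handles $\NIP$, Proposition~\ref{prop:CH} handles $\NTP 2$, and---using that $T$ is stable and $Q$ is algebraically embedded---Corollary~\ref{cor:KPsimple} and Theorem~\ref{thm:KPnsop1} handle simplicity and $\NSOP 1$. Each of these yields the implication from tameness of $(Q,E)$ to tameness of $\Tbig$; the reverse implications are immediate, since $(\Z,+)$ is a reduct of $\Tbig$ and $(Q,E)$ is interdefinable with the induced structure $Q^{\Tbig}_{\indd}$ by Proposition~\ref{prop: D' stab emb}, and all of these properties pass to reducts and to $\emptyset$-definable sets with their induced structure. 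Finally, the $U$-rank bound follows exactly as in the proof of Theorem~\ref{thm:expZ}: by Proposition~\ref{prop: D' stab emb}, $Q^{\Tbig}_{\indd}$ is interdefinable with $(Q,E)$, and forking in the induced structure $Q^{\Tbig}_{\indd}$ implies forking in $\Tbig$, so the $U$-rank of $(\Z,+,A)$, which equals that of $\Tbig$, is at least that of $(Q,E)$.
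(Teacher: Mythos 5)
Your proposal is correct and follows essentially the same route as the paper: part $(a)$ is recovered from Lemma~\ref{lem:vaporous} by showing that the additive coincidences are finite (the paper uses the formula ``$x\in A$ and $x$ is the sum of two distinct elements of $A$'' where you use $A\setminus(A+A)$, but the finiteness argument and the appeal to the lemma are the same), and part $(b)$ is obtained by recognising $\Th(\Z,+,0,1,Q,E)$ as $T[\cQ]$ with $T=\Th(\Z,+,0,1,Q)$, invoking Theorem~\ref{thm:HinZ} for algebraic embeddedness, and then applying Theorem~\ref{T:pres. of stab}, Fact~\ref{fact:JS}, Proposition~\ref{prop:CH}, Corollary~\ref{cor:KPsimple}, Theorem~\ref{thm:KPnsop1}, and Proposition~\ref{prop: D' stab emb} exactly as the paper does. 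The only presentational nuance you gloss over is that, since every element of $Q$ is $\emptyset$-definable in $(\Z,+,0,1)$, the structure $\cQ$ expanding $Q^T_{\indd}$ must carry constants for all of $Q$; this is harmless for the dividing lines and for the $U$-rank bound, but the paper states it explicitly.
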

\begin{proof}
Part $(a)$. Obviously $A$ is definable in $(\Z,+,Q,E)$. For the other direction, we first show that $Q$ is definable in $(\Z,+,A)$. Consider a formula $\varphi(x)$, which says that $x\in A$ and $x$ is the sum of two distinct elements in $A$. Clearly $A\backslash Q\seq\varphi(\Z)$. Moreover, Lemma \ref{lem:vaporous} (with $m=1$, $n\in\{2,3,4\}$, and $r=0$) implies that only finitely many elements of $Q$  satisfy $\varphi(x)$. So $A(x)\wedge\neg\varphi(x)$ defines some cofinite subset of $Q$, which suffices to prove that $Q$ is definable in $(\Z,+,A)$.

Next we show $E$ is definable in $(\Z,+,A)$. Consider a formula $\psi(x,y)$, which says that $x,y\in Q$  and $x+y\in A$. Clearly $E\seq\psi(\Z)$. Moreover, Lemma \ref{lem:vaporous} (with $m=2$, $n\in \{1,2\}$, and $r=0$) implies that $\psi(\Z)\backslash E$ is finite.

Part $(b)$. By Corollary \ref{cor:vaporous} and Lemma \ref{lem:indEI}, $\Th(\Z,+,0,1,Q,E)$ is precisely $T[\cQ]$ where $T=\Th(\Z,+,0,1,Q)$ and $\cQ$ is the expansion of $(Q,E)$ by constants for all elements of $Q$. Recall also that $Q$ is algebraically embedded in $T$ by Theorem \ref{thm:HinZ}. Altogether, the first claim follows from part $(a)$ and the  preservation theorems above. For the second claim, recall that the $(\Z,+,0,1,Q,E)$-induced structure on $Q$ is interdefinable with $(Q,E)$ by Proposition \ref{prop: D' stab emb}. It follows that the $U$-rank of $(\Z,+,Q,E)$ is at least that of $(Q,E)$, and so the same is true of $(\Z,+,A)$. 
\end{proof}

We can now give another positive answer to Question \ref{ques:intro} (restated above in Question \ref{ques:ZQs}) using expansions of $(\Z,+)$ by unary predicates.

\begin{definition}
Let $\mathsf{UG}$ be the set of all ordinals $\alpha$ such that there is a superstable (pure) graph  of $U$-rank at least $\alpha$. 
\end{definition}

\begin{corollary}\label{cor:unary-exp}
There is a set $A_\infty\seq\N$ such that $(\Z,+,A_\infty)$ is strictly stable. Moreover, for any $\alpha\in\mathsf{UG}$ there is a set $A_\alpha\seq\N$ such that  $(\Z,+,A_\alpha)$ is superstable of $U$-rank at least $\alpha$.
\end{corollary}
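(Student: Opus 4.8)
The plan is to deduce Corollary \ref{cor:unary-exp} directly from Theorem \ref{thm:unaryZ} by choosing the graph relation $E$ on a vaporous set $Q$ appropriately. Fix once and for all a vaporous subset $Q\seq\Z^+$ (e.g.\ the factorials), together with a fixed bijection $Q\to\N$, so that any graph on $\N$ can be transported to a graph relation on $Q$.

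For the first assertion, I would take a countable strictly stable (pure) graph $(\N,E_\infty)$ — such a graph exists, e.g.\ by coding a countable model of the theory of an infinite-dimensional vector space over $\F_2$ with a symmetric bilinear form, or more simply by invoking that strictly stable countable graphs exist (any countable graph bi-interpretable with, say, the free pseudospace or a suitable Shelah-style example). Transport $E_\infty$ to a graph relation on $Q$ and set $A_\infty=Q\cup\{a+b:(a,b)\in E_\infty\}$. By Theorem \ref{thm:unaryZ}$(b)$, $(\Z,+,A_\infty)$ is stable (since $(Q,E_\infty)$ is), and its $U$-rank is at least that of $(Q,E_\infty)=\omega$; in particular $(\Z,+,A_\infty)$ is not $\omega$-stable, hence not superstable of finite rank, and being stable but of $U$-rank $\geq\omega$ (in fact exactly $\omega$ by the monomial-rank fact, though we do not need this), it is strictly stable. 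One should double-check that a strictly stable \emph{graph} — as opposed to an arbitrary structure — is available; if not, one replaces ``graph'' in the first clause of Theorem \ref{thm:unaryZ} by ``arbitrary countable structure coded by a graph'' using the standard Hodges-type coding of a countable structure into a graph (as invoked in Example \ref{E: Stably Embed is necessary}), which preserves the relevant dividing lines.

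For the second assertion, fix $\alpha\in\mathsf{UG}$, so there is a superstable countable graph $(\N,E_\alpha)$ of $U$-rank at least $\alpha$ (countability is no loss: $\mathsf{UG}$ is defined via graphs and a superstable graph of $U$-rank $\geq\alpha$ has a countable elementary substructure of the same $U$-rank). Transport $E_\alpha$ to $Q$, set $A_\alpha=Q\cup\{a+b:(a,b)\in E_\alpha\}$, and apply Theorem \ref{thm:unaryZ}$(b)$: $(\Z,+,A_\alpha)$ is superstable because $(Q,E_\alpha)$ is, and its $U$-rank is at least that of $(Q,E_\alpha)\geq\alpha$.

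I do not expect a genuine obstacle here — the corollary is essentially an immediate packaging of Theorem \ref{thm:unaryZ}. The only point requiring a little care is the existence of a strictly stable countable \emph{graph} (or the reduction, via graph-coding, to an arbitrary strictly stable countable structure such as a nonabelian free group, as already noted after Theorem \ref{thm:expZ}); once that existence is secured, the rest is a one-line application of the transfer theorem together with the observation that $U$-rank $\geq\omega$ plus stability forces strict stability.
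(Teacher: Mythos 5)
Your overall route is the same as the paper's: apply Theorem~\ref{thm:unaryZ} to a suitable countable graph sitting on a vaporous set; the paper's proof is exactly this and simply cites \cite{HMS} for the existence of a strictly stable graph (and invokes the definition of $\mathsf{UG}$ for the second clause). So the decomposition is right. However, your justification for strict stability of $(\Z,+,A_\infty)$ contains a genuine error in reasoning. You write that it is ``stable but of $U$-rank $\geq\omega$ \ldots\ [hence] strictly stable,'' and you repeat in the summary that ``$U$-rank $\geq\omega$ plus stability forces strict stability.'' That implication is false: for instance, $(\Z,+,A)$ for sparse $A$ is superstable of $U$-rank $\omega$, and $\mathsf{UG}$ is by definition populated with superstable graphs of arbitrarily large $U$-rank. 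Moreover a strictly stable $(Q,E_\infty)$ has no ordinal $U$-rank, so ``$U$-rank of $(Q,E_\infty)=\omega$'' is not meaningful. The needed argument is simpler and is handed to you by the ``if and only if'' in Theorem~\ref{thm:unaryZ}$(b)$: $(\Z,+,A)$ is superstable iff $(Q,E)$ is superstable and stable iff $(Q,E)$ is stable, so $(Q,E)$ strictly stable immediately gives $(\Z,+,A)$ strictly stable — no $U$-rank computation should enter. Also, your candidate example (an $\F_2$-vector space with a symmetric bilinear form) is doubtful as a strictly stable example; the reliable reference is a known strictly stable graph such as the one in \cite{HMS}, which is what the paper cites. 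With the $U$-rank argument replaced by a direct appeal to the biconditional in Theorem~\ref{thm:unaryZ}$(b)$, your proof matches the paper's.
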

\begin{proof}
This follows immediately from Theorem \ref{thm:unaryZ}, together with the definition of $\mathsf{UG}$ and the existence of strictly stable graphs  (see  \cite{HMS} for an explicit description of such a graph).
\end{proof}

\begin{remark}
It is a well-known fact that every first-order structure in a finite language is bi-interpretable with a graph (see \cite[Theorem 5.5.1]{Hodges}). Consequently, if there is a superstable theory $T$ in a finite language with $U(T)\geq\alpha\omega$, then $\alpha\in \mathsf{UG}$ by sub-additivity of $U$-rank.  Thus we conjecture that $\mathsf{UG}$ is the set of all countable ordinals, since there ought to be theories in finite languages with arbitrarily high countable $U$-ranks. However, we have so far been unable to find a reference or a proof of this. In light of existing literature, it appears  the most we can say is that $\mathsf{UG}$ contains $\omega^m$ for all $m>0$ (e.g., witnessed by DCF$_{0,m}$ \cite{McGrail}). Beyond this, a possible lead is unpublished work of Bouscaren and Ziegler \cite{BousZi}, in which the authors describe a particular interpretation of an $\cL$-structure in a graph, with $\cL$ countable (and possibly infinite). Using this construction, they show that Vaught's Conjecture reduces to theories of graphs. In personal communication, Ziegler  suggested that their interpretation should preserve superstability. 
\end{remark}

Next, we give a concrete formulation of the result alluded to at the start of Section \ref{intro:MA}, 
namely, the existence of sets $B\seq A\seq\N$ such that $(\Z,+,A)$ is stable and $(\Z,+,B)$ is unstable. Indeed, by Remark \ref{rem:vapstable} and Theorem \ref{thm:unaryZ}$(a)$, we have the following  general observation.

\begin{proposition}\label{prop:RGsum}
Let $Q\seq\Z^+$ be a vaporous set, and define $A=Q\cup(Q+Q)$. Then $(\Z,+,A)$ is superstable, but for any countable graph $\Gamma$ there is some $B\seq A$ such that $\Gamma$ is definable in $(\Z,+,B)$.
\end{proposition}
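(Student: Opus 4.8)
The plan is to derive both halves of the statement from results already in place, so the proof will be brief. First I would handle superstability of $(\Z,+,A)$: the set $A=Q\cup(Q+Q)$ is $\emptyset$-definable in $(\Z,+,Q)$, being cut out by $Q(x)\vee\exists y\,\exists z\,(Q(y)\wedge Q(z)\wedge x=y+z)$. Consequently, for any parameter set $B$, every $B$-definable set of $(\Z,+,A)$ is already $B$-definable in $(\Z,+,Q)$, so $\abs{S^{(\Z,+,A)}(B)}\leq\abs{S^{(\Z,+,Q)}(B)}$, and hence $\lambda$-stability transfers from $(\Z,+,Q)$ to $(\Z,+,A)$ for every $\lambda$. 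By Remark \ref{rem:vapstable}, $(\Z,+,Q)$ is superstable, hence $\lambda$-stable for all $\lambda\geq 2^{\aleph_0}$ by \cite[Theorem II.3.14]{classification}; therefore so is $(\Z,+,A)$, i.e.\ $(\Z,+,A)$ is superstable.

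Next I would establish the graph-coding assertion. Let $\Gamma=(V,E_\Gamma)$ be a countable graph. If $V$ is finite, take $B=\emptyset\seq A$: a finite graph is, up to isomorphism, definable with parameters in $(\Z,+)$ already (a finite subset of $\Z$, and any relation on it, being parameter-definable), so there is nothing to prove. If $V$ is countably infinite, fix a bijection $V\to Q$ and let $E$ be the image of $E_\Gamma$ under it; then $E$ is a graph relation on $Q$ and $(Q,E)\cong\Gamma$. Set $B=Q\cup\{a+b:(a,b)\in E\}$. Since membership in $E$ forces both coordinates into $Q$, we get $\{a+b:(a,b)\in E\}\seq Q+Q$, whence $B\seq Q\cup(Q+Q)=A$, as required. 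Since $Q$ is vaporous, Theorem \ref{thm:unaryZ}$(a)$ applies and gives that $(\Z,+,B)$ is interdefinable with $(\Z,+,Q,E)$; in the latter, $Q$ is a definable set carrying the definable relation $E$ with $(Q,E)\cong\Gamma$, so the same is true in $(\Z,+,B)$, i.e.\ $\Gamma$ is definable in $(\Z,+,B)$.

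I do not expect a genuine obstacle here: all the real work has been done in Theorem \ref{thm:unaryZ} (via Lemma \ref{lem:vaporous}) and Remark \ref{rem:vapstable}, and the present argument only assembles these, checking along the way that $A$ is $\emptyset$-definable from $Q$, that the chosen $B$ sits inside $A$, and that ``definable in $(\Z,+,B)$'' is read as ``isomorphic to a definable set equipped with a definable relation''. One may finally record the motivating consequence: taking $\Gamma$ to be an unstable countable graph (for instance the random graph) produces sets $B\seq A\seq\N$ with $(\Z,+,A)$ stable and $(\Z,+,B)$ unstable.
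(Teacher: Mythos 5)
Your proof is correct and takes the same route as the paper, which gives only a one-sentence pointer to Remark \ref{rem:vapstable} and Theorem \ref{thm:unaryZ}$(a)$: you use the reduct observation (that $A$ is $\emptyset$-definable in $(\Z,+,Q)$, which is superstable) for the first half, and Theorem \ref{thm:unaryZ}$(a)$ applied to $B=Q\cup\{a+b:(a,b)\in E\}\subseteq A$ for the second. The finite-graph case and the closing remark (taking $\Gamma$ to be the random graph) are sound and align with the surrounding discussion in the paper.
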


Finally, we use similar techniques to construct sets $A\seq\N$ such that the induced structure $A^{\Z}_{\indd}$ has large  $U$-rank. As discussed in Section \ref{intro:unary}, all examples in previous literature of stable expansions $(\Z,+,A)$  are such that $U(A^{\Z}_{\indd})=1$.  

\begin{corollary}\label{cor:Uind}$~$
\begin{enumerate}[$(a)$]
\item There is a set $A\seq\N$ such that  $A^{\Z}_{\indd}$ is strictly stable.
\item For any $n<\omega$, there is a set $A\seq\N$ such that $U(A^{\Z}_{\indd})=n$.
\item Suppose $\alpha$ is an ordinal such that $\mathsf{UG}$ contains some $\beta>\alpha\omega$. Then there is a set $A\seq\N$ such that $U(A^{\Z}_{\indd})>\alpha$.
\end{enumerate}
\end{corollary}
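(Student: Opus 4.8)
The plan is to apply Theorem~\ref{thm:unaryZ} together with its underlying combinatorics (Lemma~\ref{lem:vaporous}) through two concrete constructions of $A$: an $n$-fold restricted sumset of a vaporous set for part~$(b)$, and the ``$\{a+b\}$-construction'' attached to a suitable auxiliary pure graph for parts~$(a)$ and~$(c)$. In every case the real content is to analyze the \emph{induced} structure $A^{\Z}_{\indd}$ rather than $(\Z,+,A)$ itself.

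For part~$(b)$, given $n\ge 1$, I would fix a vaporous set $Q=\{a_i\}_{i\ge 0}\seq\Z^+$ and put $A=Q\cup\{a_{i_1}+\dots+a_{i_n}: i_1<\dots<i_n\}$. Applying Lemma~\ref{lem:vaporous} with various parameters shows: (i) $\Th(\Z,+,0,1,A)=\Th(\Z,+,0,1,Q)=:T$, since each of $Q,A$ is cofinitely definable from the other; (ii) cofinitely, $S_n:=A\setminus Q$ consists exactly of the elements representable as a sum of $n$ distinct elements of $Q$, each such representation being unique; (iii) the $\emptyset$-definable relation $\sum_k x_k=y$ of $(\Z,+,0,1)$, restricted to $A$, recovers $Q$, $S_n$, and the ``decomposition'' relation between $S_n$-elements and their summands, up to finitely many exceptions. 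Since $Q$ is stably embedded in $T$ with induced structure $(Q,=)$ (Lemma~\ref{lem:Qfacts} and Proposition~\ref{prop: D' stab emb}), it follows that $A^{\Z}_{\indd}$ is, after naming finitely many constants, interdefinable with the two-sorted structure $(Q\sqcup[Q]^n,\in)$ (membership). This structure is $\omega$-stable of $U$-rank exactly $n$ (the first sort has rank $1$, the second rank $n$), and here neither naming finitely many constants nor passing to a definable reduct changes the $U$-rank; hence $U(A^{\Z}_{\indd})=n$. (For $n=0$ take $A$ finite.)

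For parts~$(a)$ and~$(c)$, I would fix a \emph{countable pure graph} $(Q,E)$: strictly stable for~$(a)$ (such graphs exist, e.g.\ \cite{HMS}), and superstable with $U(Q,E)\ge\beta$ for~$(c)$ (such a graph exists by definition of $\mathsf{UG}$, after passing to a countable elementary substructure). Realize $Q$ as a vaporous subset of $\Z^+$ and set $A=Q\cup\{a+b:(a,b)\in E\}$, as in Theorem~\ref{thm:unaryZ}. As in the proof of Theorem~\ref{thm:unaryZ}$(a)$, Lemma~\ref{lem:vaporous} shows that $Q$ and $S:=A\setminus Q$ are cofinitely $\emptyset$-definable in $A^{\Z}_{\indd}$, and --- the key point --- that the $\emptyset$-definable binary relation $R(x,y):\Leftrightarrow\exists z\,(x+y=z)$ of $A^{\Z}_{\indd}$, where the quantifier now ranges over the universe $A$, agrees with $E$ on $Q\times Q$ off a finite set of pairs. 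Hence, after naming finitely many constants, $(Q,E)$ is a definable reduct of the induced structure on $Q$ inside $A^{\Z}_{\indd}$; this gives $U(A^{\Z}_{\indd})\ge U(Q,E)$, so $U(A^{\Z}_{\indd})\ge\beta>\alpha$ in~$(c)$, and it transfers failure of superstability to $A^{\Z}_{\indd}$ in~$(a)$. For the stability side of~$(a)$: $A^{\Z}_{\indd}$ is a definable reduct of the induced structure $A^{(\Z,+,0,1,Q,E)}_{\indd}$, and $(\Z,+,0,1,Q,E)=T[\cQ]$ is stable by Theorem~\ref{thm:unaryZ}$(b)$, so the induced structure on its definable set $A$ is stable, and therefore so is $A^{\Z}_{\indd}$; combined with the previous point, $A^{\Z}_{\indd}$ is strictly stable.

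The main obstacle is the ``cofinitely'' bookkeeping: one must identify precisely, via Lemma~\ref{lem:vaporous} with the appropriate parameters in each case, which additive equations among elements of $A$ have only finitely many solutions, so that $Q$, $S$ (resp.\ $S_n$) and the edge (resp.\ decomposition) relation are recovered inside $A^{\Z}_{\indd}$ up to finitely many exceptions; these are then absorbed by naming constants, after which one must check that naming finitely many constants and passing to definable reducts preserves ``strictly stable'' (for~$(a)$) and leaves the $U$-rank unchanged (for~$(b)$, and as an upper bound if one wanted an exact value in~$(c)$). Conceptually, the one idea beyond the proof of Theorem~\ref{thm:unaryZ} is the observation used in~$(a)$ and~$(c)$: after coding a graph into $A$ via sums, the edge relation reappears on the $Q$-sort of $A^{\Z}_{\indd}$ through an existential quantifier over $A$, rather than only as a subset of $[Q]^2$.
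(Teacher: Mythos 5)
Your overall strategy — vaporous sets, the sumset coding from Theorem~\ref{thm:unaryZ}, and the combinatorics of Lemma~\ref{lem:vaporous} — matches the paper's, and part $(a)$ is essentially the paper's argument (the paper just cites Corollary~\ref{cor:unary-exp}$(a)$ and transfers strict stability through Fact~\ref{fact:bounded}$(a)$ in one line, rather than re-examining $A^{\Z}_{\indd}$ directly). For $(c)$, however, the paper takes an indirect route: it notes $U(\Z,+,A)\geq U(Q,E)\geq\beta$ and then invokes \cite[Theorem~2.11]{CoSS} to pass from $(\Z,+,A)$ to the induced structure, which is precisely where the hypothesis $\beta>\alpha\omega$ is consumed. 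Your version works directly inside $A^{\Z}_{\indd}$ and, if completed, would give the sharper conclusion $U(A^{\Z}_{\indd})\geq\beta$ (needing only $\beta>\alpha$). But as written there is a gap: from ``$(Q,E)$ is a definable reduct of the induced structure on $Q$'' it does \emph{not} follow in general that $U(A^{\Z}_{\indd})\geq U(Q,E)$; a reduct of the induced structure on a definable subset need not have smaller $U$-rank. What one actually needs is that $Q^{A^{\Z}_{\indd}}_{\indd}$ is \emph{interdefinable} with $(Q,E)$ (not merely has it as a reduct), together with the automatic stable embeddedness of $Q$ in the stable theory $\Th(A^{\Z}_{\indd})$. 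The interdefinability does hold: one direction is your $\exists z\,(x+y=z)$ observation, and the other follows by chaining Theorem~\ref{thm:unaryZ}$(a)$, Remark~\ref{rem:bounded}, and Proposition~\ref{prop: D' stab emb} to see that every $A^{\Z}_{\indd}$-trace on $Q^n$ is already $(Q,E)$-definable. So the conclusion is right and in fact strengthens the paper's, but the key step needs this two-sided identification rather than the one-sided ``reduct'' claim.

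For part $(b)$ your construction is genuinely different. You take all sums of $n$ \emph{distinct} elements of a vaporous $Q$, whereas the paper takes $Q=\{k!:k\geq n\}$ and the asymmetric sumset $B=\{x_1+\dots+x_n:\bar x\in Q^n,\ x_i>n!\text{ for some }i>1\}$, designed specifically so that $A^{\Z}_{\indd}$ is directly interpretable in $Q^{\Z}_{\indd}$ with universe $Q^n$; the upper bound $U(A^{\Z}_{\indd})\leq n$ then falls out of Lascar's inequality with no further analysis of what is and is not definable. Your claim that $A^{\Z}_{\indd}$ is interdefinable with $(Q\sqcup[Q]^n,\in)$ is plausible and would also give exactly $n$, but the phrase ``it follows [from stable embeddedness of $Q$ in $T$]'' elides real work: one has to verify, via Lemma~\ref{lem:vaporous} with the appropriate parameters, that the trace on $A$ of every Presburger relation reduces (cofinitely) to membership and equality, and this bookkeeping is exactly what the paper's asymmetric $B$ is built to circumvent. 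So the two constructions both work, but the paper's gives the upper bound essentially for free while yours requires an unproved interdefinability claim.
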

\begin{proof}
Part $(a)$. Let $A=A_\infty$ be as in Corollary \ref{cor:unary-exp}. Then $(\Z,+,A_\infty)$ is strictly stable, and thus so is $A^{\Z}_{\indd}$ by Fact \ref{fact:bounded}. 

Part $(b)$. Fix $n<\omega$. We may assume $n\geq 1$. Let $Q=\{k!:k\geq n\}$. It is easy to check that it follows by the definition of $Q$ that the map $(x_1,\ldots,x_n)\mapsto x_1+\ldots+x_n$ from $Q^n$ to $\N$ is injective. Note that $U(Q^{\Z}_{\indd})=1$ since $Q^{\Z}_{\indd}$ is interdefinable with $(Q,=)$. Define
\[
B = \{x_1+\ldots+x_n:\xbar\in Q^n,~x_i>n!\text{ for some $1<i\leq n$}\}.
\]
Let $A=B\cup Q$. Then it follows from Lemma \ref{lem:vaporous} (similar to the proof of Theorem \ref{thm:unaryZ}$(a)$) that $B\cap Q$ is finite and $Q$ is an $A^{\Z}_{\indd}$-definable subset of $A$. So $U(A^{\Z}_{\indd})\geq n$ since we have an injective definable map from $(Q\backslash\{n!\})^n$ into $A$. Conversely,  we can definably interpret $A^{\Z}_{\indd}$ in $Q^{\Z}_{\indd}$ with universe $Q^n$ (send $Q\backslash B$ into $Q\times\{n!\}^{n-1}$, and $B$ to the complement). Therefore $U(A^{\Z}_{\indd})\leq n$ by Lascar's inequality.

Part $(c)$. Given such an $\alpha$, let $\Gamma$ be a graph of $U$-rank $\beta>\alpha\omega$. By Theorem \ref{thm:unaryZ}, there is a set $A\seq\N$ such that $(\Z,+,A)$ is superstable and defines $\Gamma$  (using a unary set for the universe). Thus $(\Z,+,A)$ has $U$-rank at least $\beta$. So $A^{\Z}_{\indd}$ is superstable, and it follows from \cite[Theorem 2.11]{CoSS}  that $U(A^{\Z}_{\indd})>\alpha$. 
\end{proof}

\subsection{Addendum on vaporous sequences}
In the course of developing various results of eventually indiscernible sequences and then specializing to vaporous sequences in $\Z^+$, the authors  wondered whether \emph{any} strictly increasing sequence in $\Z^+$, which is eventually indiscernible in the structure $(\Z,+,0,1)$, must be vaporous. So we have included some brief details showing that this is not the case. Note first that any eventually indiscernible sequence in $(\Z,+,0,1)$ must satisfy condition $(ii)$ of Definition \ref{def:vaporous}; but we will see that condition $(i)$ can fail.

Let $(p_k)_{k=1}^\infty$ be a strictly increasing enumeration of all prime powers. The following is a special case of the (generalized) Chinese Remainder Theorem. 

\begin{fact}\label{fact:CRT}
Fix $k\geq 1$, and suppose $b\in \Z$ is such that $b\equiv_{p_t} 0$ for all $1\leq t<k$. Then there is some $0\leq r<\lcm(p_1,\ldots,p_k)$ such that $r\equiv_{p_k}b$ and $r\equiv_{p_t} 0$ for all $1\leq t< k$.
\end{fact}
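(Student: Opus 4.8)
The statement to prove is the (generalized) Chinese Remainder Theorem in the following packaged form: given $k\geq 1$ and $b\in\Z$ with $b\equiv_{p_t}0$ for all $1\leq t<k$, there is $0\leq r<\lcm(p_1,\ldots,p_k)$ with $r\equiv_{p_k}b$ and $r\equiv_{p_t}0$ for $1\leq t<k$. Since this is explicitly labelled ``a special case of the (generalized) Chinese Remainder Theorem'', I would present a short, self-contained derivation rather than invoke a black box, both for completeness and because the wrinkle here is that the moduli $p_1,\ldots,p_k$ are prime powers and hence \emph{not pairwise coprime} in general (e.g.\ $p_1=2$, $p_2=4$), so the classical CRT for coprime moduli does not apply verbatim.

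\textbf{Key steps.} First I would reduce to solving the simultaneous congruence system $x\equiv_{p_k}b$, $x\equiv_{p_t}0$ for $1\leq t<k$, for an integer $x$; once any integer solution $x$ is found, taking $r$ to be the residue of $x$ modulo $L:=\lcm(p_1,\ldots,p_k)$ gives $0\leq r<L$, and $r\equiv x$ modulo each $p_t$ (since $p_t\mid L$), so $r$ satisfies all the required congruences. Second, to produce an integer solution $x$ to the system, I would invoke the generalized CRT solvability criterion: a system $x\equiv a_i\pmod{m_i}$ has a solution if and only if $a_i\equiv a_j\pmod{\gcd(m_i,m_j)}$ for all $i,j$, and in that case the solution is unique modulo $\lcm_i(m_i)$. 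Here $a_t=0$ for $t<k$ and $a_k=b$, with moduli $m_t=p_t$. The pairwise consistency conditions are: for $i,j<k$, $0\equiv 0\pmod{\gcd(p_i,p_j)}$, trivially true; and for $i<k$, we need $0\equiv b\pmod{\gcd(p_i,p_k)}$. Now each $p_i$ and $p_k$ is a power of a prime, so $\gcd(p_i,p_k)$ divides $p_i$; and by hypothesis $p_i\mid b$, hence $\gcd(p_i,p_k)\mid b$, giving $b\equiv 0\pmod{\gcd(p_i,p_k)}$. Thus all consistency conditions hold and an integer solution $x$ exists; the uniqueness-mod-$L$ clause then tells us the reduction $r$ is the unique such residue in $[0,L)$, which is more than we need.

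\textbf{Main obstacle.} There is essentially no obstacle: the only thing to be careful about is precisely the point flagged above — the moduli are prime powers, not distinct primes, so one must use the \emph{generalized} CRT (with the $\gcd$-compatibility hypotheses) rather than the elementary coprime version, and one must verify the compatibility conditions, which is exactly where the hypothesis $b\equiv_{p_t}0$ for $t<k$ gets used. Alternatively, if one prefers to avoid citing the generalized CRT entirely, one can build the solution by induction on $k$: suppose $x_{k-1}$ solves the first $k-1$ congruences (so $p_t\mid x_{k-1}$ for $t<k$, and $x_{k-1}$ is determined mod $L_{k-1}:=\lcm(p_1,\ldots,p_{k-1})$); seek $x_k=x_{k-1}+tL_{k-1}$ and solve $x_{k-1}+tL_{k-1}\equiv b\pmod{p_k}$ for $t$, which is solvable precisely because $\gcd(L_{k-1},p_k)$ divides $b-x_{k-1}$ — and $\gcd(L_{k-1},p_k)$ is a power of the prime underlying $p_k$ dividing some $p_i$ with $i<k$, hence divides both $b$ and $x_{k-1}$. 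Either route is a few lines; I would pick whichever keeps the exposition lightest, probably the direct citation of generalized CRT with the $\gcd$-compatibility check spelled out.
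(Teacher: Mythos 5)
Your proof is correct and matches the paper's intent exactly: the paper states this Fact with no proof beyond the remark that it is a special case of the generalized Chinese Remainder Theorem, and your write-up simply supplies the reduction to the congruence system and the $\gcd$-compatibility check (which is where the hypothesis $p_t \mid b$ for $t<k$ is used) that the paper leaves implicit.
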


\begin{lemma}\label{lem:slow-shift}
Let $(b_n)_{n=0}^\infty$ be any sequence of integers. Then there is a sequence $(a_n)_{n=0}^\infty$ in $\Z$ such that $|a_n-b_n|\leq n$ for all $n\geq 0$ and, for all $m>0$, $(a_n)_{n=0}^\infty$ is eventually 0 modulo $m$.
\end{lemma}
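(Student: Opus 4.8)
The plan is to reduce the ``eventually $0$ modulo $m$'' requirement to a single congruence condition for each prime power, and then to meet all of these conditions simultaneously by exploiting the fact that the allowed perturbation window around $b_n$ grows with $n$ while each individual modulus is fixed.

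First I would record the reduction: writing $(p_k)_{k=1}^\infty$ for the increasing enumeration of all prime powers and $L_k=\lcm(p_1,\dots,p_k)$, a sequence $(a_n)$ is eventually $0$ modulo every $m>0$ as soon as it is eventually $0$ modulo $p_k$ for every $k\ge 1$, since any $m$ factors as a product of prime powers each appearing in the list and $a\equiv 0$ modulo each prime-power divisor of $m$ forces $a\equiv 0\pmod m$. I would also note that $L_k$ is strictly increasing in $k$ (the new prime power $p_{k+1}$ is never a divisor of $L_k$), and that $p_k\mid L_k$ and $L_k\mid L_{k'}$ whenever $k\le k'$.

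Next I would define, for each $n\ge 0$, the index $k(n)=\max\{k\ge 1: L_k\le 2n+1\}$, with $k(n)=0$ and $L_0=1$ when no such $k$ exists. Since each $L_k$ is a fixed finite integer, for every $k$ we have $L_k\le 2n+1$ once $n$ is large, so $k(n)$ is non-decreasing and $k(n)\to\infty$. Now the set $\{b_n-n,\dots,b_n+n\}$ consists of $2n+1$ consecutive integers, hence contains a multiple of any positive integer that is $\le 2n+1$; in particular it contains a multiple of $L_{k(n)}$. I would pick $a_n$ to be such a multiple, so that $|a_n-b_n|\le n$ by construction. (Alternatively, one can produce $a_n$ explicitly and incrementally, using Fact \ref{fact:CRT} to peel off one prime power at a time; the bookkeeping is essentially the same.)

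Finally I would check eventual divisibility: fix $k\ge 1$ and choose $N$ with $k(n)\ge k$ for all $n\ge N$. For such $n$ we have $p_k\mid L_k\mid L_{k(n)}\mid a_n$, so $a_n\equiv 0\pmod{p_k}$. By the reduction in the first step, this gives that $(a_n)$ is eventually $0$ modulo every $m>0$, completing the proof. There is no real obstacle here beyond keeping straight the two scales involved — the fixed moduli $L_k$ versus the growing window $2n+1$ — and the only thing to be slightly careful about is the degenerate small-$n$ range, where $k(n)=0$ forces $a_n=b_n$, which is consistent with the requirement $|a_n-b_n|\le n$.
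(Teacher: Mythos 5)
Your proof is correct, and it takes a genuinely simpler route than the paper's. The paper fixes $n$ and inductively builds a perturbation $b_{n,0}=b_n, b_{n,1}, b_{n,2},\dots$ using the Chinese Remainder Theorem (their Fact \ref{fact:CRT}) to peel off one prime power at a time, adding a nonnegative correction $r_{n,k+1}<L_{k+1}$ at each stage; it then truncates at level $k_n=\max\{k:n\geq N_k\}$ where $N_k=\sum_{t\le k}(L_t-1)$, and checks $|a_n-b_n|\le N_{k_n}\le n$. Your observation replaces all of this with the single pigeonhole fact that any window of $L$ consecutive integers contains a multiple of $L$: since $\{b_n-n,\dots,b_n+n\}$ has $2n+1$ elements, it contains a multiple of $L_{k(n)}$ whenever $L_{k(n)}\le 2n+1$, and that's the whole construction. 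This avoids CRT entirely, is symmetric rather than one-sided in the perturbation, and makes the bound $|a_n-b_n|\le n$ immediate rather than a consequence of summing the $r_{n,t}$. The paper's version, on the other hand, produces a fully explicit value for each $a_n$, whereas yours merely asserts existence of a suitable multiple in the window (trivially computable, but not written down as a formula). You also correctly handle the boundary case $k(n)=0$ and the fact that $L_k$ is strictly increasing. Both arguments are valid; yours is cleaner.
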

\begin{proof}
We first fix $n\geq 0$, and inductively define a sequence $(b_{n,k})_{k=0}^\infty$ such that:
\begin{enumerate}[$(i)$]
\item $b_{n,k}\equiv_{p_t}0$ for all $1\leq t\leq k$, and
\item $b_{n,k+1}=b_{n,k}+r_{n,k+1}$ for some $0\leq r_{n,k+1}<\lcm(p_1,\ldots,p_{k+1})$.
\end{enumerate}
Let $b_{n,0}=b_n$. Suppose we have $b_{n,0},\ldots,b_{n,k}$ satisfying $(i)$ and $(ii)$. Using $(i)$ and Fact \ref{fact:CRT}, choose $0\leq r_{n,k+1}<\lcm(p_1,\ldots,p_{k+1})$ such that $r_{n,k+1}\equiv_{p_{k+1}}\text{-} b_{n,k}$ and $r_{n,k+1}\equiv_{p_t} 0$ for all $1\leq t\leq k$. Set $b_{n,k+1}=b_{n,k}+r_{n,k+1}$. To verify $(i)$, note that if $1\leq t\leq k$ then $r_{n,k+1}\equiv_{p_t} 0$ by construction and $b_{n,k}\equiv_{p_t} 0$ by induction, and so $b_{n,k+1}\equiv_{p_t} 0$. Moreover, $r_{n,k}\equiv_{p_{k+1}}\text{-} b_{n,k}$ by construction, and so $b_{n,k+1}\equiv_{p_{k+1}}0$. 

Set $N_0=0$ and, for $k\geq 1$, set $N_k=\sum_{t=1}^k(\lcm(p_1,\ldots,p_t)-1)$. For any $n\geq 0$ and $k\geq 0$, we have 
\[
b_{n,k}=b_n+r_{n,1}+\ldots+r_{n,k}\leq b_n+N_k.
\]

Now for each $n\geq 0$, let $k_n\geq 0$ be the maximal $k$ such that $n\geq N_k$. Define $a_n=b_{n,k_n}$. Then $|a_n-b_n|\leq N_{k_n}\leq n$. 

Finally, we show that $(a_n)_{n=0}^\infty$ is eventually $0$ modulo $m$ for all $m>0$. It suffices to assume $m=p_k$ for some $k\geq 1$. So fix $k\geq 1$. Suppose $n\geq N_k$. Then $k\leq k_n$, and so $a_n=b_{n,k_n}\equiv_{p_k} 0$. 
\end{proof}

\begin{remark}
After minor modifications to the proof, one can adjust the statement of the lemma so that $|a_n-b_n|\leq n$ is replaced by $|a_n-b_n|\leq f(n)$, where $f(n)$ is any divergent function.
\end{remark}

\begin{corollary}
There is a  sequence $(a_n)_{n=0}^\infty$ in $\Z^+$, which is eventually indiscernible over $\Z$ but not vaporous.
\end{corollary}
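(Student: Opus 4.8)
The plan is to produce a sequence that is eventually indiscernible over $\Z$ in $(\Z,+,0,1)$ but whose consecutive ratios do \emph{not} tend to infinity, so that condition $(i)$ of Definition \ref{def:vaporous} fails. The natural candidate, given the machinery just developed, is to start from a sequence with bounded (or at least not divergent) ratios and then perturb it slightly using Lemma \ref{lem:slow-shift}. Concretely, first I would fix a strictly increasing sequence $(b_n)_{n=0}^\infty$ in $\Z^+$ whose ratios are bounded away from $\infty$ but which nonetheless grows fast enough that the conclusion of Lemma \ref{lem:vaporous} applies after perturbation — for instance $b_n = 2^{2^n}$ would give $b_{n+1}/b_n = b_n \to \infty$, so instead take something like $b_n = n^2$ or, better, a sequence engineered so that $b_{n+1}/b_n \to c$ for some finite $c>1$; $b_n = \lceil c^n \rceil$ works, with $b_{n+1}/b_n \to c$. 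Then apply Lemma \ref{lem:slow-shift} to obtain $(a_n)_{n=0}^\infty$ with $|a_n - b_n|\le n$ and with $(a_n)$ eventually $0$ modulo every $m>0$.

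The key steps, in order, are then the following. First, check that $(a_n)$ can be taken strictly increasing and lying in $\Z^+$: since $|a_n-b_n|\le n$ and $b_n$ grows like $c^n$ (or any superpolynomial rate), for large $n$ the perturbation is negligible, so $(a_n)$ is eventually strictly increasing and positive; truncating an initial segment and reindexing fixes the rest (note truncation preserves both eventual indiscernibility and the mod-$m$ behaviour). Second, verify condition $(ii)$-type behaviour is automatic: by construction $(a_n)$ is eventually constant (indeed $0$) modulo every $m$. Third — and this is the real content — verify that $(a_n)$ is eventually indiscernible over $\Z$ with respect to $(\Z,+,0,1)$. By quantifier elimination for $T_0 = \Th(\Z,+,0,1)$ (after adding congruences), it suffices to handle formulas of the two types in the proof of Corollary \ref{cor:vaporous}: congruence formulas $x\equiv_m r$, handled by the mod-$m$ stabilisation just noted; and linear equations $x_1+\dots+x_m = y_1+\dots+y_n+r$. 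For the latter I would invoke the conclusion of Lemma \ref{lem:vaporous} — but here is the subtlety, since Lemma \ref{lem:vaporous} needs $\lim a_{n+1}/a_n = \infty$, which now \emph{fails}. So I must instead argue directly that, for each fixed such equation, there are only finitely many solutions among the $a_n$'s with distinct maxima; this is where the choice of $b_n$ matters. The cleanest fix is to choose $b_n$ growing fast enough that $\lim b_{n+1}/b_n = \infty$ after all — e.g. $b_n = F(n)!$ for a slowly growing $F$ — \emph{but} then perturb so the \emph{perturbed} ratios are bounded? That cannot work simultaneously. The honest resolution: pick $b_n$ with $b_{n+1}/b_n \to \infty$ (so Lemma \ref{lem:vaporous} applies verbatim to $(a_n)$, since $|a_n-b_n|\le n$ is absorbed and one checks $a_{n+1}/a_n\to\infty$ is \emph{not} needed — rather, one re-runs the proof of Lemma \ref{lem:vaporous} with the error terms, getting finitely many solutions), and then arrange that nonetheless $(a_n)$ is not vaporous by a different violation. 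Wait — but Definition \ref{def:vaporous}$(i)$ is precisely $\lim a_{n+1}/a_n = \infty$, so if that holds and $(ii)$ holds, the sequence \emph{is} vaporous.

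Therefore the correct plan is: choose $(b_n)$ with bounded ratios but still so sparse that linear equations with distinct maxima have few solutions — e.g. take $b_n$ along a geometric-type progression where consecutive terms are comparable but no small integer combination of distinct terms cancels, such as $b_n$ chosen greedily to be larger than the sum of a few previous terms while keeping $b_{n+1}\le K b_n$ for a fixed constant $K$ depending on how many terms appear. Precisely: one can build $(b_n)$ recursively so that $b_n > 2(b_0+\dots+b_{n-1})$ fails (that forces superexponential growth) — instead allow $b_{n+1}\in (b_n, Kb_n]$ and use a pigeonhole/density argument showing that for each fixed $(m,n,r)$ only finitely many solutions exist because the largest index is essentially determined. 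Then apply Lemma \ref{lem:slow-shift} to $(b_n)$, absorb the $O(n)$ error into the finiteness analysis (the error shifts each equation's constant, but the bound on the number of solutions in Lemma \ref{lem:vaporous}'s conclusion can be made uniform in $r$, as already observed in the proof of Corollary \ref{pro:proj is acl}), deduce eventual indiscernibility exactly as in Corollary \ref{cor:vaporous}, and finally observe $a_{n+1}/a_n \le K + o(1) \not\to\infty$, so $(a_n)$ is not vaporous. The main obstacle is thus the combinatorial construction of a sequence with bounded ratios for which every fixed linear Diophantine equation has only finitely many solutions with distinct maxima — this requires care, but a greedy construction avoiding all "small" coincidences, or a probabilistic/density argument, should suffice, and the perturbation lemma then handles the congruence conditions for free.
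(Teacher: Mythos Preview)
Your overall plan matches the paper's exactly: start from a sequence $(b_n)$ with bounded ratios, apply Lemma \ref{lem:slow-shift} to fix the congruence conditions, and then verify eventual indiscernibility by handling congruence formulas (automatic) and linear equations separately. You also correctly isolate the only real difficulty, namely that Lemma \ref{lem:vaporous} is no longer available for the linear-equation part.

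The gap is that you never actually resolve this difficulty. You end with ``a greedy construction avoiding all `small' coincidences, or a probabilistic/density argument, should suffice,'' but this is not a proof and it is not even clear it works as stated: the condition you need (for every fixed $m,n,r$, only finitely many solutions with distinct maxima) must hold for \emph{all} such equations simultaneously, and at the moment you choose $b_k$ you cannot yet see the coincidences involving later terms. Your earlier attempt to force $b_n > 2(b_0+\dots+b_{n-1})$ shows you already noticed that the obvious greedy route pushes the ratios back to infinity.

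The paper's fix is short and clean: take $b_n = \lfloor \pi^n \rfloor$. Since the powers $1,\pi,\pi^2,\ldots$ are linearly independent over $\Q$, no nontrivial integer combination of distinct $\pi^n$'s vanishes, and the $O(n)$ perturbation from Lemma \ref{lem:slow-shift} plus the rounding error is easily absorbed. This is exactly the ``independently sparse'' situation analyzed in \cite{CoMS}, which the paper cites for the linear-equation step. One then checks $a_{n+1}/a_n \to \pi$, so condition $(i)$ of Definition \ref{def:vaporous} fails. In short: you found the right architecture but not the key arithmetic input --- choose the geometric base $c$ to be transcendental (or at least with $\Q$-linearly independent powers), and the linear-equation analysis goes through without any greedy or probabilistic argument.
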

\begin{proof}
Let $(a_n)_{n=0}^\infty$ be as in Lemma \ref{lem:slow-shift} with respect to the starting sequence $b_n=\lfloor\pi^n\rfloor$. Set $Q=\{a_n:n\geq 0\}$. We claim that the induced structure on $Q$ from $(\Z,+,0,1)$ is $(Q,=)$, and so $(a_n)_{n=0}^\infty$ is eventually indiscernible over $\Z$ by Lemma \ref{lem:indEI}. For   formulas of the form $x\equiv_m r$ this follows by construction. For the induced structure from linear equations, this is a special case of a more general family of examples considered in \cite{CoMS} (in particular, ``independently sparse" sequences; see \cite[Remark 4.19]{CoMS}). 

To see that $(a_n)_{n=0}^\infty$ is not vaporous, note that $a_n=\pi^n+r_n$ where $r_n$ is real number with $|r_n|<n+1$. Therefore
\[
 \lim_{n\to\infty}\frac{a_{n+1}}{a_n}=\lim_{n\to\infty}\frac{\pi^{n+1}+r_{n+1}}{\pi^n+r_n}=\lim_{n\to\infty}\frac{\pi+\frac{r_{n+1}}{\pi^n}}{1+\frac{r_n}{\pi^n}}=\pi.\qedhere
 \]
\end{proof}

\bibliographystyle{alpha}
\bibliography{biblio}
\end{document}